\documentclass[a4paper,11pt,reqno]{amsart}
\usepackage{amssymb,amsmath,hyperref,comment}
\title[Polar finite decompositions for odd spin]{On odd-spin $A_{1}^{(1)}$-string functions, cross-spin identities,\\ and mock theta conjecture-like identities}

\author{Stepan Konenkov}
\address{Department of Mathematics and Computer Science, Saint Petersburg State University, Saint Petersburg,  Russia, 199178}
\email{konenkov.stepan@yandex.ru)}

\author{Eric T. Mortenson}
\address{Department of Mathematics and Computer Science, Saint Petersburg State University, Saint Petersburg,  Russia, 199178}
\email{etmortenson@gmail.com}

\renewcommand\theta{\vartheta}

\newtheorem{theorem}{Theorem}
\newtheorem{lemma}[theorem]{Lemma}
\newtheorem{corollary}[theorem]{Corollary}
\newtheorem{proposition}[theorem]{Proposition}
\theoremstyle{definition}

\newtheorem{remark}[theorem]{Remark}

\numberwithin{theorem}{section} 
\numberwithin{equation}{section}

\newcommand{\Z}{\mathbb{Z}}

\newcommand{\im}{\textnormal{Im}}

\makeatletter
\@namedef{subjclassname@2020}{\textup{2020} Mathematics Subject Classification}
\makeatother

\begin{document}

\date{8 March 2026}

\subjclass[2020]{Primary 11F37, 11F27, 33D90, 11B65; Secondary 17B67, 81R10, 81T40}

\keywords{string functions, parafermionic characters, admissible characters, polar-finite decomposition of Jacobi forms, Appell functions, mock theta functions}

\begin{abstract}
Determining the explicit forms and modularity for string functions and branching coefficients for Kac--Moody algebras after Kac, Peterson, and Wakimoto is a long-standing, yet wide-open, problem and recently a connection has been made between positive admissible-level  $A_{1}^{(1)}$-string functions and Ramanujan's mock theta functions.   In this paper we obtain the polar-finite decomposition for the admissible-level $A_{1}^{(1)}$ character of odd spin, and we also find new mock theta conjecture-like identities for the odd-spin, $2/3$-level and $2/5$-level $A_{1}^{(1)}$-string functions.  
\end{abstract}

\maketitle

 \tableofcontents

\section{Introduction}

Determining the explicit forms and modularity for string functions and branching coefficients for Kac--Moody algebras after Kac, Peterson, and Wakimoto is a long-standing, yet wide-open, problem \cite{KP84, KW88pnas, KW88advmath, KW90}, and recently a connection has been made between positive admissible-level for $A_{1}^{(1)}$-string functions and Ramanujan's mock theta functions \cite{BoMo2026, BoMo2025, KM25, KM26}.   Kac and Peterson \cite{KP84} derived the modular properties of integral-level string functions, and calculated them for certain cases in terms of theta functions. Subsequently, Kac and Wakimoto generalized these results and computed certain branching functions using methods of modular and conformal invariance \cite{KW88advmath}. 

\smallskip
Physicists and mathematicians have taken the work of Kac, Peterson, and Wakimoto in multiple directions. In one direction, Ahn, Chung, and Tye \cite{ACT} introduced the generalized Fateev--Zamolodchikov parafermionic theories \cite{FZ85}, which have applications in statistical physics and string theory. In another direction, Auger, Creutzig, and Ridout pursued the corresponding logarithmic parafermionic vertex algebra for negative admissible-levels  \cite{ACR}. 

\smallskip
In recent works, Borozenets and Mortenson \cite{BoMo2026, BoMo2025} and then Konenkov and Mortenson \cite{KM25} determined explicit forms for certain admissible-level string functions for the affine Kac--Moody algebra $A_{1}^{(1)}$.  Admissible-level string functions are parametrized in terms of level, quantum number, and spin.

\smallskip
For positive fractional-level string functions Borozenets and Mortenson obtained mock theta conjecture-like identities.  For even spin, and some for odd spin, the string functions of levels $1/2$, $1/3$, $2/3$, $1/5$ and $2/5$ may be expressed as mock theta conjecture-like identities in terms of the second-order, third-order, and tenth-order mock theta functions found in Ramanujan's last letter to G. H. Hardy \cite{AB2018} and in Ramanujan's lost notebook \cite{RLN}.   The levels $2/3$ and $2/5$ were not found for odd spin \cite{BoMo2025, KM25}.

\smallskip
The work of Borozenets and Mortenson \cite{BoMo2025} was not actually limited to even-spin.  They also obtained a general cross-spin identity that allows one to express an odd-spin string function in terms of an even-spin string function when the admissible-level is of the reduced form $p/q$ and $p$ is odd.  For $p$ even, the spin for both string functions in the cross-spin relation is odd; thus, odd-spin string functions of admissible-level $p/q$, $p$ even, were not contained within the results of \cite{BoMo2025}.

\smallskip
In order to express the positive admissible-level string functions in terms of mock theta conjecture-like identities, Borozenets and Mortenson focused on even-spin and introduced quasi-periodicity \cite{BoMo2025} and polar-finite decompositions after Zwegers \cite{Zw2} and Dabholkar, Murthy, and Zagier \cite{DMZ}.  One is thus motivated to find applications of quasi-periodicity and polar-finite decompositions in other settings.  Subsequently, Konenkov and Mortenson \cite{KM26} applied quasi-peridocity to generalized Euler identities of Schilling and Warnaar \cite{SW} after work of Kac and Wakimoto on branching functions \cite{KW90}.

\smallskip
 Dabholkar, Murthy, and Zagier \cite{DMZ} built upon work of Zwegers \cite{Zw2} and introduced a canonical decomposition of a meromorphic Jacobi form into a ``finite-part'' and a ``polar-part.''  The finite-part is a finite linear combination of theta functions with mock modular forms as coefficients, and the polar-part is entirely determined by the poles of the meromorphic Jacobi form.   Borozenets and Mortenson \cite{BoMo2025} used their notion of quasi-periodic relations to extend Zagier--Zwegers' analysis to the case of admissible characters, which are vector-valued meromorphic Jacobi forms \cite{KW88advmath}. Using the polar-finite decomposition of admissible characters, they discovered mock theta conjecture-like identities for string functions of certain positive admissible-levels of even-spin, and thus extended their initial results \cite{BoMo2026}. 
 
\smallskip 
Konenkov and Mortenson \cite{KM26} applied quasi-peridocity to generalized Euler identities of Schilling and Warnaar \cite{SW}.  First Konenkov and Mortenson developed quasi-periodicity for odd spin, and then discovered mock theta conjecture-like identities for both even and odd-spin for levels $1/2$, $1/3$, and $2/3$.  Their identities were of a different nature and did not explicitly give the string functions in terms of mock theta conjecture-like identities.  Moreover, they were unable to express the odd-spin identities for level $2/3$ in terms of the same two mock theta functions used for even-spin level $2/3$!  This was unexpected and is what motivated this paper.

 \smallskip
 In this paper, we obtain the analogous polar-finite decomposition for an admissible $A_{1}^{(1)}$ character of odd spin.  The polar-finite decomposition is rather big, and we would like to convince the reader that there are no mistakes.  So we then obtain the corresponding mock theta conjecture-like identities for odd-spin characters levels $1/2$, $1/3$, and $1/5$ and then compare them with the analogous even-spin results found in \cite{BoMo2025} via the cross-spin identity. 
 
 \smallskip
We then obtain the corresponding mock theta conjecture-like identities for odd-spin string functions of levels $2/3$ and $2/5$.  Similar to results in Konenkov and Mortenson \cite{KM26}, we find that we cannot use the exact same set of mock theta functions as in analogous even-spin results found in \cite{BoMo2025, KM25}.  However, for odd-spin, level $2/3$ we also discover that there are multiple mock theta conjecture type identities involving different sets of third-order mock theta functions!  We then put our findings into the context of the cross-spin identity.  
 
 \smallskip
 In summary, what is new in this paper is the polar-finite decomposition for the admissible-level $A_{1}^{(1)}$ character of odd spin, and the new mock theta conjecture-like identities for the odd-spin, level $2/3$ and $2/5$ $A_{1}^{(1)}$-string functions.


 \subsection{On $A_{1}^{(1)}$-string functions}

We will focus on the case of the affine Kac--Moody algebra $A_1^{(1)}$, and we will continue the focus on positive admissible-levels as in \cite{BoMo2026, BoMo2025, KM25, KM26}.  We review the basic notation.

\smallskip
As has been customary, we begin by following the terminology in Schilling and Warnaar \cite[Section 3]{SW} and introduce the string function as follows.  We let $p^{\prime}\ge 2$, $p \ge 1$ be coprime integers and define the admissible-level to be
\begin{equation} \label{equation:admlevel}
N:=\frac{p^{\prime}}{p}-2.
\end{equation}
Let $q := e^{2\pi i \tau}$ with $\im(\tau) >0$ and $z$ a non-zero complex number. For an admissible $A_{1}^{(1)}$  character $\chi_{\ell}^N (z;q)$ of level $N$ and spin $\ell$, we define the string function for level $N$, quantum number $m$, and spin $\ell$ with $m+\ell \equiv 0 \pmod 2$, through
\begin{equation} \label{equation:fourcoefexp}
\chi_{\ell}^N (z;q)=\sum_{m\in 2\mathbb{Z}+\ell}
C_{m,\ell}^{N}(q) q^{\frac{m^2}{4N}}z^{-\frac{1}{2}m}.
\end{equation}

For the purpose of the Weyl--Kac formula, we will temporarily define the theta function as
\begin{equation}\label{equation:SW-thetaDef}
\Theta_{n,m}(z;q):=\sum_{j\in\mathbb{Z}+n/2m}q^{mj^2}z^{-mj}.
\end{equation}
Using the Weyl--Kac formula, one can express the admissible $A_{1}^{(1)}$ character as
\begin{equation}\label{equation:WK-formula}
\chi_{\ell}^N(z;q)=\frac{\sum_{\sigma=\pm 1}\sigma \Theta_{\sigma (\ell+1),p^{\prime}}(z;q^{p})}
{\sum_{\sigma=\pm 1}\sigma\Theta_{\sigma,2}(z;q)}.
\end{equation}

\smallskip
Although symmetries and periodicity of integral-level string functions have long been known, symmetries of admissible-level string functions have only recently been developed in a quantified manner \cite{BoMo2025}.  Integer-level string functions enjoy classical symmetries \cite[(3.4), (3.5)]{SW}, \cite[(2.40)]{ACT}
\begin{equation*}
C_{m,\ell}^{N}(q) = C_{-m,\ell}^{N}(q), \ C_{m,\ell}^{N}(q) = C_{N-m,N-\ell}^{N}(q).
\end{equation*}
For the integral-level $N$ we have the important periodicity property \cite[(3.5)]{SW}
\begin{equation} \label{eq:inglevelperiod}
C_{m,\ell}^{N}(q) = C_{m+2N,\ell}^{N}(q),
\end{equation}
and hence from \eqref{equation:fourcoefexp} the theta-expansion
\begin{equation}\label{eq:intlevelthetadecomp}
\chi_{\ell}^N(z;q)=\sum_{\substack{0\le m <2N\\m \in 2\Z + \ell}}C_{m,l}^{N}(q)\Theta_{m,N}(z;q).
\end{equation}
In  \cite{BoMo2025}, Borozenets and Mortenson developed quasi-periodicity for even-spin string functions \cite[Theorem 2.1]{BoMo2025} and applied it to \eqref{equation:fourcoefexp}.  They thus obtained a polar-finite expansion for characters of even-spin  \cite[Theorem 2.3]{BoMo2025}.   In more detail, the expression they obtained is similar to (\ref{eq:intlevelthetadecomp}) but with significant differences.  Like the right-hand side of (\ref{eq:intlevelthetadecomp}), they obtained a finite linear-combination of string functions and theta functions, but the string functions are now mock modular.  This is the finite part of the polar-finite decomposition. They also obtained a second term which is now a finite linear combination of theta functions and Appell functions; this is the polar part.

\smallskip
In \cite{KM26}, Konenkov and Mortenson found the analogous form for  quasi-periodicity for odd-spin string functions \cite[Theorem 3.1]{KM26} in their study of generalized Euler identities for $A_{1}^{(1)}$-string functions.  Here we will find and prove the analogous polar-finite decomposition for odd-spin characters.

\smallskip
Before we review the mock modular results of Borozenets, Konenkov, and Mortenson for admissible-level string functions, we give some examples of the modular results of Kac and Peterson for integral-level string functions, see also calculations found in \cite{Mo24, MPS}.  Let us first recall the $q$-Pochhammer notation
\begin{equation*}
(x)_n=(x;q)_n:=\prod_{i=0}^{n-1}(1-q^ix), \ \ (x)_{\infty}=(x;q)_{\infty}:=\prod_{i\ge 0}(1-q^ix),
\end{equation*}
and the theta function
\begin{equation}\label{equation:JTPid}
j(x;q):=(x)_{\infty}(q/x)_{\infty}(q)_{\infty}=\sum_{n=-\infty}^{\infty}(-1)^nq^{\binom{n}{2}}x^n,
\end{equation}
where the last equality is the Jacobi triple product identity.   We will frequently use the notation,
\begin{equation*} 
J_{a,b}:=j(q^a;q^b),
 \ \overline{J}_{a,b}:=j(-q^a;q^b), \ {\text{and }}J_a:=J_{a,3a}=\prod_{i\ge 1}(1-q^{ai}),
\end{equation*}
where $a,b$ are positive integers.    One notes that the two definitions for theta functions are equivalent.

An important example of a theta functions is the Dedekind eta-function.  We define the Dedekind eta-function as follows
\begin{equation*} 
\eta(q) :=q^{1/24}\prod_{n\ge 1}(1-q^{n}).
\end{equation*}
Among more general results, Kac and Peterson \cite[p. 220]{KP84} showed
{\allowdisplaybreaks \begin{gather*}
C_{0,0}^{1}(q) = \eta(q)^{-1},\\
C_{1,1}^{2}(q)= \eta(q)^{-2}\eta(q^2),\\
C_{1,1}^{3}(q)= \eta(q)^{-2} q^{3/40} J_{6,15},\\
C_{2,0}^{4}(q) = \eta(q)^{-2} \eta(q^6)^{-1} \eta(q^{12})^{2}.
\end{gather*}}%

To make our subsequent results easier to state, we introduce the additional notation
\begin{equation}
\mathcal{C}_{m,\ell}^{N}(q) := q^{-s_{\lambda}}C_{m,\ell}^{N}(q) \in \Z[[q]],\label{equation:mathCalCtoStringC}
\end{equation} 
where
\begin{equation*}
s_{\lambda} := -\frac{1}{8}+\frac{(\ell+1)^2}{4(N+2)} - \frac{m^2}{4N}.
\end{equation*}

Although we will not be using Hecke-type double-sums explicitly, one does want to have a feel for admissible-level string function.  We will use the following definition for Hecke-type double-sums:  let $x,y$ be non-zero complex numbers, then
\begin{equation} \label{equation:fabc-def2}
f_{a,b,c}(x,y;q):=\left( \sum_{r,s\ge 0 }-\sum_{r,s<0}\right)(-1)^{r+s}x^ry^sq^{a\binom{r}{2}+brs+c\binom{s}{2}}.
\end{equation}
One way to express admissible-level string functions in terms of Hecke-type double-sums is by using the classical partial fraction expansion for the reciprocal of Jacobi's theta product
\begin{equation}
\frac{1}{j(z;q)}=\frac{1}{(q;q)_{\infty}^3}\sum_{n\in\mathbb{Z}}\frac{(-1)^nq^{\binom{n+1}{2}}}{1-q^{n}z}
\label{equation:jacobiThetaReciprocal}
\end{equation}
and the Weyl--Kac formula \eqref{equation:WK-formula} as found in for example \cite[Section 2.4]{ACT}, \cite[Proposition 3]{L},  \cite[(3.8)]{SW}:  for $p'\geq 2$, $p\geq 1$ coprime integers, $0\leq \ell \leq p'-2$ and $m\in 2\Z +\ell$, then
\begin{equation}
\mathcal{C}_{m,\ell}^{N}(q)
 =\frac{1}{(q)_{\infty}^3}\left ( f_{1,p^{\prime},2pp^{\prime}}(q^{1+\frac{m+\ell}{2}},-q^{p(p^{\prime}+\ell+1)};q)
 -f_{1,p^{\prime},2pp^{\prime}}(q^{\frac{m-\ell}{2}},-q^{p(p^{\prime}-(\ell+1))};q)\right).
 \label{equation:modStringFnHeckeForm}
\end{equation}   
In the case of positive integer level $N>0$, we have the compact form \cite[Example $1.3$]{HM}, \cite{SW}
\begin{equation*}
\mathcal{C}_{m,\ell}^{N}(q)=\frac{1}{(q)_{\infty}^3}
f_{1,1+N,1}(q^{1+\tfrac{1}{2}(m+\ell)},q^{1-\tfrac{1}{2}(m-\ell)};q).
\end{equation*}
 
The double-sum form of admissible-level string functions is important to understanding quasi-periodicity.  The form of quasi-periodicity developed by Borozenets and Mortenson \cite[Theorem 2.1]{BoMo2025} is an application of the building block form of functional equation properties for double-sums as found in \cite[Section 6]{HM}.  Schilling and Warnaar did consider the difference of two admissble-level string functions \cite[(7.6)]{SW}, but they did not write the difference in terms of building blocks.


\subsection{On admissible-level $A_{1}^{(1)}$-string functions and mock theta conjecture-like identities}
In Ramanujan's last letter to Hardy \cite[Chapter 14]{AB2018}, he gave a list of seventeen so-called mock theta functions.   The descriptive ``mock'' is a loose defining term meaning that the mock theta functions have certain asymptotic properties similar to those of ordinary theta functions, but they are not theta functions \cite{GOR}. 

\smallskip
Each mock theta function was defined as a $q$-series convergent for $|q|<1$.  For an example of a mock theta function from Ramanujan's letter, his third-order mock theta function $f_{3}(q)$ can be written \cite[Chapter 14]{AB2018}: 
\begin{equation*}
f_3(q):=\sum_{n= 0}^{\infty}\frac{q^{n^2}}{(-q)_n^2}=1+\frac{q}{(1+q)^2}+\frac{q^4}{(1+q)^2(1+q^2)^2}+\dots
=1+q-2q^2+3q^3+\dots
\end{equation*}
 In Ramanujan's letter, one finds an additional three third-order mock theta functions, ten fifth-order functions, and three seventh-order functions, as well as several identities relating the mock theta functions to each other.  The notion of order is not well-defined.

\smallskip
In the decades that followed Ramanujan's last letter, mock theta functions were shrouded in mystery and their development was somewhat sleepy, but then came the discovery of the lost notebook \cite{RLN}, Hickerson's proof of the mock theta conjectures \cite{AG, H1,H2},  and Zwegers breakthrough work on mock modularity \cite{Zw1, Zw2}.  Suddenly mock theta functions became more central to mathematics.

\smallskip
While on a visit to Trinity College Library at Cambridge University, George Andrews discovered a collection of papers of Ramanujan now known as the lost notebook \cite{RLN}.   The collection of papers contained more mock theta functions and mock theta function identities.  For example, we find the mock theta conjectures, which are a collection of ten identities in which each identity expresses a different fifth-order mock theta function in terms of a building-block mock theta function and a simple quotient of theta functions.   We also find the four tenth-order mock theta functions, and six identities which they satisfy \cite{C1, C2, C3}.   

\smallskip
The particular building-block for the mock theta conjectures is the so-called universal mock theta function $g_3(x;q)$, which is defined
\begin{equation*}
g_3(x;q):=x^{-1}\Big ( -1 +\sum_{n=0}^{\infty}\frac{q^{n^2}}{(x)_{n+1}(q/x)_{n}} \Big ).
\end{equation*}
Using our theta function notation, two of the ten mock theta conjectures read \cite{AG, H1}
{\allowdisplaybreaks \begin{align*}
f_0(q)&:=\sum_{n= 0}^{\infty}\frac{q^{n^2}}{(-q;q)_n}=\frac{J_{5,10}J_{2,5}}{J_1}-2q^2g_3(q^2;q^{10}),\\
f_1(q)&:=\sum_{n= 0}^{\infty}\frac{q^{n(n+1)}}{(-q;q)_n}=\frac{J_{5,10}J_{1,5}}{J_1}-2q^3g_3(q^4;q^{10}).
\end{align*}}%

The universal mock theta function $g_3(x;q)$ is only related to the odd-ordered mock theta functions; however,  both even and odd-ordered mock theta functions can be expressed in terms of Appell functions \cite[Section 5]{HM}.

\smallskip
 In a series of papers, Borozenets and Mortenson \cite{BoMo2026, BoMo2025}, and Konenkov and Mortenson \cite{KM25} expressed string functions of certain admissible-levels in terms of mock theta conjecture-like identities.  
 
 \smallskip
We first recall two of Ramanujan's second-order mock theta functions:
\begin{equation*}
A_2(q)
:=\sum_{n\ge 0}\frac{q^{n+1}(-q^2;q^2)_n}{(q;q^2)_{n+1}}, \ 
\mu_2(q)
:=\sum_{n\ge 0}\frac{(-1)^nq^{n^2}(q;q^2)_n}{(-q^2;q^2)_{n}^2},
\end{equation*}
both of which appearing in the ``Lost Notebook''  \cite[p. 8]{RLN}.   For even-spin, $1/2$-level string functions Borozenets and Mortenson found, slightly rewritten, \cite[Corollaries 2.5, 2.6]{BoMo2026}:  For $(p,p^{\prime})=(2,5)$, $(m,\ell)=(0,2r)$, $r\in\{0,1\}$, we have
    \begin{gather}
(q)_{\infty}^3\mathcal{C}_{0,2r}^{1/2}(q)
=(-1)^{r}
\frac{J_{1}^4J_{4}}{J_{2}^4}j(q^{4r+12};q^{20})
 -2q^{-r} j(q^{1+2r};q^{5})  A_{2}(-q),
 \label{equation:mockThetaConj2502r-2ndA}\\
(q)_{\infty}^3\mathcal{C}_{0,2r}^{1/2}(q)
=(-q)^{-r}\frac{1}{2}
\frac{J_{1}^3}{J_{2}J_{4}}j(-q^{2r+1};-q^{5})
+ q^{-r} \frac{1}{2}  j(q^{1+2r};q^{5}) \mu_{2}(q).
 \label{equation:mockThetaConj2502r-2ndmu}
\end{gather}

In order to state more recent results, we recall two of the third-order mock theta functions found in Ramanujan's last letter to Hardy \cite[Chapter 14]{AB2018}:
\begin{gather*}
f_3(q):=\sum_{n\ge 0}\frac{q^{n^2}}{(-q)_n^2}, \ \ 
\omega_3(q)
:=\sum_{n\ge 0}\frac{q^{2n(n+1)}}{(q;q^2)_{n+1}^2}.
\end{gather*}

 For the even-spin, $1/3$-level string functions Borozenets and Mortenson obtained
\begin{theorem}\label{theorem:newMockThetaIdentitiespP37m0ell2r}\cite[Theorem 2.6]{BoMo2025}
For $(p,p^{\prime})=(3,7)$, $r\in \{0,1,2\}$ we have
\begin{align*}
(q)_{\infty}^3&\mathcal{C}_{0,2r}^{1/3}(q)
=(-q)^{-r}\frac{(q)_{\infty}^3}{J_{2}}
\frac{j(-q^{1+2r};q^{14})j(q^{16+4r};q^{28})}
{j(-1;q)J_{28}} \\
&\qquad   -q^{2-2r}\frac{j(q^{6-2r};q^{14})j(q^{26-4r};q^{28})}{J_{28}} \omega_3(-q)
+  \frac{q^{-r}}{2} \frac{j(q^{1+2r};q^{14})j(q^{16+4r};q^{28})}{J_{28}} 
f_{3}(q^2).
\end{align*}
\end{theorem}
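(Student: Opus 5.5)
The plan is to start from the Hecke-type double-sum form \eqref{equation:modStringFnHeckeForm} with $(p,p')=(3,7)$, $m=0$, $\ell=2r$:
\[
(q)_\infty^3\mathcal{C}_{0,2r}^{1/3}(q)=f_{1,7,42}(q^{1+r},-q^{3(8+2r)};q)-f_{1,7,42}(q^{-r},-q^{3(6-2r)};q).
\]
Each $f_{a,b,c}$ with discriminant $b^2-ac=49-42=7>0$ will be converted into Appell functions $m(x,q,z)$ plus theta quotients. For this I would use the general expansion of $f_{a,b,c}$ from Hickerson--Mortenson \cite{HM} (their Theorem 1.3/1.4 type formula), which expresses $f_{a,b,c}(x,y;q)$ as a sum of $g_{a,b,c}$ (Appell-function terms with modulus $q^{a(b^2-ac)/\dots}$) plus a theta-quotient error term. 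An equivalent and more structural route is the one the paper advertises: the even-spin polar-finite decomposition \cite[Theorem 2.3]{BoMo2025} of $\chi_{2r}^{1/3}$. Here the finite part involves only $\mathcal{C}_{0,2r}$ and the quasi-periodic shifts. The polar part is a finite combination of $j(\cdot;q^{14})j(\cdot;q^{28})/J_{28}$-type theta products times Appell functions $m(\cdot,q^{?},\cdot)$ coming from the pole of the Weyl--Kac denominator at $z=1$.

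The key steps are as follows. First, write the string function as a combination of a small number of Appell functions of the shape $m(-q^{a},q^{b},z)$, with theta coefficients whose arguments carry the $r$-dependence, i.e. $j(q^{1+2r};q^{14})j(q^{16+4r};q^{28})$ and $j(q^{6-2r};q^{14})j(q^{26-4r};q^{28})$. Second, identify these Appell functions with the third-order mock theta functions using the standard representations \cite[Section 5]{HM}:
\[
f_3(q)=\frac{2}{J_1}\,m(-q,q^3,\cdot)+\text{theta}\quad\text{and}\quad \omega_3(q)=-q^{-1}m(q,q^6,q^2)+\text{theta},
\]
after substituting $q\to q^2$, respectively $q\to -q$. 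Third, use the change-of-$z$ formula for $m(x,q,z)$ and the splitting $m(x,q,z)=\sum_k$ (over $q\to q^n$) to move all Appell functions to these normalized forms. This collects all residual theta terms into one expression.

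The main obstacle is the last step: showing that the accumulated theta-function remainder equals the single quotient
\[
(-q)^{-r}\frac{(q)_\infty^3\, j(-q^{1+2r};q^{14})\,j(q^{16+4r};q^{28})}{J_2\, j(-1;q)\,J_{28}}.
\]
I would handle this by treating the remainder, for each fixed $r\in\{0,1,2\}$, as a theta function identity. Both sides can be viewed as functions of an auxiliary elliptic variable (restoring $z$ before specialization) with the same quasi-periodicity. I would then verify equality by matching zeros and poles, or equivalently by the "theta functions of the same weight and index: compare finitely many coefficients" argument via Sturm-type bounds, checking $q$-expansions to sufficient order. Since $r$ ranges over only three values, a case-by-case verification is acceptable if a uniform argument resists.
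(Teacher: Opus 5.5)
\emph{Verdict: there is a genuine gap.} The paper does not prove this theorem; it quotes it from \cite{BoMo2025}. Its proof is, however, the even-spin twin of the computation in Section~\ref{section:mockTheta13-level}, and I am measuring your proposal against that argument.

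Your proposal never says how the $z$-dependence is eliminated. As a result, the step you flag as the main obstacle is left as an unwritten theta identity, to be settled by zero/pole matching or Sturm bounds, and that check cannot be set up until the identity is known.

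The missing idea is this. Specialize Theorem~\ref{theorem:generalPolarFiniteEvenSpin} to $(p,j)=(3,1)$, where the finite part is the single term $C_{0,2r}^{(3,7)}(q)\,j(-q^{3}z;q^{6})$. Take the non-polar point $z=-q$, and evaluate the left-hand side $\chi_{2r}^{(3,7)}(-q;q)$ with the Weyl--Kac formula, Proposition~\ref{proposition:WeylKac}. Then nothing needs to be collected or matched: the first term of the theorem is literally that Weyl--Kac value divided by the finite-part coefficient $j(q^{4};q^{6})=J_{2}$.
\begin{itemize}
\item The factor $(-q)^{-r}$ is $z^{-r}$.
\item The denominator $j(-q;q)=j(-1;q)$ is the Weyl--Kac denominator.
\item The numerator collapses by (\ref{equation:j-flip}) and the quintuple product identity (\ref{equation:quintuple}) with $q\to q^{14}$, $x=q^{1+2r}$:
\begin{equation*}
j(q^{17+6r};q^{42})+q^{1+2r}j(q^{31+6r};q^{42})=\frac{j(-q^{1+2r};q^{14})\,j(q^{16+4r};q^{28})}{J_{28}}.
\end{equation*}
\end{itemize}
For generic $z$, the $z$-independence you would be trying to establish by restoring $z$ is exactly the content of the polar-finite decomposition, so there is nothing left to match.

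On the polar side, at $z=-q$ the Appell pairs become $m(-q^{m},q^{2};q^{6})+m(-q^{m},q^{4};q^{6})$ for $m=1,2$. By the two-term forms in (\ref{equation:3rd-omega(q)}) and (\ref{equation:3rd-f(q)}), these are exactly $q\,\omega_3(-q)$ and $\tfrac12 f_3(q^2)$. So your third step (changing-$z$ and splitting) is unnecessary, and no residual theta terms arise.

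The coefficients $j(-q^{7m+6r+3};q^{42})-q^{m(6-2r)}j(-q^{-7m+6r+3};q^{42})$ collapse by (\ref{equation:j-elliptic}), (\ref{equation:j-flip}) and the quintuple product again. For $m=1$ this gives $j(q^{6-2r};q^{14})j(q^{26-4r};q^{28})/J_{28}$, and for $m=2$ it gives $j(q^{1+2r};q^{14})j(q^{16+4r};q^{28})/J_{28}$. You assume this shape but never derive it. The only theta identity needed anywhere is the quintuple product.

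Two smaller points.
\begin{itemize}
\item Your Appell forms are misquoted. There is no $1/J_{1}$ in front of the Appell function for $f_3$; the correct form is $f_3(q)=4m(-q,q;q^3)+J_{3,6}^2/J_1$. The one-term form of $\omega_3$ has $-2q^{-1}m(q,q^{2};q^{6})$, not $-q^{-1}m(q,q^{2};q^{6})$.
\item The Hecke-sum route is not a practical substitute. The $g_{a,b,c}$-part of the expansion of each $f_{1,7,42}$ already has $c=42$ Appell functions of modulus $q^{cD}=q^{294}$. Reassembling them leaves exactly the kind of large theta remainder your proposal never writes down, which is what the polar-finite decomposition is designed to avoid.
\end{itemize}
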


We recall the floor function $\lfloor \cdot \rfloor$.  For the even-spin, $2/3$-level string functions, they found
\begin{theorem}\label{theorem:newMockThetaIdentitiespP38m0ell2r}
\cite[Theorem 2.7]{BoMo2025}
 For $(p,p^{\prime})=(3,8)$, $r\in\{0,1,2,3\}$, we have
\begin{align*}
(q)_{\infty}^3&\mathcal{C}_{0,2r}^{2/3}(q) 
=(-1)^{\lfloor (r+1)/2\rfloor}\cdot
\frac{q^{-r}}{2}\frac{J_{1}^2J_{2}}{J_{4}^2J_{8}}
j(-q^{7-2r};q^{16})j(q^{1+2r};q^{8})\\
&\qquad 
- q^{3-2r}    
\frac{j(q^{7-2r};q^{16})j(q^{30-4r};q^{32})}{J_{32}}
\omega_{3}(-q^{2}) 
 + q^{-r} 
\frac{j(q^{1+2r};q^{16})j(q^{18+4r};q^{32})}{J_{32}} 
 \frac{1}{2}f_{3}(q^4).
 \end{align*}
\end{theorem}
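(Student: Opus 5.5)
To prove this identity, I would follow the route of \cite{BoMo2025}: first obtain a polar-finite decomposition of the character, then read off the string function, and finally rewrite the Appell functions in terms of mock theta functions.

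\textbf{Starting point.} For $(p,p^{\prime})=(3,8)$ the level is $N=2/3$. The first step is to use the Hecke-type double-sum form \eqref{equation:modStringFnHeckeForm}, with $m=0$, $\ell=2r$, $p=3$, $p^{\prime}=8$. This writes $(q)_\infty^3\mathcal{C}_{0,2r}^{2/3}(q)$ as a difference of two sums of the form $f_{1,8,48}(x,y;q)$, with $x=q^{1+r}$, $y=-q^{3(9+2r)}$ and $x=q^{-r}$, $y=-q^{3(7-2r)}$ respectively.

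\textbf{Expansion into Appell functions.} Next I would invoke the general expansion of $f_{a,b,c}$ into Appell functions $m(x,q,z)$ from \cite[Section 6]{HM}. Since $b^2-ac=64-48=16>0$, the double sum breaks into a theta-quotient term plus a finite sum of $m(\cdot,q^{\cdot},\cdot)$ terms. Equivalently, one can apply the even-spin polar-finite decomposition \cite[Theorem 2.3]{BoMo2025} of $\chi_{2r}^{2/3}$ and extract the coefficient of $z^0$. After this, the Appell functions should reduce to a small family of two types:
\begin{itemize}
\item $m(\,\cdot\,,q^{32},\cdot)$ terms, which produce $j(\cdot;q^{16})j(\cdot;q^{32})/J_{32}$ prefactors;
\item terms with period $q^4$ or $q^8$ in the $m$-function.
\end{itemize}
Using the quasi-periodicity of the string functions, together with the symmetries $r\leftrightarrow 3-r$, I would collapse the sum to one Appell function of each type.

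\textbf{Matching with mock theta functions.} The third step uses the known Appell function forms of the third-order mock theta functions from \cite[Section 5]{HM}. For example, $f_3(q)=2m(-q,q^3,q)\cdot(\text{const})$-type expressions hold, and
\[
\omega_3(q)=-q^{-1}m(q,q^6,q^2)+\cdots .
\]
After replacing $q\to q^4$ in the first and $q\to -q^2$ in the second, these match the two Appell functions appearing in the decomposition. Changing the $z$-parameter of each $m$-function with the standard identity $m(x,q,z_1)-m(x,q,z_0)=\text{theta quotient}$ aligns the Appell functions exactly with $\frac12 f_3(q^4)$ and $\omega_3(-q^2)$. This step also produces the explicit coefficients $q^{3-2r}j(q^{7-2r};q^{16})j(q^{30-4r};q^{32})/J_{32}$ and $q^{-r}j(q^{1+2r};q^{16})j(q^{18+4r};q^{32})/J_{32}$.

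\textbf{Main obstacle.} The hardest part will be the leftover theta function. Collecting all the theta quotients from the expansion and from the $z$-shifts gives a sum of several products of $j$-functions, and this sum must be shown to equal
\[
(-1)^{\lfloor (r+1)/2\rfloor}\frac{q^{-r}}{2}\frac{J_1^2J_2}{J_4^2J_8}\,j(-q^{7-2r};q^{16})\,j(q^{1+2r};q^{8}).
\]
I would handle this as follows:
\begin{itemize}
\item treat $r\in\{0,1,2,3\}$ separately, or in pairs using the $r\to 3-r$ symmetry;
\item reduce everything to a common modulus, say $q^{32}$ or $q^{96}$;
\item apply the quintuple product identity and Weierstrass three-term relations;
\item as a last resort, verify the resulting modular-form identity by checking enough $q$-expansion coefficients, which suffices by the valence (Sturm) bound.
\end{itemize}
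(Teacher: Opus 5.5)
There is a genuine gap. Your plan never isolates $\mathcal{C}_{0,2r}^{(3,8)}$, and it has no source for the closed-form theta quotient.

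\textbf{The polar-finite route is circular.} In Theorem~\ref{theorem:generalPolarFiniteEvenSpin} with $(p,j)=(3,2)$, the finite part is $C_{0,2r}\,j(-z^2q^6;q^{12})+z^{-1}q^{3/2}C_{2,2r}\,j(-z^2;q^{12})$. Its $z^0$-coefficient is already exactly $C_{0,2r}$. Hence in the annulus where \eqref{equation:fourcoefexp} holds, the polar part has vanishing $z^0$-coefficient, and extracting $z^0$ only returns $C_{0,2r}=C_{0,2r}$.

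What is missing is the idea behind \cite{BoMo2025}, which this paper mirrors for odd spin in Section~\ref{section:mockTheta23-level}. You evaluate the decomposition at a point where the companion coefficient vanishes, for instance $z=i$. There $j(-z^2;q^{12})=0$, while $j(-z^2q^6;q^{12})=J_{6,12}$.
\begin{itemize}
\item The $s=1$ polar terms become limits $j(-z^2;q^{12})\,m(\,\cdot\,,-z^{\pm 2};q^{12})$. By \eqref{equation:jacobiThetaReciprocal} these cancel for each $m$ (\cite[Lemmas 12.2, 12.3]{BoMo2025}). Compare Lemma~\ref{lemma:polarFinite23m1AppellVanish}, where in odd spin they instead leave a theta quotient.
\item The left side $\chi_{2r}^{(3,8)}(i;q)$ is computed in closed form from Proposition~\ref{proposition:WeylKac}.
\item The surviving $s=0$ Appell functions are $m(-q^{2},q^{6};q^{12})$ and $m(-q^{4},q^{6};q^{12})$. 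Theorem~\ref{theorem:changing-z-theorem} moves them to $m(-q^{2},q^{4};q^{12})$ and $m(-q^{4},q^{4};q^{12})$, which feed into \eqref{equation:3rd-omega(q)} and \eqref{equation:3rd-f(q)}.
\end{itemize}
The Weyl--Kac value plus the theta terms from these two rewritings is what must be shown to equal the stated theta quotient.

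\textbf{Your double-sum route does not fill this hole as written.}
\begin{itemize}
\item The coefficients $j(q^{7-2r};q^{16})j(q^{30-4r};q^{32})/J_{32}$ and $j(q^{1+2r};q^{16})j(q^{18+4r};q^{32})/J_{32}$ do not come from changing $z$ or from an ``$m(\cdot,q^{32},\cdot)$ type.'' They come from the quintuple product identity \eqref{equation:quintuple} applied to the $q^{48}$ theta coefficients. For example, $j(-q^{11+6r};q^{48})-q^{7-2r}j(-q^{6r-5};q^{48})=j(q^{7-2r};q^{16})j(q^{30-4r};q^{32})/J_{32}$.
\item The Appell functions that must appear have base $q^{12}$, not $q^4$, $q^8$ or $q^{32}$.
\item For $f_{1,8,48}$, the Hickerson--Mortenson-type expansion (the $g_{a,b,c}$ form) gives one term $j(x;q)\,m(\,\cdot\,,\,\cdot\,;q^{16})$. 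This vanishes at your $x=q^{1+r}$ and $x=q^{-r}$. The remainder is $48$ terms with Appell functions of base $q^{c(b^2-ac)}=q^{768}$.
\item You give no mechanism for regrouping these into the base-$q^{12}$ functions behind $\omega_3(-q^2)$ and $f_3(q^4)$. Since $768=12\cdot 8^2$, one would have to run \cite[Theorem 3.5]{HM} backwards on groups of eight.
\end{itemize}
Your closing Sturm-bound check is legitimate for the final theta identity, but only after these steps have produced a concrete identity to check.
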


\begin{theorem}\label{theorem:newMockThetaIdentitiespP38m2ell2r} 
\cite[Theorem 2.8]{BoMo2025}
For $(p,p^{\prime})=(3,8)$, $r\in\{0,1,2,3\}$, we have
\begin{align*}
(q)_{\infty}^3\mathcal{C}_{2,2r}^{2/3}(q) 
&=(-1)^{\lfloor (r+1)/2\rfloor}\cdot 
\frac{q^{3-2r}}{2}\frac{J_{1}^2J_{2}}{J_{4}^2J_{32}}j(q^{2+4r};q^{32})j(q^{7-2r};q^{16})\\
&\qquad 
- q^{3-2r}   
\frac{j(q^{7-2r};q^{16})j(q^{30-4r};q^{32})}{J_{32}}
\left (1- \frac{1}{2}f_{3}(q^4) \right )  \\
&\qquad + q^{1-r} 
\frac{j(q^{1+2r};q^{16})j(q^{18+4r};q^{32})}{J_{32}} 
 \left (1-q^2\omega_{3}(-q^{2}) \right ).
\end{align*}
\end{theorem}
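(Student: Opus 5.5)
We derive Theorem \ref{theorem:newMockThetaIdentitiespP38m2ell2r} from the even-spin polar-finite decomposition of the $(p,p')=(3,8)$ character \cite[Theorem 2.3]{BoMo2025}, in the same way as its companion Theorem \ref{theorem:newMockThetaIdentitiespP38m0ell2r}. The level is $N=8/3-2=2/3$. For even spin, the quasi-periodicity of \cite[Theorem 2.1]{BoMo2025} relates $\mathcal{C}^{2/3}_{m,2r}$ to $\mathcal{C}^{2/3}_{m+2p',2r}$ up to an explicit theta-quotient error term. The finite part of the character is therefore spanned by $\mathcal{C}^{2/3}_{0,2r}$ and $\mathcal{C}^{2/3}_{2,2r}$, each multiplied by a theta function in $z$. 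The polar part is a finite sum of Appell functions $m(x,q,z)$ with theta-function coefficients, read off from the poles of the denominator $\sum_{\sigma}\sigma\Theta_{\sigma,2}(z;q)$ in \eqref{equation:WK-formula}.

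The first step is to extract $\mathcal{C}^{2/3}_{2,2r}$ from this decomposition. We compare the coefficient of $z^{-1}$, corresponding to $m=2$ in \eqref{equation:fourcoefexp}, on both sides. Equivalently, we start from the Hecke form \eqref{equation:modStringFnHeckeForm} with $f_{1,8,48}$ and $m=2$. Using the double-sum to Appell-function expansion of \cite[Section 6]{HM}, we reduce $(q)_\infty^3\mathcal{C}^{2/3}_{2,2r}$ to a theta-function term plus a sum of Appell functions in base $q^{32}$. The coefficients of these Appell functions should be exactly $j(q^{7-2r};q^{16})j(q^{30-4r};q^{32})/J_{32}$ and $j(q^{1+2r};q^{16})j(q^{18+4r};q^{32})/J_{32}$, as in Theorem \ref{theorem:newMockThetaIdentitiespP38m0ell2r}. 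This matches because changing $m$ from $0$ to $2$ only shifts the $x$-argument by $q$. That shift rescales the Appell arguments by a power of $q$ and produces the prefactors $q^{3-2r}$ and $q^{1-r}$.

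The second step is to identify these Appell functions with the third-order mock theta functions. By \cite[Section 5]{HM}, $f_3(q^4)$ and $\omega_3(-q^2)$ are themselves Appell functions in base $q^{8}$ (or $q^{32}$ after dissection), each plus a theta quotient. The shifted Appell functions obtained for $m=2$ are related to those for $m=0$ by the elementary functional equation $m(qx,q,z)=1-x\,m(x,q,z)$ (or its analogue with general base). This is where the constants enter: $1-\tfrac12 f_3(q^4)$ replaces $-\omega_3$-type terms and $1-q^2\omega_3(-q^2)$ replaces $\tfrac12 f_3$-type terms, so the two mock functions trade places relative to Theorem \ref{theorem:newMockThetaIdentitiespP38m0ell2r}. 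We then apply the change-of-$z$ formula for Appell functions \cite[Theorem 3.3]{HM}. This moves every Appell function to the normalized argument used in the definitions of $f_3$ and $\omega_3$, at the cost of explicit theta quotients.

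The final step, which we expect to be the main obstacle, is to show that all the theta-quotient residue collapses to the single product
$(-1)^{\lfloor (r+1)/2\rfloor}\tfrac{q^{3-2r}}{2}J_1^2J_2J_4^{-2}J_{32}^{-1}j(q^{2+4r};q^{32})j(q^{7-2r};q^{16})$.
This residue collects the theta terms from the Hecke-to-Appell step, the change-of-$z$ corrections, and the theta parts of the mock theta representations. To handle it, we first use the symmetry $r\mapsto 3-r$, together with $j(x;q)=j(q/x;q)$, to reduce to the cases $r\in\{0,1\}$. For each remaining case we view the difference as a function of an auxiliary variable and check that it is a theta function with matching quasi-periodicity and matching zeros. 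This reduces the identity to the verification of finitely many $q$-expansion coefficients up to the Sturm-type bound. That reduction is where the sign $(-1)^{\lfloor (r+1)/2\rfloor}$ has to be tracked carefully.
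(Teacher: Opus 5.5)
\textbf{Verdict: there is a genuine gap.} The paper quotes this result from \cite{BoMo2025} without proof, so there is no proof here to compare against. The route it points to, and carries out for odd spin in Section~\ref{section:mockTheta23-level}, starts where you start: Theorem~\ref{theorem:generalPolarFiniteEvenSpin} with $(p,j)=(3,2)$. Your extraction step, however, does not work. The finite part there is $C^{(3,8)}_{0,2r}(q)\,j(-z^2q^6;q^{12})+z^{-1}q^{3/2}C^{(3,8)}_{2,2r}(q)\,j(-z^2;q^{12})$. The first theta function contains only even powers of $z$. The $z^{-1}$-coefficient of the second term is $q^{3/2}C^{(3,8)}_{2,2r}(q)$, which by \eqref{equation:fourcoefexp} is also, by definition, the $z^{-1}$-coefficient of $\chi^{(3,8)}_{2r}(z;q)$. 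So comparing coefficients of $z^{-1}$ only tells you that the polar part has no $z^{-1}$-term; it gives no formula for $\mathcal{C}^{2/3}_{2,2r}$.

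What is missing is a way to eliminate both $C^{(3,8)}_{0,2r}$ and the character. Put $z$ at a zero of $j(-z^2q^6;q^{12})$, e.g.\ $z^2=-q^{\pm 6}$. Show that $j(-z^2q^6;q^{12})$ times the $s=0$ Appell combination tends to $0$ there; in even spin this limit vanishes (cf.\ \cite[Lemmas 12.2, 12.3]{BoMo2025}), whereas in odd spin it is the theta quotient of Lemma~\ref{lemma:polarFinite23m1AppellVanish}. Then evaluate $\chi^{(3,8)}_{2r}$ at that point by Proposition~\ref{proposition:WeylKac}. That value, together with the corrections from moving the surviving $m(\,\cdot\,,q^{6};q^{12})$ to the Appell forms of $f_3(q^4)$ and $\omega_3(-q^2)$ (Theorem~\ref{theorem:changing-z-theorem}), is what has to produce the term $(-1)^{\lfloor (r+1)/2\rfloor}\frac{q^{3-2r}}{2}\frac{J_1^2J_2}{J_4^2J_{32}}j(q^{2+4r};q^{32})j(q^{7-2r};q^{16})$. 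Nothing in your sketch plays this role, so the ``residue'' you plan to simplify has no source.

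Several supporting claims are also wrong.
\begin{itemize}
\item Quasi-periodicity relates $C_{4t+2s,2r}$ to $C_{2s,2r}$, so the period in $m$ is $2j=4$, not $2p'=16$. With $16$ the finite part would involve eight string functions, contradicting your next sentence.
\item The Appell functions have base $q^{2pj}=q^{12}$, and so do $f_3(q^4)=4m(-q^4,q^4;q^{12})+J_{12,24}^2/J_4$ and $\omega_3(-q^2)=2q^{-2}m(-q^2,q^4;q^{12})+(\text{theta quotient})$, by \eqref{equation:3rd-f(q)} and \eqref{equation:3rd-omega(q)}. The moduli $q^{16},q^{32}$ occur only in the theta coefficients, which come from the quintuple product identity applied to base-$q^{48}$ theta functions.
\item The Hecke-form ``equivalent'' route is asserted, not computed, and nothing supports its claimed output.
\item There is no symmetry $r\mapsto 3-r$ of $\mathcal{C}^{2/3}_{2,2r}$. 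That substitution sends $j(q^{2+4r};q^{32})j(q^{7-2r};q^{16})$ to $j(q^{18+4r};q^{32})j(q^{1+2r};q^{16})$, so the cases $r=2,3$ are not reduced to $r=0,1$.
\end{itemize}

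What spins $2r$ and $6-2r$ do share is a cross-spin relation with $\mathcal{C}^{2/3}_{0,6-2r}$, and that gives the shortest proof once Theorem~\ref{theorem:newMockThetaIdentitiespP38m0ell2r} is known. The mock parts of $(q)_\infty^3\mathcal{C}^{2/3}_{2,2r}$ and $q^{6-3r}(q)_\infty^3\mathcal{C}^{2/3}_{0,6-2r}$ coincide. So do their isolated theta quotients: use \eqref{equation:1.10}, $j(x;q)j(-x;q)=j(x^2;q^2)J_1^2/J_2$, and $(-1)^{\lfloor (4-r)/2\rfloor}=(-1)^{\lfloor (r+1)/2\rfloor}$ for $0\le r\le 3$. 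Hence the statement is equivalent to $(q)_\infty^3\mathcal{C}^{2/3}_{2,2r}-q^{6-3r}(q)_\infty^3\mathcal{C}^{2/3}_{0,6-2r}=-q^{3-2r}j(q^{7-2r};q^{16})j(q^{30-4r};q^{32})/J_{32}+q^{1-r}j(q^{1+2r};q^{16})j(q^{18+4r};q^{32})/J_{32}$. This is exactly what Theorem~\ref{theorem:crossSpin-j-Odd} gives formally for $(p,j)=(3,2)$, $i=3/2$ and $r+\tfrac12$ in place of $r$, after the quintuple product identity, as in the proof of Corollary~\ref{corollary:newMockThetaIdentitiespP38m3ell2rPlus1}. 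That theorem is stated for integer $i,r$, so you would still have to verify that its Hecke-form proof goes through for half-integral $i,r$ with $i\pm r\in\Z$.
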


Borozenets and Mortenson \cite{BoMo2025} and then Konenkov and Mortenson \cite{KM25} also found mock theta conjecture-like identities for $1/5$-level string functions \cite[Theorem $2.9$]{BoMo2025} and $2/5$-level string functions \cite[Theorem $2.5$, $2.6$]{KM25} but in terms of Ramanujan's four tenth-order mock theta functions, where the tenth-order functions were first found in the lost notebook \cite{RLN}.  We recall the four tenth-order mock theta functions \cite{C1, C2, C3, RLN}. 
\begin{align*}
{\phi}_{10}(q)&:=\sum_{n\ge 0}\frac{q^{\binom{n+1}{2}}}{(q;q^2)_{n+1}}, \ \ {\psi}_{10}(q):=\sum_{n\ge 0}\frac{q^{\binom{n+2}{2}}}{(q;q^2)_{n+1}}, \\
& \ \ \ \ \ {X}_{10}(q):=\sum_{n\ge 0}\frac{(-1)^nq^{n^2}}{(-q;q)_{2n}}, \ \  {\chi}_{10}(q):=\sum_{n\ge 0}\frac{(-1)^nq^{(n+1)^2}}{(-q;q)_{2n+1}}.
\end{align*}

For even-spin, $1/5$-level string functions, they found the result:
\begin{theorem} \label{theorem:newMockThetaIdentitiespP511m0ell2r} \cite[Theorem $2.9$]{BoMo2025}
 For $(p,p^{\prime})=(5,11)$, $(m,\ell)=(0,2r)$, $r\in\{0,1,2,3,4\}$, we have
\begin{align*}
(q)_{\infty}^3\mathcal{C}_{0,2r}^{1/5}(q)
&=-q^{r^2-3r+1}J_{1,2}j(q^{4+8r};q^{22})
 \\
&\qquad -
q^{6-4r} \times \left ( j(-q^{16+10r};q^{110}) 
-q^{4+8r}j(-q^{6-10r};q^{110})\right )
 \times q\phi_{10}(-q) \\
& \qquad +q^{3-3r}\times \left ( j(-q^{27+10r};q^{110}) 
-q^{3+6r}j(-q^{17-10r};q^{110})\right )
\times \chi_{10}(q^2) \\
&\qquad 
-q^{1-2r}  \times \left ( j(-q^{38+10r};q^{110}) 
-q^{2+4r}j(-q^{28-10r};q^{110})\right )
\times \left ( -\psi_{10}(-q)\right ) \\
&\qquad  
+q^{-r} \times \left ( j(-q^{49+10r};q^{110}) 
-q^{1+2r}j(-q^{39-10r};q^{110})\right )\times X_{10}(q^2).
\end{align*}
\end{theorem}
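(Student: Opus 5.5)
This identity is quoted from \cite[Theorem 2.9]{BoMo2025}, so the plan is to follow that paper's route: specialize the even-spin polar-finite decomposition \cite[Theorem 2.3]{BoMo2025} to $(p,p^{\prime})=(5,11)$ and then rewrite the resulting Appell functions as tenth-order mock theta functions.

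\emph{Step 1 (set-up).} Here $N=1/5$, $\ell=2r$ with $0\le r\le 4$, and $m=0$. The Hecke-type form \eqref{equation:modStringFnHeckeForm} writes $(q)_\infty^3\mathcal{C}_{0,2r}^{1/5}(q)$ as a difference of two sums $f_{1,11,110}$. This already explains the moduli: the theta functions $j(-q^{\,\cdot};q^{110})$ come from $2pp^{\prime}=110$, and $j(q^{4+8r};q^{22})$ comes from $2p^{\prime}=22$.

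\emph{Step 2 (the two parts).} I would compare coefficients of $z^{0}$ in the polar-finite decomposition of $\chi_{2r}^{1/5}(z;q)$.
\begin{itemize}
\item The finite part contributes a short sum of string functions $\mathcal{C}_{m',2r}^{1/5}$ against theta functions. Quasi-periodicity \cite[Theorem 2.1]{BoMo2025} relates these to the $m=0$ function.
\item The polar part is a sum of theta quotients times Appell functions $m(x,q,z)$.
\end{itemize}
Equivalently, one can apply the Hickerson--Mortenson expansion of $f_{a,b,c}$ \cite{HM} directly to each double sum. With $(a,b,c)=(1,11,110)$ and discriminant $b^2-ac=11$, this gives a theta-function term plus a sum of Appell functions $m(\cdot,q^{110},\cdot)$ with theta coefficients.

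\emph{Step 3 (regrouping into building blocks).} I would group the Appell functions into four families, indexed by the shifts that produce the paired coefficients
\[
j(-q^{16+10r};q^{110})-q^{4+8r}j(-q^{6-10r};q^{110})
\]
and its three analogues. Each family should collapse to one Appell function that is independent of $r$.

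\emph{Step 4 (identification).} The four tenth-order functions have known Appell-function forms \cite[Section 5]{HM}:
\[
q\phi_{10}(-q),\qquad \chi_{10}(q^2),\qquad -\psi_{10}(-q),\qquad X_{10}(q^2).
\]
In each, the variable is $q^{10}$-type with elliptic shifts matching Step 3. I would use the Appell-function properties (change of $z$, $m(x,q,z)$ symmetries, and the $n$-fold expansion formula) to match each family to one tenth-order function, absorbing the discrepancies into theta-quotient terms.

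\emph{Step 5 (collecting theta terms).} All remaining theta products must combine into the single term $-q^{r^2-3r+1}J_{1,2}j(q^{4+8r};q^{22})$. This is a theta-function identity, verified by checking that both sides satisfy the same quasi-periodicity and have matching poles and zeros. Equivalently, one can use the standard result that a theta identity with bounded dimension holds once enough $q$-expansion coefficients agree.

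\emph{Main obstacle.} The hardest parts are Steps 3 and 5. Choosing the Appell normalizations so that exactly four mock pieces survive, each with the stated coefficient and power $q^{6-4r},q^{3-3r},q^{1-2r},q^{-r}$, needs careful bookkeeping of the $r$-dependence. The leftover theta-function identity is also long. I would first work out the case $r=0$ fully, then obtain general $r$ by tracking how the index shifts with $\ell$. Finally, I would confirm the result numerically against $q$-expansions.
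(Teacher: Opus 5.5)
The paper only quotes this identity from \cite{BoMo2025}. The route used there, and imitated here in Sections~\ref{section:mockTheta12-level}, \ref{section:mockTheta13-level} and \ref{section:mockTheta23-level}, relies on a step your Step~2 is missing, so the proposal has a genuine gap.

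\textbf{Comparing coefficients of $z^{0}$ is vacuous.} The decomposition of Theorem~\ref{theorem:generalPolarFiniteEvenSpin} was built from \eqref{equation:fourcoefexp} by quasi-periodicity. Its polar part is assembled from the corrections $C_{2t,2r}-C_{0,2r}$, which vanish at $t=0$. So the constant term in $z$ is $C_{0,2r}^{(5,11)}(q)$ on both sides, and you learn nothing. Extracting Fourier coefficients in $z$ also never turns the Appell functions into $q$-series at fixed arguments. In addition, $j=p'-2p=1$ here, so the finite part is the single term $C_{0,2r}^{(5,11)}(q)\,j(-zq^{5};q^{10})$; there is nothing for quasi-periodicity to reduce.

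\textbf{What is needed.} Solve the decomposition for the string function and insert the Weyl--Kac product of Proposition~\ref{proposition:WeylKac}. For all $z$ this gives
\begin{align*}
(q)_{\infty}^3\mathcal{C}_{0,2r}^{(5,11)}(q)
&=(q)_{\infty}^3\,z^{-r}\,\frac{j(-q^{60+10r}z^{-11};q^{110})-z^{2r+1}j(-q^{50-10r}z^{-11};q^{110})}{j(z;q)\,j(-q^{5}z;q^{10})}\\
&\quad+\sum_{m=1}^{4}(-1)^{m}q^{10-5r+\binom{m+1}{2}+m(r-5)}\big(j(-q^{11m+10r+5};q^{110})-q^{11m-m(2r+1)}j(-q^{-11m+10r+5};q^{110})\big)\\
&\qquad\times\big(m(-q^{m},-q^{5}z^{-1};q^{10})+m(-q^{m},-q^{5}z;q^{10})\big).
\end{align*}
This settles your Step~3 automatically. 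The prefactors are $-q^{6-4r},q^{3-3r},-q^{1-2r},q^{-r}$, and \eqref{equation:j-flip}, \eqref{equation:j-elliptic} turn the four theta pairs into exactly those in the statement.

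\textbf{The real content is the choice of $z$.} For instance, at $z=-q^{-3}$ you get $-q^{5}z=q^{2}$ and $-q^{5}z^{-1}=q^{8}$. By the Appell forms in \cite[Section 5]{HM}, the $m=1$ and $m=4$ pairs are then literally $q\phi_{10}(-q)$ and $X_{10}(q^{2})$. The $m=2,3$ pairs must be moved to $z$-arguments $q^{4},q^{6}$ with Theorem~\ref{theorem:changing-z-theorem}, after which they become $\chi_{10}(q^{2})$ and $-\psi_{10}(-q)$. The resulting $\Psi$-quotients, together with the Weyl--Kac quotient evaluated at that $z$, are the theta products your Step~5 needs. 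Their sum is what must be shown equal to $-q^{r^2-3r+1}J_{1,2}j(q^{4+8r};q^{22})$; your proposed modular/coefficient check is fine for that. Without a specialization of $z$, Step~4 has nothing concrete to match and Step~5 has no identified source for its theta terms.

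\textbf{Your fallback is not equivalent.} Expanding each $f_{1,11,110}$ directly is a different route that bypasses $z$ entirely. A Hickerson--Mortenson-type expansion of $f_{a,b,c}$ produces Appell functions of moduli $q^{a(b^2-ac)}=q^{11}$ and $q^{c(b^2-ac)}=q^{1210}$. The $c$-part alone has $c=110$ terms of modulus $q^{1210}$ per double sum. It does not give $m(\cdot;q^{110})$ as you state. Reducing those terms to the four $r$-independent pairs of modulus $q^{10}$ would mean collapsing groups of them, for example by running the $n=11$ case of \cite[Theorem 3.5]{HM} backwards, and controlling the theta terms this creates. None of that is in the proposal. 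As written, neither branch of Step~2 produces the specific Appell functions and theta quotients that Steps~3--5 are meant to manipulate; the $z$-specialization above is the missing idea.
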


For the $2/5$-level string functions we have two families of results.
\begin{theorem}\label{theorem:newMockThetaIdentitiespP512m0ell2r}  \cite[Theorem $2.5$]{KM25}
For $(p,p^{\prime})=(5,12)$, $(m,\ell)=(0,2r)$, $r\in\{0,1,2,3,4,5\}$, we have
\begin{align*}
(q)_{\infty}^3\mathcal{C}_{0,2r}^{2/5}(q)
&=-q^{\frac{1}{2}r^2-\frac{5}{2}r+1}j(q^{2+4r};q^{12})j(-q^{1+2r};q^{8})\frac{J_{1}}{J_{4}}\\
&\qquad   - q^{6-4r}
    \times\Big (  j(-q^{17+10r};q^{120} )
 -  q^{4+8r}j(-q^{7-10r};q^{120})\Big )
     \times q^2\phi_{10}(-q^2)\\
&\qquad     +q^{3-3r}
    \times\Big (  j(-q^{29+10r};q^{120} )
 -  q^{3+6r}j(-q^{19-10r};q^{120})\Big )
     \times \chi_{10}(q^4)\\
&\qquad   -q^{1-2r}
  \times\Big (  j(-q^{41+10r};q^{120} )
 -  q^{2+4r}j(-q^{31-10r};q^{120})\Big )
    \times \left ( -\psi_{10}(-q^2)\right)\\
&\qquad     +q^{-r}
   \times\Big (  j(-q^{53+10r};q^{120} )
 -  q^{1+2r}j(-q^{43-10r};q^{120})\Big )
    \times X_{10}(q^4). 
\end{align*}
\end{theorem}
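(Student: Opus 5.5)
The statement is cited from \cite[Theorem 2.5]{KM25}. I would prove it by the same route used for the even-spin identities of \cite{BoMo2025}.

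\emph{Step 1: the Appell-function form of the string function.} Starting from the Hecke-type double-sum representation \eqref{equation:modStringFnHeckeForm} with $(p,p')=(5,12)$ and $(m,\ell)=(0,2r)$, I would rewrite both $f_{1,12,120}$ double-sums in terms of Appell functions $m(x,q,z)$. For this I would use the general expansion theorem for $f_{a,b,c}$ from \cite[Section 6]{HM}. Since $b^2-ac=144-120=24>0$, this produces a theta-quotient remainder plus a finite sum of Appell functions. In the polar-finite language, the Appell functions have base $q^{120}$ up to rescaling, with theta coefficients $j(-q^{\cdot};q^{120})$. Equivalently, I would invoke the even-spin polar-finite decomposition \cite[Theorem 2.3]{BoMo2025} for the character $\chi^{2/5}_{2r}$ and extract the coefficient of $z^0$. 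Either way, the result is an expression
\[
(q)_\infty^3\mathcal{C}^{2/5}_{0,2r}(q)=\Theta_r(q)+\sum_{k} T_{k,r}(q)\, m(x_k,q^{A},z_k),
\]
where the $T_{k,r}$ are theta functions and $\Theta_r$ is a theta quotient.

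\emph{Step 2: pairing Appell terms.} I would group the Appell functions into four pairs using the symmetry $r\mapsto$ (reflection) together with the quasi-periodicity relations. The aim is that each pair appears with coefficient exactly
\[
j(-q^{a+10r};q^{120})-q^{b}j(-q^{a'-10r};q^{120}),\qquad a\in\{17,29,41,53\}.
\]
This is also where the dependence on $r$ is isolated. The Appell-function arguments left over should then be independent of $r$.

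\emph{Step 3: identifying the tenth-order functions.} I would use the known Appell-function expressions for the tenth-order mock theta functions from \cite[Section 5]{HM}. For example, $\phi_{10}(q)$ and $\psi_{10}(q)$ are Appell functions $m(\cdot,q^{10},\cdot)$ plus theta quotients, and $X_{10},\chi_{10}$ are similar with base $q^{5}$ or $q^{20}$. I would substitute $q\to -q^2$ or $q\to q^4$ so that these match the four surviving Appell functions. The $z$-parameters must first be harmonized using the change-of-$z$ formula for $m(x,q,z)$. This introduces further theta quotients.

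\emph{Step 4: collapsing the theta part.} All leftover theta pieces must be shown to combine to $-q^{\frac12r^2-\frac52r+1}j(q^{2+4r};q^{12})j(-q^{1+2r};q^{8})J_1/J_4$. I expect this to be the main obstacle. I would handle it for each of the six values of $r$ (or uniformly in $r$ via quasi-periodicity $j(q^nx;q)=(-1)^nq^{-\binom n2}x^{-n}j(x;q)$) by proving a theta-function identity. The first attempt would be to multiply through to a common denominator and verify the resulting identity of holomorphic theta functions by checking enough $q$-coefficients, justified by a Sturm-type bound on the weight and level. If that is too costly, I would fall back on repeated use of the Weierstrass three-term relation and the quintuple product identity to simplify the sum of theta quotients directly.
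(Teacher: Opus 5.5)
The gap is in Step~1, which is where the real content of the proof lies.

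\textbf{Your ``equivalent'' route is vacuous.} Taking Theorem~\ref{theorem:generalPolarFiniteEvenSpin} with $(p,j)=(5,2)$ and extracting the coefficient of $z^0$ tells you nothing. The finite part is $C_{0,2r}^{(5,12)}(q)\,j(-z^2q^{10};q^{20})+z^{-1}q^{5/2}C_{2,2r}^{(5,12)}(q)\,j(-z^2;q^{20})$. Its $z^0$-coefficient is already $C_{0,2r}^{(5,12)}(q)$, because the second term contains only odd powers of $z$. Accordingly the polar part has zero constant term: it is assembled from the differences $C_{4t+2s,2r}-C_{2s,2r}$, which vanish at $t=0$. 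So you only recover $C_{0,2r}=C_{0,2r}$.

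\textbf{Your primary route is not available as cited.} \cite[Section 6]{HM} supplies the functional equations for $f_{a,b,c}$ (the source of quasi-periodicity), not an Appell expansion of $f_{1,12,120}$. Moreover, a $g_{a,b,c}$-type expansion puts the Appell functions in bases $q^{a(b^2-ac)}$ and $q^{c(b^2-ac)}$. Here these are $q^{24}$, whose coefficient $j(x;q)$ vanishes since $x$ is an integral power of $q$, and $q^{2880}=q^{20\cdot 12^2}$. That gives $120$ Appell functions per double sum, plus a large theta remainder. Reassembling them into four Appell functions of base $q^{20}$ amounts to inverting the $n=12$ case of \cite[Theorem 3.5]{HM}. ``Reflection plus quasi-periodicity'' in Step~2 does not provide this, so Steps~3 and~4 have no concrete input.

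\textbf{The missing idea.} This is the route behind the cited result, and the one this paper follows for odd spin in Section~\ref{section:mockTheta23-level}. You should \emph{evaluate} the decomposition at a zero of the companion coefficient, namely $z=i$, where $j(-z^2;q^{20})=j(1;q^{20})=0$.
\begin{itemize}
\item The $C_{2,2r}$ term drops out, and so does its polar term. By (\ref{equation:mxqz-fnq-z}), (\ref{equation:mxqz-flip}) and (\ref{equation:mxqz-fnq-x}), its Appell bracket equals $q^{-m}(1-m(X,w;q^{20})-m(X,w^{-1};q^{20}))$ with $X=-q^{10-2m}$ and $w=-z^2\to 1$. The two poles cancel, so the product with $j(w;q^{20})$ tends to $0$ (cf.\ \cite[Lemmas 12.2, 12.3]{BoMo2025}). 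In odd spin this is exactly where Lemma~\ref{lemma:polarFinite23m1AppellVanish} leaves a theta quotient instead.
\item At $z=i$ both $s=0$ Appell functions become $m(-q^{2m},q^{10};q^{20})$. Normalize by (\ref{equation:mathCalCtoStringC}) and divide by $j(q^{10};q^{20})$. Then $(q)_\infty^3\mathcal{C}_{0,2r}^{(5,12)}(q)$ equals a theta quotient plus exactly the four terms of the statement, with $q^2\phi_{10}(-q^2)$, $\chi_{10}(q^4)$, $-\psi_{10}(-q^2)$, $X_{10}(q^4)$ replaced by $2m(-q^{2m},q^{10};q^{20})$ for $m=1,2,3,4$. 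The prefactors $-q^{6-4r},q^{3-3r},-q^{1-2r},q^{-r}$ and the theta coefficients come out directly, so your four ``pairs'' arise for free, one per $m$.
\item The leading theta quotient comes from the character value $\chi_{2r}^{(5,12)}(i;q)$. This is a single product by Proposition~\ref{proposition:WeylKac}, since its two numerator theta functions $j(-q^{65+10r};q^{120})$ and $j(-q^{55-10r};q^{120})$ coincide by (\ref{equation:j-flip}).
\end{itemize}

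From there your Steps~3 and~4 are the right ones. Theorem~\ref{theorem:changing-z-theorem} moves the tenth-order Appell forms to $z=q^{10}$. The resulting theta quotients, together with the Weyl--Kac value, must then be shown to collapse to $-q^{\frac12r^2-\frac52r+1}j(q^{2+4r};q^{12})j(-q^{1+2r};q^8)J_1/J_4$.
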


\begin{theorem}\label{theorem:newMockThetaIdentitiespP512m2ell2r}  \cite[Theorem $2.6$]{KM25}
For $(p,p^{\prime})=(5,12)$, $(m,\ell)=(2,2r)$, $r\in\{0,1,2,3,4,5\}$, we have
{\allowdisplaybreaks    \begin{align*}
(q)_{\infty}^3\mathcal{C}_{2,2r}^{2/5}(q)
&=q^{\frac{1}{2}r^2-\frac{3}{2}r+3}j(q^{2+4r};q^{12})j(-q^{5+2r};q^8)\frac{J_{1}}{J_{4}}\\
&\qquad  -q^{10-4r}     \times\left (  j(-q^{17+10r};q^{120} )
 -  q^{4+8r}j(-q^{7-10r};q^{120})\right )   \times
\left (1-X_{10}(q^4)\right )\\
&\qquad    +q^{6-3r} 
    \times\left (  j(-q^{29+10r};q^{120} )
 -  q^{3+6r}j(-q^{19-10r};q^{120})\right )
     \times
\left (1+\psi_{10}(-q^2)\right )\\
&\qquad     -q^{3-2r}    \times\left (  j(-q^{41+10r};q^{120} )
 -  q^{2+4r}j(-q^{31-10r};q^{120})\right )
   \times
\left (1-\chi_{10}(q^4)\right )\\
&\qquad   +q^{1-r} 
    \times\left (  j(-q^{53+10r};q^{120} )
 -  q^{1+2r}j(-q^{43-10r};q^{120})\right )
     \times
\left (1-q^2\phi_{10}(-q^2)\right ). 
 \end{align*}}%
\end{theorem}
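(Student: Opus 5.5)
The statement is \cite[Theorem $2.6$]{KM25}, which expresses the $2/5$-level, $m=2$, even-spin string functions in terms of the tenth-order mock theta functions. My plan is to derive it from the $m=0$ companion, Theorem \ref{theorem:newMockThetaIdentitiespP512m0ell2r}, through the quasi-periodicity of string functions \cite[Theorem 2.1]{BoMo2025}, rather than by going back to the polar-finite decomposition. The guiding observation is in the shape of the target. The theta coefficients $j(-q^{a+10r};q^{120})-q^{b}j(-q^{a'-10r};q^{120})$ are the same four as in the $m=0$ identity, and the mock theta functions appear cyclically permuted and reflected: $q^2\phi_{10}(-q^2)\to 1-X_{10}(q^4)$, $\chi_{10}(q^4)\to 1+\psi_{10}(-q^2)$, and so on. This is exactly the pattern one gets by shifting the summation index in the Appell-function building blocks. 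The same thing happens at level $2/3$, where Theorem \ref{theorem:newMockThetaIdentitiespP38m2ell2r} arises from Theorem \ref{theorem:newMockThetaIdentitiespP38m0ell2r} with $f_3\leftrightarrow\omega_3$ interchanged.

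The key steps, in order, are as follows.

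First, I write $\mathcal{C}^{2/5}_{2,2r}$ via the Hecke form \eqref{equation:modStringFnHeckeForm} with $(p,p')=(5,12)$. I then apply the building-block functional equation for $f_{1,12,120}$ \cite[Section 6]{HM}, which relates the $m=2$ double sums to the $m=0$ double sums with $x\mapsto qx$. The difference is a finite sum of theta-quotient terms, one for each residue class, that is, the quasi-periodicity relation $\mathcal{C}_{m+2,\ell}$ versus $\mathcal{C}_{m,\ell}$.

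Second, I express each $f_{1,12,120}$ through the $f_{a,b,c}$-to-Appell conversion of \cite{HM}. This splits it into a theta part plus a sum of $m(x,q^{120},z)$ terms whose $x$-arguments run over the residues tied to $\pm q^{17},q^{29},q^{41},q^{53}$. This matches the way the $m=0$ proof identifies each Appell term with one of $\phi_{10},\chi_{10},\psi_{10},X_{10}$ via the known Appell forms of the tenth-order functions \cite[Section 5]{HM}.

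Third, I use the elementary shifts $m(qx,q,z)=1-x\,m(x,q,z)$ and $m(x,q,z)=x^{-1}m(x^{-1},q,z^{-1})$ to rewrite each shifted Appell term as a constant minus a mock theta function. This produces the summands $1-X_{10}(q^4)$, $1+\psi_{10}(-q^2)$, $1-\chi_{10}(q^4)$ and $1-q^2\phi_{10}(-q^2)$, together with the power prefactors $q^{10-4r},q^{6-3r},q^{3-2r},q^{1-r}$.

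Finally, I collect the leftover pure-theta terms and show they equal
\[
q^{\frac12 r^2-\frac32 r+3}\,j(q^{2+4r};q^{12})\,j(-q^{5+2r};q^8)\,\frac{J_1}{J_4}.
\]
This last step is the main obstacle: the leftover is a sum of several theta products with different moduli, and it has to collapse to a single product. I would first reduce it to a statement about the polar part, comparing with the corresponding term $j(-q^{1+2r};q^8)$ of the $m=0$ identity, since $-q^{5+2r}=-q^{4}\cdot q^{1+2r}$ suggests a single quasi-periodic shift of $j(x;q^8)$. If the collapse does not come out directly, I would fall back on a finite check. I would verify that both sides transform identically under $r\mapsto$ its symmetry partner, check that they are holomorphic modular forms of the same weight on a common congruence subgroup, and then compare $q$-expansions up to the Sturm bound for each $r\in\{0,\dots,5\}$.
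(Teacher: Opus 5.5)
\textbf{The first step fails, and everything after it depends on it.} For $(p,p')=(5,12)$ you have $j=p'-2p=2$. The quasi-periodicity of \cite[Theorem 2.1]{BoMo2025} relates $\mathcal{C}_{2jt+2s,2r}$ to $\mathcal{C}_{2s,2r}$, so it moves $m$ by $4$, not by $2$. This is forced by the double sums. The functional equations of $f_{1,12,120}(x,y;q)$ \cite[Section 6]{HM} only give shifts $(x,y)\mapsto(q^{k+12l}x,q^{12k+120l}y)$. Keeping $y$ fixed forces $k=-10l$, so the smallest pure $x$-shift is $x\mapsto q^{2}x$. The shift $x\mapsto qx$ you invoke is not available, and at fixed $\ell$ the symmetries (shifts and reflection) only move $m$ within $\pm m+4\mathbb{Z}$. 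That is exactly why the finite part of Theorem \ref{theorem:generalPolarFiniteEvenSpin} at $j=2$ carries two independent string functions, $\mathcal{C}_{0,2r}$ and $\mathcal{C}_{2,2r}$.

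You can see the mismatch in the statements themselves. Put $A_r:=j(-q^{17+10r};q^{120})-q^{4+8r}j(-q^{7-10r};q^{120})$ and $D_r:=j(-q^{53+10r};q^{120})-q^{1+2r}j(-q^{43-10r};q^{120})$. At the same $r$, $X_{10}(q^4)$ carries $q^{-r}D_r$ in Theorem \ref{theorem:newMockThetaIdentitiespP512m0ell2r} but $q^{10-4r}A_r$ in the target. So subtracting a monomial multiple of $\mathcal{C}_{0,2r}$ does not cancel the mock parts term by term. The theta-only difference that your steps two to four rely on is not available.

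\textbf{What the pattern really is, and how to fix the argument.} The pattern you noticed is real, but it comes from the spin reflection $\ell\mapsto p'-2-\ell$, i.e.\ $r\mapsto 5-r$. By \eqref{equation:j-flip} and \eqref{equation:j-elliptic}, $A_{5-r}=D_r$ and $D_{5-r}=A_r$, and the $q^{29+10r}$ and $q^{41+10r}$ combinations swap the same way. With these, the mock part of $q^{15-5r}$ times Theorem \ref{theorem:newMockThetaIdentitiespP512m0ell2r} at $5-r$ matches the mock part of the target term by term. At level $2/3$ the same happens with $r\mapsto 3-r$ and factor $q^{6-3r}$.

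The right tool is therefore the cross-spin identity, Theorem \ref{theorem:crossSpin-j-Odd}, with the even parities $(m,\ell)=(2,2r)$ in place of $(2i-1,2r-1)$. The two double sums of $\mathcal{C}_{0,10-2r}$ are those of $\mathcal{C}_{2,2r}$, interchanged and shifted by the lattice vectors $p(1,p')$ and $p(1,p')-(p',2pp')$. Both carry the same monomial factor $(-1)^pq^{\binom{p}{2}+p(m-\ell)/2}$. Hence $(q)_\infty^3\mathcal{C}_{2,2r}-q^{15-5r}(q)_\infty^3\mathcal{C}_{0,10-2r}$ is an explicit finite sum of theta functions. Only a pure theta identity is left, and your Sturm-bound fallback is legitimate there. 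This is how the paper obtains Corollaries \ref{corollary:newMockThetaIdentitiespP38m3ell2rPlus1} and \ref{corollary:newMockThetaIdentitiespP512m3ell2rPlus1}.

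Alternatively, follow the proof of Theorem \ref{theorem:newMockThetaIdentitiespP38m1ell2rPlus1}:
\begin{itemize}
\item specialize Theorem \ref{theorem:generalPolarFiniteEvenSpin} to $(p,j)=(5,2)$ and let $z\to iq^{5}$, so that $j(-z^2q^{10};q^{20})\to 0$ kills the $\mathcal{C}_{0,2r}$ term;
\item evaluate the Weyl--Kac side via Proposition \ref{proposition:WeylKac};
\item compute the limits of the Appell terms multiplying that theta function;
\item rewrite the surviving Appell functions as tenth-order mock theta functions.
\end{itemize}
Note that the paper itself only quotes this theorem from \cite{KM25} and gives no proof.
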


 \smallskip
 The results of Borozenets and Mortenson were not limited to even-spin.  They also found a cross-spin identity which allows one to write odd-spin string functions in terms of even-spin string functions; however, there is a catch.  The theorem reads
\begin{theorem}\label{theorem:crossSpin-j-Odd}\cite[Theorem 2.2]{BoMo2025}  For $(p,p^{\prime})=(p,2p+j)$, we have the cross-spin identity
\begin{align*}
(q)_{\infty}^3&\mathcal{C}_{2i-1,2r-1}^{(p,2p+j)}(q)
-(-1)^{p+1}q^{p(i-r)+\binom{p}{2}}(q)_{\infty}^{3}\mathcal{C}_{2i-1-j,2p-2r+j-1}^{(p,2p+j)}(q)\\
&=(-1)^{p}q^{\binom{p}{2}+p(i+r)}
\sum_{m=1}^{p-1}(-1)^{m}q^{\binom{m+1}{2}-m(i+p+r)}\\
&\qquad \qquad \times
\left ( j(-q^{m(2p+j)-2pr};q^{2p(2p+j)})
-q^{2r(m-p)}j(-q^{m(2p+j)+2pr};q^{2p(2p+j)})\right ) .
\end{align*}
\end{theorem}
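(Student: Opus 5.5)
The plan is to work directly with the Hecke-type double-sum representation \eqref{equation:modStringFnHeckeForm} and to compare the two string functions on the left-hand side term by term. Write $p'=2p+j$. For $\mathcal{C}_{2i-1,2r-1}$ we have $m=2i-1$ and $\ell=2r-1$, so the two double sums are
\begin{equation*}
f_{1,p',2pp'}(q^{i+r},-q^{p(p'+2r)};q)
\quad\text{and}\quad
f_{1,p',2pp'}(q^{i-r},-q^{p(p'-2r)};q).
\end{equation*}
For the second string function, $m'=2i-1-j$ and $\ell'=2p-2r+j-1$, so $\ell'+1=p'-2r$. Its arguments are therefore
\begin{equation*}
q^{1+(m'+\ell')/2}=q^{i-r+p},\qquad -q^{p(p'+\ell'+1)}=-q^{p(2p'-2r)},
\end{equation*}
\begin{equation*}
q^{(m'-\ell')/2}=q^{i-j-p+r},\qquad -q^{p(p'-\ell'-1)}=-q^{2pr}.
\end{equation*}
So the first double sum of $\mathcal{C}_{m',\ell'}$ has $y$-argument $-q^{p(p'-2r)}\cdot q^{pp'}$. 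It is a shift by $(x,y)\mapsto(xq^{p},yq^{pp'})$ of the second double sum of $\mathcal{C}_{m,\ell}$. Similarly, its second double sum should be a shift by $(x,y)\mapsto(xq^{-p-j},yq^{-pp'})$, up to $p'$-multiples, of the first double sum of $\mathcal{C}_{m,\ell}$.

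The key tool is the functional equation for $f_{a,b,c}$ from \cite[Section 6]{HM}, in the building-block form used in \cite[Theorem 2.1]{BoMo2025} and \cite[Theorem 3.1]{KM26}. With $(a,b,c)=(1,p',2pp')$, one has the elementary shift $f_{a,b,c}(x,y)=-\frac{y}{x^{?}}\cdots$. More precisely, replacing $(r,s)\to(r+b,s-a)$ gives an exact invariance up to a monomial, namely $f(x,y)=\text{monomial}\cdot f(xq^{?},yq^{?})$. Shifting $r$ alone by $k$ produces a finite sum of theta functions $j(yq^{\cdot};q^{c})$ via $f_{a,b,c}(x,y)=-x\,f_{a,b,c}(q^ax,q^by)+j(y;q^c)$.

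Shifting the $x$-argument by $p$ steps, using $x\mapsto q x$ $p$ times with the accompanying $y\mapsto q^{p'}y$, gives exactly the required $(q^{p},q^{pp'})$ shift. It produces $p$ theta terms $j(-q^{\ast};q^{2pp'})$ and the monomial factor $(-1)^p q^{\binom p2+p(i-r)}$, which matches the prefactor $(-1)^{p+1}q^{p(i-r)+\binom p2}$ once the overall sign is accounted for. I do this for both double sums of $\mathcal{C}_{m,\ell}$, using for the second one the symmetry $f_{a,b,c}(x,y)=-\frac{q^{a+b+c}}{xy}f_{a,b,c}(q^{2a}/x,\ldots)$ type inversions if needed, to align it with $\mathcal{C}_{m',\ell'}$.

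After these steps the double sums cancel in the stated combination. What remains is a sum of $2p$ theta functions, which reindexes as $\sum_{m=0}^{p-1}$ of pairs $j(-q^{mp'-2pr};q^{2pp'})$ and $q^{2r(m-p)}j(-q^{mp'+2pr};q^{2pp'})$, the pairing coming from the two double sums. The $m=0$ terms should cancel or vanish, using $j(-q^{-2pr})$ and $j(-q^{2pr})$ together with $j(1/x;q)=-x^{-1}j(x;q)$, leaving $\sum_{m=1}^{p-1}$. The main obstacle is the bookkeeping in this step: matching exponents and signs exactly, and making the second double sum's shift, which involves $j$ and $i-j-p+r$, line up after a possible use of $j(q^nx;q^c)$ quasi-periodicity to convert $q^{\cdot}$-shifts of theta arguments modulo $2pp'$. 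I would first verify the identity numerically for $(p,j)=(2,1)$ to fix signs before writing the general argument.
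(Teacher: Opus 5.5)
Your route through the Hecke form \eqref{equation:modStringFnHeckeForm} does work. The paper quotes this result without proof, so there is no proof of its own to compare against. Your computation of the eight arguments is correct. So is your treatment of the first pair: $p$ applications of $f(x,y)=-xf(qx,q^{p'}y)+j(y;q^{2pp'})$ take $f(q^{i-r},-q^{p(p'-2r)})$ to $f(q^{i+p-r},-q^{p(2p'-2r)})$ with monomial $(-1)^pq^{p(i-r)+\binom p2}$.

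The gap is the second pair, $f(q^{i+r},-q^{pp'+2pr})$ versus $f(q^{i-j-p+r},-q^{2pr})$. Neither tool you name realizes this shift:
\begin{itemize}
\item $(r,s)\mapsto(r+b,s-a)$ is not an exact invariance of $f_{a,b,c}$, because the $r$-shifts create theta terms $j(q^{\cdot}y;q^{c})$.
\item The inversion $f(x,y)=-\frac{q^{a+b+c}}{xy}f(q^{2a+b}/x,q^{2c+b}/y)$ turns $q^{i}$ into $q^{-i}$, so it can never match $q^{i+r}$ with $q^{i-j-p+r}$.
\end{itemize}
What is missing is the $y$-direction equation $f_{a,b,c}(x,y)=-yf_{a,b,c}(q^{b}x,q^{c}y)+j(x;q^{a})$. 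Here $a=1$ and $x$ is an integral power of $q$, so $j(x;q)=0$ and this step is exact:
\[
f(q^{i-j-p+r},-q^{2pr})=q^{2pr}f(q^{i+p+r},-q^{2pp'+2pr}).
\]
Combine this with $p$ $x$-steps starting from $f(q^{i+r},-q^{pp'+2pr})$. The resulting monomial is $(-1)^pq^{p(i+r)+\binom p2}\cdot q^{-2pr}=(-1)^pq^{p(i-r)+\binom p2}$, the same as for the first pair. This coincidence is exactly what makes all four double sums cancel in the stated combination, so it has to be checked, not asserted.

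After the cancellation, the left-hand side equals
\[
\sum_{k=0}^{p-1}(-1)^kq^{\binom k2}\Bigl(q^{k(i+r)}j(-q^{(p+k)p'+2pr};q^{2pp'})-q^{k(i-r)}j(-q^{(p+k)p'-2pr};q^{2pp'})\Bigr).
\]
Your predicted reindexing is not the right one; the correct substitution is $m=p-k$:
\begin{itemize}
\item By $j(x;q)=j(q/x;q)$ the arguments become $-q^{mp'\mp 2pr}$.
\item The identity $\binom{p-m}{2}=\binom p2-pm+\binom{m+1}2$ turns the coefficients into $(-1)^pq^{\binom p2+p(i+r)}(-1)^mq^{\binom{m+1}2-m(i+p+r)}$.
\item The second theta function picks up the extra factor $q^{2r(m-p)}$.
\end{itemize}
This gives the right-hand side summed over $1\le m\le p$. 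The surplus term is $m=p$, and it vanishes because $j(-q^{pp'-2pr};q^{2pp'})=j(-q^{pp'+2pr};q^{2pp'})$. The $m=0$ value of the summand also vanishes, as you observed, but it is not the term that actually arises. No quasi-periodicity shifts of the theta function are needed, and nothing uses the parity of the integer $j$, which is consistent with the paper's remark.
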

In the original statement of the theorem, $j$ was odd; however, in the proof of the theorem, the parity of $j$ does not matter.  When $j$ is odd, for example $1/2$, $1/3$ and $1/5$, we can write odd-spin string functions in terms of even-spin string functions.  We will see this in Section \ref{section:crossSpinExamples}.  When $j$ is even, for example levels $2/3$ and $2/$, we cannot express odd-spin in terms of even-spin.  Instead, we can only write the odd-spin string functions in terms of other odd-spin string functions, see Section \ref{section:mockTheta23-level}. 

\section{New results}

\subsection{An odd-spin polar-finite decomposition and some corollaries}
\label{subsection:polarFinite}
Our first result, which also drives the paper, is the polar-finite decomposition for characters of odd spin.  The polar part of the decomposition is in terms of Appell functions, so following Hickerson and Mortenson \cite{HM} we define the Appell function as
\begin{equation}
m(x,z;q):=\frac{1}{j(z;q)}\sum_{r\in\Z}\frac{(-1)^rq^{\binom{r}{2}}z^r}{1-q^{r-1}xz}.\label{equation:m-def}
\end{equation}
All of Ramanujan's classical mock theta functions can be written in terms of Appell functions \cite[Section 5]{HM}.  For example, for the third-order mock theta function we have
\begin{equation*}
f_3(q):=\sum_{n= 0}^{\infty}\frac{q^{n^2}}{(-q)_n^2}
=2m(-q,q;q^3)+2m(-q,q^2;q^3)=4m(-q,q;q^3)+\frac{J_{3,6}^2}{J_1}.
\end{equation*}

For comparison, we recall the polar-finite decomposition for even-spin characters:
\begin{theorem}\label{theorem:generalPolarFiniteEvenSpin}\cite[Theorem 2.3]{BoMo2025} For $(p,p^{\prime})=(p,2p+j)$, we have the polar-finite decomposition for even spin 
{\allowdisplaybreaks \begin{align*}
&\chi_{2r}^{(p,2p+j)} (z;q)\\
&\ \ =\sum_{s=0}^{j-1}z^{-s}q^{\frac{p}{j}s^2}C_{2s,2r}^{(p,2p+j)}(q)j(-z^{j}q^{p(j-2s)};q^{2pj})\\
&\ \ \ \  +\frac{1}{(q)_{\infty}^3}\sum_{s=0}^{j-1}
(-1)^{p}q^{-\frac{1}{8}+\frac{p(2r+1)^2}{4(2p+j)}}q^{\binom{p}{2}-p(r-s)}z^{-s}
j(-q^{p(j-2s)}z^{j};q^{2jp})
 \sum_{m=1}^{p-1}(-1)^{m}q^{\binom{m+1}{2}+m(r-p)} \\
&\qquad      \times\Big (  j(-q^{m(2p+j)+p(2r+1)};q^{2p(2p+j)} )
 -  q^{m(2p+j)-m(2r+1)}j(-q^{-m(2p+j)+p(2r+1)};q^{2p(2p+j)})\Big )\\
&\qquad      \times
\Big (q^{ms-2ps}m(-q^{jm-2ps},-q^{p(j+2s)}z^{-j};q^{2jp}) 
 + q^{-ms}m(-q^{jm+2ps},-q^{p(j-2s)}z^{j};q^{2jp})\Big ).
\end{align*}}%
\end{theorem}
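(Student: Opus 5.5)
The plan is to start from the Fourier expansion \eqref{equation:fourcoefexp} and reorganize it by residue classes of the quantum number. For $(p,p')=(p,2p+j)$ the level is $N=j/p$, so $q^{m^2/4N}=q^{pm^2/(4j)}$. For even spin $\ell=2r$ every $m$ is even, and I write it uniquely as $m=2s+2jn$ with $0\le s\le j-1$ and $n\in\Z$. The monomial then factors as
\begin{equation*}
q^{\frac{p}{j}(s+jn)^2}z^{-s-jn}=q^{\frac{p}{j}s^2}z^{-s}\cdot q^{pjn^2+2psn}z^{-jn}.
\end{equation*}
If the string functions were exactly periodic, $C_{2s+2jn,2r}=C_{2s,2r}$, the sum over $n$ would be a single theta series. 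By \eqref{equation:JTPid} that series is $j(-z^{j}q^{p(j-2s)};q^{2pj})$, after substituting $n\to -n$ and using $pjn^2=2pj\binom{n}{2}+pjn$. This produces exactly the finite part, namely the first sum in the theorem.

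The second step is to invoke the quasi-periodicity of even-spin string functions, \cite[Theorem 2.1]{BoMo2025}. It expresses the defect
\begin{equation*}
C_{2s+2jn,2r}^{(p,2p+j)}(q)-C_{2s,2r}^{(p,2p+j)}(q)
\end{equation*}
as a finite sum over $m=1,\dots,p-1$ of the theta building blocks
\begin{equation*}
j(-q^{m(2p+j)\pm p(2r+1)};q^{2p(2p+j)})
\end{equation*}
that appear in the statement. Each building block is multiplied by a sum over $k$ running between $0$ and $n$ (with sign reversed for $n<0$) of explicit monomials $q^{Q(k)}$ with $Q$ quadratic. This defect comes from the functional equations of the double sums $f_{1,p',2pp'}$ in \eqref{equation:modStringFnHeckeForm}, as in \cite[Section 6]{HM}, and it supplies the prefactors $(-1)^pq^{-1/8+\dots}/(q)_\infty^3$ after converting from $\mathcal{C}$ to $C$ via \eqref{equation:mathCalCtoStringC}.

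The third step substitutes this defect into the $n$-sum. For each fixed $s$ and $m$ one obtains a "triangular" double series of the form
\begin{equation*}
\Big(\sum_{n\ge 0}\sum_{0\le k<n}-\sum_{n<0}\sum_{n\le k<0}\Big)q^{pjn^2+2psn+Q(k)}z^{-jn}.
\end{equation*}
I would evaluate it by interchanging the order: fix $k$ and sum the tail in $n$, or equivalently substitute $n=k+t$ and sum over $t$. The tail is then recognized via the partial-fraction definition \eqref{equation:m-def} as $j(-q^{p(j-2s)}z^{j};q^{2jp})$ times an Appell function. The two halves, coming from the two orientations of the triangle and from the two building blocks, give the pair
\begin{equation*}
m(-q^{jm-2ps},-q^{p(j+2s)}z^{-j};q^{2jp}),\qquad m(-q^{jm+2ps},-q^{p(j-2s)}z^{j};q^{2jp}).
\end{equation*}
The first arises with $z^{-j}$ from the $n<0$ side after using $m(x,z;q)=m(x,q/z;q)$-type symmetries or simply reflecting. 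An alternative is to verify the identity by showing both sides have the same poles and residues in $z$ and the same quasi-periodicity in $z\to zq^{\cdot}$. I would use this as a check.

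The main obstacle is this third step. One must get the exact exponents, signs, and normalizations so that the triangular sums match the standard Appell-function identity. One must also justify the interchange of summation, which requires $z$ in a suitable annulus followed by meromorphic continuation. I would first carry out the case $p=2$, $j=1$ and compare with \eqref{equation:mockThetaConj2502r-2ndA} as a sanity check before doing the general bookkeeping.
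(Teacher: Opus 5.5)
Your first two steps follow the route the paper takes for the odd-spin analogue (Section~\ref{section:polarFiniteOddSpin}, Proposition~\ref{proposition:polarFinitePreAppell}). The even-spin statement itself is only quoted from \cite{BoMo2025}, whose proof uses the same template, and your finite part via \eqref{equation:JTPid} is correct. The gap is in step 3, which carries all the content, and the mechanism you describe there would not work as written.

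For fixed triangle index $k$, the tail $\sum_{n>k}q^{pjn^2+2psn}z^{-jn}$ is still quadratic in the summation variable. The same holds for the sum over $t$ after $n=k+1+t$. It is a partial theta series, so it cannot be recognized as $j(\cdot)\,m(\cdot)$ via \eqref{equation:m-def}. The summation has to go the other way. The quadratic part of $Q(k)$ is $-pjk(k+1)$, the analogue of the factor $q^{-2pj\binom{i}{2}}$ in Theorem~\ref{theorem:generalQuasiPeriodicityOddSpin}. It cancels the $k^2$ term in $pj(k+1+t)^2$, so for fixed $t$ the exponent is \emph{linear} in $k$. Summing the geometric series in $k$ then produces the denominators $1-q^{2pjt+pj\pm jm}z^{-j}$ from the $n>0$ triangle and $1-q^{2pjt+pj\pm jm}z^{j}$ from the $n<0$ triangle, leaving one-sided sums over $t\ge 0$. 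This is exactly what the paper does in the proof of Proposition~\ref{proposition:initSumOver_i_tPreAppellFinal}: the shift $t\to t+i$ followed by the geometric series in $i$.

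Even then, a one-sided sum is not an Appell function, and your account of where the two Appell functions come from is wrong. They do not come from the two orientations of the triangle. The two theta building blocks play no role either; they simply factor out as the common multiplier in the statement. Instead, the two Appell functions correspond to the two monomials $q^{\pm m(\cdot)}$ of the quasi-periodicity defect.

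For a fixed monomial, take the $n>0$ half-sum and apply $t\to -t$ together with $A/(1-B)=-AB^{-1}/(1-B^{-1})$. It becomes the $t\le 0$ half of a bilateral sum whose $t\ge 1$ half is the $n<0$ contribution of the \emph{same} monomial, after $t\to t-1$. Only these glued bilateral sums match \eqref{equation:m-def}. This is the even-spin analogue of Proposition~\ref{proposition:initSumOver_i_tPreAppellFinal}, and it yields $j(-q^{p(j-2s)}z^{j};q^{2jp})$ times $-q^{-m(j-s)}m(-q^{-jm+2ps},-q^{p(j-2s)}z^{j};q^{2jp})+q^{-ms}m(-q^{jm+2ps},-q^{p(j-2s)}z^{j};q^{2jp})$.

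The $z^{-j}$ form of the first term in the statement then comes from \eqref{equation:mxqz-flip} followed by \eqref{equation:mxqz-fnq-z}. It does not come from $m(x,z;q)=m(x,q/z;q)$, which is not an identity. With these two corrections your plan becomes the paper's argument.
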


Now we state our first new result:
\begin{theorem}\label{theorem:generalPolarFiniteOddSpin} (new!) For $(p,p^{\prime})=(p,2p+j)$, we have the polar-finite decomposition for odd spin 
{\allowdisplaybreaks \begin{align*}
&\chi_{2r+1}^{(p,2p+j)}(z;q)\\
& =    \sum_{s=0}^{j-1}z^{-\frac{1}{2}(2s+1)}q^{\frac{p}{4j}(2s+1)^2}C_{2s+1,2r+1}^{(p,2p+j)}(q)
    j(-z^{j}q^{p(j-2s-1)};q^{2pj})\\
& \quad +     \frac{1}{(q)_{\infty}^3}
\sum_{s=0}^{j-1}(-1)^{p}q^{-\frac{1}{8}+\frac{p(2r+2)^2}{4(2p+j)}}q^{\binom{p}{2}-p(r-s)}z^{-\frac{1}{2}(2s+1)} 
j(-q^{p(j-2s-1)}z^{j};q^{2jp})\\
& \qquad  \times \sum_{m=1}^{p-1}(-1)^{m}q^{\binom{m+1}{2}+m(r-p)}\\
& \qquad \times\Big (  j(-q^{m(2p+j)+p(2r+2)};q^{2p(2p+j)}) 
 -q^{m(2p+j)-m(2r+2)}j(-q^{-m(2p+j)+p(2r+2)};q^{2p(2p+j)})\Big )\\
& \qquad \times \Big ( q^{m(s+1)-p(2s+1)}m(-q^{jm-p(2s+1)},-q^{p(j+2s+1)}z^{-j};q^{2jp}) \\
&\qquad  \qquad + q^{-ms} m(-q^{jm+p(2s+1)},-q^{p(j-2s-1)}z^{j};q^{2jp})\Big ).
\end{align*} }%
\end{theorem}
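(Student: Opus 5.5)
We follow the even-spin argument of \cite[Theorem 2.3]{BoMo2025}, now driven by the odd-spin quasi-periodicity of \cite[Theorem 3.1]{KM26}. The starting point is the Fourier expansion \eqref{equation:fourcoefexp} with $\ell=2r+1$. Here the quantum numbers are odd, $m=2n+1$. Since $N=j/p$, we have $q^{m^2/4N}=q^{\frac{p}{4j}(2n+1)^2}$. We split $n$ into residue classes modulo $j$: write $n=s+jk$ with $0\le s\le j-1$ and $k\in\Z$. Then
\begin{equation*}
\chi_{2r+1}^{(p,2p+j)}(z;q)=\sum_{s=0}^{j-1}\sum_{k\in\Z}C_{2s+1+2jk,2r+1}^{(p,2p+j)}(q)\,q^{\frac{p}{4j}(2s+1+2jk)^2}z^{-\frac12(2s+1)-jk}.
\end{equation*}
If the string functions were exactly periodic in $k$, as in the integral-level case \eqref{eq:inglevelperiod}, the inner sum would collapse. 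Completing the square, $\frac{p}{4j}(2s+1+2jk)^2=\frac{p}{4j}(2s+1)^2+pjk^2+p(2s+1)k$, so the inner sum would give $C_{2s+1,2r+1}\,j(-z^{j}q^{p(j-2s-1)};q^{2pj})$ up to normalisation. This is exactly the finite part of the statement.

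The string functions are only quasi-periodic, so the next step is to invoke the odd-spin quasi-periodicity \cite[Theorem 3.1]{KM26}. It expresses $C_{2s+1+2jk,2r+1}-C_{2s+1,2r+1}$ as an explicit finite sum. The summands are theta quotients: each involves the factor $(-1)^m q^{\binom{m+1}{2}+m(r-p)}$ times the bracket $j(-q^{m(2p+j)+p(2r+2)};q^{2p(2p+j)})-q^{m(2p+j)-m(2r+2)}j(-q^{-m(2p+j)+p(2r+2)};q^{2p(2p+j)})$. This is multiplied by a partial geometric-type sum over the intermediate indices between $0$ and $k$, with powers of $q$ quadratic in the index and linear in $m$.

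This quasi-periodicity comes from the double-sum form \eqref{equation:modStringFnHeckeForm}. Alternatively, it can be checked by comparing with the cross-spin identity, Theorem \ref{theorem:crossSpin-j-Odd}, which has precisely the same $m$-sum shape. Substituting it into the inner sum, the finite part comes out as above. What remains is a sum over $k\in\Z$ and $m\in\{1,\dots,p-1\}$ of expressions of the form
\[
\operatorname{sgn}\text{-weighted sums }\sum_{0\le i<k}\ \text{or}\ \sum_{k\le i<0}
\]
of $q^{pjk^2+\cdots}z^{-jk}$ times $q^{(\text{linear in }i)}$.

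The heart of the argument is to interchange the $k$- and $i$-sums. The resulting sign-indefinite double sums over $\{k>i\ge0\}\cup\{k\le i<0\}$ are then evaluated as Appell functions. Concretely, for fixed $m$ the expression is a double sum of Hecke type with quadratic form $pjk^2$ and a linear term in $i$. Summing the geometric series in $k-i$, or equivalently using the defining expansion \eqref{equation:m-def} in the form $j(y;q)\,m(x,y;q)=\sum_r(-1)^rq^{\binom r2}y^r/(1-q^{r-1}xy)$, we identify the two pieces coming from $k>0$ and $k<0$. They should be
\[
q^{m(s+1)-p(2s+1)}m(-q^{jm-p(2s+1)},-q^{p(j+2s+1)}z^{-j};q^{2jp})\quad\text{and}\quad q^{-ms}m(-q^{jm+p(2s+1)},-q^{p(j-2s-1)}z^{j};q^{2jp}).
\]
Each is multiplied by the theta function $j(-q^{p(j-2s-1)}z^{j};q^{2jp})$, which arises as the denominator of the Appell function. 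The conversion uses the identity $j(z^{-1};q)=-z^{-1}j(z;q)$, so that $j(-q^{p(j+2s+1)}z^{-j};q^{2jp})$ becomes a multiple of $j(-q^{p(j-2s-1)}z^j;q^{2jp})$. Finally, we collect the prefactor $q^{-\frac18+\frac{p(2r+2)^2}{4(2p+j)}}$ relating $C$ to $\mathcal C$ via \eqref{equation:mathCalCtoStringC}, together with the factor $(-1)^pq^{\binom p2-p(r-s)}$ inherited from the quasi-periodicity.

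We expect the main obstacle to be bookkeeping rather than ideas. The exponents differ from the even case by the half-integer shift $s\mapsto s+\tfrac12$ (so $2s\to2s+1$), and this changes the Appell arguments asymmetrically: $m(s+1)$ appears versus $-ms$. We must get the boundary terms of the interchanged sums exactly right, including the $i=0$ terms and the $k=0$ term, which is absorbed into the finite part. As a safeguard we would test the result numerically on small cases such as $(p,j)=(2,1)$ and $(3,1)$. For these $j=1$, so there is a single $s=0$, and the result can be compared against \eqref{equation:WK-formula} expanded in $q$.
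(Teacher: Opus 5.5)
Your outline follows the paper's proof. You split the Fourier expansion by the quantum number modulo $2j$, insert the odd-spin quasi-periodicity, and sum the $C_{2s+1,2r+1}$ part by the triple product. You then interchange the $t$- and $i$-sums in the remainder and evaluate them by geometric series, which is Proposition~\ref{proposition:initSumOver_i_tPreAppellFinal}. However, the step you call the heart of the argument is misdescribed in two places, and the second one would not close if carried out as you state it.

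\textbf{The form of the double sum.} It is not ``$pjk^2$ plus a term linear in $i$.'' The inner sum carries $q^{-pji(i-1)}$, so the quadratic part is the indefinite form $pj(k^2-i^2)$. The substitution $k\mapsto k+i$ is what makes the exponent linear in $i$ for fixed $k-i$. The geometric series is therefore summed over $i$, not over $k-i$; in $k-i$ the exponent stays quadratic.

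\textbf{The two Appell functions.} This is the more important point: the two Appell functions of the statement are \emph{not} the contributions of $k>0$ and $k<0$ respectively. The geometric summation yields four one-sided sums over $t\ge 0$. From $k>0$ you get two sums in $z^{-j}$ with denominators $1-q^{2jpt+jp\pm jm}z^{-j}$. From $k<0$ you get two sums in $z^{j}$ with denominators $1-q^{2jpt+jp\pm jm}z^{j}$. None of these is an Appell function: a one-sided sum has its poles accumulating at only one end, while $j(y;q)m(x,y;q)$ is a bilateral sum.

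The missing step is to reflect $t\mapsto -t$ in the two $z^{-j}$-sums and multiply numerator and denominator by the inverse of the monomial in the denominator. They become sums over $t\le 0$ in $z^{j}$, with the sign of $jm$ reversed. Each then completes one of the $k<0$ sums (shifted $t\mapsto t-1$) to
$\sum_{t\in\Z}q^{2jp\binom{t}{2}+p(j-2s-1)t}z^{jt}/(1-q^{2jp(t-1)+jp\pm jm}z^{j})=j(y;q^{2jp})\,m(-q^{\pm jm+p(2s+1)},y;q^{2jp})$, where $y=-q^{p(j-2s-1)}z^{j}$.

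So each Appell function glues a half coming from $k>0$ to a half coming from $k<0$. Each corresponds to one of the two terms $q^{m(ji+s-j+1)}$ and $-q^{-m(ji+s)}$ of the quasi-periodicity bracket, with coefficients $-q^{-m(j-s-1)}$ and $q^{-ms}$ respectively. Only after applying \eqref{equation:mxqz-flip} and \eqref{equation:mxqz-fnq-z} does the first become the $z^{-j}$-Appell function in the theorem.

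\textbf{A minor point.} $j(-q^{p(j+2s+1)}z^{-j};q^{2jp})=j(-q^{p(j-2s-1)}z^{j};q^{2jp})$ holds exactly by $j(x;q)=j(q/x;q)$, so no extra multiple appears.
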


For $j=1$ we have the following immediate corollary.
\begin{corollary}\label{corollary:generalPolarFiniteOddSpin1p} (new!) For $(p,p^{\prime})=(p,2p+1)$, $(m,\ell)=(2k+1,2r+1)$ we have 
{\allowdisplaybreaks \begin{align*}
&\chi_{2r+1}^{(p,2p+1)}(z;q)\\
& =   z^{-\frac{1}{2}}q^{\frac{p}{4}}C_{1,2r+1}^{(p,2p+1)}(q)
    j(-z;q^{2p})\\
& \quad +     \frac{1}{(q)_{\infty}^3}
(-1)^{p}q^{-\frac{1}{8}+\frac{p(2r+2)^2}{4(2p+1)}}q^{\binom{p}{2}-pr}z^{-\frac{1}{2}} 
j(-z;q^{2p})
 \sum_{m=1}^{p-1}(-1)^{m}q^{\binom{m+1}{2}+m(r-p)}\\
& \qquad \times\Big (  j(-q^{m(2p+1)+p(2r+2)};q^{2p(2p+1)}) 
 -q^{m(2p+1)-m(2r+2)}j(-q^{-m(2p+1)+p(2r+2)};q^{2p(2p+1)})\Big )\\
& \qquad \times \Big ( q^{m-p}m(-q^{m-p},-q^{2p}z^{-1};q^{2p}) 
+  m(-q^{m+p},-z;q^{2p})\Big ).
\end{align*} }%
\end{corollary}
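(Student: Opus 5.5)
The corollary is the specialization $j=1$ of Theorem \ref{theorem:generalPolarFiniteOddSpin}, so the plan is to substitute $j=1$ there and simplify. Since the sum over $s$ runs over $0\le s\le j-1$, only the single term $s=0$ survives in both the finite part and the polar part. I will then check each factor against the right-hand side of the corollary.

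For the finite part, setting $j=1$ and $s=0$ gives $z^{-\frac12}q^{\frac{p}{4}}C_{1,2r+1}^{(p,2p+1)}(q)\,j(-z\,q^{p(1-1)};q^{2p})$. This equals $z^{-\frac12}q^{\frac p4}C_{1,2r+1}^{(p,2p+1)}(q)\,j(-z;q^{2p})$, as claimed.

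For the polar part, the prefactor becomes
\[
(-1)^{p}q^{-\frac18+\frac{p(2r+2)^2}{4(2p+1)}}q^{\binom p2-pr}z^{-\frac12}\,j(-z;q^{2p}),
\]
which uses $s=0$ in $q^{\binom p2-p(r-s)}$. The inner sum over $m$ and the bracket of theta functions with modulus $q^{2p(2p+1)}$ carry over verbatim on replacing $2p+j$ by $2p+1$.

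The Appell-function bracket needs the most care, though it is still routine.
\begin{itemize}
\item In the first term, the power is $q^{m(s+1)-p(2s+1)}=q^{m-p}$, and the function is $m(-q^{jm-p(2s+1)},-q^{p(j+2s+1)}z^{-j};q^{2jp})=m(-q^{m-p},-q^{2p}z^{-1};q^{2p})$.
\item In the second term, the power is $q^{-ms}=1$, and the function is $m(-q^{m+p},-q^{p(1-1)}z;q^{2p})=m(-q^{m+p},-z;q^{2p})$.
\end{itemize}
This reproduces the stated bracket exactly.

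No genuine obstacle is expected, since the corollary is a direct substitution. The only point to watch is bookkeeping of exponents, in particular confirming that $p(j-2s-1)$ vanishes at $j=1$, $s=0$. No functional equations for $j(x;q)$ or $m(x,z;q)$ are needed.

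The label $(m,\ell)=(2k+1,2r+1)$ in the statement is only descriptive. The character $\chi_{2r+1}$ itself has no dependence on $k$, and the corollary records that all odd-$m$ string functions at $j=1$ are governed by $C_{1,2r+1}$ together with the explicit polar term.
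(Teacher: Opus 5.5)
Your proposal is correct and is exactly the paper's route: the paper presents this as an immediate corollary, obtained by setting $j=1$ in Theorem~\ref{theorem:generalPolarFiniteOddSpin}, so that only $s=0$ survives. Your term-by-term check of the finite part, the polar prefactor, the theta bracket, and the two Appell functions (in particular $p(j-2s-1)=0$ and $p(j+2s+1)=2p$) is accurate.
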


Theorem \ref{theorem:generalPolarFiniteOddSpin} can be used to obtain the mock theta conjecture-like identities for odd-spin $A_{1}^{(1)}$-string functions of levels $1/2$, $1/3$, $1/5$.  We will demonstrate this for levels $1/2$ and $1/3$.  The three expansions can also be obtained from the corresponding even-spin results found in \cite{BoMo2025} by using the cross-spin identity Theorem \ref{theorem:crossSpin-j-Odd}.  We will also demonstrate this for levels $1/2$ and $1/3$.  For level $1/5$, we will simply state the result.

For level $1/2$, we have
\begin{corollary} \label{corollary:newMockThetaIdentitiespP25m1ell2rPlus1} 
 For $(p,p^{\prime})=(2,5)$, $(m,\ell)=(1,2r+1)$, $r\in\{0,1\}$, we have
    \begin{gather}
(q)_{\infty}^3\mathcal{C}_{1,2r+1}^{(2,5)}(q)
=(-1)^{r}q^{1-2r}
\frac{J_{1}^4J_{4}}{{J_{2}^4}}j(q^{4r+4};q^{20})
+   
q^{-r}j(q^{2+2r};q^5)
  \Big ( 1+2A_2(-q)\Big ),
 \label{equation:mockThetaConj2512rPlus1-2ndA}\\
(q)_{\infty}^3\mathcal{C}_{1,2r+1}^{(2,5)}(q)
=(-1)^{r}\frac{q^{-r}}{2}
\frac{J_{1}^3}{J_{2}J_{4}}j(q^{2r+2};-q^{5})
+ q^{-r}
 j(q^{2+2r};q^5) \Big ( 1-\frac{1}{2}\mu_{2}(q)\Big ).
 \label{equation:mockThetaConj2512rPlus1-2ndmu}
\end{gather}
\end{corollary}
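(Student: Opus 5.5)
We would deduce both identities from the even-spin results \eqref{equation:mockThetaConj2502r-2ndA} and \eqref{equation:mockThetaConj2502r-2ndmu} by way of the cross-spin identity, Theorem \ref{theorem:crossSpin-j-Odd}. Specialize it to $(p,p^{\prime})=(2,5)$, so $j=1$, and take $i=1$. Replace its spin parameter by $r+1$, where $r\in\{0,1\}$ is the parameter of the statement. The left-hand side then becomes
\[
(q)_{\infty}^3\mathcal{C}_{1,2r+1}^{(2,5)}(q)+q^{1-2r}(q)_{\infty}^3\mathcal{C}_{0,2-2r}^{(2,5)}(q).
\]
Only the single term $m=1$ appears on the right-hand side. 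After simplifying the exponent, it is an explicit difference of two theta functions $j(-q^{\ast};q^{20})$ with a monomial prefactor.

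Thus $(q)_{\infty}^3\mathcal{C}_{1,2r+1}^{(2,5)}$ equals this theta expression minus $q^{1-2r}(q)_\infty^3\mathcal{C}_{0,2(1-r)}^{(2,5)}$. Into the latter we insert \eqref{equation:mockThetaConj2502r-2ndA}, respectively \eqref{equation:mockThetaConj2502r-2ndmu}, with $r$ replaced by $1-r$; this is legitimate since $1-r\in\{0,1\}$.

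The mock parts match immediately. We have $j(q^{1+2(1-r)};q^5)=j(q^{3-2r};q^5)=j(q^{2+2r};q^5)$ by $j(x;q)=j(q/x;q)$. Moreover $q^{1-2r}\cdot q^{-(1-r)}=q^{-r}$. So $-q^{1-2r}\cdot(-2q^{r-1})j(\cdot)A_2(-q)$ yields exactly $+2q^{-r}j(q^{2+2r};q^5)A_2(-q)$, and similarly the $\mu_2$ term yields $-\tfrac12 q^{-r}j(q^{2+2r};q^5)\mu_2(q)$.

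What remains in each case is a pure theta-function identity. It says that the cross-spin theta difference, minus $q^{1-2r}$ times the theta quotient of the even-spin formula at $1-r$, equals the stated theta quotient plus the extra $q^{-r}j(q^{2+2r};q^5)$ (the ``$1+$'' and ``$1-$'' terms).

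This residual theta identity is the main obstacle. We would first try to prove it with standard tools. One is to rewrite $j(-q^{a};q^{20})\pm q^{b}j(-q^{c};q^{20})$ via the quintuple product identity or Weierstrass three-term relations, to produce $j(q^{2+2r};q^5)$ times a modular quotient. Another is to use the elementary dissections $j(x;q)=j(x^2q;q^4)-xj(x^2q^3;q^4)$ relating modulus $5$ and $20$ thetas.

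Failing a slick derivation, we would argue by modularity. Each side is, after multiplying by a suitable power of $q$, a weight-$1/2$ or weight-$1$ (quotient of) theta functions on some $\Gamma_1(N)$ with $N\mid 80$. The two sides therefore agree once their $q$-expansions agree up to the Sturm bound; this is only two identities for each of $r=0,1$.

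As an independent consistency check, one can instead specialize Corollary \ref{corollary:generalPolarFiniteOddSpin1p} at $p=2$. The sum over $m$ collapses to $m=1$. The Appell functions $m(-q^{-1},-q^4z^{-1};q^4)$ and $m(-q^3,-z;q^4)$ can be combined using the standard Appell function relations of \cite{HM}. Comparing coefficients of $z^{-1/2}$ against the Weyl--Kac quotient \eqref{equation:WK-formula} then recovers $A_2(-q)$ and $\mu_2(q)$ via their known Appell-function forms.
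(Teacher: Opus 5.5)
Your route is correct. It is exactly the derivation the paper gives in Section~\ref{section:crossSpinExamples}, where it appears only as a confirmation.

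The paper's primary proof (Section~\ref{section:mockTheta12-level}) is what you call a consistency check. Corollary~\ref{corollary:generalPolarFiniteOddSpin1p} is specialized to $p=2$, and $j(-q^{9+4r};q^{20})-q^{3-2r}j(-q^{-1+4r};q^{20})$ is collapsed to $j(q^{2+2r};q^5)$. The result is compared with Proposition~\ref{proposition:WeylKac} and solved for $\mathcal{C}_{1,2r+1}^{(2,5)}(q)$ as an expression valid for all $z$. The mechanism is then to \emph{specialize} $z$, not to compare coefficients of $z^{-1/2}$:
\begin{itemize}
\item $z=-q^2$ turns the Appell part into $1-2m(-q,q^2;q^4)=1+2A_2(-q)$;
\item $z=-q$ turns it into $1-m(-q,q;q^4)-m(-q,-1;q^4)=1-\tfrac12\mu_2(q)$, using Corollary~\ref{corollary:mxqz-flip-xz}.
\end{itemize}
That route uses only the new decomposition, so it tests Theorem~\ref{theorem:generalPolarFiniteOddSpin}. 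It is also the one that survives at levels $2/3$ and $2/5$, where $j$ is even and the cross-spin identity never reaches even spin. Your route is shorter, but it depends on the earlier even-spin identities \eqref{equation:mockThetaConj2502r-2ndA}, \eqref{equation:mockThetaConj2502r-2ndmu} and on $j$ being odd.

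The step you single out as the main obstacle is immediate, because your residual identity splits into two unrelated pieces. No Sturm bound is needed. A Sturm-bound check of the combined identity would in any case have to be split by weight, since it mixes weight-$\tfrac12$ unary thetas with weight-$1$ quotients.

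\textbf{The cross-spin term.} The cross-spin right-hand side is $-q^{2+r}\bigl(j(-q^{1-4r};q^{20})-q^{-2-2r}j(-q^{9+4r};q^{20})\bigr)$. Rewrite $j(-q^{9+4r};q^{20})=j(-q^{11-4r};q^{20})$ and apply \eqref{equation:jsplit-m2} with $q\to q^5$, $z=q^{-2-2r}$. The bracket becomes $j(q^{-2-2r};q^5)=-q^{-2-2r}j(q^{2+2r};q^5)$. Hence the right-hand side is exactly the extra term $q^{-r}j(q^{2+2r};q^5)$, which is \eqref{equation:master25crossSpin}.

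\textbf{The theta quotients.} These match termwise by $j(x;q)=j(q/x;q)$ alone. For the $A_2$ identity, $j(q^{16-4r};q^{20})=j(q^{4+4r};q^{20})$, with prefactor $-q^{1-2r}(-1)^{1-r}=(-1)^rq^{1-2r}$. For the $\mu_2$ identity, $j(-q^{3-2r};-q^5)=j(q^{2+2r};-q^5)$, with prefactor $-q^{1-2r}(-q)^{r-1}=(-1)^rq^{-r}$.
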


For level $1/3$, we have
\begin{corollary}\label{corollary:newMockThetaIdentitiespP37m1ell2rPlus1} 
For $(p,p^{\prime})=(3,7)$, $r\in\{0,1,2\}$ we have
\begin{align*}
(q)_{\infty}^3\mathcal{C}_{1,2r+1}^{(3,7)}(q)
&=
(-1)^{r}q^{1-2r}\frac{(q)_{\infty}^3}{J_{2}}
\frac{j(-q^{5-2r};q^{14})j(q^{24-4r};q^{28})}
{j(-1;q)J_{28}} \\
&\qquad   +q^{-r}\frac{j(q^{2+2r};q^{14})j(q^{18+4r};q^{28})}{J_{28}}\left ( 1-q \omega_3(-q)\right)\\
&\qquad -  q^{1-2r} \frac{j(q^{9+2r};q^{14})j(q^{4+4r};q^{28})}{J_{28}} 
\left ( 1-\frac{1}{2}f_{3}(q^2)\right ).
\end{align*}
\end{corollary}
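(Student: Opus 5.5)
The plan is to derive the corollary from the even-spin identity Theorem~\ref{theorem:newMockThetaIdentitiespP37m0ell2r} via the cross-spin identity Theorem~\ref{theorem:crossSpin-j-Odd}. As a consistency check, we will compare the result with the direct specialization of Theorem~\ref{theorem:generalPolarFiniteOddSpin} (equivalently Corollary~\ref{corollary:generalPolarFiniteOddSpin1p}) at $(p,j)=(3,1)$.

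\textbf{Reduction to even spin.} Take $(p,j)=(3,1)$ in Theorem~\ref{theorem:crossSpin-j-Odd}, set $i=1$, and replace $r$ by $r+1$, so that $r\in\{0,1,2\}$. Since $(-1)^{p+1}=1$ and $\binom{3}{2}=3$, this gives
\begin{equation*}
(q)_{\infty}^3\mathcal{C}_{1,2r+1}^{(3,7)}(q)=q^{3-3r}(q)_{\infty}^3\mathcal{C}_{0,2(2-r)}^{(3,7)}(q)+T_r(q),
\end{equation*}
where $T_r(q)$ is the explicit two-term ($m=1,2$) sum of theta functions with modulus $q^{42}$ on the right of Theorem~\ref{theorem:crossSpin-j-Odd}. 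Next substitute Theorem~\ref{theorem:newMockThetaIdentitiespP37m0ell2r} with $r\mapsto 2-r$.

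\textbf{Matching the mock parts.} The $\omega_3(-q)$ term becomes
\begin{equation*}
-q^{3-3r}q^{2-2(2-r)}\frac{j(q^{2+2r};q^{14})j(q^{18+4r};q^{28})}{J_{28}}\,\omega_3(-q)=-q^{1-r}\frac{j(q^{2+2r};q^{14})j(q^{18+4r};q^{28})}{J_{28}}\,\omega_3(-q).
\end{equation*}
This is exactly the $-q\,\omega_3(-q)$ piece of the second line of the corollary. The $f_3(q^2)$ term becomes
\begin{equation*}
\frac{q^{1-2r}}{2}\,\frac{j(q^{5-2r};q^{14})j(q^{24-4r};q^{28})}{J_{28}}\,f_3(q^2).
\end{equation*}
Using $j(x;q)=j(q/x;q)$, we rewrite $j(q^{5-2r};q^{14})=j(q^{9+2r};q^{14})$ and $j(q^{24-4r};q^{28})=j(q^{4+4r};q^{28})$. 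This matches the $+\tfrac12 f_3(q^2)$ piece of the third line. Thus all mock theta contributions agree, and what remains is a pure theta-function identity.

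\textbf{The residual theta identity (main obstacle).} We must show, for each $r\in\{0,1,2\}$, that the following two quantities are equal:
\begin{itemize}
\item[(i)] the sum of $T_r(q)$ and the shifted theta quotient $q^{3-3r}(-q)^{r-2}\frac{(q)_\infty^3}{J_2}\frac{j(-q^{5-2r};q^{14})j(q^{24-4r};q^{28})}{j(-1;q)J_{28}}$;
\item[(ii)] the sum of the claimed theta quotient $(-1)^rq^{1-2r}\frac{(q)_\infty^3}{J_2}\frac{j(-q^{5-2r};q^{14})j(q^{24-4r};q^{28})}{j(-1;q)J_{28}}$ and the two ``constant'' products $q^{-r}\frac{j(q^{2+2r};q^{14})j(q^{18+4r};q^{28})}{J_{28}}-q^{1-2r}\frac{j(q^{9+2r};q^{14})j(q^{4+4r};q^{28})}{J_{28}}$.
\end{itemize}
Note that $q^{3-3r}(-q)^{r-2}=(-1)^rq^{1-2r}$, so the two theta quotients coincide. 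The identity therefore collapses to showing that $T_r(q)$ equals the difference of the two products. I would first try to prove this directly. Write the $m=1,2$ terms of $T_r$ with the elementary shifts $j(q^nx;q)=(-1)^nq^{-\binom n2}x^{-n}j(x;q)$, then recognize the resulting combination as a quintuple product: a product $j(x;q^{14})j(x^2q^{\ast};q^{28})$ expands as a two-term sum of theta functions with modulus $q^{42}$. This is the same mechanism that produces the quintuple-product-shaped brackets in Theorem~\ref{theorem:newMockThetaIdentitiespP511m0ell2r}. If that bookkeeping resists, the fallback is to treat both sides as holomorphic modular forms of the same weight on a common congruence subgroup and verify the $q$-expansions up to the Sturm bound for each $r$.

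Finally, as an independent check, specialize Corollary~\ref{corollary:generalPolarFiniteOddSpin1p} at $p=3$ and take the coefficient of $z^{-1/2}$ (the $s=0$ Fourier coefficient), evaluating the Appell functions $m(-q^{m\pm3},\cdot\,;q^6)$ in terms of $f_3$ and $\omega_3$ via the representations in \cite[Section 5]{HM}. I would not rely on this route for the proof itself.
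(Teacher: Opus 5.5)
Your proposal is correct. It is the cross-spin derivation the paper itself carries out in Section~\ref{section:crossSpinExamples}: specialize Theorem~\ref{theorem:crossSpin-j-Odd} at $(p,j)=(3,1)$, $i=1$, $r\to r+1$, then substitute Theorem~\ref{theorem:newMockThetaIdentitiespP37m0ell2r} with $r\mapsto 2-r$. Your matching of the $\omega_3(-q)$ term, the $f_3(q^2)$ term and the theta quotient (including $q^{3-3r}(-q)^{r-2}=(-1)^rq^{1-2r}$) is right.

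The residual step needs no Sturm-bound fallback. Expanding the $m=1,2$ terms gives $T_r=q^{-r}j(-q^{20+6r};q^{42})-q^{2+r}j(-q^{8-6r};q^{42})+q^{5+2r}j(-q^{1-6r};q^{42})-q^{1-2r}j(-q^{13+6r};q^{42})$.
\begin{itemize}
\item The first pair is (\ref{equation:quintuple}) with $q\to q^{14}$ and $x=-q^{2+2r}$, once you write $j(-q^{8-6r};q^{42})=j(-q^{34+6r};q^{42})$.
\item The second pair is the same identity with $x=-q^{9+2r}$, after (\ref{equation:j-flip}) and one shift (\ref{equation:j-elliptic}) in each of the moduli $q^{42}$ and $q^{28}$.
\end{itemize}
Together these give exactly $q^{-r}\frac{j(q^{2+2r};q^{14})j(q^{18+4r};q^{28})}{J_{28}}-q^{1-2r}\frac{j(q^{9+2r};q^{14})j(q^{4+4r};q^{28})}{J_{28}}$, which is the specialization the paper records before substituting.

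Note, however, that the paper treats this only as a confirmation. Its actual proof, in Section~\ref{section:mockTheta13-level}, goes through the new odd-spin polar-finite decomposition. It specializes Corollary~\ref{corollary:generalPolarFiniteOddSpin1p} to $p=3$, collapses each $m$-pair with the quintuple product, solves for $\mathcal{C}_{1,2r+1}^{(3,7)}$, and then evaluates at $z=-q^{2}$. This is not the extraction of a coefficient of $z^{-1/2}$ that your last paragraph suggests: the Appell functions carry $z$ in their second argument, so one specializes $z$ instead. At $z=-q^2$ one has $j(-z;q^6)=J_{2,6}$, and by (\ref{equation:mxqz-fnq-z})--(\ref{equation:mxqz-fnq-x}) the Appell functions reduce to $m(-q^{a},q^{2};q^{6})+m(-q^{a},q^{4};q^{6})$ with $a\in\{1,2\}$. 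By (\ref{equation:3rd-omega(q)}) and (\ref{equation:3rd-f(q)}) these are $q\,\omega_3(-q)$ and $\tfrac12 f_3(q^2)$. The theta quotient comes from Proposition~\ref{proposition:WeylKac} at $z=-q^2$ plus one more quintuple product.

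The two routes buy different things.
\begin{itemize}
\item Your route is shorter and avoids all Appell-function manipulation. However, it rests on the previously established even-spin identity, and it works only because $j=1$ is odd, so the cross-spin partner $\mathcal{C}_{0,2(2-r)}^{(3,7)}$ has even spin.
\item The polar-finite route is independent of the even-spin result and serves in the paper as a test of the large new Theorem~\ref{theorem:generalPolarFiniteOddSpin}. It is also the route that carries over to even $j$ (levels $2/3$ and $2/5$), where the cross-spin identity links odd spin only to odd spin.
\end{itemize}
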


For level $1/5$, we have
\begin{corollary} \label{corollary:newMockThetaIdentitiespP511m1ell2rPlus1} For $(p,p^{\prime})=(5,11)$ and $(m,\ell)=(1,2r+1)$, $r\in\{0,1,2,3,4\}$, we have
\begin{align*}
  (q)_{\infty}^3\mathcal{C}_{1,2r+1}^{(5,11)}(q)
&=q^{(r-1)^2}J_{1,2}j(q^{8(r+1)};q^{22})\\
& \quad -q^{6-4r} \times\Big (  j(-q^{21+10r};q^{110}) 
 -q^{8+8r}j(-q^{1-10r};q^{110})\Big )
 \times \Big (1-X_{10}(q^2)\Big )\\
& \quad + q^{3-3r} \times\Big (  j(-q^{32+10r};q^{110}) 
 -q^{6+6r}j(-q^{12-10r};q^{110})\Big )
 \times \Big ( 1+\psi_{10}(-q)\Big )\\
&\quad -q^{1-2r} \times\Big (  j(-q^{43+10r};q^{110}) 
 -q^{4+4r}j(-q^{23-10r};q^{110})\Big )
 \times \Big ( 1- \chi_{10}(q^2)  \Big )\\
& \quad +q^{-r} \times\Big (  j(-q^{54+10r};q^{110}) 
 -q^{2+2r}j(-q^{34-10r};q^{110})\Big )
 \times \Big (1-q\phi_{10}(-q)\Big ).
\end{align*}
\end{corollary}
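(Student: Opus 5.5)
The statement to prove is Corollary \ref{corollary:newMockThetaIdentitiespP511m1ell2rPlus1}. I would derive it from the even-spin Theorem \ref{theorem:newMockThetaIdentitiespP511m0ell2r} via the cross-spin identity, Theorem \ref{theorem:crossSpin-j-Odd}. This is the same route the paper uses as a check at levels $1/2$ and $1/3$. Take $(p,j)=(5,1)$, so $p'=11$, and set $i=1$ with spin $2r'-1=2r+1$, i.e.\ $r'=r+1$. Then the second string function on the left is
\begin{equation*}
\mathcal{C}^{(5,11)}_{2i-1-j,\,2p-2r'+j-1}=\mathcal{C}^{(5,11)}_{0,\,8-2r},
\end{equation*}
which is an even-spin string function with $m=0$ and spin $2(4-r)$, where $4-r\in\{0,\dots,4\}$. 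Since $p=5$ is odd, the sign is $(-1)^{p+1}=+1$, so
\begin{equation*}
(q)_\infty^3\mathcal{C}^{(5,11)}_{1,2r+1}=q^{p(i-r')+\binom{p}{2}}(q)_\infty^3\mathcal{C}^{(5,11)}_{0,8-2r}+(\text{theta sum over } m=1,\dots,4).
\end{equation*}
The prefactor is $q^{5(1-r-1)+10}=q^{10-5r}$.

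First I substitute Theorem \ref{theorem:newMockThetaIdentitiespP511m0ell2r} with $r\mapsto 4-r$. The four mock-theta terms there carry coefficients $q^{6-4r}$, $q^{3-3r}$, $q^{1-2r}$, $q^{-r}$ and theta pairs $j(-q^{16+10r};q^{110})$, and so on. After the replacement $r\mapsto 4-r$ these become pairs such as $j(-q^{56-10r};q^{110})-q^{36-8r}j(-q^{46+10r};q^{110})$. I then use $j(x;q)=j(q/x;q)$ and the quasi-periodicity $j(q^n x;q)=(-1)^nq^{-\binom n2}x^{-n}j(x;q)$ to rewrite each such pair, up to a power of $q$ and a sign, as one of the pairs in the target. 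For example,
\begin{equation*}
j(-q^{56-10r};q^{110})=j(-q^{54+10r};q^{110}),
\end{equation*}
which should turn the $\phi_{10}$ term of the even identity into the $\phi_{10}$ term $-q^{-r}(\cdots)q\phi_{10}(-q)$ of the target. The same should happen in the pattern $\phi\leftrightarrow X$, $\chi\leftrightarrow\psi$ (the mock functions get reversed). Each mock function should appear with the correct coefficient, and the minus sign in front of it is what explains the "$1-$" shapes in the target.

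Second, I expand the cross-spin theta sum explicitly: four terms $m=1,\dots,4$, each a two-term combination in $j(\,\cdot\,;q^{110})$. After the same normalizations, these should supply exactly the constant "1" parts $\pm(\text{theta pair})\times 1$ in the target. Any leftover pieces should be absorbed together with the pure theta term $q^{10-5r}\cdot(-q^{(4-r)^2-3(4-r)+1}J_{1,2}j(q^{36-8r};q^{22}))$. Using $j(q^{36-8r};q^{22})=j(q^{8r-14};q^{22})$ and quasi-periodicity, this should reduce to $q^{(r-1)^2}J_{1,2}j(q^{8r+8};q^{22})$.

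The main obstacle is the bookkeeping of $q$-powers and signs in these theta normalizations. The exponents must match exactly for every $r\in\{0,\dots,4\}$, and the $m$-sum must line up pair by pair with the four theta pairs rather than leaving a residual theta identity. If a residual does remain, I would verify it as a theta-function identity on $q^{110}$ by checking enough $q$-coefficients, or reduce it via the quintuple product. As an independent cross-check I would specialize Corollary \ref{corollary:generalPolarFiniteOddSpin1p} at $p=5$, but the cross-spin route is the intended one.
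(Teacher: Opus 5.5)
Your proposal is correct. It follows the cross-spin route the paper carries out for levels $1/2$ and $1/3$ in Section~\ref{section:crossSpinExamples}; for level $1/5$ the paper only states the result. The bookkeeping closes with no residual: take Theorem~\ref{theorem:newMockThetaIdentitiespP511m0ell2r} at $r\mapsto 4-r$ and multiply by $q^{10-5r}$. The theta term and each of the four mock-theta terms then land exactly on the corresponding terms of the target. The $m=1,2,3,4$ summands of the cross-spin sum, after factoring out a power of $q$, are exactly $-q^{6-4r}$, $q^{3-3r}$, $-q^{1-2r}$, $q^{-r}$ times the four theta pairs of the target, which supplies the ``$1\times$(pair)'' parts.

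One arithmetic slip in your example: after $r\mapsto 4-r$ the second theta function in the first pair is $j(-q^{10r-34};q^{110})=q^{10r-34}j(-q^{34-10r};q^{110})$, not $j(-q^{46+10r};q^{110})$. It is this rewriting that turns $q^{36-8r}$ into the required $q^{2+2r}$.
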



\subsection{Mock theta conjecture-like identities for $2/3$-level and $2/5$-level string functions}
\label{subsection:mockThetaIdentities}

For the $2/3$-level string functions we have two pairs of results using different sets of third-order mock theta functions!  To state the first pair of results we recall two more of Ramanujan's third-order mock theta functions:
\begin{equation*}
\psi_3(q):=\sum_{n= 1}^{\infty}\frac{q^{n^2}}{(q;q^2)_n}, \  
\chi_3(q):=\sum_{n= 0}^{\infty}\frac{q^{n^2}(-q)_n}{(-q^3;q^3)_n}.
\end{equation*}
Our first pair of results for odd-spin, $2/3$-level string functions then reads
\begin{theorem} \label{theorem:newMockThetaIdentitiespP38m1ell2rPlus1}
(new!) For $(p,p^{\prime})=(3,8)$, $(m,\ell)=(1,2r+1)$, $r\in\{0,1,2\}$ we have
\begin{align*}
 (q)_{\infty}^3 \mathcal{C}_{1,2r+1}^{(3,8)}(q)&=\Theta_{1,2r+1}^{(3,8)}(q)
   -q^{1-2r}
\frac{j(q^{6-2r};q^{16})j(q^{28-4r};q^{32})}{J_{32}}
 \Big ( -\psi_{3}(-q)\Big )\\
& \qquad  +q^{-r}
\frac{j(q^{2+2r};q^{16})j(q^{20+4r};q^{32})}{J_{32}}
  \Big ( 1-\chi_{3}(q)\Big ),
\end{align*} 
where
\begin{equation*}
\Theta_{1,2r+1}^{(3,8)}(q)
:=\begin{cases}
\medskip
\frac{J_{3}J_{4}^2}{J_{2}J_{12}}j(-q^6;q^{16}) &\textup{if} \  r=0,\\
\medskip
3\frac{J_{3}J_{12}^3}{J_{6}^2} &\textup{if} \ r=1,\\
\medskip
q^{-2}\frac{J_{3}J_{4}^2}{J_{2}J_{12}}j(-q^2;q^{16}) & \textup{if} \ r=2.
\end{cases}
\end{equation*}
Using the cross-spin identity in Theorem \ref{theorem:crossSpin-j-Odd}, we can obtain
\end{theorem}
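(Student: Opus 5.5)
Specialize Theorem \ref{theorem:generalPolarFiniteOddSpin} to $(p,p')=(3,8)$, so $j=2$, $s\in\{0,1\}$, and the inner sum runs over $m\in\{1,2\}$. The left-hand side $\chi_{2r+1}^{(3,8)}(z;q)$ is the ratio of theta functions from the Weyl--Kac formula \eqref{equation:WK-formula}. The first term on the right is
\[
z^{-1/2}q^{3/8}C_{1,2r+1}^{(3,8)}(q)\,j(-z^2q^{3};q^{12})+z^{-3/2}q^{27/8}C_{3,2r+1}^{(3,8)}(q)\,j(-z^2q^{-3};q^{12}).
\]
To isolate $C_{1,2r+1}$, we evaluate at a specialization of $z$ that kills the $s=1$ theta factor. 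The choice is $z^2=-q^{3}$, or a value in the same class modulo $q^{12}$ after the quasi-periodicity $j(qx;q)=-x^{-1}j(x;q)$. Equivalently, we can take a residue/limit argument as in \cite{BoMo2025}. At such a point the $s=1$ Appell terms $m(\cdot,-q^{3}z^{\pm2}\cdots;q^{12})$ have $j$ in their denominator vanishing. We therefore expect to multiply through by the relevant theta factor, or to use the standard formula for $j(z)m(x,z)$ as $z$ approaches a zero of $j$, so that those contributions become theta quotients.

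For the $s=0$ polar term, after the specialization, the Appell functions become $m(-q^{2m-3},-q^{9}z^{-2};q^{12})$ and $m(-q^{2m+3},-q^{3}z^{2};q^{12})$ with $z^2=-q^3$ or its shift. These are Appell functions of the form $m(\pm q^a,\pm q^b;q^{12})$ with $m=1,2$. Using the changing-$z$ identity \cite{HM} $m(x,z_1)-m(x,z_0)=(\text{theta quotient})$ and $m(x,z)=x^{-1}m(x^{-1},z^{-1})$, we normalize them to the Appell-function forms of the third-order functions from \cite[Section 5]{HM}:
\[
\psi_3(q)=-m(q,q^{-1}\cdots)\ \text{type in base } q^{4},\qquad \chi_3(q)\ \text{type in base } q^{3}.
\]
Base changes are handled by the standard $m(x,z;q)$ to $m(x^n,\cdot;q^{n^2})$ decomposition if needed. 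This is where $-\psi_3(-q)$ and $1-\chi_3(q)$ should appear, multiplied by the theta coefficients. The coefficients come from the $m=1,2$ brackets $j(-q^{8m+3(2r+2)};q^{48})-\cdots$ after the $j$-quotients are rewritten. We then combine them via quintuple-product or Weierstrass three-term relations into the stated products $j(q^{6-2r};q^{16})j(q^{28-4r};q^{32})/J_{32}$ and $j(q^{2+2r};q^{16})j(q^{20+4r};q^{32})/J_{32}$. The constants ``$1$'' come from the shift $C$ normalization factors $q^{-s_\lambda}$ and from the $\mathcal{C}$ normalization \eqref{equation:mathCalCtoStringC}.

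Everything left over is a sum of theta quotients, namely the Weyl--Kac quotient at the specialized $z$, the $s=1$ residual pieces, and the theta functions produced by changing $z$ in Appell functions. The claim is that for each $r\in\{0,1,2\}$ this collapses to $\Theta^{(3,8)}_{1,2r+1}(q)$. Since these are weakly holomorphic modular forms, we verify each of the three cases by the valence formula / Sturm bound, checking enough $q$-coefficients, or by the theta-function identity proving machinery of \cite{HM} (Frye--Garvan style).

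The main obstacle is the specialization step. We must choose $z$ so that the $s=1$ finite term disappears while the Appell functions remain convergent and recognizable as $\psi_3(-q)$ and $\chi_3(q)$ rather than the $f_3,\omega_3$ pair of the even-spin case. If a direct specialization fails, we will instead extract the $z^{-1/2}$-coefficient class using the orthogonality of $j(-z^2q^{3\mp3};q^{12})$ expansions. That means comparing Fourier coefficients in $z$ directly from \eqref{equation:fourcoefexp}, and then expressing the resulting Appell sums through the $\psi_3,\chi_3$ representations.
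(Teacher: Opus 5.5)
Your outline is essentially the paper's proof. You specialize the odd-spin polar-finite decomposition at $z=iq^{3/2}$, so that $j(-q^{-3}z^{2};q^{12})=0$ kills the $C_{3,2r+1}$ term. You evaluate the singular $s=1$ polar piece as a limit of $j\cdot m$ that is a theta quotient (Lemma \ref{lemma:polarFinite23m1AppellVanish}), and evaluate the Weyl--Kac side at the same point (Proposition \ref{proposition:weylKac23ell2rzVal}). You then turn $m(-q^{\pm1},q^{6};q^{12})$, $m(-q^{5},q^{6};q^{12})$, $m(-q^{7},q^{6};q^{12})$ into $\psi_3(-q)$ and $\chi_3(q)$ through the $n=2$ splitting of Corollary \ref{corollary:msplit-m2}, and collapse the leftover theta quotients. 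The only real difference is that last step: the paper proves those identities by hand (Lemmas \ref{lemma:unusualThetaIdentity1}, \ref{lemma:unusualThetaIdentity2} and Proposition \ref{proposition:masterThetaIdentitypP38m1ell2rPlus1}), whereas you propose a Sturm-bound check.

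One correction: the constant $1$ in $1-\chi_3(q)$ does not come from the normalizations $q^{-s_\lambda}$, which are purely multiplicative. It comes from $m(qx,z;q)=1-xm(x,z;q)$ applied to $m(-q^{7},q^{6};q^{12})$, as in Proposition \ref{proposition:alternateAppellFormsLvl23FirstTwoPairs}.
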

\begin{corollary} \label{corollary:newMockThetaIdentitiespP38m3ell2rPlus1}
(new!) For $(p,p^{\prime})=(3,8)$, $(m,\ell)=(3,2r+1)$, $r\in\{0,1,2\}$ we have
\begin{align*}
 (q)_{\infty}^3 \mathcal{C}_{3,2r+1}^{(3,8)}(q)&=q^{6-3r}\Theta_{1,2(2-r)+1}^{(3,8)}(q)
   -q^{4-2r}
\frac{j(q^{6-2r};q^{16})j(q^{28-4r};q^{32})}{J_{32}}
 q^{-1}\left (1-q\left (1  -\chi_{3}(q)\right) \right )\\
& \qquad  +q^{3-r}
\frac{j(q^{2+2r};q^{16})j(q^{20+4r};q^{32})}{J_{32}}
  \left ( q^{-2}+\psi_{3}(-q)\right ),
\end{align*} 
where $\Theta_{1,2r+1}^{(3,8)}(q)$ is as in  Theorem \ref{theorem:newMockThetaIdentitiespP38m1ell2rPlus1}.
\end{corollary}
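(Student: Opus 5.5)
We obtain the corollary by specializing the cross-spin identity, Theorem \ref{theorem:crossSpin-j-Odd}, to $(p,p^{\prime})=(3,8)$, i.e.\ $p=3$, $j=2$, and then inserting Theorem \ref{theorem:newMockThetaIdentitiespP38m1ell2rPlus1}. In Theorem \ref{theorem:crossSpin-j-Odd} we take $i=2$ and replace $r$ by $r+1$. Then $2i-1=3$, $2r^{\prime}-1=2r+1$, $2i-1-j=1$, and $2p-2r^{\prime}+j-1=5-2r=2(2-r)+1$. Since $(-1)^{p+1}=1$ and $p(i-r^{\prime})+\binom{p}{2}=6-3r$, the identity reads
\begin{equation*}
(q)_{\infty}^3\mathcal{C}_{3,2r+1}^{(3,8)}(q)=q^{6-3r}(q)_{\infty}^3\mathcal{C}_{1,2(2-r)+1}^{(3,8)}(q)+R_r(q),
\end{equation*}
where $R_r(q)$ is the explicit sum over $m\in\{1,2\}$ of theta functions of modulus $q^{48}$. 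The index map $r\mapsto 2-r$ permutes $\{0,1,2\}$, so Theorem \ref{theorem:newMockThetaIdentitiespP38m1ell2rPlus1} applies to the first term on the right.

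Next we substitute Theorem \ref{theorem:newMockThetaIdentitiespP38m1ell2rPlus1} with $r\mapsto 2-r$. Under this map $j(q^{6-2r};q^{16})j(q^{28-4r};q^{32})$ and $j(q^{2+2r};q^{16})j(q^{20+4r};q^{32})$ are interchanged. The prefactors become $q^{6-3r}q^{1-2(2-r)}=q^{3-r}$, attached to $\psi_3(-q)$, and $q^{6-3r}q^{-(2-r)}=q^{4-2r}$, attached to $1-\chi_3(q)$. This produces the term $q^{6-3r}\Theta^{(3,8)}_{1,2(2-r)+1}(q)$ and the two mock terms in the claimed form. Expanding the claimed right-hand side, it differs from what we obtain only by the pure theta combination
\begin{equation*}
-q^{3-2r}\frac{j(q^{6-2r};q^{16})j(q^{28-4r};q^{32})}{J_{32}}+q^{1-r}\frac{j(q^{2+2r};q^{16})j(q^{20+4r};q^{32})}{J_{32}}.
\end{equation*}
These are the constants $q^{-1}$ inside $q^{-1}(1-q(1-\chi_3))$ and $q^{-2}$ inside $q^{-2}+\psi_3(-q)$. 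It therefore remains to prove that $R_r(q)$ equals this combination.

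This last step is the main obstacle. Each quotient $j(x;q^{16})j(q^{16}x^2;q^{32})/J_{32}$ is exactly the left-hand side of the quintuple product identity, with $x=q^{6-2r}$ or $x=q^{2+2r}$; one checks $q^{16}x^2=q^{28-4r}$ and $q^{20+4r}$ respectively. So I would expand each quotient by the quintuple product identity as a difference of two theta functions of modulus $q^{48}$. I would then match the resulting four terms, after the standard rewriting $j(x;q)=j(q/x;q)$ and $j(q^n x;q)=(-1)^nq^{-\binom n2}x^{-n}j(x;q)$ to normalize arguments, against the four terms of $R_r(q)$ coming from $m=1,2$. The exponents in $R_r(q)$ are of the shape $j(-q^{8m\mp 6r^{\prime}};q^{48})$ with $r^{\prime}=r+1$, which suggests this is the intended pairing. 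If the bookkeeping of $q$-powers becomes unwieldy, a fallback is to verify the identity separately for each of $r=0,1,2$. Both sides are theta quotients of bounded weight, so comparing sufficiently many $q$-coefficients suffices, or one can again reduce each case to the quintuple product identity.
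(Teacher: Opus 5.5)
Your proposal is correct and follows the paper's proof. You specialize the cross-spin identity with $p=3$, $j=2$, $i=2$, $r\to r+1$, collapse the $m$-sum with the quintuple product identity, and substitute Theorem~\ref{theorem:newMockThetaIdentitiespP38m1ell2rPlus1} at $2-r$. Note that in the quintuple product identity you should take $x\to -q^{6-2r}$ and $x\to -q^{2+2r}$, since its product side is $j(-x;q)j(qx^2;q^2)/J_2$.

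The only difference is order. The paper does the quintuple-product step first and obtains $-q^{3-2r}j(q^{10+2r};q^{16})j(q^{4+4r};q^{32})/J_{32}+q^{1-r}j(q^{2+2r};q^{16})j(q^{20+4r};q^{32})/J_{32}$. After (\ref{equation:j-flip}) this is exactly the theta combination you identified as the target.
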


For our second pair of results, it is not possible to use $\omega_{3}(q)$.  Instead, we can write
\begin{theorem} \label{theorem:newMockThetaIdentitiespP38m1ell2rPlus1Alt}
(new!) For $(p,p^{\prime})=(3,8)$, $(m,\ell)=(1,2r+1)$, $r\in\{0,1,2\}$ we have
\begin{align*}
 (q)_{\infty}^3 \mathcal{C}_{1,2r+1}^{(3,8)}(q)&=\Psi_{1,2r+1}^{(3,8)}(q)
   -q^{1-2r}
\frac{j(q^{6-2r};q^{16})j(q^{28-4r};q^{32})}{J_{32}}
 \frac{1}{4}f_{3}(q)\\
& \qquad  +q^{-r}
\frac{j(q^{2+2r};q^{16})j(q^{20+4r};q^{32})}{J_{32}}
\left ( 1- \frac{1}{4}f_{3}(q)\right ),
\end{align*} 
where
\begin{equation*}
\Psi_{1,2r+1}^{(3,8)}(q)
:=\begin{cases}
\medskip
\frac{1}{4}\frac{J_{1}^2J_{2}}{J_{4}}&\textup{if} \  r=0,\\
\medskip
-\frac{1}{2q}\frac{J_{1}^3J_{4}}{J_{2}^2} &\textup{if} \ r=1,\\
\medskip
\frac{1}{4q^3}\frac{J_{1}^2J_{2}}{J_{4}} & \textup{if} \  r=2.
\end{cases}
\end{equation*}
\end{theorem}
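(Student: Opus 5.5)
The plan is to deduce Theorem \ref{theorem:newMockThetaIdentitiespP38m1ell2rPlus1Alt} from Theorem \ref{theorem:newMockThetaIdentitiespP38m1ell2rPlus1}, which we take as already proved. The statement to prove has exactly the same two theta-quotient coefficients
\[
A_r:=q^{1-2r}\frac{j(q^{6-2r};q^{16})j(q^{28-4r};q^{32})}{J_{32}},\qquad B_r:=q^{-r}\frac{j(q^{2+2r};q^{16})j(q^{20+4r};q^{32})}{J_{32}}.
\]
Subtracting one expansion from the other, the claim reduces to the identity
\[
\Psi_{1,2r+1}^{(3,8)}(q)-\Theta_{1,2r+1}^{(3,8)}(q)
=A_r\Big(\tfrac14 f_3(q)+\psi_3(-q)\Big)+B_r\Big(\tfrac14 f_3(q)-\chi_3(q)\Big).
\]
This leaves no string functions in the picture. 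The strategy is to show that both bracketed combinations on the right are pure theta quotients, and then to verify the resulting theta identity separately for $r=0,1,2$.

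\textbf{Step 1: Appell-function forms.} I would write each of $f_3(q)$, $\psi_3(-q)$, $\chi_3(q)$ in Appell-function form, using the list in \cite[Section 5]{HM}. We have
\[
f_3(q)=4m(-q,q;q^3)+\frac{J_{3,6}^2}{J_1}.
\]
Similarly, $\psi_3(q)$ and $\chi_3(q)$ are expressible through $m(\cdot,\cdot;q^3)$ or $m(\cdot,\cdot;q^{12})$ plus theta quotients; the classical relations are
\[
\chi_3(q)=\tfrac{4}{3}\phi\text{-type combinations}, \qquad 4\chi_3(q)-f_3(q)=3\frac{J_{3,6}^2}{J_1}.
\]
Ramanujan's letter gives the linear relations among $f_3,\phi_3,\psi_3,\chi_3$ with theta-function differences, for example
\[
2\phi_3(-q)-f_3(q)=f_3(q)+4\psi_3(-q)=\vartheta_4(0,q)\prod(1+q^n)^{-1}.
\]
From these:
\begin{itemize}
\item $\tfrac14 f_3(q)+\psi_3(-q)$ equals $\tfrac14$ times a theta quotient, namely $J_1^2/J_2$ up to normalization;
\item $\tfrac14 f_3(q)-\chi_3(q)$ equals $-\tfrac34 J_{3,6}^2/J_1$, up to the precise form.
\end{itemize}
The key point is that these Ramanujan relations kill every mock part, which explains why $f_3$ can replace the pair $\psi_3,\chi_3$ when it appears with weight $1/4$ in both places.

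\textbf{Step 2: theta identity for each $r$.} Substituting these two theta quotients, the claim becomes, for each $r\in\{0,1,2\}$,
\[
\Psi_{1,2r+1}^{(3,8)}(q)-\Theta_{1,2r+1}^{(3,8)}(q)=A_r\,T_1(q)+B_r\,T_2(q),
\]
with $T_1,T_2$ explicit theta quotients. I would prove each case by the standard method:
\begin{itemize}
\item rewrite all products in terms of $J$'s with a common modulus (here $16$, $32$, $48$);
\item view both sides as theta functions in an auxiliary variable, or compare them as modular forms of the same weight on $\Gamma_1(N)$;
\item apply the Sturm bound, or use three-term Weierstrass relations and quintuple-product expansions as in \cite{HM}.
\end{itemize}
For $r=2$, the symmetry $r\mapsto 2-r$ visible in the definitions of $\Theta$ and $\Psi$ (compare the $q^{-2}$ and $q^{-3}$ prefactors) should reduce this case to $r=0$.

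\textbf{Main obstacle.} I expect the hardest part to be Step 2 for $r=1$. There $\Theta_{1,3}^{(3,8)}=3J_3J_{12}^3/J_6^2$ has a different shape, and the coefficients $A_1,B_1$ involve $j(q^4;q^{16})$ and $j(q^{24};q^{32})$, so the identity genuinely mixes moduli $3$ and $8$. I would first try a numerical $q$-expansion check to fix any sign or normalization slips in Step 1. I would then clear denominators and prove the resulting identity of holomorphic modular forms on $\Gamma_0(96)$ (or $\Gamma_1(96)$) by checking coefficients up to the Sturm bound, which is routine once the weight and level are pinned down.
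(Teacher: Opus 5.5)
Your reduction is correct, and it takes a genuinely different route from the paper.

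\textbf{How the two routes differ.} The paper never passes through Theorem~\ref{theorem:newMockThetaIdentitiespP38m1ell2rPlus1}. It re-specializes Proposition~\ref{proposition:polarFinite23OddSpin} at $z=iq^{3/2}$, using Lemma~\ref{lemma:polarFinite23m1AppellVanish} and Proposition~\ref{proposition:weylKac23ell2rzVal}. It then rewrites the two Appell combinations directly in terms of $f_3(q)$ via Proposition~\ref{proposition:alternateAppellFormsLvl23FirstTwoPairsAlt}, and closes with Lemma~\ref{lemma:unusualThetaIdentity2} and the separate master identity Proposition~\ref{proposition:masterThetaIdentitypP38m1ell2rPlus1Alt}.

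Your route shows the two $2/3$-level theorems are equivalent modulo \eqref{equation:mockIdentity-f(q)psi(q)} and \eqref{equation:mockIdentity-chi(q)f(q)}. This explains why $f_3$ can replace the pair $\psi_3,\chi_3$, and it needs no Appell functions at all. The cost is that you inherit Theorem~\ref{theorem:newMockThetaIdentitiespP38m1ell2rPlus1}, and your $r=1$ case is exactly where Lemma~\ref{lemma:unusualThetaIdentity1} resurfaces. In the paper's direct route, the $r=1$ case of Proposition~\ref{proposition:masterThetaIdentitypP38m1ell2rPlus1Alt} is just a two-line product rearrangement.

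\textbf{Corrections to your steps.}
In Step 1, the correct value is $\tfrac14 f_3(q)+\psi_3(-q)=\tfrac14 J_1^3/J_2^2$. Your $J_1^2/J_2$ is only the $\vartheta_4(0,q)$ factor. Your $-\tfrac34 J_{3,6}^2/J_1=-\tfrac34 J_3^4/(J_1J_6^2)$ is right.

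Step 2 needs no Sturm bound. By Lemma~\ref{lemma:unusualThetaIdentity2} one gets
$A_0+B_0=J_2^3/(J_1J_4)$, $A_1=B_1=q^{-1}J_4$, and $A_2+B_2=q^{-3}J_2^3/(J_1J_4)$.
One also checks that $\Theta_{1,2r+1}^{(3,8)}=B_r J_3J_4^3/(J_2^2J_{12})$ for $r=0,2$. The three classical relations together give $4J_3J_4^3/(J_2^2J_{12})=J_1^3/J_2^2+3J_3^4/(J_1J_6^2)$. With this, your identity collapses to $\Psi_{1,2r+1}^{(3,8)}=\tfrac14 (J_1^3/J_2^2)(A_r+B_r)$ for $r=0,2$, and to Lemma~\ref{lemma:unusualThetaIdentity1} for $r=1$.

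Finally, the symmetry $r\mapsto 2-r$ swaps $A$ and $B$, and $\Theta_{1,5}^{(3,8)}\neq q^{-3}\Theta_{1,1}^{(3,8)}$. So $r=2$ is not literally the $r=0$ case, though it is equally easy.
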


Using the cross-spin identity in Theorem \ref{theorem:crossSpin-j-Odd}, we can obtain
\begin{corollary} \label{corollary:newMockThetaIdentitiespP38m3ell2rPlus1Alt}
(new!) For $(p,p^{\prime})=(3,8)$, $(m,\ell)=(1,2r+1)$, $r\in\{0,1,2\}$ we have
\begin{align*}
(q)_{\infty}^3\mathcal{C}_{3,2r+1}^{(3,8)}(q)
&=q^{6-3r}
\Psi_{1,2(2-r)+1}^{(3,8)}(q)
 +q^{1-r}
 \frac{j(q^{2+2r};q^{16})j(q^{20+4r};q^{32})}{J_{32}}
\left ( 1- \frac{q^2}{4}f_{3}(q)\right )\\
&\qquad  -q^{3-2r}
 \frac{j(q^{6-2r};q^{16})j(q^{28-4r};q^{32})}{J_{32}}
 \left ( 1-q\left ( 1- \frac{1}{4}f_{3}(q)\right ) \right ),
\end{align*}
where $\Psi_{1,2r+1}^{(3,8)}(q)$ is as in  Theorem \ref{theorem:newMockThetaIdentitiespP38m1ell2rPlus1Alt}.
\end{corollary}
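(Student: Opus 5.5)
The plan is to specialize the cross-spin identity of Theorem~\ref{theorem:crossSpin-j-Odd} to $(p,j)=(3,2)$. This expresses $\mathcal{C}^{(3,8)}_{3,2r+1}$ in terms of a $(3,8)$ string function with quantum number $1$. We then substitute Theorem~\ref{theorem:newMockThetaIdentitiespP38m1ell2rPlus1Alt} for that string function and collect terms. The statement in the corollary presumably intends $(m,\ell)=(3,2r+1)$.

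\textbf{Step 1: specialize the cross-spin identity.} With $p=3$, $j=2$, $i=2$ and $r$ replaced by $r+1$, Theorem~\ref{theorem:crossSpin-j-Odd} reads
\begin{equation*}
(q)_\infty^3\mathcal{C}^{(3,8)}_{3,2r+1}(q)-q^{6-3r}(q)_\infty^3\mathcal{C}^{(3,8)}_{1,2(2-r)+1}(q)=T_r(q).
\end{equation*}
Here $T_r(q)$ is the explicit two-term sum over $m\in\{1,2\}$ of differences of theta functions $j(-q^{\ast};q^{48})$. One checks that $p+1$ is even, that the second spin is $2p-2r'+j-1=7-2r'=2(2-r)+1$, and that $r\in\{0,1,2\}$ keeps $2-r$ in range.

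\textbf{Step 2: substitute the quantum-number-$1$ identity.} Apply Theorem~\ref{theorem:newMockThetaIdentitiespP38m1ell2rPlus1Alt} with $r\mapsto 2-r$. Under this substitution the two theta quotients are swapped:
\begin{equation*}
j(q^{6-2(2-r)};q^{16})\,j(q^{28-4(2-r)};q^{32})=j(q^{2+2r};q^{16})\,j(q^{20+4r};q^{32}),
\end{equation*}
and conversely. Write $A_r$ for the quotient $j(q^{6-2r};q^{16})j(q^{28-4r};q^{32})/J_{32}$ and $B_r$ for $j(q^{2+2r};q^{16})j(q^{20+4r};q^{32})/J_{32}$.

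The term $q^{6-3r}\Psi^{(3,8)}_{1,2(2-r)+1}$ then appears verbatim. The $f_3$-parts contribute the following.
\begin{itemize}
\item $q^{6-3r}\cdot q^{-(2-r)}A_r(1-\tfrac14 f_3)=q^{4-2r}A_r(1-\tfrac14 f_3)$. The target is $-q^{3-2r}A_r\bigl(1-q(1-\tfrac14f_3)\bigr)$, which differs from this by exactly $-q^{3-2r}A_r$.
\item $-q^{6-3r}\cdot q^{1-2(2-r)}B_r\tfrac14 f_3=-q^{3-r}B_r\tfrac14f_3$. The target is $q^{1-r}B_r(1-\tfrac{q^2}{4}f_3)$, which differs from this by exactly $q^{1-r}B_r$.
\end{itemize}
So the mock parts match, and the corollary is reduced to the pure theta identity
\begin{equation*}
T_r(q)=q^{1-r}B_r-q^{3-2r}A_r,\qquad r\in\{0,1,2\}.
\end{equation*}

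\textbf{Step 3: prove the theta identity (main obstacle).} Each $T_r$ is a combination of four theta functions with modulus $q^{48}$, while each right-hand product is a product of two thetas over $J_{32}$. I would first use $j(x;q)=j(q/x;q)$ and the quasi-periodicity $j(q^nx;q)=(-1)^nq^{-\binom n2}x^{-n}j(x;q)$ to pair the terms of $T_r$. I would then recognize each pair, after dissecting by moduli, as a quintuple product identity: $j(x;q)j(qx^2;q^2)/J_2$ equals a two-term sum of $j(\cdot;q^3)$'s. Here $q\mapsto q^{16}$, and indeed $3\cdot 16=48$ matches the modulus.

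For example, $B_r$ has exactly the quintuple shape with $x=q^{2+2r}$ and base $q^{16}$, since $q^{16}x^2=q^{20+4r}$. Likewise $A_r$ is quintuple with $x=q^{6-2r}$, since $q^{16}x^2=q^{28-4r}$. So both sides expand into $j(\cdot;q^{48})$ terms, and matching exponents and signs finishes the proof.

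If the bookkeeping resists, a safe fallback is available. Both sides are weight-$1/2$ theta-type modular forms of known level, so it suffices to compare $q$-expansions up to the Sturm bound for each of the three values of $r$.
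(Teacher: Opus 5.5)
Your proposal is correct and is essentially the paper's proof. The paper also specializes Theorem~\ref{theorem:crossSpin-j-Odd} to $(p,j,i)=(3,2,2)$ with $r\mapsto r+1$. It then uses \eqref{equation:j-flip}, \eqref{equation:j-elliptic} and the quintuple product identity \eqref{equation:quintuple} to reach \eqref{equation:crossSpin23master}, which is exactly your $T_r=q^{1-r}B_r-q^{3-2r}A_r$ because $j(q^{10+2r};q^{16})j(q^{4+4r};q^{32})=j(q^{6-2r};q^{16})j(q^{28-4r};q^{32})$. Finally it substitutes Theorem~\ref{theorem:newMockThetaIdentitiespP38m1ell2rPlus1Alt} with $r\mapsto 2-r$.

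The only difference is order: you isolate the theta identity before proving it, while the paper proves it first. You are also right that the statement should read $(m,\ell)=(3,2r+1)$.
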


\begin{remark}  One could combine the two coefficients of $f_{3}(q)$ in Theorem \ref{theorem:newMockThetaIdentitiespP38m1ell2rPlus1Alt} and Corollary \ref{corollary:newMockThetaIdentitiespP38m3ell2rPlus1Alt} using Lemma \ref{lemma:unusualThetaIdentity2}, but the two remaining simple theta quotients still do not combine.
\end{remark}
For odd-spin, level $2/5$ something similar occurs.  We note that it is not possible to use the tenth order functions $\psi_{10}(q)$ and $\phi_{10}(q)$.  Instead, we have
\begin{theorem} (new!) \label{theorem:newMockThetaIdentitiespP512m1ell2rPlus1}  
For $(p,p^{\prime})=(5,12)$, $(m,\ell)=(0,2r)$, $r\in\{0,1,2,3,4\}$, we have
{\allowdisplaybreaks \begin{align*}
(q)_{\infty}^3\mathcal{C}_{1,2r+1}^{(5,12)}(q)
&=\Theta_{1,2r+1}^{(5,12)}(q)  \\
&\quad  - q^{6-4r} \times\Big (  j(-q^{22+10r};q^{120}) 
 -q^{8+8r}j(-q^{2-10r};q^{120})\Big )
  \times \left ( \frac{1}{2}\chi_{10}(q)\right )\\
& \quad  +q^{3-3r}
\times\Big (  j(-q^{34+10r};q^{120}) 
 -q^{6+6r}j(-q^{14-10r};q^{120})\Big )
  \times \left (\frac{1}{2}X_{10}(q)\right )\\
& \quad   -q^{1-2r}
\times\Big (  j(-q^{46+10r};q^{120}) 
 -q^{4+4r}j(-q^{26-10r};q^{120})\Big )
  \times \left ( 1-\frac{1}{2}X_{10}(q) \right )\\
& \quad  +q^{-r} \times\Big (  j(-q^{58+10r};q^{120}) 
 -q^{2+2r}j(-q^{38-10r};q^{120})\Big )
  \times \left (1-\frac{1}{2}\chi_{10}(q)\right ),
\end{align*} }%
where
\begin{equation*}
\Theta_{1,2r+1}^{(5,12)}(q):=
\begin{cases}
0 &\textup{if} \  r=0,2,4,\\
-\frac{1}{2}q^{-1}J_{1,2}J_{1}  & \textup{if} \ r=1,\\
-\frac{1}{2}q^{-6}J_{1,2}J_{1}  &\textup{if} \  r=3.
\end{cases}
\end{equation*}
\end{theorem}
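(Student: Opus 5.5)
The plan is to specialize the odd-spin polar-finite decomposition, Theorem \ref{theorem:generalPolarFiniteOddSpin}, to $(p,j)=(5,2)$, so $p'=12$ and $N=2/5$. For odd spin $\ell=2r+1$ the finite part contains exactly two string functions, $C_{1,2r+1}^{(5,12)}$ and $C_{3,2r+1}^{(5,12)}$ (from $s=0,1$). Each multiplies a theta function in $z$ with nome $q^{20}$: namely $j(-z^2q^{5};q^{20})$ and $j(-z^2q^{-5};q^{20})$. The polar part is a sum over $m=1,\dots,4$ of Appell functions $m(\cdot,\cdot;q^{20})$. The aim is to isolate $C_{1,2r+1}^{(5,12)}$ by evaluating the identity at a suitable $z$, or by extracting the coefficient of $z^{-1/2}$ on both sides.

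Concretely, I would pick $z$ so that the $s=1$ finite-part theta function vanishes. Taking $z^{2}=-q^{5}$ works, since then $j(-z^2q^{-5};q^{20})=j(1;q^{20})=0$. I would then evaluate the left side $\chi_{2r+1}^{(5,12)}(z;q)$ through the Weyl--Kac formula \eqref{equation:WK-formula}. The numerator and denominator become explicit theta functions, possibly after a limiting argument if the denominator has a zero there. In the Appell functions the second argument becomes a power of $-q$ times $\pm 1$.

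Next, each of the eight resulting Appell functions $m(-q^{2m\mp 5(2s+1)},\ldots;q^{20})$ has to be reduced, via the standard Hickerson--Mortenson identities, to the Appell-function forms of the tenth-order functions $\phi_{10},\psi_{10},X_{10},\chi_{10}$ (cf.\ \cite[Section 5]{HM}). The tools are the changing-$z$ formula for $m(x,z;q)$, the relation $m(x,z;q)=x^{-1}m(x^{-1},z^{-1};q)$, and the $n=2$ (or $n=4$) splitting of $m(x,z;q)$ into Appell functions with nome $q^{40}$ or $q^{80}$. Bookkeeping should show that the $m$-sum, together with the $j(\ldots;q^{120})$ brackets already present in Theorem \ref{theorem:generalPolarFiniteOddSpin}, produces exactly the four displayed bracket combinations with exponents $22+10r$, $34+10r$, $46+10r$, $58+10r$. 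The associated mock parts should pair up as $\tfrac12\chi_{10}(q)$ and $\tfrac12 X_{10}(q)$. The constants $1$ in $1-\tfrac12 X_{10}$ and $1-\tfrac12\chi_{10}$ come from the shift by a $q$-power that occurs when $m(x,z)$ is replaced by $m(qx,z)$ via $m(qx,z;q)=1-x\,m(x,z;q)$-type relations. The leftover pieces of the form $j\cdot j/j$ then have to be collected.

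The main obstacle is this final step: showing that all residual theta quotients, together with the evaluated Weyl--Kac side, collapse to $\Theta_{1,2r+1}^{(5,12)}(q)$. That is, they should vanish for $r$ even and equal $-\tfrac12 q^{-1}J_{1,2}J_1$ or $-\tfrac12 q^{-6}J_{1,2}J_1$ for $r=1,3$. The intended route is to combine terms with the quintuple product and three-term Weierstrass-type theta identities, or else to prove the required identity of theta quotients by checking that both sides are modular of the same weight and level and comparing enough $q$-expansion coefficients (Sturm bound). I would try the modular/Sturm verification first, doing it separately for each $r\in\{0,\dots,4\}$.

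As an independent consistency check, I would derive the $m=3$ companion from the cross-spin identity, Theorem \ref{theorem:crossSpin-j-Odd}, with $j=2$. Since $j$ is even this relates $C_{1,2r+1}$ to $C_{3,9-2r}$, so the two results must be compatible, which also confirms that $\psi_{10}$ and $\phi_{10}$ cannot appear.
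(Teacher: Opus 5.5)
\textbf{Verdict: right approach, but with a genuine gap at the specialization step.} Your overall route is the one the paper intends; it omits the $2/5$ proof because it mirrors Section~\ref{section:mockTheta23-level}. That route is: specialize Theorem~\ref{theorem:generalPolarFiniteOddSpin} to $(p,j)=(5,2)$, put $z=iq^{5/2}$ so that $j(-z^2q^{-5};q^{20})=j(1;q^{20})=0$, evaluate the character by Weyl--Kac, convert Appell functions to tenth-order functions, and finish with theta identities.

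The gap is what the specialization does to the rest of the $s=1$ contribution. The same factor $j(-q^{-5}z^2;q^{20})$ multiplies the entire $s=1$ polar part, not just $C_{3,2r+1}^{(5,12)}$. After \eqref{equation:mxqz-flip} and \eqref{equation:mxqz-fnq-z}, the $s=1$ Appell combination for each $m$ is
\[
-m(-q^{15-2m},w;q^{20})+q^{-m}m(-q^{15+2m},w;q^{20}),\qquad w=-q^{-5}z^{2}.
\]
At $z^2=-q^5$ this sits at $w=1$, exactly on the poles of the Appell functions, so you have a $0\cdot\infty$ form. These terms therefore cannot be discarded along with $C_{3,2r+1}^{(5,12)}$. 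Nor can they be ``reduced to tenth-order functions,'' as your second step proposes for all the Appell functions.

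What you need instead is the analogue of Lemma~\ref{lemma:polarFinite23m1AppellVanish}, which the paper singles out as the new odd-spin phenomenon (in even spin these terms vanish). From \eqref{equation:m-def} and \eqref{equation:jacobiThetaReciprocal},
\[
\lim_{w\to1}j(w;q^{20})\,m(x,w;q^{20})=-\frac{J_{20}^3}{j(x;q^{20})}.
\]
So each $m$ contributes the nonzero theta quotient
\[
\frac{J_{20}^3}{j(-q^{15-2m};q^{20})}-q^{-m}\frac{J_{20}^3}{j(-q^{15+2m};q^{20})}.
\]
For $m=1$ this equals $-q^{-1}J_{20}^3J_{1,5}/\big(j(-q^{3};q^{20})j(-q^{7};q^{20})\big)$ by \eqref{equation:jsplit-m2}. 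Each such term carries $z^{-3/2}$, the $s=1$ power of $q$, and its $q^{120}$-bracket. These pieces are indispensable. In the $2/3$ case they are exactly the terms that cancel the Weyl--Kac term for $r=0,2$ in Proposition~\ref{proposition:masterThetaIdentitypP38m1ell2rPlus1}. Here they must likewise enter the collapse to $\Theta^{(5,12)}_{1,2r+1}(q)$. Without them, the theta identity you would try to verify by a Sturm bound is false.

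\textbf{Smaller corrections.} Only the four $s=0$ pairs become mock theta functions. The splitting you need goes from nome $q^5$ to nome $q^{20}$, not to $q^{40}$ or $q^{80}$. Apply Corollary~\ref{corollary:msplit-m2} with $q\to q^5$ to the nome-$q^5$ Appell forms of $\chi_{10}(q)$ and $X_{10}(q)$, whose first arguments are $-q$ and $-q^2$. Up to theta quotients this gives
\[
m(-q^7,q^{10};q^{20})-q^{-1}m(-q^3,q^{10};q^{20})
\quad\text{and}\quad
m(-q^9,q^{10};q^{20})-q^{-2}m(-q,q^{10};q^{20}).
\]
These are exactly the $s=0$, $m=1,2$ combinations at $z^2=-q^5$. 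The $m=4,3$ combinations are $1$ minus these, by \eqref{equation:mxqz-fnq-x} and \eqref{equation:mxqz-flip}; this is the analogue of Proposition~\ref{proposition:alternat3rdAppellFormsLvl23}. No limiting argument is needed on the Weyl--Kac side, since $j(iq^{5/2};q)\neq0$. Once the $s=1$ limits are included, finishing with a Sturm-bound check for each $r$, rather than the paper's hand manipulations of theta functions, is a legitimate alternative.
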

Using the cross-spin identity in Theorem \ref{theorem:crossSpin-j-Odd}, we can obtain
\begin{corollary} (new!) \label{corollary:newMockThetaIdentitiespP512m3ell2rPlus1} 
For $(p,p^{\prime})=(5,12)$, $(m,\ell)=(2,2r)$, $r\in\{0,1,2,3,4\}$, we have
{\allowdisplaybreaks \begin{align*}
(q)_{\infty}^3\mathcal{C}_{3,2r+1}^{(5,12)}(q)
&=  q^{15-5r} \Theta_{1,2(4-r)+1}^{(5,12)}(q)\\
& \quad  - q^{11-5r} \times\Big (  j(-q^{22+10r};q^{120}) 
 -q^{8+8r}j(-q^{2-10r};q^{120})\Big )
  \times \Big ( -1+ q^{-1} +\frac{1}{2}\chi_{10}(q) \Big )\\
&  \quad  +q^{8-3r}
\times\Big (  j(-q^{34+10r};q^{120}) 
 -q^{6+6r}j(-q^{14-10r};q^{120})\Big )
  \times \Big ( -1 + q^{-2}+\frac{1}{2}X_{10}(q)\Big ) \\
& \quad  -q^{6-2r}
\times \Big (  j(-q^{46+10r};q^{120}) 
 -q^{4+4r}j(-q^{26-10r};q^{120})\Big )
  \times \Big ( q^{-3}-\frac{1}{2}X_{10}(q) \Big )\\
&   \quad +q^{5-r} \times\Big (  j(-q^{58+10r};q^{120}) 
 -q^{2+2r}j(-q^{38-10r};q^{120})\Big )
  \times \Big (  q^{-4}-\frac{1}{2}\chi_{10}(q) \Big ),
\end{align*} }%
where $\Theta_{1,2r+1}^{(5,12)}(q)$ is as in Theorem \ref{theorem:newMockThetaIdentitiespP512m1ell2rPlus1}. 
\end{corollary}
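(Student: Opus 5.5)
The statement is Corollary \ref{corollary:newMockThetaIdentitiespP512m3ell2rPlus1}. The plan is to derive it directly from Theorem \ref{theorem:newMockThetaIdentitiespP512m1ell2rPlus1} via the cross-spin identity, Theorem \ref{theorem:crossSpin-j-Odd}. Note that although the hypothesis line says $(m,\ell)=(2,2r)$, the identity concerns $\mathcal{C}_{3,2r+1}^{(5,12)}$, so it should be read as $(m,\ell)=(3,2r+1)$.

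\textbf{Step 1: specialize the cross-spin identity.} Take $p=5$, $j=2$, so $2p+j=12$. Choose $i=2$, so that $2i-1=3$ and $2i-1-j=1$. Replace $r$ by $r+1$, so that $2r-1$ becomes $2r+1$. The partner spin is then $2p-2(r+1)+j-1=2(4-r)+1$, and $(-1)^{p+1}=1$. The identity becomes
\begin{equation*}
(q)_\infty^3\mathcal{C}_{3,2r+1}^{(5,12)}(q)=q^{5(1-r)+10}(q)_\infty^3\mathcal{C}_{1,2(4-r)+1}^{(5,12)}(q)+S_r(q),
\end{equation*}
where $S_r(q)$ is the explicit sum over $m=1,\dots,4$ of theta differences with modulus $q^{120}$. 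The prefactor is $q^{15-5r}$, which matches the coefficient in front of $\Theta_{1,2(4-r)+1}^{(5,12)}$ in the statement.

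\textbf{Step 2: insert Theorem \ref{theorem:newMockThetaIdentitiespP512m1ell2rPlus1} with $r\mapsto 4-r$.} Each theta pair $j(-q^{a+10(4-r)};q^{120})-q^{b(5-r)}j(-q^{c-10(4-r)};q^{120})$ must be matched with one of the four pairs in the statement, which carry exponents $22+10r$, $34+10r$, $46+10r$, $58+10r$. To do this, apply $j(x;q)=j(q/x;q)$ and the quasi-periodicity $j(q^nx;q)=(-1)^nq^{-\binom n2}x^{-n}j(x;q)$ inside modulus $q^{120}$. For example, $j(-q^{22+10(4-r)};q^{120})=j(-q^{62-10r};q^{120})=j(-q^{58+10r};q^{120})$. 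So the substitution $r\mapsto4-r$ permutes the four pairs, reversing their order, up to a power of $q$ and an overall sign. Under this permutation the coefficient $\tfrac12\chi_{10}$ attached to the first pair lands on the fourth pair, and likewise for $X_{10}$. This accounts for the $\pm\frac12\chi_{10}(q)$ and $\pm\frac12X_{10}(q)$ terms in the statement.

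\textbf{Step 3: absorb $S_r(q)$.} Rewrite each $m$-summand of $S_r$, again by quasi-periodicity, as a monomial times one of the same four theta pairs; the $m$-th term should land on the $(5-m)$-th or $m$-th pair. These monomials, combined with the constants $1$ coming from Step 2, should produce exactly the rational prefactors $-1+q^{-1}$, $-1+q^{-2}$, $q^{-3}$ and $q^{-4}$ inside the parentheses.

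\textbf{Main obstacle.} The difficulty is exponent bookkeeping: tracking the signs and the $q^{-\binom n2}x^{-n}$ factors so that each theta pair matches exactly. I would first check one value, say $r=0$, by expanding both sides as $q$-series to a few terms to fix sign conventions. I would then carry out the general-$r$ computation one pair at a time.
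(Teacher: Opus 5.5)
Your plan is the paper's route. The paper omits this proof as routine, but the corollary is meant to follow exactly as in the $2/3$-level case: apply the cross-spin identity with $(p,j,i)=(5,2,2)$ and $r\mapsto r+1$, then insert Theorem~\ref{theorem:newMockThetaIdentitiespP512m1ell2rPlus1} at $4-r$. You are also right that the hypothesis should read $(m,\ell)=(3,2r+1)$.

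One caveat concerns the bookkeeping, which is cleaner than you expect but does not reproduce the printed statement. Write $P_1,\dots,P_4$ for the four theta pairs in the displayed order. Then $P_k(4-r)=P_{5-k}(r)$ holds exactly, with no extra sign or power of $q$. The $m$-th cross-spin summand is $(-1)^m q^{15-5r+\binom{m+1}{2}-6m+mr}P_m$, so it lands on the $m$-th pair, and
\[
S_r(q)=-q^{10-4r}P_1+q^{6-3r}P_2-q^{3-2r}P_3+q^{1-r}P_4 .
\]
This reproduces the $P_2$, $P_3$ and $P_4$ terms of the statement. The coefficient of $P_1$, however, comes out as
\[
q^{11-4r}\Big(1-\tfrac12\chi_{10}(q)\Big)-q^{10-4r}=-q^{11-4r}\Big(-1+q^{-1}+\tfrac12\chi_{10}(q)\Big).
\]
So the printed $q^{11-5r}$ is a misprint for $q^{11-4r}$. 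This has to be so, because in the polar-finite decomposition the $m$-th pair always carries the $r$-dependence $q^{-(p-m)r}$.

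The two exponents agree at $r=0$, so your proposed $r=0$ series check would not catch this. Keep $r$ general, or test $r=1$.
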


\section{An overview of the paper}
What is new in this paper is the polar-finite decomposition for the admissible-level $A_{1}^{(1)}$ character of odd spin, and the new mock theta conjecture-like identities for the odd-spin, $2/3$-level and $2/5$-level $A_{1}^{(1)}$-string functions.  For brevity, we will omit the proofs for the $2/5$-level results in Theorem \ref{theorem:newMockThetaIdentitiespP512m1ell2rPlus1} and Corollary \ref{corollary:newMockThetaIdentitiespP512m3ell2rPlus1}; there is nothing new.

\smallskip
In Section \ref{section:technicalPrelim}, we review theta functions, Appell functions, odd-spin quasi-periodicity, and the Weyl--Kac formula.  In Section \ref{section:alternateAppellForms}, we review Appell function forms of the relevant second and third-order mock theta function, and we introduce some new Appell function forms.  In Section \ref{section:fryeGarvan}, we introduce two families of theta function identities need to prove the identities for the odd-spin, $2/3$-level string functions in Theorems \ref{theorem:newMockThetaIdentitiespP38m1ell2rPlus1} and \ref{theorem:newMockThetaIdentitiespP38m1ell2rPlus1Alt}.  In Section \ref{section:polarFiniteOddSpin}, we find the polar-finite decomposition for odd-spin admissible $A_{1}^{(1)}$ characters.  In Section 
\ref{section:mockTheta12-level}, respectively \ref{section:mockTheta13-level}, we use the new odd-spin polar-finite decomposition to give new a proof the mock theta conjecture-like identities in Corollary \ref{corollary:newMockThetaIdentitiespP25m1ell2rPlus1}, respectively \ref{corollary:newMockThetaIdentitiespP37m1ell2rPlus1}.  In Section \ref{section:crossSpinExamples}, we confirm the results of Sections \ref{section:mockTheta12-level} and \ref{section:mockTheta13-level} by using the cross-spin identity Theorem \ref{theorem:crossSpin-j-Odd}.  In Section \ref{section:mockTheta23-level}, we use our new odd-spin polar-finite decomposition to  prove the odd-spin, $2/3$-level string function identities in Theorems \ref{theorem:newMockThetaIdentitiespP38m1ell2rPlus1} and \ref{theorem:newMockThetaIdentitiespP38m1ell2rPlus1Alt}.  Using the results of these two theorems, we use our cross-spin identity to prove the string function identities in Corollaries \ref{corollary:newMockThetaIdentitiespP38m3ell2rPlus1} and \ref{corollary:newMockThetaIdentitiespP38m3ell2rPlus1Alt}.


\section{Technical preliminaries}
\label{section:technicalPrelim}

We will frequently use the following product rearrangements:
\begin{subequations}
\begin{gather}
\overline{J}_{0,1}=2\overline{J}_{1,4}=\frac{2J_2^2}{J_1},  \overline{J}_{1,2}=\frac{J_2^5}{J_1^2J_4^2},   J_{1,2}=\frac{J_1^2}{J_2},   \overline{J}_{1,3}=\frac{J_2J_3^2}{J_1J_6}, \notag\\
J_{1,4}=\frac{J_1J_4}{J_2},   J_{1,6}=\frac{J_1J_6^2}{J_2J_3},   \overline{J}_{1,6}=\frac{J_2^2J_3J_{12}}{J_1J_4J_6}.\notag
\end{gather}
\end{subequations}

\noindent Following from the definitions are the general identities:
\begin{subequations}
{\allowdisplaybreaks \begin{gather}
j(q^n x;q)=(-1)^nq^{-\binom{n}{2}}x^{-n}j(x;q), \ \ n\in\mathbb{Z},\label{equation:j-elliptic}\\
j(x;q)=j(q/x;q)=-xj(x^{-1};q)\label{equation:j-flip},\\
j(x;q)={J_1}j(x,qx,\dots,q^{n-1}x;q^n)/{J_n^n} \ \ {\text{if $n\ge 1$,}}\label{equation:1.10}\\
j(x;-q)={j(x;q^2)j(-qx;q^2)}/{J_{1,4}},\label{equation:1.11}\\
j(x^n;q^n)={J_n}j(x,\zeta_nx,\dots,\zeta_n^{n-1}x;q^n)/{J_1^n} \ \ {\text{if $n\ge 1$.}}\label{equation:1.12}\\
j(z;q)=\sum_{k=0}^{m-1}(-1)^k q^{\binom{k}{2}}z^k
j\big ((-1)^{m+1}q^{\binom{m}{2}+mk}z^m;q^{m^2}\big ),\label{equation:jsplit}\\
j(qx^3;q^3)+xj(q^2x^3;q^3)=j(-x;q)j(qx^2;q^2)/J_2={J_1j(x^2;q)}/{j(x;q)},\label{equation:quintuple}
\end{gather}}%
\end{subequations}
\noindent  where identity (\ref{equation:quintuple}) is the quintuple product identity.   For later use, we state the $m=2$ specialization of (\ref{equation:jsplit})
\begin{equation}
j(z;q)=j(-qz^2;q^4)-zj(-q^3z^2;q^4).\label{equation:jsplit-m2}
\end{equation}
We then recall \cite[Theorem 1.1]{H1}
\begin{equation}
j(x;q)j(y;q)
=j(-xy;q^{2})j(-qx^{-1}y;q^{2})-xj(-qxy;q^{2})j(-x^{-1}y;q^{2}),
\label{equation:H1Thm1.1}
\end{equation}
as well as the more general Weierstrauss three-term relation.  Let us define
\begin{equation*}
j(x_1,x_2,\dots,x_n;q)=j(x_1;q)j(x_2;q)\cdots j(x_n;q),
\end{equation*}
then for generic non-zero $a,b,c,d\in \mathbb{C}$,
\begin{align}
j(ac,a/c,bd,b/d;q)=j(ad,a/d,bc,b/c;q)+b/c \cdot j(ab,a/b,cd,c/d;q).
\label{equation:Weierstrauss}
\end{align}

We will need to use a few elementary properties of Appell functions \cite[Section 3]{HM}, \cite{Zw2}.
\begin{proposition}  For generic $x,z\in \mathbb{C}^*$
{\allowdisplaybreaks \begin{subequations}
\begin{gather}
m(x,z;q)=m(x,qz;q),\label{equation:mxqz-fnq-z}\\
m(x,z;q)=x^{-1}m(x^{-1},z^{-1};q),\label{equation:mxqz-flip}\\
m(qx,z;q)=1-xm(x,z;q).\label{equation:mxqz-fnq-x}
\end{gather}
\end{subequations}}
\end{proposition}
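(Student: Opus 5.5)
The plan is to derive all three identities directly from the definition \eqref{equation:m-def}. We combine the elliptic transformation \eqref{equation:j-elliptic} and the flip \eqref{equation:j-flip} for the prefactor $1/j(z;q)$ with a shift of the summation index $r$ in the Lambert-type sum. The Jacobi triple product \eqref{equation:JTPid} enters only for \eqref{equation:mxqz-fnq-x}. Throughout, $x,z$ are generic, so no denominator vanishes and all the series converge absolutely, which justifies reindexing.

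For \eqref{equation:mxqz-fnq-z}, replace $z$ by $qz$. The prefactor becomes $1/j(qz;q)=-z/j(z;q)$ by \eqref{equation:j-elliptic} with $n=1$. The summand becomes $(-1)^rq^{\binom{r}{2}+r}z^r/(1-q^{r}xz)$. Substituting $r\mapsto r-1$ and using $\binom{r-1}{2}+r-1=\binom{r}{2}$, the sum equals $-z^{-1}$ times the original sum. The two factors $-z$ and $-z^{-1}$ cancel, which gives the identity.

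For \eqref{equation:mxqz-fnq-x}, first use \eqref{equation:mxqz-fnq-z} to write $m(qx,z;q)=m(qx,q^{-1}z;q)$. This series has the same denominators $1-q^{r-1}xz$ as $m(x,z;q)$, with numerators $(-1)^rq^{\binom r2}q^{-r}z^r$. Its prefactor is $1/j(q^{-1}z;q)=-qz^{-1}/j(z;q)$ by \eqref{equation:j-elliptic} with $n=-1$. Adding $x\,m(x,z;q)$, the combined numerator of the $r$-th term is
\[
(-1)^rq^{\binom r2}z^r\bigl(-q^{1-r}z^{-1}+x\bigr)=-(-1)^rq^{\binom r2}q^{1-r}z^{r-1}\bigl(1-q^{r-1}xz\bigr).
\]
This cancels the denominator exactly, so what remains is
\[
\frac{-1}{j(z;q)}\sum_{r\in\Z}(-1)^rq^{\binom r2-r+1}z^{r-1}.
\]
Since $\binom r2-r+1=\binom{r-1}{2}$, the substitution $r\mapsto r+1$ turns this sum into $-\sum_r(-1)^rq^{\binom r2}z^r=-j(z;q)$ by \eqref{equation:JTPid}. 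Hence $m(qx,z;q)+x\,m(x,z;q)=1$, which is \eqref{equation:mxqz-fnq-x}.

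For \eqref{equation:mxqz-flip}, expand $x^{-1}m(x^{-1},z^{-1};q)$. The prefactor is $1/j(z^{-1};q)=-z^{-1}/j(z;q)$ by \eqref{equation:j-flip}. In each term, rewrite $1/(1-w^{-1})=-w/(1-w)$ with $w=q^{1-r}xz$. Then reindex by $r=2-s$, so that the new denominator is $1-q^{s-1}xz$ and $\binom{2-s}{2}=\binom{s-1}{2}$. The remaining work is to collect the powers of $q$, $z$ and $x$. The factor $x$ coming from $w$ cancels the $x^{-1}$ in front. The exponent should then reduce to $\binom{s}{2}$ and the power of $z$ to $z^{s}$, reproducing the sum for $m(x,z;q)$.

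This last bookkeeping is the only step where care is needed, because of the signs and the $q$-exponents. If the direct reindexing proves messy, I would fall back on writing \eqref{equation:m-def} symmetrically, using $r\mapsto 1-r$ together with \eqref{equation:mxqz-fnq-z}, to absorb any leftover power of $q$ into a shift $z\mapsto qz$.
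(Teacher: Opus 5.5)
Your proofs of \eqref{equation:mxqz-fnq-z} and \eqref{equation:mxqz-fnq-x} are correct and complete. The paper itself gives no proof and only cites \cite[Section 3]{HM} and \cite{Zw2}; working directly from \eqref{equation:m-def}, as you do, is the standard route.

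\eqref{equation:mxqz-flip} has an error, in exactly the step you left unchecked. From \eqref{equation:j-flip} we have $j(z;q)=-z\,j(z^{-1};q)$, so $j(z^{-1};q)=-z^{-1}j(z;q)$. Therefore $1/j(z^{-1};q)=-z/j(z;q)$, not $-z^{-1}/j(z;q)$ as you wrote. With your prefactor, the whole expression is $z^{-2}$ times the true value of $x^{-1}m(x^{-1},z^{-1};q)$. Concretely, the $z$-power of the $r$-th term comes out as $z^{-1}\cdot z^{-r}\cdot z=z^{-r}$, which becomes $z^{s-2}$ after $r=2-s$, not $z^{s}$. No reindexing, and no appeal to \eqref{equation:mxqz-fnq-z} as in your fallback, can remove an overall factor $z^{-2}$, so the argument as written does not close.

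With the correct prefactor it closes in one line. The $r$-th term of $x^{-1}m(x^{-1},z^{-1};q)$ is
\[
x^{-1}\,\frac{-z}{j(z;q)}\cdot\frac{(-1)^rq^{\binom r2}z^{-r}\,\bigl(-q^{1-r}xz\bigr)}{1-q^{1-r}xz}
=\frac{1}{j(z;q)}\cdot\frac{(-1)^rq^{\binom r2+1-r}z^{2-r}}{1-q^{1-r}xz}.
\]
Substituting $r=2-s$ and using $\binom{2-s}{2}+s-1=\binom{s-1}{2}+s-1=\binom{s}{2}$ turns this into
\[
\frac{1}{j(z;q)}\cdot\frac{(-1)^sq^{\binom s2}z^{s}}{1-q^{s-1}xz},
\]
which is exactly the $s$-th term of $m(x,z;q)$. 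Replace your prefactor and write out this computation explicitly, rather than asserting that the exponents ``should'' reduce.
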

We have the well-known changing-$z$ property:
\begin{theorem} \label{theorem:changing-z-theorem}  For generic $x,z_0,z_1\in \mathbb{C}^*$
\begin{equation}
m(x,z_1;q)-m(x,z_0;q)=\Psi(x,z_1,z_0;q),
\end{equation}
where
\begin{equation}
\ \Psi(x,z_1,z_0;q):=\frac{z_0J_1^3j(z_1/z_0;q)j(xz_0z_1;q)}{j(z_0;q)j(z_1;q)j(xz_0;q)j(xz_1;q)}.\label{equation:PsiDef}
\end{equation}
\end{theorem}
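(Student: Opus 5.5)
The plan is the standard Liouville-type argument in the variable $z_1$, with $x$ and $z_0$ fixed and generic. Set
\begin{equation*}
F(z_1):=m(x,z_1;q)-m(x,z_0;q)-\Psi(x,z_1,z_0;q),
\end{equation*}
and show that $F$ is an elliptic function of $z_1$ (multiplicative period $q$) with no poles. Then $F$ is constant, and evaluating at $z_1=z_0$ shows the constant is zero. At $z_1=z_0$ the two Appell terms cancel, and $\Psi$ vanishes because of the factor $j(z_1/z_0;q)=j(1;q)=0$ in its numerator.

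\emph{Step 1 (periodicity).} By (\ref{equation:mxqz-fnq-z}), $m(x,qz_1;q)=m(x,z_1;q)$, so it suffices to check that $\Psi(x,qz_1,z_0;q)=\Psi(x,z_1,z_0;q)$. Apply (\ref{equation:j-elliptic}) with $n=1$ to each of the four factors of $\Psi$ that involve $z_1$.
\begin{itemize}
\item The numerator factors $j(z_1/z_0)$ and $j(xz_0z_1)$ pick up $-z_0/z_1$ and $-1/(xz_0z_1)$.
\item The denominator factors $j(z_1)$ and $j(xz_1)$ pick up $-1/z_1$ and $-1/(xz_1)$.
\end{itemize}
These multipliers cancel exactly, so $F$ is $q$-periodic in $z_1$.

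\emph{Step 2 (poles).} By periodicity, it is enough to examine one representative of each pole class in $z_1$. The possible poles are at $z_1\in q^{\Z}$ (from $1/j(z_1;q)$ in both $m$ and $\Psi$) and at $xz_1\in q^{\Z}$ (from the $r=1$ term $1/(1-xz_1)$ of the sum in (\ref{equation:m-def}), and from $1/j(xz_1;q)$ in $\Psi$). For generic $x,z_0$ these are simple poles, and there are no others.
\begin{itemize}
\item At $z_1=1$: use $j(z_1;q)\sim (1-z_1)J_1^3$. Together with $j(1/z_0)=-z_0^{-1}j(z_0)$, the polar part of $\Psi$ is $-1/\bigl(j(x;q)(1-z_1)\bigr)$. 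The polar part of $m(x,z_1;q)$ is $\bigl(J_1^3(1-z_1)\bigr)^{-1}\sum_r(-1)^rq^{\binom r2}/(1-q^{r-1}x)$. After reindexing, the partial fraction expansion (\ref{equation:jacobiThetaReciprocal}) shows this sum equals $-J_1^3/j(x;q)$, so the two polar parts agree.
\item At $z_1=x^{-1}$: the $r=1$ summand of $m$ gives polar part $-xz_1/\bigl(j(z_1)(1-xz_1)\bigr)$, which at this point is $-1/\bigl(j(x^{-1})(1-xz_1)\bigr)$. For $\Psi$, use $j(xz_1)\sim(1-xz_1)J_1^3$. The remaining factors $j(1/(xz_0))$ and $j(z_0)$ combine via (\ref{equation:j-flip}), and one checks the polar parts match.
\end{itemize}
Hence $F$ is an entire elliptic function, so it is constant, and Step 0 gives $F\equiv 0$.

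\emph{Expected obstacle.} The only delicate point is the residue bookkeeping in Step 2: matching the reindexed partial-fraction sum at $z_1=1$, and tracking signs and powers of $x$ and $z_0$ through (\ref{equation:j-flip}) at $z_1=x^{-1}$. I would do the $z_1=1$ case first, since it uses (\ref{equation:jacobiThetaReciprocal}) directly. If the second residue becomes messy, an alternative is to use the symmetry $m(x,z;q)=x^{-1}m(x^{-1},z^{-1};q)$ from (\ref{equation:mxqz-flip}), which interchanges the two pole families and so reduces the second check to a case like the first.
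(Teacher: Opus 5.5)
The paper does not prove this statement. It quotes the changing-$z$ theorem as a well-known property from the work of Hickerson--Mortenson and Zwegers. Your Liouville argument in $z_1$ is a valid self-contained proof of the standard kind, trading the citation for some residue bookkeeping. Step~1 and the check at $z_1=1$ are correct as written. The multipliers combine to $1/(xz_1^2)$ in both numerator and denominator. The reindexed partial-fraction sum is $-J_1^3/j(x;q)$, so both polar parts equal $-1/\bigl(j(x;q)(1-z_1)\bigr)$.

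There is a slip at $z_1=x^{-1}$. The $r=1$ summand in (\ref{equation:m-def}) is $-z_1/(1-xz_1)$, not $-xz_1/(1-xz_1)$. So the polar part of $m(x,z_1;q)$ there is $-x^{-1}/\bigl(j(x^{-1};q)(1-xz_1)\bigr)$. With your stated value $-1/\bigl(j(x^{-1};q)(1-xz_1)\bigr)$, the comparison would fail by a factor of $x$. With the corrected value it closes: by (\ref{equation:j-flip}), the polar part of $\Psi$ is $z_0\,j(1/(xz_0);q)/\bigl(j(x^{-1};q)j(xz_0;q)(1-xz_1)\bigr)=-x^{-1}/\bigl(j(x^{-1};q)(1-xz_1)\bigr)$.

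Your proposed fallback would not help. The symmetry (\ref{equation:mxqz-flip}) sends $(x,z)\mapsto(x^{-1},z^{-1})$, which preserves each of the families $z\in q^{\Z}$ and $xz\in q^{\Z}$ rather than swapping them. The symmetry that does swap them is Corollary~\ref{corollary:mxqz-flip-xz}, which the paper derives from this very theorem, so using it would be circular.

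If you want to skip the second computation, use the residue theorem on $\mathbb{C}^*/q^{\Z}$ instead. The residues of the $q$-invariant differential $F(z_1)\,dz_1/z_1$ over a fundamental annulus sum to zero. Since $F$ has at most simple poles, in the two classes $z_1\equiv 1$ and $z_1\equiv x^{-1}$, vanishing of the residue at $z_1=1$ forces vanishing at $z_1=x^{-1}$ as well.
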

The changing-$z$ theorem has the following immediate corollary
\begin{corollary} \label{corollary:mxqz-flip-xz} We have
\begin{equation}
m(x,q,z)=m(x,x^{-1}z^{-1};q).
\end{equation}
\end{corollary}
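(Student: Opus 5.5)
The statement is to be read as $m(x,z;q)=m(x,x^{-1}z^{-1};q)$. I would derive it directly from Theorem \ref{theorem:changing-z-theorem}, by choosing the two $z$-arguments so that one theta factor in the numerator of $\Psi$ vanishes.

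First, take $z_0:=z$ and $z_1:=x^{-1}z^{-1}$ in Theorem \ref{theorem:changing-z-theorem}. This gives
\begin{equation*}
m(x,x^{-1}z^{-1};q)-m(x,z;q)=\Psi(x,x^{-1}z^{-1},z;q),
\end{equation*}
and it remains to show that this $\Psi$ is zero.

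Next, evaluate the factors in the definition \eqref{equation:PsiDef}. In the numerator we have $xz_0z_1=x\cdot z\cdot x^{-1}z^{-1}=1$, so the factor $j(xz_0z_1;q)$ becomes $j(1;q)$. This is zero, because $j(x;q)=(x)_\infty(q/x)_\infty(q)_\infty$ contains the factor $(1-x)$. The denominator is
\begin{equation*}
j(z;q)\,j(x^{-1}z^{-1};q)\,j(xz;q)\,j(z^{-1};q).
\end{equation*}
By \eqref{equation:j-flip}, $j(x^{-1}z^{-1};q)=-(xz)^{-1}j(xz;q)$ and $j(z^{-1};q)=-z^{-1}j(z;q)$. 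Hence the denominator is a nonzero multiple of $j(z;q)^2j(xz;q)^2$, which is nonzero for generic $x,z$ (that is, $z,xz\notin q^{\Z}$). Therefore $\Psi=0$, which gives the identity for generic $x,z$.

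The only delicate point is the genericity hypothesis. One must check that the specialization $z_1=x^{-1}z^{-1}$ does not force a zero in the denominator of $\Psi$, nor hit a pole of either Appell function. The computation above shows that the only vanishing factor is in the numerator. Both sides are meromorphic in $z$, so once the identity holds on a dense set it holds wherever both sides are defined, and no further work is needed.
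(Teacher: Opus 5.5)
Your proposal is correct, and it is exactly the argument the paper intends. The paper gives no separate proof and calls this an immediate corollary of Theorem~\ref{theorem:changing-z-theorem}; setting $z_0=z$, $z_1=x^{-1}z^{-1}$ makes the numerator factor $j(xz_0z_1;q)=j(1;q)$ vanish while the denominator stays nonzero for generic $x,z$. Your reading of the statement as $m(x,z;q)=m(x,x^{-1}z^{-1};q)$, correcting the typo, is also right.
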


For future use, we the specialization $n=2$ of Theorem 3.5 of \cite{HM}.  In a sense, \cite[Theorem 3.5]{HM} is a generalization of Theorem \ref{theorem:changing-z-theorem}. 
\begin{corollary}\label{corollary:msplit-m2} We have
{\allowdisplaybreaks \begin{align*}
m(&x,z;q) =  m\big({-}q x^2, z_1;q^{4} \big) - q^{-1} x m\big({-}q^{-1} x^2, z_1;q^{4} \big)\\
& +z_1 \frac{J_2^3}{j(xz;q) j(z_1;q^{4})} \left [  
\frac{ j\big({-}q x^2 zz_1;q^2\big) j(z^2/z_1;q^{4})}
{ j\big({-}q x^2z_1;q^{2}\big ) j\big ( z;q^2\big )}
-\frac{ xz j\big({-}q^{2} x^2 zz_1;q^2\big)j(q^{2} z^2/z_1;q^{4})}
{ j\big({-}q x^2z_1;q^{2}\big ) j\big ( q z;q^2\big )}
\right ].
\end{align*}}%
\end{corollary}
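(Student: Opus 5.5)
The plan is to specialize \cite[Theorem 3.5]{HM} directly. That result states that for generic $x,z,z'\in\mathbb{C}^*$ and $n\ge 1$,
\begin{align*}
m(x,z;q)&=\sum_{r=0}^{n-1}q^{-\binom{r+1}{2}}(-x)^r m\big(-q^{\binom{n}{2}-nr}(-x)^n,z';q^{n^2}\big)\\
&\quad +\frac{z'J_n^3}{j(xz;q)j(z';q^{n^2})}\sum_{r=0}^{n-1}
\frac{q^{\binom{r}{2}}(-xz)^r j\big(-q^{\binom{n}{2}+r}(-x)^nzz';q^n\big)j(q^{nr}z^n/z';q^{n^2})}{j\big(-q^{\binom{n}{2}}(-x)^nz';q^n\big)j(q^rz;q^n)}.
\end{align*}
We set $n=2$ and $z'=z_1$, and check the result term by term.

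\emph{The Appell part.} We have $(-x)^2=x^2$ and $\binom{2}{2}=1$. For $r=0$ the first sum gives $m(-qx^2,z_1;q^4)$. For $r=1$ the prefactor is $q^{-1}(-x)$ and the argument is $-q^{1-2}x^2=-q^{-1}x^2$, so it gives $-q^{-1}x\,m(-q^{-1}x^2,z_1;q^4)$.

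\emph{The theta part.} The common prefactor becomes $z_1J_2^3/(j(xz;q)j(z_1;q^4))$, and both summands share the denominator factor $j(-qx^2z_1;q^2)$.
\begin{itemize}
\item For $r=0$ the summand is $j(-qx^2zz_1;q^2)\,j(z^2/z_1;q^4)/j(z;q^2)$.
\item For $r=1$ the weight is $q^{0}(-xz)=-xz$. The remaining factors are $j(-q^2x^2zz_1;q^2)$, $j(q^2z^2/z_1;q^4)$ and $1/j(qz;q^2)$.
\end{itemize}
This reproduces the bracket in the stated corollary exactly.

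The only place an error could creep in is the sign and $q$-power bookkeeping: the $(-x)^r$ factors, the $q^{-\binom{r+1}{2}}$ and $q^{\binom{r}{2}}$ weights, and the exponent $\binom{n}{2}-nr$. I would recheck these carefully.

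As an independent sanity check, and as a fallback if one prefers not to rely on the general theorem, the identity can be derived directly. Write $m(x,z;q)$ via \eqref{equation:m-def} and split the sum over $r$ by parity. The even and odd parts become Appell-type sums in base $q^4$ with $x\mapsto -qx^2$ and $-q^{-1}x^2$. However, their $z$-arguments are forced to equal $z^2$ times a power of $q$, rather than the free $z_1$.

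Theorem \ref{theorem:changing-z-theorem} then moves each of these to the free parameter $z_1$. The two resulting $\Psi$ terms must then be combined into the displayed bracket using \eqref{equation:1.10}, \eqref{equation:j-elliptic} and \eqref{equation:H1Thm1.1}. This theta simplification is the main obstacle of the direct route.

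A cheaper way to confirm that route is to compare both sides as functions of $z_1$. They have the same poles, at $j(z_1;q^4)=0$ and $j(-qx^2z_1;q^2)=0$, with matching residues, and the same quasi-periodicity under $z_1\mapsto q^4z_1$. Equality at a single convenient value of $z_1$ then finishes it.
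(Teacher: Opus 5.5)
Your proposal matches the paper, which likewise obtains this corollary directly as the $n=2$ specialization of \cite[Theorem 3.5]{HM}. Your statement of that theorem is correct, and your term-by-term check reproduces the displayed identity exactly: the $(-x)^r$ signs, the weights $q^{-\binom{r+1}{2}}$ and $q^{\binom{r}{2}}$, and the exponents $\binom{n}{2}-nr$ and $\binom{n}{2}+r$.
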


We recall the quasi-periodic relations for odd-spin admissible-level string functions.   We will need this result for the proof of Theorem 
\ref{theorem:generalPolarFiniteOddSpin}. 

\begin{theorem}\label{theorem:generalQuasiPeriodicityOddSpin}\cite[Theorem 2.1]{KM26}
For $(p,p^{\prime})=(p,2p+j)$, we have the quasi-periodic relation for odd spin 
{\allowdisplaybreaks \begin{align*}
& (q)_{\infty}^{3}C_{2jt+2s+1,2r+1}^{(p,2p+j)}(q)
 -(q)_{\infty}^{3}C_{2s+1,2r+1}^{(p,2p+j)}(q)\\
& \ = (-1)^{p}q^{-\frac{1}{8}+\frac{p(2r+2)^2}{4(2p+j)}}q^{\binom{p+1}{2}-p(r+1-s)-\frac{p}{4j}(2s+1)^2}
\sum_{i=1}^{t}q^{-2pj\binom{i}{2}-p(2s+1)i}\\
&\quad \times \sum_{m=1}^{p-1}(-1)^{m}q^{\binom{m+1}{2}+m(r-p)}
\left (q^{m(ji+s-j+1)} -q^{-m(ji+s)}\right ) \\
&\quad  \times\Big (  j(-q^{m(2p+j)+p(2r+2)};q^{2p(2p+j)}) 
 -q^{m(2p+j)-m(2r+2)}j(-q^{-m(2p+j)+p(2r+2)};q^{2p(2p+j)})\Big ).
\end{align*}}
\end{theorem}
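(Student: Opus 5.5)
The plan is to work from the Hecke double-sum form \eqref{equation:modStringFnHeckeForm}, with $(p,p')=(p,2p+j)$, $\ell=2r+1$ and quantum number $2s+1$. I will prove the case $t=1$, that is, the shift $m\mapsto m+2j$, and obtain general $t$ by telescoping. Writing $2jt+2s+1$ as the result of $t$ successive shifts by $2j$, the difference $C_{2jt+2s+1}-C_{2s+1}$ is the sum over $i=1,\dots,t$ of the one-step differences $C_{2ji+2s+1}-C_{2j(i-1)+2s+1}$. The outer sum $\sum_{i=1}^{t}$ with weight $q^{-2pj\binom{i}{2}-p(2s+1)i}$ should come out exactly this way. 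So the core task is a single-step identity with $s$ replaced by $s+j(i-1)$.

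For the single step, the shift $m\mapsto m+2j$ multiplies the $x$-argument of each $f_{1,p',2pp'}$ in \eqref{equation:modStringFnHeckeForm} by $q^{j}$; the $y$-arguments are unchanged.

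1. I will use the building-block functional equations for $f_{a,b,c}$ from \cite[Section 6]{HM}. With $a=1$, re-indexing $r\mapsto r+1$ in \eqref{equation:fabc-def2} relates $f_{1,b,c}(x,y)$ to $-x\,f_{1,b,c}(qx,q^{b}y)$ plus a single-variable theta series in $s$ of modulus $c=2pp'$.
2. Iterating this $j$ times, and combining it with the companion relation that shifts $y$ by the appropriate power of $q$ (from $s\mapsto s+1$), returns to $f_{1,p',2pp'}(q^{j}x,y)$. The error is a finite sum of theta functions $j(\,\cdot\,;q^{2pp'})$.
3. The normalization $q^{-s_\lambda}$ from \eqref{equation:mathCalCtoStringC} must be tracked throughout. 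It produces the prefactor $q^{-\frac18+\frac{p(2r+2)^2}{4(2p+j)}-\frac{p}{4j}(2s+1)^2}$ and the $\binom{p+1}{2}$-type exponents.

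Next I will organize the resulting theta terms according to the residue of the summation index modulo $p$. Residues $m\equiv 0$ should cancel between the two double sums in \eqref{equation:modStringFnHeckeForm}, that is, between spin $\ell+1$ and $-(\ell+1)$, and this leaves $\sum_{m=1}^{p-1}$. For $1\le m\le p-1$, the theta functions in $q^{2pp'}$ coming from the two $f$'s should pair into
\[
j(-q^{m(2p+j)+p(2r+2)};q^{2p(2p+j)})-q^{m(2p+j)-m(2r+2)}j(-q^{-m(2p+j)+p(2r+2)};q^{2p(2p+j)}).
\]
The pairing will use \eqref{equation:j-elliptic} and \eqref{equation:j-flip}, and \eqref{equation:jsplit} to pass between moduli where needed. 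The coefficient $q^{m(ji+s-j+1)}-q^{-m(ji+s)}$ should arise from the two boundary contributions (the $x$- and $y$-shift terms) at the $i$-th step.

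The main obstacle is this last bookkeeping step. It requires showing that the two families of boundary terms from the two double sums combine into exactly the displayed product of a binomial in $q^{m}$ and the pair of theta functions, with all the powers of $q$ matching. I would first verify the identity numerically for small cases such as $(p,j)=(2,1)$ and $(3,2)$ with $t=1$, to pin down the signs and exponents. After that I would redo the general computation in the ``building block'' notation of \cite{HM}, which is the notation the odd-spin analogue of \cite[Theorem 2.1]{BoMo2025} naturally follows. The even-spin proof should carry over essentially verbatim, with $2r+1$ replaced by $2r+2$ and $2s$ by $2s+1$.
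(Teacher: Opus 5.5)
Your overall plan is the right one. It is the route behind the result the paper imports from \cite{KM26}: the odd-spin analogue of \cite[Theorem 2.1]{BoMo2025}, obtained from \eqref{equation:modStringFnHeckeForm} and the functional equations of \cite[Section 6]{HM}. That route consists of a single step $M\mapsto M+2j$, telescoping with $S=s+j(i-1)$, the step's multiplier absorbed by $q^{-s_\lambda}$, and cancellation of the residues $\equiv 0\pmod p$. (For $t\le 0$ the same step is read backwards, using the convention \eqref{equation:sumconvention}.) However, step~2, where the proof actually lives, is mis-specified. Write $f=f_{1,p',2pp'}$. The relation coming from $s\mapsto s+1$ is $f(x,y)=-y\,f(q^{p'}x,q^{2pp'}y)+j(x;q)$. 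It moves $x$ as well, so no relation ``shifts $y$'' alone, and $j$ applications of the $x$-relation land at $f(q^{j}x,q^{jp'}y)$, not at $f(q^jx,y)$. In exponent space the two moves are $(1,p')$ and $(p',2pp')$, and $(j,0)=(p',2pp')-2p(1,p')$ because $p'-2p=j$. So you need one $y$-shift and $2p$ \emph{backward} $x$-shifts. Moreover, the boundary term of the $y$-shift is $j(x;q)$, of modulus $q$ rather than $q^{2pp'}$. It disappears only because $x=q^{1+\frac{M+\ell}{2}}$ or $q^{\frac{M-\ell}{2}}$ is an integral power of $q$, a fact your outline never uses. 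For $x\in q^{\mathbb{Z}}$ the correct single step is
\[
f(x,y;q)=-y\,x^{-2p}q^{\binom{2p+1}{2}-2pp'}f(q^{j}x,y;q)+\sum_{n=1}^{2p}(-1)^{n}y\,x^{-n}q^{\binom{n+1}{2}-np'}j(q^{(2p-n)p'}y;q^{2pp'}).
\]
For both double sums its multiplier is $q^{-p(M+j)}=q^{s_\lambda(M+2j)-s_\lambda(M)}$. A simple count shows your version cannot reproduce the right-hand side. Each double sum must supply $2p$ theta terms: two cancel against the other sum and $2(p-1)$ fill the brackets, whereas your $j$ iterations supply only $j$.

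You also misplace the source of the binomial. It does not come from an $x$-boundary term and a $y$-boundary term (the latter is zero); it comes from the two double sums in \eqref{equation:modStringFnHeckeForm}. Put $M=2S+1$, $L=2r+2$ and $E=\binom{p+1}{2}-p(S+r+2)+\binom{m+1}{2}+m(r-p)$. The $n=p-m$ term of the first sum and the $n=p+m$ term of the second both reduce, via \eqref{equation:j-elliptic} and \eqref{equation:j-flip}, to $j(-q^{mp'+pL};q^{2pp'})$. Their coefficients are $(-1)^{p+m+1}q^{E+m(S+1)}$ and $(-1)^{p+m+1}q^{E-m(S+j)}$, so subtracting the two sums produces $q^{m(S+1)}-q^{-m(S+j)}$. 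The $n=p+m$ term of the first sum and the $n=p-m$ term of the second give the partner $-q^{mp'-mL}j(-q^{-mp'+pL};q^{2pp'})$ with the same binomial. The terms $n=p$ and $n=2p$ cancel between the two sums. No change of modulus, and hence no \eqref{equation:jsplit}, is needed. With $S=s+j(i-1)$ the binomial is exactly $q^{m(ji+s-j+1)}-q^{-m(ji+s)}$. Summing over $i$ then yields the prefactors $q^{\binom{p+1}{2}-p(r+1-s)-\frac{p}{4j}(2s+1)^2}$ and $q^{-2pj\binom{i}{2}-p(2s+1)i}$. As written, your proposal postpones precisely this computation and describes its mechanism incorrectly, so it is not yet a proof.
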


We also have a more convenient form of the Weyl--Kac theorem, which we need need in obtaining the mock conjecture-like identities for various string functions.
\begin{proposition} \label{proposition:WeylKac} \cite[Proposition 4.6]{BoMo2025}
We have
\begin{align*}
\chi_{\ell}^{(p,p^{\prime})}(z;q)
&=z^{-\frac{\ell+1}{2}}q^{p\frac{(\ell+1)^2}{4p^{\prime}}}
\frac{j(-q^{p(\ell+1)+pp^{\prime}}z^{-p^{\prime}};q^{2pp^{\prime}})
-z^{\ell+1}j(-q^{-p(\ell+1)+pp^{\prime}}z^{-p^{\prime}};q^{2pp^{\prime}}) }
{z^{-\frac{1}{2}}q^{\frac{1}{8}}j(z;q)}.
\end{align*}
\end{proposition}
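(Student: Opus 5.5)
The plan is a direct computation from the Weyl--Kac formula \eqref{equation:WK-formula}. I will rewrite the numerator and the denominator separately as Jacobi triple products using the series form of $j(x;q)$ in \eqref{equation:JTPid}. No modular or Appell-function input is needed; everything reduces to reindexing the theta series \eqref{equation:SW-thetaDef}.

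\emph{Denominator.} In $\Theta_{1,2}(z;q)-\Theta_{-1,2}(z;q)$ the summation index runs over $\mathbb{Z}+\tfrac14$ and $\mathbb{Z}-\tfrac14$ respectively.
\begin{itemize}
\item For the first sum, write $j=n+\tfrac14$. The summand becomes $q^{\frac18}z^{-\frac12}\,q^{2n^2+n}z^{-2n}$; setting $k=2n$ gives $q^{\frac18}z^{-\frac12}\,q^{\binom{k+1}{2}}z^{-k}$.
\item For the second sum, write $j=n-\tfrac14$. The summand becomes $q^{\frac18}z^{-\frac12}\,q^{2n^2-n}z^{-(2n-1)}$; setting $k=2n-1$ again gives $q^{\binom{k+1}{2}}z^{-k}$, now carrying the sign $-1=(-1)^k$.
\end{itemize}
The two sums therefore interlace into
\[
z^{-\frac12}q^{\frac18}\sum_{k\in\mathbb{Z}}(-1)^kq^{\binom{k+1}{2}}z^{-k}
= z^{-\frac12}q^{\frac18}\,j(q/z;q)
= z^{-\frac12}q^{\frac18}\,j(z;q),
\]
where the last step uses \eqref{equation:j-flip}.

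\emph{Numerator.} For $\sigma=+1$, put $j=n+\frac{\ell+1}{2p'}$ in $\Theta_{\ell+1,p'}(z;q^p)$. Expanding $pp'j^2$ and $-p'j$ gives
\[
z^{-\frac{\ell+1}{2}}q^{\frac{p(\ell+1)^2}{4p'}}\sum_{n}q^{pp'n^2+p(\ell+1)n}z^{-p'n}.
\]
Since $q^{pp'n^2+p(\ell+1)n}=(q^{2pp'})^{\binom n2}\,(q^{p(\ell+1)+pp'})^n$, this sum equals $j(-q^{p(\ell+1)+pp'}z^{-p'};q^{2pp'})$.

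For $\sigma=-1$, put $j=n-\frac{\ell+1}{2p'}$. This gives the prefactor $z^{+\frac{\ell+1}{2}}q^{\frac{p(\ell+1)^2}{4p'}}=z^{-\frac{\ell+1}{2}}q^{\frac{p(\ell+1)^2}{4p'}}\cdot z^{\ell+1}$. The accompanying sum is $j(-q^{-p(\ell+1)+pp'}z^{-p'};q^{2pp'})$, obtained by the same comparison with $\ell+1$ replaced by $-(\ell+1)$.

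Dividing the numerator by the denominator yields exactly the displayed formula. The only delicate point is bookkeeping: tracking the fractional shifts of the summation lattices, and checking that the two half-lattice sums in the denominator interlace with the correct alternating sign. I would verify this by matching the $k$ even and $k$ odd terms separately, as done above.
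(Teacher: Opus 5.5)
Your proof is correct. Both half-lattice reindexings in the denominator and both shifted sums in the numerator check out term by term, and the interlacing sign $(-1)^k$ together with $j(q/z;q)=j(z;q)$ gives exactly the stated denominator $z^{-\frac12}q^{\frac18}j(z;q)$. The paper gives no proof of its own here, quoting the result from \cite[Proposition 4.6]{BoMo2025}, and your direct reindexing of the theta series in the Weyl--Kac formula \eqref{equation:WK-formula} is the natural derivation.
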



\section{Ramanujan's mock theta functions and alternate Appell function forms}
\label{section:alternateAppellForms}

From \cite[Section 5]{HM}, we have the following classical mock theta functions in Appell function form:

\smallskip
\noindent {\bf `2nd order' functions}
\begin{align}
A_2(q)
&:=\sum_{n\ge 0}\frac{q^{n+1}(-q^2;q^2)_n}{(q;q^2)_{n+1}}
=\sum_{n\ge 0}\frac{q^{(n+1)^2}(-q;q^2)_n}{(q;q^2)_{n+1}^2}
=-m(q,q^2;q^4)\label{equation:2nd-A(q)}\\
\mu_2(q)
&:=\sum_{n\ge 0}\frac{(-1)^nq^{n^2}(q;q^2)_n}{(-q^2;q^2)_{n}^2}
=2m(-q,-1;q^4)+2m(-q,q;q^4)\label{equation:2nd-mu(q)}
\end{align}

\noindent {\bf `3rd order' functions}
{\allowdisplaybreaks \begin{align}
f_3(q)&:=\sum_{n= 0}^{\infty}\frac{q^{n^2}}{(-q)_n^2}
=2m(-q,q;q^3)+2m(-q,q^2;q^3)=4m(-q,q;q^3)+\frac{J_{3,6}^2}{J_1}\label{equation:3rd-f(q)}\\
\omega_3(q)
&:=\sum_{n= 0}^{\infty}\frac{q^{2n(n+1)}}{(q;q^2)_{n+1}^2}
=-q^{-1}m(q,q^2;q^6)-q^{-1}m(q,q^4;q^6)
 =-2q^{-1}m(q,q^2;q^6)+\frac{J_6^3}{J_2 J_{3,6}}\label{equation:3rd-omega(q)}\\
 \psi_3(q)&:=\sum_{n\ge 1}\frac{q^{n^2}}{(q;q^2)_n}
=-m(q,-q;-q^3)+\frac{qJ_{12}^3}{J_4 J_{3,12}}\label{equation:3rd-psi(q)}\\
\chi_3(q)&:=\sum_{n\ge 0}\frac{q^{n^2}(-q)_n}{(-q^3;q^3)_n}=m(-q,q;q^3)+\frac{J_{3,6}^2}{J_1}\label{equation:3rd-chi(q)}
\end{align}}%
We also have some classical identities for the third-order mock theta functions, \cite[Chapter 14]{AB2018}:
{\allowdisplaybreaks \begin{gather}
f_{3}(q)+4\psi_{3}(-q)=\frac{J_{1}^3}{J_{2}^2}\label{equation:mockIdentity-f(q)psi(q)},\\
4\chi_{3}(q)-f_{3}(q)=3\frac{J_{3}^4}{J_{1}J_{6}^2},\label{equation:mockIdentity-chi(q)f(q)}\\
\chi_3(q)+\psi_3(-q)=\frac{J_{3}J_{4}^3}{J_{2}^2J_{12}}.\label{equation:mockIdentity-chi(q)psi(q)}
\end{gather}}%

We find more convenient Appell function forms for the mock theta functions that we will be using.

\begin{proposition} \label{proposition:alternat3rdAppellFormsLvl23}  We have
\begin{align}
m(-q^5, q^6;q^{12}) - q^{-1} m(-q, q^6;q^{12})
&=\chi_3(q)-\frac{J_{3}J_{4}^3}{J_{2}^2J_{12}},
\label{equation:altAppellForm3rd-chi}\\
m(-q^5, q^6;q^{12}) - q^{-1} m(-q, q^6;q^{12})
&=-\psi_3(-q).
\label{equation:altAppellForm3rd-psi}
\end{align}
\end{proposition}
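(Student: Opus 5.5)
The two identities are equivalent modulo \eqref{equation:mockIdentity-chi(q)psi(q)}: since $\chi_3(q)+\psi_3(-q)=J_3J_4^3/(J_2^2J_{12})$, we have $\chi_3(q)-J_3J_4^3/(J_2^2J_{12})=-\psi_3(-q)$. So it suffices to prove \eqref{equation:altAppellForm3rd-psi}. The approach is to start from the known Appell form \eqref{equation:3rd-psi(q)} with $q\to -q$, namely
\begin{equation*}
\psi_3(-q)=-m(-q,q;q^3)-\frac{qJ_{12}^3}{J_4 j(-q^3;q^{12})},
\end{equation*}
using $j(q^3;-q^{12})=j(-q^3;q^{12})$. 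The task is then to show that $m(-q^5,q^6;q^{12})-q^{-1}m(-q,q^6;q^{12})$ equals $m(-q,q;q^3)$ plus a theta quotient.

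The natural tool is the $n=2$ splitting Corollary \ref{corollary:msplit-m2}. Here $m(x,z;q)$ with modulus $q$ is rewritten as $m(-qx^2,z_1;q^4)-q^{-1}x\,m(-q^{-1}x^2,z_1;q^4)$ plus theta terms. Replace $q$ by $q^3$ and take $x=\pm q^{1}$ (up to sign), so that $-q^3x^2=-q^5$ and $-q^{-3}x^2=-q^{-1}$. To reach exactly the arguments $-q^5$ and $-q$, I will apply \eqref{equation:mxqz-fnq-x} and/or \eqref{equation:mxqz-flip} to relate $m(-q^{-1},\cdot;q^{12})$ to $m(-q^{11},\cdot\,;q^{12})$, or alternatively choose $x=-q$ directly. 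Matching the prefactor $-q^{-3}x$ to the required $-q^{-1}$ then fixes $x=q^2$ or similar. So I must choose $x$ carefully, possibly first using $m(-q,q;q^3)=m(-q^2,q^{-1};q^3)$-type rearrangements via \eqref{equation:mxqz-flip} and \eqref{equation:mxqz-fnq-z}, so that the split lands on $x$ with $x^2=q^2$ and coefficient $q^{-1}$. Choose $z_1=q^6$, and choose $z$ freely, e.g. $z=q$ or whatever makes the theta terms in the corollary nondegenerate. If the needed $z$ differs from the one in \eqref{equation:3rd-psi(q)}, adjust with the changing-$z$ Theorem \ref{theorem:changing-z-theorem}, at the cost of an explicit $\Psi$ theta quotient.

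After this, the identity reduces to a pure theta function identity: the bracketed theta terms from Corollary \ref{corollary:msplit-m2}, plus any $\Psi$ term, plus $-qJ_{12}^3/(J_4j(-q^3;q^{12}))$, must vanish (or sum to the stated quotient). I will simplify each $j(\cdot;q^6)$ and $j(\cdot;q^{12})$ using \eqref{equation:j-elliptic}, \eqref{equation:j-flip}, \eqref{equation:1.10} and \eqref{equation:1.11}, and the product rearrangements listed in Section \ref{section:technicalPrelim}, to write everything as eta-quotients in $J_1,\dots,J_{12}$. The remaining two-term relation should follow from the quintuple product \eqref{equation:quintuple} or \eqref{equation:H1Thm1.1}. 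Failing a clean derivation, since both sides are modular forms of known weight and level, one can finish by comparing enough $q$-series coefficients (Sturm bound).

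The main obstacle is this final theta identity: choosing $x,z$ so that the splitting terms are nondegenerate and then collapsing them to the single quotient. I would first try $z=q^{-1}$-type choices that make one of the two bracket terms vanish, e.g. via $j(z^2/z_1;q^{12})=j(q^{12};q^{12})$-type zeros which must be avoided, or via $j(q^6z^2/z_1;q^{12})$ simplifications. I would then use \eqref{equation:H1Thm1.1} on the surviving products.
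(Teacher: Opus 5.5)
Your plan is essentially the paper's proof: apply Corollary~\ref{corollary:msplit-m2} (with $q\to q^3$) to $m(-q,q;q^3)$ with $x=-q$, $z=q$, $z_1=q^6$, rewrite $q^{-2}m(-q^{-1},q^6;q^{12})=-q^{-1}m(-q,q^6;q^{12})$ by \eqref{equation:mxqz-flip} and \eqref{equation:mxqz-fnq-z}, and compare with \eqref{equation:3rd-psi(q)} at $q\to -q$. With these choices no changing-$z$ step is needed, and your remark that matching coefficients fixes $x=q^2$ should be dropped (that choice gives $m(-q^7,\cdot)$ and the wrong left side). The only difference is how the closing theta identity is settled: the paper uses \eqref{equation:1.12} and the Weierstrass relation \eqref{equation:Weierstrauss} with $(a,b,c,d)=(-iq,-iq^2,iq,i)$ to get the quotient $-qJ_3J_{12}^3/(J_4J_6^2)=-qJ_{12}^3/(J_4\overline{J}_{3,12})$, rather than the quintuple product, \eqref{equation:H1Thm1.1}, or a Sturm-bound check.
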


\begin{proof} [Proof of Proposition \ref{proposition:alternat3rdAppellFormsLvl23}] 
Using Corollary (\ref{corollary:msplit-m2}), we have
{\allowdisplaybreaks \begin{align*}
m(-q,q;q^3) &=  m\big({-}q^{5}, q^6;q^{12} \big) + q^{-2}  m\big({-}q^{-1} , q^6;q^{12} \big)\\
& \qquad +q^6 \frac{J_6^3}{j(-q^2;q^3) j(q^6;q^{12})} \cdot  
\Big [ \frac{ j\big({-}q^{12} ;q^6\big)j(q^{-4};q^{12})}
{ j\big({-}q^{11};q^{6}\big ) j\big ( q ;q^6\big )}
+
\frac{ q^{2} j\big({-}q^{15} ;q^6\big)j(q^{2};q^{12})}
{ j\big({-}q^{11};q^{6}\big ) j\big ( q^{4} ;q^6\big )}\Big ].
\end{align*}}%
Applying (\ref{equation:j-flip}), (\ref{equation:j-elliptic}), and pulling out common factors gives
{\allowdisplaybreaks \begin{align*}
m(-q,q;q^3) &=  m\big({-}q^{5}, q^6;q^{12} \big) + q^{-2}  m\big({-}q^{-1} , q^6;q^{12} \big)\\
& \qquad -q \frac{J_6^3}{\overline{J}_{1,3}J_{6,12}\overline{J}_{1,6} } \cdot  
\Big [\frac{ \overline{J}_{0,6} J_{4,12} }
{ J_{1,6}}
- \frac{ \overline{J}_{3,6}J_{2,12}}
{ J_{2,6}}\Big ].
\end{align*}}%
Combining fractions gives
{\allowdisplaybreaks \begin{align*}
m(-q,q;q^3) &=  m\big({-}q^{5}, q^6;q^{12} \big) + q^{-2}  m\big({-}q^{-1} , q^6;q^{12} \big)\\
& \qquad -q \frac{J_6^3}{\overline{J}_{1,3}J_{6,12}\overline{J}_{1,6} } \cdot  
\Big [\frac{ \overline{J}_{0,6} J_{4,12} J_{2,6}-\overline{J}_{3,6}J_{2,12}J_{1,6}}
{ J_{1,6}J_{2,6}}\Big ].
\end{align*}}%
Using (\ref{equation:1.12}) gives
{\allowdisplaybreaks \begin{align*}
m(-q,q;q^3) &=  m\big({-}q^{5}, q^6;q^{12} \big) + q^{-2}  m\big({-}q^{-1} , q^6;q^{12} \big)\\
& \qquad -q \frac{J_6^3}{\overline{J}_{1,3}J_{6,12}\overline{J}_{1,6} } \cdot  \frac{J_{12}}{J_{6}^2} \cdot
\Big [\frac{ \overline{J}_{0,6} J_{2,6} \overline{J}_{2,6}J_{2,6}-\overline{J}_{3,6}J_{1,6}\overline{J}_{1,6}J_{1,6}}
{ J_{1,6}J_{2,6}}\Big ].
\end{align*}}%
Let us try the Weierstrauss three-term relation (\ref{equation:Weierstrauss}) with $(a,b,c,d)=(-iq,-iq^2,iq,i)$.  This gives
\begin{align*}
&j(q^2;q^6)j(-1;q^6)j(q^2;q^6)j(-q^2;q^6)\\
&\qquad =j(q;q^6)j(-q;q^6)j(q^3;q^6)j(-q;q^6)-qj(-q^3;q^6)j(q^{-1};q^6)j(-q;q^6)j(q;q^6)\\
&\qquad =j(q;q^6)j(-q;q^6)j(q^3;q^6)j(-q;q^6)+j(-q^3;q^6)j(q;q^6)j(-q;q^6)j(q;q^6).
\end{align*}
Rearranging terms gives
\begin{align*}
j(q^2;q^6)&j(-1;q^6)j(q^2;q^6)j(-q^2;q^6)
-j(-q^3;q^6)j(q;q^6)j(-q;q^6)j(q;q^6)\\
&=j(q;q^6)j(-q;q^6)j(q^3;q^6)j(-q;q^6).
\end{align*}
Substituting brings us to
{\allowdisplaybreaks \begin{align*}
m(-q,q;q^3) &=  m\big({-}q^{5}, q^6;q^{12} \big) + q^{-2}  m\big({-}q^{-1} , q^6;q^{12} \big)\\
& \qquad -q \frac{J_6^3}{\overline{J}_{1,3}J_{6,12}\overline{J}_{1,6} } \cdot  \frac{J_{12}}{J_{6}^2} \cdot
\Big [\frac{J_{1,6}  \overline{J}_{1,6}J_{3,6} \overline{J}_{1,6}}
{ J_{1,6}J_{4,6}}\Big ].
\end{align*}}%
Product rearrangements and (\ref{equation:mxqz-flip}) yield
{\allowdisplaybreaks \begin{align*}
m(-q,q;q^3) &=  m\big({-}q^{5}, q^6;q^{12} \big) - q^{-1}  m\big({-}q , q^6;q^{12} \big)
 -q\frac{J_{3}J_{12}^3}{J_{4}J_{6}^2}.
\end{align*}}%
The second identity follows from (\ref{equation:3rd-psi(q)}).  The first identity then follows from the second identity and (\ref{equation:mockIdentity-chi(q)psi(q)}).
\end{proof}

We will need the following proposition for the proof of Theorem \ref{theorem:newMockThetaIdentitiespP38m1ell2rPlus1}.
\begin{proposition}\label{proposition:alternateAppellFormsLvl23FirstTwoPairs} We have
\begin{align*}
-q^{-1}m(-q,q^{6};q^{12}) 
+  m(-q^{5},q^{6};q^{12})
&=-\psi_{3}(-q),\\
-q^{-2}m(-q^{-1},q^{6};q^{12}) 
+  m(-q^{7},q^{6};q^{12})
&=1-\left (\chi_{3}(q)-\frac{J_{3}J_{4}^3}{J_{2}^2J_{12}}\right ). 
\end{align*}
\end{proposition}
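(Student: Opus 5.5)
The first identity is exactly \eqref{equation:altAppellForm3rd-psi} of Proposition \ref{proposition:alternat3rdAppellFormsLvl23}, with the two terms on the left listed in the opposite order. So only the second identity needs an argument. My plan is to reduce it to the first line of Proposition \ref{proposition:alternat3rdAppellFormsLvl23} using only the elementary Appell function properties \eqref{equation:mxqz-flip} and \eqref{equation:mxqz-fnq-x}.

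The first step is to rewrite $m(-q^{7},q^{6};q^{12})$ with the $x$-shift \eqref{equation:mxqz-fnq-x} in base $q^{12}$. Taking $x=-q^{-5}$ gives $q^{12}x=-q^{7}$, so
\begin{equation*}
m(-q^{7},q^{6};q^{12})=1+q^{-5}m(-q^{-5},q^{6};q^{12}).
\end{equation*}
The second step is to flip both remaining terms with \eqref{equation:mxqz-flip}:
\begin{equation*}
m(-q^{-5},q^{6};q^{12})=-q^{5}m(-q^{5},q^{-6};q^{12}),\qquad m(-q^{-1},q^{6};q^{12})=-q\,m(-q,q^{-6};q^{12}).
\end{equation*}
By \eqref{equation:mxqz-fnq-z}, in each case $m(\cdot,q^{-6};q^{12})=m(\cdot,q^{6};q^{12})$.

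The third step is to substitute these into the left-hand side of the second identity. This gives
\begin{equation*}
-q^{-2}m(-q^{-1},q^{6};q^{12})+m(-q^{7},q^{6};q^{12})
=q^{-1}m(-q,q^{6};q^{12})+1-m(-q^{5},q^{6};q^{12}).
\end{equation*}
The right-hand side equals $1-\bigl(m(-q^{5},q^{6};q^{12})-q^{-1}m(-q,q^{6};q^{12})\bigr)$. By \eqref{equation:altAppellForm3rd-chi} this is $1-\bigl(\chi_3(q)-J_3J_4^3/(J_2^2J_{12})\bigr)$, which is the claim.

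The only real obstacle is bookkeeping: the signs and the powers of $q$ in the shift and flip steps. Plausibly that is also why this proposition is stated separately for use in Theorem \ref{theorem:newMockThetaIdentitiespP38m1ell2rPlus1}. I would check each sign, especially $x^{-1}=-q^{5}$ and the factor $q^{-5}\cdot(-q^{5})=-1$, against a few terms of the $q$-expansion.
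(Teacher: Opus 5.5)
Your proof is correct and follows the paper's argument: the first identity is quoted from \eqref{equation:altAppellForm3rd-psi}, and the second is reduced to \eqref{equation:altAppellForm3rd-chi} using \eqref{equation:mxqz-fnq-x} and \eqref{equation:mxqz-flip}, with signs and powers ($-q^{-2}\cdot(-q)=q^{-1}$ and $q^{-5}\cdot(-q^{5})=-1$) matching the paper's computation. The only difference is that you state explicitly the use of \eqref{equation:mxqz-fnq-z} to replace $q^{-6}$ by $q^{6}$ after flipping, which the paper leaves implicit.
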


\begin{proof}[Proof of Proposition \ref{proposition:alternateAppellFormsLvl23FirstTwoPairs}]
The first identity of the proposition is just identity (\ref{equation:altAppellForm3rd-psi}).  For the second identity of the proposition, we apply (\ref{equation:mxqz-flip}) to the first sum and (\ref{equation:mxqz-fnq-x}) to the second sum.  This gives
\begin{align*}
-q^{-2}m(-q^{-1},q^{6};q^{12}) 
+  m(-q^{7},q^{6};q^{12})
&=q^{-1}m(-q,q^{6};q^{12}) 
+  m(-q^{7},q^{6};q^{12}).
\end{align*}
We then apply (\ref{equation:mxqz-fnq-x}) and then (\ref{equation:mxqz-flip}) to the second sum to get
{\allowdisplaybreaks \begin{align*}
-q^{-2}m(-q^{-1},q^{6};q^{12}) 
+  m(-q^{7},q^{6};q^{12})
&=q^{-1}m(-q,q^{6};q^{12}) 
+  1+q^{-5}m(-q^{-5},q^{6};q^{12})\\
&=q^{-1}m(-q,q^{6};q^{12}) 
+  1-m(-q^{5},q^{6};q^{12}).
\end{align*}}%
The result then follows from identity (\ref{equation:altAppellForm3rd-chi})
\end{proof}

We will need the following proposition for the proof of Theorem \ref{theorem:newMockThetaIdentitiespP38m1ell2rPlus1Alt}.
\begin{proposition} \label{proposition:alternateAppellFormsLvl23FirstTwoPairsAlt} We have
\begin{align*}
- q^{-1} m(-q, q^6;q^{12})+m(-q^5, q^6;q^{12}) 
&=\frac{1}{4}f_{3}(q)-\frac{1}{4}\frac{J_{1}^3}{J_{2}^2},\\
-q^{-2}m(-q^{-1}, q^6;q^{12}) + m(-q^{7}, q^6;q^{12})
&=1- \left ( \frac{1}{4}f_{3}(q)-\frac{1}{4}\frac{J_{1}^3}{J_{2}^2}\right ). 
\end{align*}
\end{proposition}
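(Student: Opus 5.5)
The plan is to reduce both identities to the two propositions just proved, together with the classical relation (\ref{equation:mockIdentity-f(q)psi(q)}) between $f_3$ and $\psi_3$.

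\emph{First identity.} By (\ref{equation:altAppellForm3rd-psi}) of Proposition \ref{proposition:alternat3rdAppellFormsLvl23}, equivalently the first identity of Proposition \ref{proposition:alternateAppellFormsLvl23FirstTwoPairs}, the left-hand side equals $-\psi_3(-q)$. The identity (\ref{equation:mockIdentity-f(q)psi(q)}), $f_3(q)+4\psi_3(-q)=J_1^3/J_2^2$, gives
\[
-\psi_3(-q)=\tfrac14 f_3(q)-\tfrac14\frac{J_1^3}{J_2^2},
\]
which is the first claim. No new theta-function work is needed.

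\emph{Second identity.} I would not go back to the Appell splitting. Instead I would rerun the elementary manipulation from the proof of Proposition \ref{proposition:alternateAppellFormsLvl23FirstTwoPairs}. Apply (\ref{equation:mxqz-flip}) to $m(-q^{-1},q^6;q^{12})$, which turns $-q^{-2}m(-q^{-1},q^6;q^{12})$ into $q^{-1}m(-q,q^{-6};q^{12})$. Then use (\ref{equation:mxqz-fnq-z}), via $z\mapsto qz$ with base $q^{12}$, to replace $q^{-6}$ by $q^{6}$. Next write $m(-q^7,q^6;q^{12})=1+q^{-5}m(-q^{-5},q^6;q^{12})$ by (\ref{equation:mxqz-fnq-x}), and apply (\ref{equation:mxqz-flip}) again, using $z^{-1}=q^{-6}\sim q^6$, to get $q^{-5}m(-q^{-5},q^6;q^{12})=-m(-q^5,q^6;q^{12})$. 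The result is
\[
-q^{-2}m(-q^{-1},q^6;q^{12})+m(-q^7,q^6;q^{12})=1-\bigl(m(-q^5,q^6;q^{12})-q^{-1}m(-q,q^6;q^{12})\bigr).
\]
Substituting the first identity into the bracket gives the second.

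\emph{Where care is needed.} The only delicate points are the sign and power bookkeeping in these flips and shifts. In particular I would check that the factor $x^{-1}$ in (\ref{equation:mxqz-flip}) with $x=-q^{-5}$ produces $-q^{5}$, so that the overall sign comes out negative. Beyond that, the argument is just the combination of (\ref{equation:altAppellForm3rd-psi}) with (\ref{equation:mockIdentity-f(q)psi(q)}).
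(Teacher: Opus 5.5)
Your proposal is correct and is essentially the paper's own proof. The first identity is (\ref{equation:altAppellForm3rd-psi}) combined with (\ref{equation:mockIdentity-f(q)psi(q)}), and the second is derived from the first by the same flips and shifts (\ref{equation:mxqz-flip}), (\ref{equation:mxqz-fnq-x}), and implicitly (\ref{equation:mxqz-fnq-z}), used in the proof of Proposition \ref{proposition:alternateAppellFormsLvl23FirstTwoPairs}; your signs and powers of $q$ come out correctly.
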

\begin{proof}[Proof of Proposition \ref{proposition:alternateAppellFormsLvl23FirstTwoPairsAlt}]  The first identity is just 
(\ref{equation:altAppellForm3rd-psi}) combined with the classic third-order identity (\ref{equation:mockIdentity-f(q)psi(q)}).  The second identity can be obtained from the first after using (\ref{equation:mxqz-flip}) and (\ref{equation:mxqz-fnq-x}).
\end{proof}


\section{Families of theta function identities}
\label{section:fryeGarvan}

In this section we will prove several families of theta function identities.  Unlike previous works \cite{BoMo2026, BoMo2025, KM25, KM26}, we will not use the methods of Frye and Garvan \cite{FG}.

\smallskip
The following proposition will be used to prove Theorem \ref{theorem:newMockThetaIdentitiespP38m1ell2rPlus1}.

\begin{proposition} \label{proposition:masterThetaIdentitypP38m1ell2rPlus1} Let us define
\begin{align*}
F(r)&:=-i(iq^{3/2})^{-r}
q^{\frac{1}{2}}\frac{J_{1}^3j((-q^{3})^{r+1};q^{12})}
{J_{2,4}J_{6,12}}
 + q^{-r} \frac{j(q^{2+2r};q^{16})j(q^{20+4r};q^{32})}{J_{32}}\frac{J_{3}J_{4}^3}{J_{2}^2J_{12}}\\
& \quad -iq^{-5/2}\frac{J_{1}^2J_{4}J_{12}}{J_{2}^2J_{3}}
  \times \left ( -q^{3-r}\frac{j(q^{2+2r};q^{8})}{J_{8}}j(q^{1-r};q^{4})\right ),
\end{align*}
then
\begin{equation*}
F(r)=\begin{cases}
j(-q^{6};q^{16})\frac{J_{3}J_{4}^2}{J_{2}J_{12}} & \textup{for} \ r=0,\\
3\frac{J_{3}J_{12}^3}{J_{6}^2} & \textup{for} \ r=1,\\
q^{-2} j(-q^2;q^{16})\frac{J_{3}J_{4}^2}{J_{2}J_{12}} & \textup{for} \ r=2.
\end{cases}
\end{equation*}
\end{proposition}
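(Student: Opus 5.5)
Since the statement concerns only the three values $r\in\{0,1,2\}$, we treat each case separately and reduce $F(r)$ to an identity between products of theta functions. Each such identity is then proved with the tools of Section \ref{section:technicalPrelim}: (\ref{equation:j-elliptic}), (\ref{equation:j-flip}), (\ref{equation:1.10})--(\ref{equation:jsplit}), the quintuple product (\ref{equation:quintuple}), the identity (\ref{equation:H1Thm1.1}), and the Weierstrass relation (\ref{equation:Weierstrauss}). The first step is bookkeeping for the half-integer powers and factors of $i$. For $r=0,1,2$ we have $(iq^{3/2})^{-r}$ equal to $1$, $-iq^{-3/2}$, and $-q^{-3}$ respectively. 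Multiplying by $-iq^{1/2}$ shows that the first term is real, with an integral power of $q$, only for $r=1$. For $r=0,2$ it carries a stray factor $i q^{1/2}$. That factor must cancel against the third term, which also carries $-iq^{-5/2}$. So the first and third terms must be combined, and we do this for every $r$.

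\textbf{Case $r=1$.} Here $j(q^{1-r};q^4)=j(1;q^4)=0$, so the third term vanishes. The first term becomes $-q^{-1}J_1^3 j(q^6;q^{12})/(J_{2,4}J_{6,12})=-q^{-1}J_1^3/J_{2,4}$. We rewrite $J_{2,4}=J_2^2/J_4$ and reduce $j(q^4;q^{16})j(q^{24};q^{32})/J_{32}$ by product rearrangement. The claim $F(1)=3J_3J_{12}^3/J_6^2$ then becomes a two-term identity of level $12$ or $16$. We would attack it by expressing $J_1^3$ via (\ref{equation:mockIdentity-chi(q)f(q)})-type theta evaluations, or directly by (\ref{equation:jsplit}) with $m=3$ applied to $J_1=j(q;q^3)$-type quotients. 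Failing that, we would apply (\ref{equation:H1Thm1.1}) to split the $q^{16}$-theta functions into $q^{32}$ pieces.

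\textbf{Cases $r=0,2$.} The two expressions are related: replacing $r$ by $2-r$ should interchange them up to the prefactor $q^{-2}$ and the swap $j(-q^6;q^{16})\leftrightarrow j(-q^2;q^{16})$. We would verify this reflection using (\ref{equation:j-elliptic}) and (\ref{equation:j-flip}), for instance $j(q^{-1};q^4)=-q^{-1}j(q;q^4)$ and $j(q^{6};q^8)=j(q^2;q^8)$. This way only $r=0$ needs a real proof.

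For $r=0$, the first and third terms give
\[
iq^{1/2}\frac{J_1^3 j(-q^3;q^{12})}{J_{2,4}J_{6,12}} - iq^{1/2}\frac{J_1^2J_4J_{12}}{J_2^2J_3}\cdot\frac{J_{2,8}J_{1,4}}{J_8}.
\]
We expect this bracket to vanish identically, or else to reduce to a real quantity. Vanishing amounts to the product identity $J_1 j(-q^3;q^{12})J_3J_8 = J_{2,4}J_{6,12}J_4J_{12}J_{2,8}J_{1,4}/J_2^2$, which we would check by writing everything as eta quotients using the listed rearrangements ($J_{1,4}=J_1J_4/J_2$, $\overline J_{3,12}$ via $J_{6}J_{24}$-type formulas, etc.). The main obstacle appears after this: proving
\[
\frac{j(q^{2};q^{16})j(q^{20};q^{32})}{J_{32}}\frac{J_3J_4^3}{J_2^2J_{12}}=j(-q^6;q^{16})\frac{J_3J_4^2}{J_2J_{12}},
\]
that is, $j(q^2;q^{16})j(q^{20};q^{32})J_4=J_2J_{32}\,j(-q^6;q^{16})$. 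We would prove this using (\ref{equation:1.11}) and the quintuple product to rewrite $j(q^{20};q^{32})$ and $j(q^2;q^{16})j(-q^6;q^{16})$ in base $q^{32}$. If the identity does not close as a pure product identity, the fallback is the Weierstrass three-term relation with a well-chosen quadruple. A last-resort check is a finite $q$-expansion via modular-form (Sturm bound) arguments.
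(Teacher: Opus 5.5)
Your case reductions are correct as far as they go. At $r=1$ the third term vanishes because $j(1;q^4)=0$, and $F(1)=q^{-1}\left(\frac{J_3J_4^4}{J_2^2J_{12}}-\frac{J_1^3J_4}{J_2^2}\right)$. At $r=0$, with $X=\frac{J_1^3J_4J_{12}}{J_2^2J_3}$, the first term is $-iq^{1/2}X$ and the third is $+iq^{1/2}X$, so they cancel; you have both signs reversed, which is harmless.

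\textbf{The gap is the $r=1$ case.} You must prove
\[
\frac{J_3J_4^4}{J_2^2J_{12}}-\frac{J_1^3J_4}{J_2^2}=3q\,\frac{J_3J_{12}^3}{J_6^2},
\]
and your proposal neither states this identity nor proves it. It only lists possible attacks, none carried out. Two of them are misdirected: after your reduction no base-$q^{16}$ theta functions remain to be split by (\ref{equation:H1Thm1.1}), and (\ref{equation:jsplit}) with $m=3$ on $J_1$ produces base-$q^{27}$ pieces that do not occur in the identity.

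The factor $3$ is the real obstacle. The paper (Lemma \ref{lemma:unusualThetaIdentity1}) writes $3=1+2$ and shows that $\frac{J_3J_4^4}{J_2^2J_{12}}-q\frac{J_3J_{12}^3}{J_6^2}$ and $2q\frac{J_3J_{12}^3}{J_6^2}+\frac{J_1^3J_4}{J_2^2}$ both equal $\frac{J_1J_4^2J_6^7}{J_2^3J_3^2J_{12}^3}$. Each is one application of (\ref{equation:H1Thm1.1}), with $(x,y,q)\to(q,-q^3,q^6)$ and $(q,q,q^3)$ respectively.

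Your hint about the third-order identities can in fact be completed. Adding (\ref{equation:mockIdentity-f(q)psi(q)}) and (\ref{equation:mockIdentity-chi(q)f(q)}) and comparing with four times (\ref{equation:mockIdentity-chi(q)psi(q)}) gives $\frac{J_1^3}{J_2^2}=4\frac{J_3J_4^3}{J_2^2J_{12}}-3\frac{J_3^4}{J_1J_6^2}$. This reduces the display above to the 2-dissection $\frac{J_3^3}{J_1}=\frac{J_4^3J_6^2}{J_2^2J_{12}}+q\frac{J_{12}^3}{J_4}$. That identity still has to be proved or cited, and your proposal never reaches it.

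\textbf{The shortcut $r\mapsto 2-r$ is unfounded.} The targets $j(-q^6;q^{16})$ and $q^{-2}j(-q^2;q^{16})$ are not related by (\ref{equation:j-elliptic}) or (\ref{equation:j-flip}). Neither are the second terms $j(q^2;q^{16})j(q^{20};q^{32})$ and $j(q^6;q^{16})j(q^{28};q^{32})$. So $r=2$ needs its own check. There the first and third terms again cancel, using $j(-q^9;q^{12})=j(-q^3;q^{12})$ and $j(q^{-1};q^4)=-q^{-1}j(q;q^4)$.

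\textbf{The step you call the main obstacle at $r=0$ is routine.} It needs none of (\ref{equation:1.11}), the quintuple product, the Weierstrass relation, or a Sturm bound. By (\ref{equation:1.12}), $j(q^{20};q^{32})=J_{32}\,j(q^{10};q^{16})j(-q^{10};q^{16})/J_{16}^2$. By (\ref{equation:1.10}), $j(q^2;q^{16})j(q^{10};q^{16})=J_{16}^2\,j(q^2;q^8)/J_8$. Finally $J_{2,8}/J_8=J_2/J_4$. The same steps give the $r=2$ value, starting from $j(q^{28};q^{32})=J_{32}\,j(q^2;q^{16})j(-q^2;q^{16})/J_{16}^2$ and $j(q^6;q^{16})=j(q^{10};q^{16})$.
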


The following proposition will be used to prove Theorem \ref{theorem:newMockThetaIdentitiespP38m1ell2rPlus1Alt}.

\begin{proposition} \label{proposition:masterThetaIdentitypP38m1ell2rPlus1Alt}  We have
{\allowdisplaybreaks \begin{align*}
F(r):=-i(iq^{3/2})^{-r}&
q^{\frac{1}{2}}\frac{J_{1}^3j((-q^{3})^{r+1};q^{12})}
{J_{2,4}J_{6,12}} +   \frac{1}{4}\frac{J_{1}^3}{J_{2}^2} 
  \times \left ( q^{-r}
\frac{j(q^{2+2r};q^{8})}{J_{8}}j(-q^{1-r};q^{4})
  \right ) \\  
& \quad +   (iq^{3/2})^{-1} 
 \Big (q^{2}\frac{J_{1}^2J_{4}J_{12}}{J_{2}^2J_{3}}\Big )
   \times \left (  -q^{-r}
\frac{j(q^{2+2r};q^{8})}{J_{8}}j(q^{1-r};q^{4}) \right ),
\end{align*} }%
where
\begin{equation*}
F(r):=\begin{cases}
\medskip
\frac{1}{4}\frac{J_{1}^2J_{2}}{J_{4}} & \textup{if} \ r=0,\\
\medskip
- \frac{1}{2q}\frac{J_{1}^3J_{4}}{J_{2}^2} &\textup{if} \ r=1,\\
\medskip
 \frac{1}{4q^3}\frac{J_{1}^2J_{2}}{J_{4}} & \textup{if} \ r=2.
\end{cases}
\end{equation*}
\end{proposition}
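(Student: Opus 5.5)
The plan is to specialize $F(r)$ separately at $r=0,1,2$ and reduce each case, by elementary product rearrangements, to the stated quotient. In each case I will rewrite every theta function as an eta-quotient using the rearrangements listed at the start of Section \ref{section:technicalPrelim}, together with their dilations $q\mapsto q^k$. The relevant ones are $\overline{J}_{1,4}=J_2^2/J_1$, $J_{1,4}=J_1J_4/J_2$, $J_{1,2}=J_1^2/J_2$ and $\overline{J}_{0,1}=2J_2^2/J_1$. No Frye--Garvan style verification should be needed.

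\emph{Case $r=1$.} This case is immediate.
\begin{itemize}
\item The third term vanishes, because it contains $j(q^{0};q^4)=0$.
\item In the first term the prefactor is $-i(iq^{3/2})^{-1}q^{1/2}=-q^{-1}$, and $j(q^6;q^{12})=J_{6,12}$ cancels. With $J_{2,4}=J_2^2/J_4$ the first term equals $-q^{-1}J_1^3J_4/J_2^2$.
\item The second term is $\frac{q^{-1}}{4}\frac{J_1^3}{J_2^2}\frac{J_{4,8}}{J_8}\,j(-1;q^4)$. Using $J_{4,8}=J_4^2/J_8$ and $j(-1;q^4)=2J_8^2/J_4$, it equals $\frac{q^{-1}}{2}J_1^3J_4/J_2^2$.
\end{itemize}
Summing gives $-\frac{1}{2q}J_1^3J_4/J_2^2$, as claimed.

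\emph{Case $r=0$.} The first and third terms are each $i$ times a real $q$-series, while the target is real. I therefore expect them to cancel exactly, and will verify this directly.
\begin{itemize}
\item The first term is $-iq^{1/2}J_1^3\,\overline{J}_{3,12}/(J_{2,4}J_{6,12})$. Using $\overline{J}_{3,12}=J_6^2/J_3$, $J_{6,12}=J_6^2/J_{12}$ and $J_{2,4}=J_2^2/J_4$, it becomes $-iq^{1/2}J_1^3J_4J_{12}/(J_2^2J_3)$.
\item The third term is $(-iq^{-3/2})\,q^2\frac{J_1^2J_4J_{12}}{J_2^2J_3}\cdot\bigl(-J_{2,8}J_{1,4}/J_8\bigr)$. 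Since $J_{2,8}=J_2J_8/J_4$ and $J_{1,4}=J_1J_4/J_2$, we get $J_{2,8}J_{1,4}/J_8=J_1$, so the third term is $+iq^{1/2}J_1^3J_4J_{12}/(J_2^2J_3)$.
\item The two terms cancel.
\item The second term is $\frac14\frac{J_1^3}{J_2^2}\cdot\frac{J_{2,8}}{J_8}\cdot\overline{J}_{1,4}=\frac14\frac{J_1^3}{J_2^2}\cdot\frac{J_2}{J_4}\cdot\frac{J_2^2}{J_1}=\frac14\frac{J_1^2J_2}{J_4}$.
\end{itemize}

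\emph{Case $r=2$.} I will reduce this to $r=0$ term by term, using \eqref{equation:j-elliptic} and \eqref{equation:j-flip}.
\begin{itemize}
\item First term: the prefactor $-i(iq^{3/2})^{-2}q^{1/2}=iq^{-5/2}$ is $-q^{-3}$ times the $r=0$ prefactor. Also $j(-q^9;q^{12})=j(-q^3;q^{12})$. So the first term is $-q^{-3}$ times its $r=0$ value.
\item Second term: $j(q^6;q^8)=j(q^2;q^8)$ and $j(-q^{-1};q^4)=q^{-1}j(-q;q^4)$. So the second term is $+q^{-3}$ times its $r=0$ value.
\item Third term: $j(q^{-1};q^4)=-q^{-1}j(q;q^4)$. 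So the third term is $-q^{-3}$ times its $r=0$ value.
\end{itemize}
Since the first and third terms at $r=0$ cancel, so do their multiples here. $F(2)$ is therefore $q^{-3}$ times the $r=0$ second term, namely $\frac{1}{4q^3}J_1^2J_2/J_4$.

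The only genuinely delicate step is the $r=0$ cancellation of the two imaginary terms, where the powers of $i$ and the half-integral powers of $q$ must be tracked carefully. The product reductions above indicate it goes through.
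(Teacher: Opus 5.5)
Your proposal is correct and follows essentially the same route as the paper: a case-by-case check by elementary product rearrangements. At $r=1$ the third term vanishes because of $j(1;q^4)=0$, and at $r=0$ and $r=2$ the first and third terms cancel, at $r=2$ after applying \eqref{equation:j-elliptic} and \eqref{equation:j-flip}. Your bookkeeping of the powers of $i$ and $q$ and your eta-quotient reductions all check out.
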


The following lemma is used in the proof of Proposition \ref{proposition:masterThetaIdentitypP38m1ell2rPlus1}. 
\begin{lemma}  \label{lemma:unusualThetaIdentity1} We have
\begin{equation*}
\frac{J_{3}J_{4}^4}{J_{2}^2J_{12}}-\frac{J_{1}^3}  {J_{2,4}}
 =3q\frac{J_{3}J_{12}^3}{J_{6}^2}.
\end{equation*}
\end{lemma}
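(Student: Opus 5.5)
The plan is to reduce Lemma \ref{lemma:unusualThetaIdentity1} to a single classical theta identity by using the third-order mock theta relations already recorded in Section \ref{section:alternateAppellForms}. First rewrite $J_{2,4}$ as a product. From the product rearrangement $J_{1,2}=J_1^2/J_2$ with $q\to q^2$ we get $J_{2,4}=J_2^2/J_4$. Hence $J_1^3/J_{2,4}=J_1^3J_4/J_2^2$, and the lemma is equivalent to
\begin{equation*}
\frac{J_{3}J_{4}^3}{J_{2}^2J_{12}}-\frac{J_{1}^3}{J_{2}^2}=3q\frac{J_{3}J_{12}^3}{J_{4}J_{6}^2}.
\end{equation*}

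Next, eliminate the mock theta functions. By (\ref{equation:mockIdentity-chi(q)psi(q)}) the first term equals $\chi_3(q)+\psi_3(-q)$, and by (\ref{equation:mockIdentity-f(q)psi(q)}) the second equals $\frac14 f_3(q)+\psi_3(-q)$. The $\psi_3(-q)$ terms cancel, so the left side is $\chi_3(q)-\frac14 f_3(q)$. By (\ref{equation:mockIdentity-chi(q)f(q)}) this equals $\frac34 J_3^4/(J_1J_6^2)$. Thus it suffices to prove
\begin{equation*}
\frac{3}{4}\frac{J_3^4}{J_1J_6^2}=3q\frac{J_3J_{12}^3}{J_4J_6^2}+\frac{J_{3}J_{4}^3}{J_{2}^2J_{12}}-\frac{J_{1}^3}{J_{2}^2}-\Big(\chi_3-\tfrac14 f_3\Big)+\dots
\end{equation*}
That display is not the form to carry forward. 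Instead, combine the two linear relations $4(\chi_3+\psi_3(-q))-(f_3+4\psi_3(-q))=4\chi_3-f_3$ directly. This shows that the lemma is equivalent to the pure theta identity
\begin{equation*}
\frac{J_3^3}{J_1}=\frac{J_4^3J_6^2}{J_2^2J_{12}}+q\frac{J_{12}^3}{J_4},
\end{equation*}
after multiplying through by $J_6^2/J_3$ and dividing by $3$. This is the standard $2$-dissection of $J_3^3/J_1$.

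To prove the $2$-dissection, I would write $J_3^3/J_1=\overline{J}_{1,3}^{\,2}\,J_1J_6^2/(J_2^2J_3)$ using $\overline{J}_{1,3}=J_2J_3^2/(J_1J_6)$. Then $2$-dissect each factor.
\begin{itemize}
\item For $\overline{J}_{1,3}^{\,2}=j(-q;q^3)^2$, apply (\ref{equation:H1Thm1.1}) with $q\to q^3$ and $x=y=-q$. This gives $\overline{J}_{2,6}\overline{J}_{3,6}+q\,\overline{J}_{5,6}\overline{J}_{0,6}$.
\item For $J_1$, use (\ref{equation:jsplit-m2}), which gives $J_1=j(q;q^3)=j(-q^5;q^{12})-q\,j(-q^{7};q^{12})$.
\end{itemize}
Multiply out, collect the even and odd parts in $q$, and match each part with the right-hand side. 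The matching uses the listed product rearrangements together with (\ref{equation:1.10}) and (\ref{equation:1.12}) to convert everything to $J_{2k}$'s.

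The main obstacle is this last matching step. Each parity component will be a sum of two products that must collapse to a single product, and collapsing them will need a further application of (\ref{equation:Weierstrauss}) or the quintuple product (\ref{equation:quintuple}). If this bookkeeping becomes unwieldy, my fallback is the modular route. Both sides of the $2$-dissection identity are eta-quotients of weight $1$ on $\Gamma_0(12)$ with the same multiplier, so checking agreement of $q$-expansions up to the Sturm bound, roughly $q^{6}$, suffices.
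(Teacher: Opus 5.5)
Your reduction is correct, but delete the first attempt. By (\ref{equation:mockIdentity-f(q)psi(q)}) one has $J_1^3/J_2^2=f_3(q)+4\psi_3(-q)$, not $\frac14 f_3(q)+\psi_3(-q)$, so $\psi_3(-q)$ does not cancel there. Your ``Instead'' step is the right one. The three classical relations give $4\frac{J_3J_4^3}{J_2^2J_{12}}-\frac{J_1^3}{J_2^2}=3\frac{J_3^4}{J_1J_6^2}$. Subtracting the lemma (divided by $J_4$) then shows that the lemma is equivalent to the $2$-dissection $\frac{J_3^3}{J_1}=\frac{J_4^3J_6^2}{J_2^2J_{12}}+q\frac{J_{12}^3}{J_4}$.

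This route differs from the paper's. The paper splits $3=1+2$ and shows, via two applications of (\ref{equation:H1Thm1.1}), that both sides of the rearranged identity equal $J_1J_4^2J_6^7/(J_2^3J_3^2J_{12}^3)$. In fact your $2$-dissection is exactly (\ref{equation:idLHS}) with $q\to -q$ (use $J_1(-q)=J_2^3/(J_1J_4)$ and $J_3(-q)=J_6^3/(J_3J_{12})$). Given it, the relation you extract from the mock theta identities is equivalent to (\ref{equation:idRHS}). So you trade the paper's second elementary theta identity for the Ramanujan--Watson third-order relations. That explains where the $3$ comes from, namely $4\chi_3(q)-f_3(q)$. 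The cost is importing mock theta identities to prove a pure theta identity, whereas the paper's argument is self-contained.

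The step that fails as written is your elementary proof of the $2$-dissection, because none of your pieces is actually a $2$-dissection.
\begin{enumerate}
\item The factors $\overline{J}_{3,6}=\sum_n q^{3n^2}$ and $\overline{J}_{5,6}$ contain odd powers of $q$.
\item Your split of $J_1$ is misstated. By (\ref{equation:jsplit-m2}), $J_1=j(-q^5;q^{12})-q\,j(-q^{11};q^{12})$, whereas what you wrote equals $(1-q)j(-q^5;q^{12})$. In any case $j(-q^5;q^{12})=\sum_n q^{6n^2-n}$ has both parities.
\item The cofactor $J_1J_6^2/(J_2^2J_3)$ is not a function of $q^2$, and the factor $1/J_3$ is never dissected.
\end{enumerate}
So collecting even and odd parts would not produce the two products.

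The fix is to factor differently: $\frac{J_3^3}{J_1}=\overline{J}_{1,6}J_{3,6}\cdot\frac{J_4J_6^2}{J_2^2J_{12}}$, where the cofactor is even. Apply (\ref{equation:H1Thm1.1}) with $(x,y,q)\to(-q,q^3,q^6)$ to get $\overline{J}_{1,6}J_{3,6}=J_4^2+qJ_{2,12}^2$. Since $J_{2,12}=J_2J_{12}^2/(J_4J_6)$, this gives the $2$-dissection in one line; it is the paper's computation for (\ref{equation:idLHS}) at $-q$.

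Your modular fallback also works, but it needs to be stated correctly. All three terms carry a factor $q^{1/3}$, so they are not in $M_1(\Gamma_0(12),\chi)$ for any Dirichlet character $\chi$. After $q\to q^3$ they become $\eta(9\tau)^3/\eta(3\tau)$, $\eta(12\tau)^3\eta(18\tau)^2/(\eta(6\tau)^2\eta(36\tau))$ and $\eta(36\tau)^3/\eta(12\tau)$. These are holomorphic at all cusps and lie in $M_1\bigl(\Gamma_0(36),\left(\frac{-3}{\cdot}\right)\bigr)$. The Sturm bound there is $6$, so checking coefficients through $q^6$ in the new variable suffices.
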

\begin{proof}[Proof of Lemma \ref{lemma:unusualThetaIdentity1}]   The three is problematic, so we rewrite the identity as
\begin{equation*}
\frac{J_{3}J_{4}^4}{J_{2}^2J_{12}}-q\frac{J_{3}J_{12}^3}{J_{6}^2}
 =2q\frac{J_{3}J_{12}^3}{J_{6}^2}+\frac{J_{1}^3J_{4}}  {J_{2}^2}.
\end{equation*}
We will prove
\begin{gather}
\frac{J_{3}J_{4}^4}{J_{2}^2J_{12}}-q\frac{J_{3}J_{12}^3}{J_{6}^2}
=\frac{J_{1}J_{4}^2J_{6}^7}{J_{2}^3J_{3}^2J_{12}^3},
\label{equation:idLHS}\\
2q\frac{J_{3}J_{12}^3}{J_{6}^2}+\frac{J_{1}^3J_{4}}  {J_{2}^2}
=\frac{J_{1}J_{4}^2J_{6}^7}{J_{2}^3J_{3}^2J_{12}^3}.
\label{equation:idRHS}
\end{gather}
We prove (\ref{equation:idLHS}).  Clearing out the denominators, shows that (\ref{equation:idLHS})  is equivalent to
\begin{equation*}
J_{2}J_{3}^3J_{4}^4J_{6}^2J_{12}^2-qJ_{2}^3J_{3}^3J_{12}^6=J_{1}J_{4}^2J_{6}^{9}.
\end{equation*}
Pulling out a common factor and using elementary product rearrangements gives that (\ref{equation:idLHS}) is equivalent to
\begin{equation*}
J_{2}J_{3}^3J_{4}^2J_{6}^2J_{12}^2\left ( J_{4}^2-qJ_{2,12}^2 \right )
 =J_{1}J_{4}^2J_{6}^{9},
\end{equation*}
which is true by (\ref{equation:H1Thm1.1}) with $(x,y,q)\to(q,-q^3,q^6)$ and product rearrangements.

We prove (\ref{equation:idRHS}).  Clearing out the denominators, shows that (\ref{equation:idLHS})  is equivalent to
\begin{equation*}
2qJ_{2}^3J_{3}^3J_{12}^6+J_{1}^3J_{2}J_{3}^2J_{4}J_{6}^2J_{12}^3=J_{1}J_{4}^2J_{6}^9.
\end{equation*}
We again set up for (\ref{equation:H1Thm1.1}).  Multiplying by an appropriate quotient of theta functions gives that (\ref{equation:idLHS}) is now equivalent to 
\begin{equation*}
2q\frac{J_{2}^2J_{3}J_{12}^3}{J_{1}J_{4}J_{6}^2}+J_{1}^2=\frac{J_{4}J_{6}^7}{J_{2}J_{3}^2J_{12}^3},
\end{equation*}
which by product rearrangements is just 
\begin{equation*}
q\overline{J}_{1,6}\overline{J}_{0,6}+J_{1}^2=\overline{J}_{2,6}\overline{J}_{3,6},
\end{equation*}
which is true by (\ref{equation:H1Thm1.1}) with $(x,y,q)\to(q,q,q^3)$.
\end{proof}
The following lemma is used in the proofs of Theorems \ref{theorem:newMockThetaIdentitiespP38m1ell2rPlus1} and  \ref{theorem:newMockThetaIdentitiespP38m1ell2rPlus1Alt} in order to set up for Propositions \ref{proposition:masterThetaIdentitypP38m1ell2rPlus1}  and \ref{proposition:masterThetaIdentitypP38m1ell2rPlus1Alt}.
\begin{lemma}  \label{lemma:unusualThetaIdentity2} We have
\begin{gather*}
q^{1-2r}
\frac{j(q^{6-2r};q^{16})j(q^{28-4r};q^{32})}{J_{32}}
- q^{-r}
\frac{j(q^{2+2r};q^{16})j(q^{20+4r};q^{32})}{J_{32}}
=-q^{-r}
\frac{j(q^{2+2r};q^{8})}{J_{8}}j(q^{1-r};q^{4}),\\
q^{1-2r}
\frac{j(q^{6-2r};q^{16})j(q^{28-4r};q^{32})}{J_{32}}
+ q^{-r}
\frac{j(q^{2+2r};q^{16})j(q^{20+4r};q^{32})}{J_{32}}
=q^{-r}
\frac{j(q^{2+2r};q^{8})}{J_{8}}j(-q^{1-r};q^{4}).
\end{gather*}
\end{lemma}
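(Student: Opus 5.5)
The plan is to add and subtract the two identities, so that each becomes a single-term product identity, and then prove those with the elementary product rules (\ref{equation:j-flip}), (\ref{equation:1.10}), (\ref{equation:1.12}) and the $m=2$ splitting (\ref{equation:jsplit-m2}). Write
\[
A:=\frac{j(q^{6-2r};q^{16})j(q^{28-4r};q^{32})}{J_{32}},\qquad B:=\frac{j(q^{2+2r};q^{16})j(q^{20+4r};q^{32})}{J_{32}}.
\]
The lemma asserts that $q^{1-2r}A\mp q^{-r}B$ equals $\mp q^{-r}\frac{j(q^{2+2r};q^{8})}{J_8}j(\mp q^{1-r};q^4)$.

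Adding the two stated identities and subtracting them, it suffices to prove
\begin{align*}
2q^{1-2r}A&=q^{-r}\frac{j(q^{2+2r};q^{8})}{J_{8}}\bigl(j(-q^{1-r};q^4)-j(q^{1-r};q^4)\bigr),\\
2q^{-r}B&=q^{-r}\frac{j(q^{2+2r};q^{8})}{J_{8}}\bigl(j(-q^{1-r};q^4)+j(q^{1-r};q^4)\bigr).
\end{align*}

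Apply (\ref{equation:jsplit-m2}) with $q\to q^4$ and $z=\pm q^{1-r}$. This gives
\[
j(\pm q^{1-r};q^4)=j(-q^{6-2r};q^{16})\mp q^{1-r}j(-q^{14-2r};q^{16}).
\]
Hence the bracketed difference is $2q^{1-r}j(-q^{14-2r};q^{16})$ and the bracketed sum is $2j(-q^{6-2r};q^{16})$. After cancelling the factors $2$ and the powers of $q$, the two target identities become
\[
A=\frac{j(q^{2+2r};q^{8})j(-q^{14-2r};q^{16})}{J_8},\qquad B=\frac{j(q^{2+2r};q^{8})j(-q^{6-2r};q^{16})}{J_8}.
\]

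Next, rewrite $j(q^{2+2r};q^8)/J_8$ using (\ref{equation:1.10}) with $n=2$ and base $q^8$:
\[
\frac{j(q^{2+2r};q^8)}{J_8}=\frac{j(q^{2+2r};q^{16})j(q^{10+2r};q^{16})}{J_{16}^2}.
\]
On the other side, apply (\ref{equation:1.12}) with $n=2$ and base $q^{16}$, which gives $j(x^2;q^{32})/J_{32}=j(x;q^{16})j(-x;q^{16})/J_{16}^2$, to the factors $j(q^{28-4r};q^{32})$ and $j(q^{20+4r};q^{32})$. Everything is then a product of theta functions with base $q^{16}$ divided by $J_{16}^2$.

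For $B$, the flip (\ref{equation:j-flip}) gives $j(-q^{6-2r};q^{16})=j(-q^{10+2r};q^{16})$, and both sides reduce to
\[
\frac{j(q^{2+2r};q^{16})\,j(q^{10+2r};q^{16})\,j(-q^{10+2r};q^{16})}{J_{16}^2}.
\]
For $A$, we need
\[
j(q^{2+2r};q^{16})j(q^{10+2r};q^{16})=j(q^{14-2r};q^{16})j(q^{6-2r};q^{16}),
\]
which again is just (\ref{equation:j-flip}) applied factorwise.

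There is no real obstacle. The only care needed is in the sign and $q$-power bookkeeping of the splitting step, namely which of $j(\pm q^{1-r};q^4)$ picks up the term $-zj(-q^{12}z^2;q^{16})$ with sign $+$. Because only elliptic and flip transformations are used, the argument holds for all integers $r$, and in particular for $r\in\{0,1,2\}$.
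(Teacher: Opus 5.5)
Your proof is correct and is essentially the paper's argument. The paper also uses (\ref{equation:j-flip}), (\ref{equation:1.12}) and (\ref{equation:1.10}) to write each of the two terms as $\frac{j(q^{2+2r};q^{8})}{J_{8}}$ times a single theta function to base $q^{16}$, and then recombines with (\ref{equation:jsplit-m2}). The only difference is organizational: you add and subtract the two identities and expand the right-hand sides with the splitting, which proves both at once, whereas the paper combines the left-hand side of the first identity directly and states that the second is proved the same way.
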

\begin{proof}[Proof of Lemma \ref{lemma:unusualThetaIdentity2}]
The proofs of the two identities are the same, so we will only do the first.  We use theta function properties (\ref{equation:j-flip}) and (\ref{equation:1.12}) to get
{\allowdisplaybreaks \begin{align*}
q^{1-2r}
&\frac{j(q^{6-2r};q^{16})j(q^{28-4r};q^{32})}{J_{32}}
- q^{-r}
\frac{j(q^{2+2r};q^{16})j(q^{20+4r};q^{32})}{J_{32}}\\
&=q^{1-2r}
\frac{j(q^{10+2r};q^{16})j(q^{4+4r};q^{32})}{J_{32}}
- q^{-r}
\frac{j(q^{2+2r};q^{16})j(q^{20+4r};q^{32})}{J_{32}}\\
&=q^{1-2r}
\frac{j(q^{10+2r};q^{16})j(q^{2+2r};q^{16})j(-q^{2+2r};q^{16})}{J_{32}}\frac{J_{32}}{J_{16}^2}\\
&\qquad - q^{-r}
\frac{j(q^{2+2r};q^{16})j(q^{10+2r};q^{16})j(-q^{10+2r};q^{16})}{J_{32}}\frac{J_{32}}{J_{16}^2}.
\end{align*}}%
We then combine theta functions in the numerator with (\ref{equation:1.10}) to get
{\allowdisplaybreaks \begin{align*}
q^{1-2r}
&\frac{j(q^{6-2r};q^{16})j(q^{28-4r};q^{32})}{J_{32}}
- q^{-r}
\frac{j(q^{2+2r};q^{16})j(q^{20+4r};q^{32})}{J_{32}}\\
&=q^{1-2r}
\frac{j(q^{2+2r};q^{8})j(-q^{2+2r};q^{16})}{J_{32}}\frac{J_{32}}{J_{16}^2}\frac{J_{16}^2}{J_{8}}\\
&\qquad - q^{-r}
\frac{j(q^{2+2r};q^{8})j(-q^{10+2r};q^{16})}{J_{32}}\frac{J_{32}}{J_{16}^2}\frac{J_{16}^2}{J_{8}}.
\end{align*}}%
We then simplify, rewrite two theta functions with (\ref{equation:j-flip}), and then combine the two theta functions with (\ref{equation:jsplit-m2}) to get
{\allowdisplaybreaks \begin{align*}
q^{1-2r}
&\frac{j(q^{6-2r};q^{16})j(q^{28-4r};q^{32})}{J_{32}}
- q^{-r}
\frac{j(q^{2+2r};q^{16})j(q^{20+4r};q^{32})}{J_{32}}\\
&=-q^{-r}
\frac{j(q^{2+2r};q^{8})}{J_{8}}\left (j(-q^{10+2r};q^{16})-q^{1-r} j(-q^{2+2r};q^{16})\right ) \\
&=-q^{-r}
\frac{j(q^{2+2r};q^{8})}{J_{8}}\left (j(-q^{6-2r};q^{16})-q^{1-r} j(-q^{14-2r};q^{16})\right ) \\
&=-q^{-r}
\frac{j(q^{2+2r};q^{8})}{J_{8}}j(q^{1-r};q^{4}),
\end{align*}}%
and the result follows.
\end{proof}

\begin{proof}[Proof of Proposition  \ref{proposition:masterThetaIdentitypP38m1ell2rPlus1}]
We start with the case $r=0$.  Two terms cancel resulting in
\begin{equation*}
F(0)=\frac{j(q^{2};q^{16})j(q^{20};q^{32})}{J_{32}}\frac{J_{3}J_{4}^3}{J_{2}^2J_{12}}.
\end{equation*}
We break apart a theta function with (\ref{equation:1.12}), combine two theta functions with (\ref{equation:1.10}), and then simplify to get
{\allowdisplaybreaks \begin{align*}
F(0)&=\frac{j(q^{2};q^{16})j(q^{10};q^{16})j(-q^{10};q^{16})}{J_{32}}\frac{J_{32}}{J_{16}^2}\frac{J_{3}J_{4}^3}{J_{2}^2J_{12}}\\
&=j(q^2;q^8)\frac{J_{16}^2}{J_{8}}j(-q^{10};q^{16})\frac{1}{J_{16}^2}\frac{J_{3}J_{4}^3}{J_{2}^2J_{12}}\\
&=j(-q^{6};q^{16})\frac{J_{3}J_{4}^2}{J_{2}J_{12}}.
\end{align*}}%

For the case $r=1$, one term vanishes.  Product rearrangements give us 
\begin{align*}
F(1)&=-q^{-1}\frac{J_{1}^3}  {J_{2,4}}
 + q^{-1} \frac{j(q^{4};q^{16})j(q^{24};q^{32})}{J_{32}}\frac{J_{3}J_{4}^3}{J_{2}^2J_{12}}\\
 &=-q^{-1}\frac{J_{1}^3}  {J_{2,4}}
 + q^{-1}\frac{J_{4}J_{16}}{J_{8}}\frac{J_{8}J_{32}}{J_{16}}\frac{1}{J_{32}}\frac{J_{3}J_{4}^3}{J_{2}^2J_{12}}\\
 &=-q^{-1}\frac{J_{1}^3}  {J_{2,4}}
 + q^{-1}\frac{J_{3}J_{4}^4}{J_{2}^2J_{12}}.
\end{align*}
The result then follows from Lemma \ref{lemma:unusualThetaIdentity1}.

For the case $r=2$, two terms cancel giving us
\begin{equation*}
F(2)= q^{-2} \frac{j(q^{6};q^{16})j(q^{28};q^{32})}{J_{32}}\frac{J_{3}J_{4}^3}{J_{2}^2J_{12}}.
 \end{equation*}
 We break apart a theta function with (\ref{equation:1.12}), and then combine two theta functions with (\ref{equation:1.10}) to get
 \begin{align*}
 F(2)&= q^{-2} \frac{j(q^{8};q^{16})j(q^2;q^{16})j(-q^2;q^{16})}{J_{32}}\frac{J_{32}}{J_{16}^2}\frac{J_{3}J_{4}^3}{J_{2}^2J_{12}}\\
 &=q^{-2} \frac{j(q^2;q^{8})j(-q^2;q^{16})}{J_{32}}\frac{J_{16}^2}{J_{8}}\frac{J_{32}}{J_{16}^2}\frac{J_{3}J_{4}^3}{J_{2}^2J_{12}},
 \end{align*}
 and the result follows from simplifying and elementary product rearrangements.
\end{proof}

\begin{proof}[Proof of Proposition \ref{proposition:masterThetaIdentitypP38m1ell2rPlus1Alt}] We consider $r=0$.  Elementary product rearrangements give that two terms cancel.  The remaining term easily simplifies
\begin{align*}
F(0)&=-iq^{\frac{1}{2}}\frac{J_{1}^3j(-q^{3};q^{12})}
{J_{2,4}J_{6,12}} +   \frac{1}{4}\frac{J_{1}^3}{J_{2}^2} 
\frac{j(q^{2};q^{8})}{J_{8}}j(-q;q^{4}) 
+iq^{\frac{1}{2}} 
 \frac{J_{1}^2J_{4}J_{12}}{J_{2}^2J_{3}}
\frac{j(q^{2};q^{8})}{J_{8}}j(q;q^{4}) \\
&=    \frac{1}{4}\frac{J_{1}^2J_{2}}{J_{4}}.
\end{align*} 
We consider $r=1$.  One term vanishes, and the other two terms easily combine to yield
\begin{align*}
F(1)=-q^{-1}\frac{J_{1}^3}
{J_{2,4}} +   \frac{1}{4}\frac{J_{1}^3}{J_{2}^2} 
q^{-1}\frac{j(q^{4};q^{8})}{J_{8}}j(-1;q^{4})=-  q^{-1} \frac{1}{2}\frac{J_{1}^3J_{4}}{J_{2}^2}.
\end{align*}
We consider $r=2$.  Identity (\ref{equation:j-elliptic}) and elementary product rearrangements give that two terms cancel
\begin{equation*}
F(2)=  \frac{1}{4q^3}\frac{J_{1}^2J_{2}}{J_{4}}.\qedhere
\end{equation*} 
\end{proof}


\section{The polar-finite decompositions for odd-spin positive admissible-level characters} 
\label{section:polarFiniteOddSpin}
We prove Theorem \ref{theorem:generalPolarFiniteOddSpin}.    We begin with  propositions whose proofs we delay until the end of the section.
\begin{proposition} \label{proposition:polarFinitePreAppell}
We have
{\allowdisplaybreaks \begin{align}
(q)_{\infty}^3&\chi_{2r+1}^{(p,2p+j)}(z;q)
\label{equation:polarFinitePreFinal}\\
&=   (q)_{\infty}^3 \sum_{s=0}^{j-1}z^{-\frac{1}{2}(2s+1)}q^{\frac{p}{4j}(2s+1)^2}C_{2s+1,2r+1}^{(p,2p+j)}(q)
    j(-z^{-j}q^{p(2s+1+j)};q^{2pj})\notag\\
&\qquad +     (-1)^{p}q^{-\frac{1}{8}+\frac{p(2r+2)^2}{4(2p+j)}}
\sum_{s=0}^{j-1}q^{\binom{p+1}{2}-p(r+1-s)}z^{-\frac{1}{2}(2s+1)} 
\sum_{m=1}^{p-1}(-1)^{m}q^{\binom{m+1}{2}+m(r-p)}
\notag\\
&\qquad \qquad \times\Big (  j(-q^{m(2p+j)+p(2r+2)};q^{2p(2p+j)}) 
\notag \\
&\qquad \qquad \qquad -q^{m(2p+j)-m(2r+2)}j(-q^{-m(2p+j)+p(2r+2)};q^{2p(2p+j)})\Big )
\notag\\
&\qquad \qquad \times \sum_{t\in\mathbb{Z}}q^{pjt^2+p(2s+1)t}z^{-jt}
\sum_{i=1}^{t}q^{-pji(i-1)-p(2s+1)i}\left (q^{m(ji+s-j+1)} -q^{-m(ji+s)}\right ).
\notag
\end{align} }%
\end{proposition}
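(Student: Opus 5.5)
We start from the Fourier expansion \eqref{equation:fourcoefexp} for $\ell=2r+1$: the quantum numbers are odd, $m=2n+1$, and the level is $N=p'/p-2=j/p$, so $q^{m^2/4N}=q^{\frac{p}{4j}(2n+1)^2}$. Writing every odd $m$ uniquely as $m=2jt+2s+1$ with $t\in\Z$ and $0\le s\le j-1$, we get
\begin{equation*}
\chi_{2r+1}^{(p,2p+j)}(z;q)=\sum_{s=0}^{j-1}\sum_{t\in\Z}C_{2jt+2s+1,2r+1}^{(p,2p+j)}(q)\,q^{\frac{p}{4j}(2jt+2s+1)^2}z^{-\frac12(2jt+2s+1)}.
\end{equation*}
Expanding the exponent gives
\begin{equation*}
q^{\frac{p}{4j}(2jt+2s+1)^2}z^{-\frac12(2jt+2s+1)}=q^{\frac{p}{4j}(2s+1)^2}z^{-\frac12(2s+1)}\cdot q^{pjt^2+p(2s+1)t}z^{-jt},
\end{equation*}
which isolates the $t$-dependence that appears in the last line of \eqref{equation:polarFinitePreFinal}.

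We then multiply by $(q)_\infty^3$ and apply the odd-spin quasi-periodicity Theorem \ref{theorem:generalQuasiPeriodicityOddSpin} to each $(q)_\infty^3C_{2jt+2s+1,2r+1}$. This replaces it by $(q)_\infty^3C_{2s+1,2r+1}$ plus the explicit correction sum over $i=1,\dots,t$. Here we interpret $\sum_{i=1}^{t}$ for $t\le 0$ in the usual signed way: it is empty for $t=0$, and it equals $-\sum_{i=t+1}^{0}$ for $t<0$. The theorem must be valid for negative $t$ in this sense. If \cite{KM26} only states it for $t\ge 0$, I would derive the negative-$t$ case by applying the positive case with $s$ replaced by the shifted base point and then solving for the lower string function.

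The main term has $C_{2s+1,2r+1}$ independent of $t$, so its $t$-sum is a theta series. Using $j(x;q)=\sum_n(-1)^nq^{\binom n2}x^n$, we have
\begin{equation*}
\sum_{t}q^{pjt^2+p(2s+1)t}z^{-jt}=\sum_t q^{2pj\binom t2}\bigl(q^{pj+p(2s+1)}z^{-j}\bigr)^t=j(-z^{-j}q^{p(2s+1+j)};q^{2pj}).
\end{equation*}
This is exactly the first line of \eqref{equation:polarFinitePreFinal}.

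For the correction term we track the prefactors. Theorem \ref{theorem:generalQuasiPeriodicityOddSpin} contains $q^{-\frac{p}{4j}(2s+1)^2}$, which cancels against $q^{\frac{p}{4j}(2s+1)^2}$. We also have $-2pj\binom i2=-pji(i-1)$. The $m$-sum and the bracket of two $j(\cdot;q^{2p(2p+j)})$ terms are independent of $t$ and $i$, so they can be pulled outside. What remains is $\sum_t q^{pjt^2+p(2s+1)t}z^{-jt}\sum_{i=1}^t(\cdots)$, exactly as stated.

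The only genuine subtlety is convergence and the legitimacy of interchanging sums. I would argue formally, as formal Laurent series in $z^{1/2}$ with $q$-series coefficients, in an annulus where the character expansion converges. The inner $i$-sum grows at most like $q^{-pjt^2+O(t)}$, and this combines with $q^{pjt^2}$. Whether the $t$-series converges absolutely therefore depends on the linear terms and on $|z|$. I expect this bookkeeping, and checking that $\sum_{i=1}^t$ behaves correctly for negative $t$, to be the main obstacle. Once both are settled, the identity follows term by term.
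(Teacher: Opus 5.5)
Your proposal is correct and is essentially the paper's proof. Both write $k=jt+s$ in the Fourier expansion, apply Theorem~\ref{theorem:generalQuasiPeriodicityOddSpin}, sum the main term with the Jacobi triple product, and pull the $t$-independent factors out of the correction term, with the paper treating the interchange of sums as formally as you do.

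Your extra care about $t<0$ covers a point the paper passes over: it only states the signed convention \eqref{equation:sumconvention} in the next proof. Your shifted-base-point argument works, provided the theorem is used for base points $s+jt$ outside $0\le s\le j-1$ (it is stated with no range restriction on $s$). The reason is that the $i$-th correction term, viewed as a function $G(s,i)$ with its $s$-dependent prefactor included, satisfies $G(s+jt,i)=G(s,i+t)$. Hence $\sum_{i=1}^{-t}G(s+jt,i)=\sum_{i=t+1}^{0}G(s,i)$.
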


What we will do is to write (\ref{equation:polarFinitePreFinal}) in terms of Appell functions; however, we only need to focus on the last line:
\begin{equation}
\sum_{t\in\mathbb{Z}}q^{pjt^2+p(2s+1)t}z^{-jt}
\sum_{i=1}^{t}q^{-pji(i-1)-p(2s+1)i}\left (q^{m(ji+s-j+1)} -q^{-m(ji+s)}\right ).
\label{equation:initSumOver_i_t}
\end{equation}
This leads us to another proposition.

\begin{proposition}\label{proposition:initSumOver_i_tPreAppellFinal}  We have
{\allowdisplaybreaks \begin{align}
\sum_{t\in\mathbb{Z}}&q^{pjt^2+p(2s+1)t}z^{-jt}
\sum_{i=1}^{t}q^{-pji(i-1)-p(2s+1)i}\left (q^{m(ji+s-j+1)} -q^{-m(ji+s)}\right )
\label{equation:initSumOver_i_tPreAppellFinal}\\
&=-q^{-m(j-s-1)}j(-q^{p(j-2s-1)}z^{j};q^{2jp})m(-q^{-jm+p(2s+1)},-q^{p(j-2s-1)}z^{j};q^{2jp})\notag \\
&\qquad + q^{-ms} j(-q^{p(j-2s-1)}z^{j};q^{2jp})m(-q^{jm+p(2s+1)},-q^{p(j-2s-1)}z^{j};q^{2jp}).\notag
\end{align}}%
\end{proposition}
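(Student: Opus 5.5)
The plan is to evaluate the left-hand side of (\ref{equation:initSumOver_i_tPreAppellFinal}) one term at a time: first $q^{m(ji+s-j+1)}$, then $q^{-m(ji+s)}$. In each case I reorganize the double sum so that the inner sum becomes geometric, and then recognize the resulting single sum as the series definition (\ref{equation:m-def}) of an Appell function with base $q^{2jp}$. The sum over $i$ must be read with the usual convention: $\sum_{i=1}^{t}=0$ for $t=0$, and $\sum_{i=1}^{t}=-\sum_{i=t+1}^{0}$ for $t<0$. This convention is what made the quasi-periodic relation Theorem \ref{theorem:generalQuasiPeriodicityOddSpin} valid for all $t\in\Z$ in Proposition \ref{proposition:polarFinitePreAppell}.

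\textbf{Step 1 (exchange the order of summation).} Take one of the two pieces, say with $i$-dependence $q^{\epsilon m j i}$ for $\epsilon=\pm1$. Write it as
\begin{equation*}
\Big(\sum_{t\ge i\ge 1}-\sum_{t<i\le 0}\Big)\, q^{pjt^2+p(2s+1)t}z^{-jt}\,q^{-pji(i-1)-p(2s+1)i+\epsilon mji}.
\end{equation*}
Now sum over $t$ for fixed $i$. Put $t=i+k$, with $k\ge0$ in the first range and $k\le -1$ in the second. The quadratic part collapses, since
\begin{equation*}
pj(i+k)^2-pji(i-1)=pjk^2+2pjik+pji.
\end{equation*}
So the exponent is linear in $i$, and for fixed $k$ the sum over $i\ge1$ (respectively $i\le0$) is geometric with ratio
\begin{equation*}
y_k=q^{2pjk+pj+\epsilon mj}z^{-j},
\end{equation*}
up to the precise constants. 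On a suitable annulus in $z$ both halves converge, to $y_k/(1-y_k)$ and to $-1/(1-y_k)$ respectively. The minus sign in the second range cancels the minus in the convention, so both halves give the same rational function of $y_k$, and they combine into a single bilateral sum
\begin{equation*}
\sum_{k\in\Z}\frac{q^{pjk^2+p(2s+1)k}z^{-jk}\cdot(\text{monomial})}{1-q^{2pjk+c_\epsilon}z^{-j}}.
\end{equation*}

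\textbf{Step 2 (match with the Appell function).} Set $Z=-q^{p(j-2s-1)}z^{j}$. Then
\begin{equation*}
j(Z;q^{2jp})\,m(X,Z;q^{2jp})=\sum_{r}\frac{(-1)^rq^{2jp\binom r2}Z^r}{1-q^{2jp(r-1)}XZ},
\end{equation*}
and the numerator here equals $q^{pjr^2-p(2s+1)r}z^{jr}$. This is exactly the $k$-numerator from Step 1 under $k=-r$. After the substitution $k=-r$, I multiply numerator and denominator by the reciprocal of the denominator's monomial to turn $1-q^{-2pjr+c}z^{-j}$ into the form $1-q^{2jp(r-1)}XZ$. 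This flipping step produces the sign and the extra factors $-q^{-m(j-s-1)}$ and $q^{-ms}$, and it reads off $X=-q^{\mp jm+p(2s+1)}$. If a shift of $r$ by one is needed to align $q^{2jp(r-1)}$, I would absorb it using (\ref{equation:j-elliptic}), or equivalently (\ref{equation:mxqz-fnq-z}).

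\textbf{Main obstacle.} The conceptual steps are routine; the real work is the exponent bookkeeping. The constants $s-j+1$ and $s$ in the two $m$-exponents must land precisely on the prefactors $q^{-m(j-s-1)}$ and $q^{-ms}$ and on the arguments $-q^{\mp jm+p(2s+1)}$, with no leftover theta quotient. If a stray monomial or an off-by-one index shift appears, I would first try to remove it with (\ref{equation:mxqz-fnq-x}) or (\ref{equation:mxqz-flip}) before suspecting an error. A secondary point is to justify the exchange of summation on a nonempty annulus in $z$ and then extend the identity by meromorphic continuation.
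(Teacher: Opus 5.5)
Your argument is correct and is essentially the paper's proof. You use the same ingredients: the summation convention, the substitution $t=i+k$ (the paper's $t\to t+i$), a geometric series in $i$, and finally $k=-r$ with $\frac{y}{1-y}=-\frac{1}{1-y^{-1}}$. This yields $X=-q^{p(2s+1)\mp jm}$ and the prefactors $-q^{-m(j-s-1)}$, $q^{-ms}$ directly, with no index shift needed. The only difference is organizational: you glue the $t>0$ and $t<0$ halves immediately at the level of the geometric sums, whereas the paper first reflects the $t<0$ part via $t\to -t$, $i\to 1-i$, obtains four one-sided sums, and glues them at the end.

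There is one slip to fix. For $k\le -1$ one has $\sum_{i\le 0}y_k^i=-y_k/(1-y_k)$, not $-1/(1-y_k)$. Only with this value do both halves give the same $y_k/(1-y_k)$; your stated value would leave a stray partial theta series $\sum_{k\le -1}q^{pjk^2+p(2s+1)k}z^{-jk}$.
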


\noindent Now we have the pieces to prove our general polar-finite decomposition.
\begin{proof}[Proof of Theorem \ref{theorem:generalPolarFiniteOddSpin}] 
Inserting (\ref{equation:initSumOver_i_tPreAppellFinal}) into (\ref{equation:polarFinitePreFinal}) yields
{\allowdisplaybreaks \begin{align*}
&\chi_{2r+1}^{(p,2p+j)}(z;q)\\
&\qquad =    \sum_{s=0}^{j-1}z^{-\frac{1}{2}(2s+1)}q^{\frac{p}{4j}(2s+1)^2}C_{2s+1,2r+1}^{(p,2p+j)}(q)
    j(-z^{-j}q^{p(2s+1+j)};q^{2pj})\\
&\qquad \qquad +     \frac{1}{(q)_{\infty}^3}
\sum_{s=0}^{j-1}(-1)^{p}q^{-\frac{1}{8}+\frac{p(2r+2)^2}{4(2p+j)}}q^{\binom{p}{2}-p(r-s)}z^{-\frac{1}{2}(2s+1)} 
\sum_{m=1}^{p-1}(-1)^{m}q^{\binom{m+1}{2}+m(r-p)}\\
&\qquad \qquad \qquad \times\Big (  j(-q^{m(2p+j)+p(2r+2)};q^{2p(2p+j)}) \\
&\qquad \qquad \qquad \qquad -q^{m(2p+j)-m(2r+2)}j(-q^{-m(2p+j)+p(2r+2)};q^{2p(2p+j)})\Big )\\
&\qquad \qquad \qquad \times \Big ( {-}q^{-m(j-s-1)}j(-q^{p(j-2s-1)}z^{j};q^{2jp})m(-q^{-jm+p(2s+1)},-q^{p(j-2s-1)}z^{j};q^{2jp}) \\
&\qquad \qquad \qquad \qquad + q^{-ms} j(-q^{p(j-2s-1)}z^{j};q^{2jp})m(-q^{jm+p(2s+1)},-q^{p(j-2s-1)}z^{j};q^{2jp})\Big ).
\end{align*} }%
Applying (\ref{equation:j-flip}) to the coefficient of the string function, pulling out a common theta function, and using (\ref{equation:mxqz-flip}), gives us
{\allowdisplaybreaks \begin{align*}
&\chi_{2r+1}^{(p,2p+j)}(z;q)\\
& =    \sum_{s=0}^{j-1}z^{-\frac{1}{2}(2s+1)}q^{\frac{p}{4j}(2s+1)^2}C_{2s+1,2r+1}^{(p,2p+j)}(q)
    j(-z^{j}q^{p(j-2s-1)};q^{2pj})\\
& \quad +     \frac{1}{(q)_{\infty}^3}
\sum_{s=0}^{j-1}(-1)^{p}q^{-\frac{1}{8}+\frac{p(2r+2)^2}{4(2p+j)}}q^{\binom{p}{2}-p(r-s)}z^{-\frac{1}{2}(2s+1)} 
j(-q^{p(j-2s-1)}z^{j};q^{2jp})\\
& \qquad  \times \sum_{m=1}^{p-1}(-1)^{m}q^{\binom{m+1}{2}+m(r-p)}\\
& \qquad \times\Big (  j(-q^{m(2p+j)+p(2r+2)};q^{2p(2p+j)}) 
 -q^{m(2p+j)-m(2r+2)}j(-q^{-m(2p+j)+p(2r+2)};q^{2p(2p+j)})\Big )\\
& \qquad \times \Big ( q^{m(s+1)-p(2s+1)}m(-q^{jm-p(2s+1)},-q^{-p(j-2s-1)}z^{-j};q^{2jp}) \\
&\qquad  \qquad + q^{-ms} m(-q^{jm+p(2s+1)},-q^{p(j-2s-1)}z^{j};q^{2jp})\Big ).
\end{align*} }%
Applying (\ref{equation:mxqz-fnq-z}) to the first Appell function gives the result.
\end{proof}

\begin{proof}[Proof of Proposition \ref{proposition:polarFinitePreAppell}] 
We begin by recalling the defining equation for string functions (\ref{equation:fourcoefexp}) with the specialization $(p,p^{\prime})=(p,2p+j)$, $1\le j \le p-1$, $\ell=2r+1$:
\begin{equation*}
\chi_{2r+1}^{(p,2p+j)} (z;q)=\sum_{k\in\mathbb{Z}}
C_{2k+1,2r+1}^{(p,2p+j)}(q) q^{\frac{p}{4j}(2k+1)^2}z^{-\frac{1}{2}(2k+1)}.
\end{equation*}
Because our quasi-periodicity formula of Theorem \ref{theorem:generalQuasiPeriodicityOddSpin} has quasi-period $2j$, we need to consider $k \pmod j$.  We write
\begin{equation*}
\chi_{2r+1}^{(p,2p+j)}(z;q)
=\sum_{t\in\mathbb{Z}}
\sum_{s=0}^{j-1}C_{2jt+2s+1,2r+1}^{(p,2p+j)}(q)q^{\frac{p}{4j}(2(jt+s)+1)^2}z^{-\frac{1}{2}(2(jt+s)+1)}.
\end{equation*}
Multiplying by $(q)_{\infty}^3$ and using Theorem \ref{theorem:generalQuasiPeriodicityOddSpin}, this reads
{\allowdisplaybreaks \begin{align}
(q)_{\infty}^3&\chi_{2r+1}^{(p,2p+j)}(z;q)
\label{equation:prePolarFiniteV1}\\
& =\sum_{t\in\mathbb{Z}}
    \sum_{s=0}^{j-1}(q)_{\infty}^{3}C_{2s+1,2r+1}^{(p,2p+j)}(q)
    q^{\frac{p}{4j}(2(jt+s)+1)^2}z^{-\frac{1}{2}(2(jt+s)+1)}
    \notag\\
&\qquad   +\sum_{t\in\mathbb{Z}}
    \sum_{s=0}^{j-1}  (-1)^{p}q^{-\frac{1}{8}+\frac{p(2r+2)^2}{4(2p+j)}}q^{\binom{p+1}{2}-p(r+1-s)-\frac{p}{4j}(2s+1)^2}
    q^{\frac{p}{4j}(2(jt+s)+1)^2}z^{-\frac{1}{2}(2(jt+s)+1)}
\notag \\
&\qquad \qquad \times \sum_{i=1}^{t}q^{-2pj\binom{i}{2}-p(2s+1)i}\sum_{m=1}^{p-1}(-1)^{m}q^{\binom{m+1}{2}+m(r-p)}
\notag \\
&\qquad  \qquad \qquad \times\left (q^{m(ji+s-j+1)} -q^{-m(ji+s)}\right )
\Big (  j(-q^{m(2p+j)+p(2r+2)};q^{2p(2p+j)}) \notag \\
 &\qquad  \qquad \qquad \qquad -q^{m(2p+j)-m(2r+2)}j(-q^{-m(2p+j)+p(2r+2)};q^{2p(2p+j)})\Big ).\notag
\end{align}}%

For the first double-sum over $t$ and $s$ in (\ref{equation:prePolarFiniteV1}), we use the Jacobi triple product identity (\ref{equation:JTPid}) to obtain
{\allowdisplaybreaks \begin{align*}
(q)_{\infty}^{3}&\sum_{t\in\mathbb{Z}}
    \sum_{s=0}^{j-1}C_{2s+1,2r+1}^{(p,2p+j)}(q)
    q^{\frac{p}{4j}(2(jt+s)+1)^2}z^{-\frac{1}{2}(2(jt+s)+1)}\\
    &=(q)_{\infty}^{3}
    \sum_{s=0}^{j-1}C_{2s+1,2r+1}^{(p,2p+j)}(q)\sum_{t\in\mathbb{Z}}
    q^{2pj\binom{t}{2}+p(2s+1+j)t+\frac{p}{4j}(2s+1)^2}z^{-jt-(\frac{2s+1}{2})}\\
    &=(q)_{\infty}^3\sum_{s=0}^{j-1}z^{-(\frac{2s+1}{2})}q^{\frac{p}{4j}(2s+1)^2}C_{2s+1,2r+1}^{(p,2p+j)}(q)
    \sum_{t\in\mathbb{Z}}
    q^{2pj\binom{t}{2}+p(2s+1+j)t}z^{-jt}\\
&=(q)_{\infty}^3\sum_{s=0}^{j-1}z^{-(\frac{2s+1}{2})}q^{\frac{p}{4j}(2s+1)^2}C_{2s+1,2r+1}^{(p,2p+j)}(q)
    j(-z^{-j}q^{p(2s+1+j)};q^{2pj}).
\end{align*}}%
For the second double-sum over $t$ and $s$ in (\ref{equation:prePolarFiniteV1}), rewriting the exponents yields
{\allowdisplaybreaks\begin{align*}
(-1)^{p}&q^{-\frac{1}{8}+\frac{p(2r+2)^2}{4(2p+j)}}q^{\binom{p+1}{2}}
\sum_{s=0}^{j-1}q^{-p(r+1-s)}z^{-\frac{1}{2}(2s+1)}\sum_{t\in\mathbb{Z}}q^{pjt^2+p(2s+1)t}
z^{-jt}
\sum_{i=1}^{t}q^{-pji(i-1)-p(2s+1)i}
 \\
&\qquad  \times\sum_{m=1}^{p-1}(-1)^{m}q^{\binom{m+1}{2}+m(r-p)}\left (q^{m(ji+s-j+1)} -q^{-m(ji+s)}\right )\\
&\qquad \times\Big (  j(-q^{m(2p+j)+p(2r+2)};q^{2p(2p+j)}) 
-q^{m(2p+j)-m(2r+2)}j(-q^{-m(2p+j)+p(2r+2)};q^{2p(2p+j)})\Big ).
\end{align*}}%
Interchanging the sums in order to isolate the sums over $i$ and $t$ brings us to 
{\allowdisplaybreaks\begin{align*}
(-1)^{p}&q^{-\frac{1}{8}+\frac{p(2r+2)^2}{4(2p+j)}}
\sum_{s=0}^{j-1}q^{\binom{p+1}{2}-p(r+1-s)}z^{-\frac{1}{2}(2s+1)}
 \\
&  \times\sum_{m=1}^{p-1}(-1)^{m}q^{\binom{m+1}{2}+m(r-p)}\\
& \times\Big (  j(-q^{m(2p+j)+p(2r+2)};q^{2p(2p+j)}) 
-q^{m(2p+j)-m(2r+2)}j(-q^{-m(2p+j)+p(2r+2)};q^{2p(2p+j)})\Big )\\
& \times \sum_{t\in\mathbb{Z}}q^{pjt^2+p(2s+1)t}z^{-jt}
\sum_{i=1}^{t}q^{-pji(i-1)-p(2s+1)i}\left (q^{m(ji+s-j+1)} -q^{-m(ji+s)}\right ).
\end{align*}}%
The result follows.
\end{proof}


\begin{proof}[Proof of Proposition \ref{proposition:initSumOver_i_tPreAppellFinal}] 
We will focus on the last line.  For $t=0$, the inner sum vanishes by the summation convention:  For $b<a$,
\begin{equation}
\sum_{r=a}^{b}c_r:=-\sum_{r=b+1}^{a-1}c_r, \ \textup{e.g.} \ 
\sum_{r=0}^{-1}c_r=-\sum_{r=0}^{-1}c_r=0, \label{equation:sumconvention}
\end{equation}
so we can write
{\allowdisplaybreaks \begin{align*}
\sum_{t\in\mathbb{Z}}&q^{pjt^2+p(2s+1)t}z^{-jt}
\sum_{i=1}^{t}q^{-pji(i-1)-p(2s+1)i}\left (q^{m(ji+s+1-j)} -q^{-m(ji+s)}\right )\\
&=\sum_{t\ge 1}\sum_{i=1}^{t}q^{pjt^2+p(2s+1)t}z^{-jt}
q^{-pji(i-1)-p(2s+1)i}   (q^{m(ji+s+1-j)}-q^{-m(ji+s)})\\
&\quad \qquad + \sum_{t\le -1}\sum_{i=1}^{t}q^{pjt^2+p(2s+1)t}z^{-jt}
q^{-pji(i-1)-p(2s+1)i}   (q^{m(ji+s+1-j)}-q^{-m(ji+s)}).
\end{align*}}%
Again using the summation convention (\ref{equation:sumconvention}), we rewrite the second double-sum
{\allowdisplaybreaks \begin{align*}
\sum_{t\in\mathbb{Z}}&q^{pjt^2+p(2s+1)t}z^{-jt}
\sum_{i=1}^{t}q^{-pji(i-1)-p(2s+1)i}\left (q^{m(ji+s+1-j)} -q^{-m(ji+s)}\right )\\
&=\sum_{t\ge 1}\sum_{i=1}^{t}q^{pjt^2+p(2s+1)t}z^{-jt}
q^{-pji(i-1)-p(2s+1)i}   (q^{m(ji+s+1-j)}-q^{-m(ji+s)})\\
&\quad \qquad - \sum_{t\le -1}\sum_{i=t+1}^{0}q^{pjt^2+p(2s+1)t}z^{-jt}
q^{-pji(i-1)-p(2s+1)i}   (q^{m(ji+s+1-j)}-q^{-m(ji+s)}).
\end{align*}}%
In the last line we have make the substitutions $t\to -t$ and $i\to -i+1$ to get
{\allowdisplaybreaks \begin{align*}
\sum_{t\in\mathbb{Z}}&q^{pjt^2+p(2s+1)t}z^{-jt}
\sum_{i=1}^{t}q^{-pji(i-1)-p(2s+1)i}\left (q^{m(ji+s+1-j)} -q^{-m(ji+s)}\right )\\
&=\sum_{t\ge 1}\sum_{i=1}^{t}q^{pjt^2+p(2s+1)t}z^{-jt}
q^{-pji(i-1)-p(2s+1)i}   (q^{m(ji+s+1-j)}-q^{-m(ji+s)})\\
&\quad \qquad + \sum_{t\le 1}\sum_{i=1}^{t}q^{pjt^2-p(2s+1)t}z^{jt}
q^{-pji(i-1)+p(2s+1)(i-1)}   (q^{m(ji-s-j)}-q^{-m(ji-s-1)}).
\end{align*}}%
Replacing $i$ with $i+1$ and $t$ with $t+1$ brings us to
{\allowdisplaybreaks \begin{align*}
\sum_{t\in\mathbb{Z}}&q^{pjt^2+p(2s+1)t}z^{-jt}
\sum_{i=1}^{t}q^{-pji(i-1)-p(2s+1)i}\left (q^{m(ji+s+1-j)} -q^{-m(ji+s)}\right )\\
&=\sum_{t\ge 1}\sum_{i=0}^{t-1}q^{pjt^2+p(2s+1)t}z^{-jt}
q^{-pj(i+1)i-p(2s+1)(i+1)}   (q^{m(j(i+1)+s+1-j)}-q^{-m(j(i+1)+s)})\\
&\qquad + \sum_{t\le 1}\sum_{i=0}^{t-1}q^{pjt^2-p(2s+1)t}z^{jt}
q^{-pj(i+1)i+p(2s+1)i}   (q^{m(j(i+1)-s-j)}-q^{-m(j(i+1)-s-1)})\\
&=\sum_{t\ge 0}\sum_{i=0}^{t}q^{pj(t+1)^2+p(2s+1)(t+1)}z^{-jt-j}
q^{-pj(i+1)i-p(2s+1)(i+1)}   (q^{m(j(i+1)+s+1-j)}-q^{-m(j(i+1)+s)})\\
& \qquad + \sum_{t\le 0}\sum_{i=0}^{t}q^{pj(t+1)^2-p(2s+1)(t+1)}z^{jt+j}
q^{-pj(i+1)i+p(2s+1)i}   (q^{m(j(i+1)-s-j)}-q^{-m(j(i+1)-s-1)}).
\end{align*}}%
We interchange summation symbols and then simplify to obtain
{\allowdisplaybreaks \begin{align*}
\sum_{t\in\mathbb{Z}}&q^{pjt^2+p(2s+1)t}z^{-jt}
\sum_{i=1}^{t}q^{-pji(i-1)-p(2s+1)i}\left (q^{m(ji+s+1-j)} -q^{-m(ji+s)}\right )\\
&=\sum_{i= 0}^{\infty}\sum_{t=i}^{\infty}q^{pj(t+1)^2+p(2s+1)(t+1)}z^{-jt-j}
q^{-pj(i+1)i-p(2s+1)(i+1)}   (q^{m(j(i+1)+s+1-j)}-q^{-m(j(i+1)+s)})\\
& \qquad + \sum_{i= 0}^{\infty}\sum_{t=i}^{\infty}q^{pj(t+1)^2-p(2s+1)(t+1)}z^{jt+j}
q^{-pj(i+1)i+p(2s+1)i}   (q^{m(j(i+1)-s-j)}-q^{-m(j(i+1)-s-1)}).
\end{align*}}%
We then replace $t$ with $t+i$ to get
{\allowdisplaybreaks \begin{align*}
\sum_{t\in\mathbb{Z}}&q^{pjt^2+p(2s+1)t}z^{-jt}
\sum_{i=1}^{t}q^{-pji(i-1)-p(2s+1)i}\left (q^{m(ji+s+1-j)} -q^{-m(ji+s)}\right )\\
&=\sum_{i= 0}^{\infty}\sum_{t=0}^{\infty}q^{pj(t+i+1)^2+p(2s+1)(t+i+1)}z^{-j(t+i)-j}
q^{-pj(i+1)i-p(2s+1)(i+1)}  \\
&\qquad  \times  (q^{m(j(i+1)+s+1-j)}-q^{-m(j(i+1)+s)})\\
& \quad + \sum_{i= 0}^{\infty}\sum_{t=0}^{\infty}q^{pj(t+i+1)^2-p(2s+1)(t+i+1)}z^{j(t+i)+j}
q^{-pj(i+1)i+p(2s+1)i}  \\
&\qquad \quad \times  (q^{m(j(i+1)-s-j)}-q^{-m(j(i+1)-s-1)}).
\end{align*}}%
Simplifying the exponents and distributing the sums brings us to
{\allowdisplaybreaks \begin{align*}
\sum_{t\in\mathbb{Z}}&q^{pjt^2+p(2s+1)t}z^{-jt}
\sum_{i=1}^{t}q^{-pji(i-1)-p(2s+1)i}\left (q^{m(ji+s+1-j)} -q^{-m(ji+s)}\right )\\
&=  q^{-p(2s+1)+m(s+1)}\sum_{i=0}^{\infty} \sum_{t=0}^{\infty}q^{2jp\binom{t+1}{2}+p(j+2s+1)(t+1)+i(2jpt+jp+jm)}z^{-j(t+1)-ji}
\\
&\qquad -q^{-p(2s+1)-m(j+s)}\sum_{i=0}^{\infty} \sum_{t=0}^{\infty}q^{2jp\binom{t+1}{2}+p(j+2s+1)(t+1)+i(2jpt+jp-jm)}z^{-j(t+1)-ji}\\
&\qquad + q^{-ms}\sum_{i=0}^{\infty} \sum_{t=0}^{\infty}q^{2jp\binom{t+1}{2}+p(j-2s-1)(t+1)+i(2jpt+jp+jm)}z^{j(t+1)+ji}\\
&\qquad - q^{-m(j-s-1)}\sum_{i=0}^{\infty} \sum_{t=0}^{\infty}q^{2jp\binom{t+1}{2}+p(j-2s-1)(t+1)+i(2jpt+jp-jm)}z^{j(t+1)+ji}.
\end{align*}}%
Using the geometric series yields
{\allowdisplaybreaks \begin{align}
\sum_{t\in\mathbb{Z}}&q^{pjt^2+p(2s+1)t}z^{-jt}
\sum_{i=1}^{t}q^{-pji(i-1)-p(2s+1)i}\left (q^{m(ji+s+1-j)} -q^{-m(ji+s)}\right )\
\label{equation:initSumOver_i_tPreAppell}\\
&=  q^{-p(2s+1)+m(s+1)} \sum_{t=0}^{\infty}\frac{q^{2jp\binom{t+1}{2}+p(j+2s+1)(t+1)}z^{-j(t+1)}}{1-q^{2jpt+jp+jm}z^{-j}}
\notag \\
&\qquad -q^{-p(2s+1)-m(j+s)} \sum_{t=0}^{\infty}\frac{q^{2jp\binom{t+1}{2}+p(j+2s+1)(t+1)}z^{-j(t+1)}}{1-q^{2jpt+jp-jm}z^{-j}}
\notag \\
&\qquad + q^{-ms}\sum_{t=0}^{\infty}\frac{q^{2jp\binom{t+1}{2}+p(j-2s-1)(t+1)}z^{j(t+1)}}{1-q^{2jpt+jp+jm}z^{j}}
\notag \\
&\qquad - q^{-m(j-s-1)} \sum_{t=0}^{\infty}\frac{q^{2jp\binom{t+1}{2}+p(j-2s-1)(t+1)}z^{j(t+1)}}{1-q^{2jpt+jp-jm}z^{j}}.\notag
\end{align}}%
We rework the right-hand side of (\ref{equation:initSumOver_i_tPreAppell}). In the first and second lines, we replace $t$ with $-t$, and in the third and fourth lines, we replace $t$ with $t-1$.  This gives
{\allowdisplaybreaks \begin{align*}
\sum_{t\in\mathbb{Z}}&q^{pjt^2+p(2s+1)t}z^{-jt}
\sum_{i=1}^{t}q^{-pji(i-1)-p(2s+1)i}\left (q^{m(ji+s+1-j)} -q^{-m(ji+s)}\right )\\
&=  q^{-p(2s+1)+m(s+1)} \sum_{t=-\infty}^{0}\frac{q^{2jp\binom{-t+1}{2}+p(j+2s+1)(-t+1)}z^{-j(-t+1)}}{1-q^{-2jpt+jp+jm}z^{-j}}
\frac{q^{2jpt-jp-jm}z^{j}}{q^{2jpt-jp-jm}z^{j}}\\
&\qquad -q^{-p(2s+1)-m(j+s)} \sum_{t=-\infty}^{0}\frac{q^{2jp\binom{-t+1}{2}+p(j+2s+1)(-t+1)}z^{-j(-t+1)}}{1-q^{-2jpt+jp-jm}z^{-j}}
 \frac{q^{2jpt-jp+jm}z^{j}}{q^{2jpt-jp+jm}z^{j}}\\
&\qquad + q^{-ms}\sum_{t=1}^{\infty}\frac{q^{2jp\binom{t}{2}+p(j-2s-1)t}z^{jt}}{1-q^{2jp(t-1)+jp+jm}z^{j}}
 - q^{-m(j-s-1)} \sum_{t=1}^{\infty}\frac{q^{2jp\binom{t}{2}+p(j-2s-1)t}z^{jt}}{1-q^{2jp(t-1)+jp-jm}z^{j}}.
\end{align*}}%
Simplifying brings us to
{\allowdisplaybreaks \begin{align}
\sum_{t\in\mathbb{Z}}&q^{pjt^2+p(2s+1)t}z^{-jt}
\sum_{i=1}^{t}q^{-pji(i-1)-p(2s+1)i}\left (q^{m(ji+s+1-j)} -q^{-m(ji+s)}\right )
\label{equation:initSumOver_i_tPreAppellV2}\\
&=  -q^{-m(j-s-1)} \sum_{t=-\infty}^{0}\frac{q^{2jp\binom{t}{2}+p(j- 2s-1)t}z^{jt}}{1-q^{2jp(t-1)+jp-jm}z^{j}}
 -q^{-ms} \sum_{t=-\infty}^{0}\frac{q^{2jp\binom{t}{2}+p(j-2s-1)t}z^{jt}}{1-q^{2jp(t-1)+jp+jm}z^{j}}
 \notag\\
&\qquad + q^{-ms}\sum_{t=1}^{\infty}\frac{q^{2jp\binom{t}{2}+p(j-2s-1)t}z^{jt}}{1-q^{2jp(t-1)+jp+jm}z^{j}}
 - q^{-m(j-s-1)} \sum_{t=1}^{\infty}\frac{q^{2jp\binom{t}{2}+p(j-2s-1)t}z^{jt}}{1-q^{2jp(t-1)+jp-jm}z^{j}}
 \notag\\
 &=  -q^{-m(j-s-1)} \sum_{t=-\infty}^{\infty}\frac{q^{2jp\binom{t}{2}+p(j- 2s-1)t}z^{jt}}{1-q^{2jp(t-1)+jp-jm}z^{j}}
 -q^{-ms} \sum_{t=-\infty}^{\infty}\frac{q^{2jp\binom{t}{2}+p(j-2s-1)t}z^{jt}}{1-q^{2jp(t-1)+jp+jm}z^{j}}\notag
\end{align}}%
Rewriting our find (\ref{equation:initSumOver_i_tPreAppellV2}) in terms of Appell functions gives the result.
\end{proof}


\section{Mock theta conjecture-like identities for odd-spin, $1/2$-level string functions}
\label{section:mockTheta12-level}
We give a new proof of identities (\ref{equation:mockThetaConj2512rPlus1-2ndA}) and (\ref{equation:mockThetaConj2512rPlus1-2ndmu}).  First we set up the machinery, and then we prove identities (\ref{equation:mockThetaConj2512rPlus1-2ndA}) and (\ref{equation:mockThetaConj2512rPlus1-2ndmu}).  We specialize Corollary \ref{corollary:generalPolarFiniteOddSpin1p} to $p=2$:
{\allowdisplaybreaks \begin{align*}
\chi_{2r+1}^{(2,5)}&(z;q)\\
& =   z^{-\frac{1}{2}}q^{\frac{1}{2}}C_{1,2r+1}^{(2,5)}(q) j(-z;q^{4})\\
& \quad -     \frac{1}{(q)_{\infty}^3}
q^{-\frac{1}{8}+\frac{2(r+1)^2}{5}}q^{-r}z^{-\frac{1}{2}} 
j(-z;q^{4})
 \times\Big (  j(-q^{9+4r};q^{20})  -q^{3-2r}j(-q^{-1+4r};q^{20})\Big )\\
& \qquad \times \Big ( q^{-1}m(-q^{-1},-q^{4}z^{-1};q^{4}) 
+  m(-q^{3},-z;q^{4})\Big ).
\end{align*} }%
Focusing on the two theta functions within the parentheses, we us (\ref{equation:j-elliptic}) to set up for, and then apply (\ref{equation:jsplit-m2}).  This reads
\begin{align*}
 j(-q^{9+4r};q^{20})  -q^{3-2r}j(-q^{-1+4r};q^{20})
& = j(-q^{9+4r};q^{20})  -q^{3-2r}q^{-1+4r}j(-q^{19+4r};q^{20})\\
& = j(q^{2+2r};q^{5}).
\end{align*} 
This gives us
{\allowdisplaybreaks \begin{align*}
\chi_{2r+1}^{(2,5)}(z;q)
& =   z^{-\frac{1}{2}}q^{\frac{1}{2}}C_{1,2r+1}^{(2,5)}(q) j(-z;q^{4})\\
& \quad -     \frac{1}{(q)_{\infty}^3}
q^{-\frac{1}{8}+\frac{2(r+1)^2}{5}}q^{-r}z^{-\frac{1}{2}} 
j(-z;q^{4})
 \times j(q^{2+2r};q^5)\\
&\qquad  \qquad \times \Big ( q^{-1}m(-q^{-1},-q^{4}z^{-1};q^{4}) 
+  m(-q^{3},-z;q^{4})\Big ).
\end{align*} }%
Rewriting the string function using (\ref{equation:mathCalCtoStringC}) brings us to
{\allowdisplaybreaks \begin{align}
\chi_{2r+1}^{(2,5)}(z;q)
& =   z^{-\frac{1}{2}}q^{-\frac{1}{8}+\frac{2(r+1)^2}{5}}\mathcal{C}_{1,2r+1}^{(2,5)}(q) j(-z;q^{4})
\label{equation:newFourier25}\\
& \quad -     \frac{1}{(q)_{\infty}^3}
q^{-\frac{1}{8}+\frac{2(r+1)^2}{5}}q^{-r}z^{-\frac{1}{2}} 
j(-z;q^{4})
 \times j(q^{2+2r};q^5)\notag\\
&\qquad  \qquad \times \Big ( q^{-1}m(-q^{-1},-q^{4}z^{-1};q^{4}) 
+  m(-q^{3},-z;q^{4})\Big ).\notag
\end{align} }%
We specialize Proposition \ref{proposition:WeylKac} to $(p,j,\ell)=(2,1,2r+1)$ to have
\begin{align}
\chi_{\ell}^{(2,5)}(z;q)
&=z^{-r-\frac{1}{2}}q^{-\frac{1}{8}+\frac{2(r+1)^2}{5}}
\frac{j(-q^{4r+14}z^{-5};q^{20})
-z^{2r+2}j(-q^{-4r+6}z^{-5};q^{20}) }
{j(z;q)}.
\label{equation:WeylKac25}
\end{align}
Comparing (\ref{equation:WeylKac25}) and (\ref{equation:newFourier25}) gives
{\allowdisplaybreaks \begin{align*}
z^{-r}&
\frac{j(-q^{4r+14}z^{-5};q^{20})
-z^{2r+2}j(-q^{-4r+6}z^{-5};q^{20}) }
{j(z;q)}\\
& =   \mathcal{C}_{1,2r+1}^{(2,5)}(q) j(-z;q^{4})\\
& \quad -     \frac{1}{(q)_{\infty}^3}
q^{-r}
j(-z;q^{4})
 j(q^{2+2r};q^5)
  \Big ( q^{-1}m(-q^{-1},-q^{4}z^{-1};q^{4}) 
+  m(-q^{3},-z;q^{4})\Big ).
\end{align*} }%
Solving for the string function results in
{\allowdisplaybreaks \begin{align}
\mathcal{C}_{1,2r+1}^{(2,5)}(q)
&=z^{-r}
\frac{j(-q^{4r+14}z^{-5};q^{20})
-z^{2r+2}j(-q^{-4r+6}z^{-5};q^{20}) }
{j(z;q) j(-z;q^{4})}\label{equation:generalMockThetaConjLevel12}
\\
&\qquad +    \frac{1}{(q)_{\infty}^3}
q^{-r}
 j(q^{2+2r};q^5)
  \Big ( q^{-1}m(-q^{-1},-q^{4}z^{-1};q^{4}) 
+  m(-q^{3},-z;q^{4})\Big ).\notag
\end{align} }%


We prove identity (\ref{equation:mockThetaConj2512rPlus1-2ndA}).  In (\ref{equation:generalMockThetaConjLevel12}), we set $z=-q^2$ and use (\ref{equation:j-elliptic}) to get
{\allowdisplaybreaks \begin{align*}
\mathcal{C}_{1,2r+1}^{(2,5)}(q)
&=(-1)^{r}q^{-2r}
\frac{j(q^{4r+4};q^{20})
-q^{4r+4}j(q^{-4r-4};q^{20}) }
{j(-q^2;q) j(q^2;q^{4})}\\
&\qquad +    \frac{1}{(q)_{\infty}^3}
q^{-r}
 j(q^{2+2r};q^5)
  \Big ( q^{-1}m(-q^{-1},q^2;q^{4}) 
+  m(-q^{3},q^2;q^{4})\Big ).
\end{align*} }%
We use (\ref{equation:mxqz-fnq-x}) to get
{\allowdisplaybreaks \begin{align*}
\mathcal{C}_{1,2r+1}^{(2,5)}(q)
&=(-1)^{r}q^{-2r}
\frac{j(q^{4r+4};q^{20})
-q^{4r+4}j(q^{-4r-4};q^{20}) }
{j(-q^2;q) j(q^2;q^{4})}\\
&\qquad +    \frac{1}{(q)_{\infty}^3}
q^{-r}
 j(q^{2+2r};q^5)
  \Big ( 1+2q^{-1} m(-q^{-1},q^2;q^{4})\Big ).
\end{align*} }%
Using (\ref{equation:mxqz-flip}) and (\ref{equation:mxqz-fnq-z}) gives
{\allowdisplaybreaks \begin{align*}
\mathcal{C}_{1,2r+1}^{(2,5)}(q)
&=(-1)^{r}q^{-2r}
\frac{j(q^{4r+4};q^{20})
-q^{4r+4}j(q^{-4r-4};q^{20}) }
{j(-q^2;q) j(q^2;q^{4})}\\
&\qquad +    \frac{1}{(q)_{\infty}^3}
q^{-r}
 j(q^{2+2r};q^5)
  \Big ( 1-2m(-q,q^2;q^{4})\Big ).
\end{align*} }%
Using (\ref{equation:j-flip}) and (\ref{equation:j-elliptic})
{\allowdisplaybreaks \begin{align*}
\mathcal{C}_{1,2r+1}^{(2,5)}(q)
&=(-1)^{r}2q^{-2r}
\frac{j(q^{4r+4};q^{20})}
{j(-q^2;q) j(q^2;q^{4})}
+    \frac{1}{(q)_{\infty}^3}
q^{-r}
 j(q^{2+2r};q^5)
  \Big ( 1-2m(-q,q^2;q^{4})\Big ).
\end{align*} }%
Elementary product rearrangements and (\ref{equation:2nd-A(q)}) give
 \begin{equation*}
(q)_{\infty}^3\mathcal{C}_{1,2r+1}^{(2,5)}(q)
=(-1)^{r}q^{1-2r}
\frac{J_{1}^4J_{4}}{{J_{2}^4}}j(q^{4r+4};q^{20})
+   
q^{-r}j(q^{2+2r};q^5)
  \Big ( 1+2A_2(-q)\Big ).
\end{equation*}


We prove identity (\ref{equation:mockThetaConj2512rPlus1-2ndmu}).  In (\ref{equation:generalMockThetaConjLevel12}), we set $z=-q$ to get
{\allowdisplaybreaks \begin{align*}
\mathcal{C}_{1,2r+1}^{(2,5)}(q)
&=(-1)^rq^{-r}
\frac{j(q^{4r+9};q^{20})
-q^{2r+2}j(q^{-4r+1};q^{20}) }
{j(-q;q) j(q;q^{4})}
\\
&\qquad +    \frac{1}{(q)_{\infty}^3}
q^{-r}
 j(q^{2+2r};q^5)
  \Big ( q^{-1}m(-q^{-1},q^{3};q^{4}) 
+  m(-q^{3},q;q^{4})\Big ).
\end{align*} }%
We use (\ref{equation:j-flip}) and (\ref{equation:jsplit-m2}) to get
{\allowdisplaybreaks \begin{align*}
\mathcal{C}_{1,2r+1}^{(2,5)}(q)
&=(-1)^rq^{-r}
\frac{j(q^{2+2r};-q^5)}
{j(-q;q) j(q;q^{4})}
\\
&\qquad +    \frac{1}{(q)_{\infty}^3}
q^{-r}
 j(q^{2+2r};q^5)
  \Big ( q^{-1}m(-q^{-1},q^{3};q^{4}) 
+  m(-q^{3},q;q^{4})\Big ).
\end{align*} }%
Elementary product rearrangements give
{\allowdisplaybreaks \begin{align*}
(q)_{\infty}^3\mathcal{C}_{1,2r+1}^{(2,5)}(q)
&=(-1)^{r}\frac{q^{-r}}{2}
\frac{J_{1}^3}{J_{2}J_{4}}j(q^{2r+2};-q^{5})
\\
&\qquad +  q^{-r}
 j(q^{2+2r};q^5)
  \Big ( q^{-1}m(-q^{-1},q^{3};q^{4}) 
+  m(-q^{3},q;q^{4})\Big ).
\end{align*} }%
Using (\ref{equation:mxqz-flip}) and (\ref{equation:mxqz-fnq-z}) gives
{\allowdisplaybreaks \begin{align*}
(q)_{\infty}^3\mathcal{C}_{1,2r+1}^{(2,5)}(q)
&=(-1)^{r}\frac{q^{-r}}{2}
\frac{J_{1}^3}{J_{2}J_{4}}j(q^{2r+2};-q^{5})
\\
&\qquad +  q^{-r}
 j(q^{2+2r};q^5)
  \Big ( -m(-q,q;q^{4}) 
+  m(-q^{3},q;q^{4})\Big ).
\end{align*} }%
Using (\ref{equation:mxqz-fnq-x}) and (\ref{equation:mxqz-flip}) gives
{\allowdisplaybreaks \begin{align*}
(q)_{\infty}^3\mathcal{C}_{1,2r+1}^{(2,5)}(q)
&=(-1)^{r}\frac{q^{-r}}{2}
\frac{J_{1}^3}{J_{2}J_{4}}j(q^{2r+2};-q^{5})
\\
&\qquad +  q^{-r}
 j(q^{2+2r};q^5)
  \Big ( 1-m(-q,q;q^{4}) 
-m(-q,q^{-1};q^{4})\Big ).
\end{align*} }%
Corollary \ref{corollary:mxqz-flip-xz} then gives
{\allowdisplaybreaks \begin{align*}
(q)_{\infty}^3\mathcal{C}_{1,2r+1}^{(2,5)}(q)
&=(-1)^{r}\frac{q^{-r}}{2}
\frac{J_{1}^3}{J_{2}J_{4}}j(q^{2r+2};-q^{5})
\\
&\qquad +  q^{-r}
 j(q^{2+2r};q^5)
  \Big ( 1-m(-q,q;q^{4}) -m(-q,-1;q^{4}) 
\Big ).
\end{align*} }%
Identity (\ref{equation:2nd-mu(q)}) gives
\begin{equation}
(q)_{\infty}^3\mathcal{C}_{1,2r+1}^{(2,5)}(q)
=(-1)^{r}\frac{q^{-r}}{2}
\frac{J_{1}^3}{J_{2}J_{4}}j(q^{2r+2};-q^{5})
+ q^{-r}
 j(q^{2+2r};q^5) \Big ( 1-\frac{1}{2}\mu_{2}(q)\Big ).
\end{equation}


\section{Mock theta conjecture-like identities for odd-spin, $1/3$-level string functions}
\label{section:mockTheta13-level}
We prove Corollary \ref{corollary:newMockThetaIdentitiespP37m1ell2rPlus1}.  We specialize Corollary \ref{corollary:generalPolarFiniteOddSpin1p} to $p=3$:
{\allowdisplaybreaks \begin{align*}
\chi_{2r+1}^{(3,7)}(z;q)
& =   z^{-\frac{1}{2}}q^{\frac{3}{4}}C_{1,2r+1}^{(3,7)}(q)
    j(-z;q^{6})\\
& \quad -     \frac{1}{(q)_{\infty}^3}
q^{-\frac{1}{8}+\frac{3(r+1)^2}{7}}q^{3-3r}z^{-\frac{1}{2}} 
j(-z;q^{6})
 \sum_{m=1}^{2}(-1)^{m}q^{\binom{m+1}{2}+m(r-3)}\\
& \qquad \times\Big (  j(-q^{7m +6(r+1)};q^{42}) 
 -q^{7m-2m(r+1)}j(-q^{-7m+6(r+1)};q^{42})\Big )\\
& \qquad \times \Big ( q^{m-3}m(-q^{m-3},-q^{6}z^{-1};q^{6}) 
+  m(-q^{m+3},-z;q^{6})\Big ).
\end{align*} }%

We expand the sum over $m$ and change the string function notation with (\ref{equation:mathCalCtoStringC}).  Rearranging terms gives
{\allowdisplaybreaks \begin{align*}
(q)_{\infty}^3\mathcal{C}_{1,2r+1}^{(3,7)}(q)
 &=\frac{(q)_{\infty}^3}{j(-z;q^{6})}z^{\frac{1}{2}}q^{\frac{1}{8}-\frac{3(r+1)^2}{7}}\chi_{2r+1}^{(3,7)}(z;q)\\
& \quad +   q^{3-3r}
\Big [ 
-q^{-2+r}  \left (  j(-q^{13 +6r};q^{42}) 
 -q^{5-2r}j(-q^{-1+6r};q^{42})\right )\\
& \qquad \quad \times \left ( q^{-2}m(-q^{-2},-q^{6}z^{-1};q^{6}) 
+  m(-q^{4},-z;q^{6})\right )\\
&\qquad + q^{-3+2r} \left (  j(-q^{20 +6r};q^{42} )
-q^{10-4r}j(-q^{-8+6r};q^{42})\right ) \\
& \qquad \quad \times \left ( q^{-1}m(-q^{-1},-q^{6}z^{-1};q^{6}) 
+  m(-q^{5},-z;q^{6})\right )
\Big ].
\end{align*} }%

We use (\ref{equation:j-flip}) and (\ref{equation:j-elliptic}) to set up for the quintuple product identity (\ref{equation:quintuple}).  This gives
{\allowdisplaybreaks \begin{align*}
(q)_{\infty}^3\mathcal{C}_{1,2r+1}^{(3,7)}(q)
 &=\frac{(q)_{\infty}^3}{j(-z;q^{6})}z^{\frac{1}{2}}q^{\frac{1}{8}-\frac{3(r+1)^2}{7}}\chi_{2r+1}^{(3,7)}(z;q)\\
& \quad +   q^{3-3r}
\Big [ 
-q^{-2+r}  \left (  j(-q^{29 -6r};q^{42}) 
 -q^{5-2r}j(-q^{43-6r};q^{42})\right )\\
& \qquad \quad \times \left ( q^{-2}m(-q^{-2},-q^{6}z^{-1};q^{6}) 
+  m(-q^{4},-z;q^{6})\right )\\
&\qquad + q^{-3+2r} \left (  j(-q^{20 +6r};q^{42} )
-q^{2+2r}j(-q^{34+6r};q^{42})\right ) \\
& \qquad \quad \times \left ( q^{-1}m(-q^{-1},-q^{6}z^{-1};q^{6}) 
+  m(-q^{5},-z;q^{6})\right )
\Big ].
\end{align*} }%

Applying the quintuple product identity (\ref{equation:quintuple}) brings us to
{\allowdisplaybreaks \begin{align*}
(q)_{\infty}^3\mathcal{C}_{1,2r+1}^{(3,7)}(q)
 &=\frac{(q)_{\infty}^3}{j(-z;q^{6})}z^{\frac{1}{2}}q^{\frac{1}{8}-\frac{3(r+1)^2}{7}}\chi_{2r+1}^{(3,7)}(z;q)\\
& \quad 
-q^{1-2r}  \frac{j(q^{5-2r};q^{14};q^{24-4r};q^{28})}{J_{28}}
 \times \left ( q^{-2}m(-q^{-2},-q^{6}z^{-1};q^{6}) 
+  m(-q^{4},-z;q^{6})\right )\\
&\quad + q^{-r} \frac{j(q^{2+2r};q^{14};q^{18+4r};q^{28})}{J_{28}}
\times \left ( q^{-1}m(-q^{-1},-q^{6}z^{-1};q^{6}) 
+  m(-q^{5},-z;q^{6})\right ).
\end{align*} }%

Setting $z=-q^2$ brings us to
{\allowdisplaybreaks \begin{align*}
(q)_{\infty}^3\mathcal{C}_{1,2r+1}^{(3,7)}(q)
 &=\frac{(q)_{\infty}^3}{j(q^{2};q^{6})}(-q^{2})^{\frac{1}{2}}q^{\frac{1}{8}-\frac{3(r+1)^2}{7}}\chi_{2r+1}^{(3,7)}(-q^{2};q)\\
& \quad 
-q^{1-2r}  \frac{j(q^{5-2r};q^{14};q^{24-4r};q^{28})}{J_{28}}
 \times \left ( q^{-2}m(-q^{-2},q^{4};q^{6}) 
+  m(-q^{4},q^2;q^{6})\right )\\
&\quad + q^{-r} \frac{j(q^{2+2r};q^{14};q^{18+4r};q^{28})}{J_{28}}
\times \left ( q^{-1}m(-q^{-1},q^{4};q^{6}) 
+  m(-q^{5},q^2;q^{6})\right ).
\end{align*} }%
Using (\ref{equation:mxqz-flip}) and (\ref{equation:mxqz-fnq-x}) gives
{\allowdisplaybreaks \begin{align*}
(q)_{\infty}^3\mathcal{C}_{1,2r+1}^{(3,7)}(q)
 &=\frac{J_{1}^3}{J_{2}}(-q^{2})^{\frac{1}{2}}q^{\frac{1}{8}-\frac{3(r+1)^2}{7}}\chi_{2r+1}^{(3,7)}(-q^{2};q)\\
& \quad 
-q^{1-2r}  \frac{j(q^{5-2r};q^{14};q^{24-4r};q^{28})}{J_{28}}
 \times \left (-m(-q^{2},q^{-4};q^{6}) 
+ 1+q^{-2}m(-q^{-2},q^2;q^{6})\right )\\
&\quad + q^{-r} \frac{j(q^{2+2r};q^{14};q^{18+4r};q^{28})}{J_{28}}
\times \left ( -m(-q,q^{-4};q^{6}) 
+ 1+q^{-1} m(-q^{-1},q^2;q^{6})\right ).
\end{align*} }%
Using (\ref{equation:mxqz-fnq-x}) and (\ref{equation:mxqz-fnq-z}) gives
{\allowdisplaybreaks \begin{align*}
(q)_{\infty}^3\mathcal{C}_{1,2r+1}^{(3,7)}(q)
 &=\frac{J_{1}^3}{J_{2}}(-q^{2})^{\frac{1}{2}}q^{\frac{1}{8}-\frac{3(r+1)^2}{7}}\chi_{2r+1}^{(3,7)}(-q^{2};q)\\
& \quad 
-q^{1-2r}  \frac{j(q^{5-2r};q^{14};q^{24-4r};q^{28})}{J_{28}}
 \times \left (-m(-q^{2},q^{2};q^{6}) 
+ 1-m(-q^{2},q^{4};q^{6})\right )\\
&\quad + q^{-r} \frac{j(q^{2+2r};q^{14};q^{18+4r};q^{28})}{J_{28}}
\times \left ( -m(-q,q^{2};q^{6}) 
+ 1-m(-q,q^{4};q^{6})\right ).
\end{align*} }%
Rewriting the Appell functions using  (\ref{equation:3rd-f(q)}) and (\ref{equation:3rd-omega(q)}) takes us to
{\allowdisplaybreaks \begin{align}
(q)_{\infty}^3\mathcal{C}_{1,2r+1}^{(3,7)}(q)
 &=\frac{J_{1}^3}{J_{2}}(-q^{2})^{\frac{1}{2}}q^{\frac{1}{8}-\frac{3(r+1)^2}{7}}\chi_{2r+1}^{(3,7)}(-q^{2};q)
 \label{equation:fourierFinal37}\\
& \quad 
-q^{1-2r}  \frac{j(q^{5-2r};q^{14};q^{24-4r};q^{28})}{J_{28}}
 \times \left (1-\frac{1}{2}f_{3}(q^2)\right )
 \notag \\
&\quad + q^{-r} \frac{j(q^{2+2r};q^{14};q^{18+4r};q^{28})}{J_{28}}
\times \left (1-q\omega_{3}(-q)\right ).
\notag
\end{align} }%

We take the appropriate specialization of Proposition \ref{proposition:WeylKac}:
\begin{equation*}
\chi_{2r+1}^{(3,7)}(z;q)
=z^{-r-1}q^{3\frac{(2r+2)^2}{28}}
\frac{j(-q^{6r+27}z^{-7};q^{42})
-z^{2r+2}j(-q^{-6r+15}z^{-7};q^{42}) }
{z^{-\frac{1}{2}}q^{\frac{1}{8}}j(z;q)}.
\end{equation*}
Setting $z=-q^2$ gives
\begin{equation*}
\chi_{2r+1}^{(3,7)}(-q^2;q)
=(-q^{2})^{-r-\frac{1}{2}}q^{-\frac{1}{8}+3\frac{(r+1)^2}{7}}
\frac{j(-q^{6r+13};q^{42})
-q^{4r+4}j(-q^{-6r+1};q^{42}) }
{j(-q^2;q)}.
\end{equation*}
Using (\ref{equation:j-flip}) and (\ref{equation:j-elliptic}) gives
\begin{equation*}
\chi_{2r+1}^{(3,7)}(-q^2;q)
=(-q^{2})^{-r-\frac{1}{2}}q^{-\frac{1}{8}+3\frac{(r+1)^2}{7}}
\frac{j(-q^{29-6r};q^{42})
-q^{5-2r}j(-q^{43-6r};q^{42}) }
{j(-q^2;q)}.
\end{equation*}
Using the quintuple product identity (\ref{equation:quintuple}) and then (\ref{equation:j-elliptic}) gives
\begin{equation}
\chi_{2r+1}^{(3,7)}(-q^2;q)
=(-q^{2})^{-r-\frac{1}{2}}q^{1-\frac{1}{8}+3\frac{(r+1)^2}{7}}
\frac{j(-q^{5-2r};q^{14})j(q^{24-4r};q^{28})}
{j(-1;q)J_{28}}.
\label{equation:weylKacFinal37}
\end{equation}
\noindent Inserting (\ref{equation:weylKacFinal37}) into (\ref{equation:fourierFinal37}) and isolating the string function gives the result.


\section{Odd-spin $1/2$-level and $1/3$-level string functions through cross-spin identities}\label{section:crossSpinExamples}
In this section we use our cross-spin identity Theorem \ref{theorem:crossSpin-j-Odd} and corresponding results for even-spin in \cite{BoMo2025} to confirm the results in Corollaries \ref{corollary:newMockThetaIdentitiespP25m1ell2rPlus1} and \ref{corollary:newMockThetaIdentitiespP37m1ell2rPlus1}.   There is some overlap with what one finds in \cite[Section 8]{BoMo2025}, but here we carry out the details.

\smallskip
To relate odd-spin $1/2$-level string functions to even-spin $1/2$-level string functions, we use the following specialization of Theorem \ref{theorem:crossSpin-j-Odd}: 
\begin{align*}
(q)_{\infty}^3\mathcal{C}_{2i-1,2r-1}^{(2,5)}(q)
&=-q^{2(i-r)+1}(q)_{\infty}^{3}\mathcal{C}_{2i-2,4-2r}^{(2,5)}(q)\\
&\qquad -q^{i+r}
 \times
\left ( j(-q^{5-4r};q^{20})
-q^{-2r}j(-q^{5+4r};q^{20})\right ).
\end{align*}
Using (\ref{equation:j-flip}) and then (\ref{equation:jsplit-m2}) produces
\begin{align*}
(q)_{\infty}^3\mathcal{C}_{2i-1,2r-1}^{(2,5)}(q)
&=-q^{2(i-r)+1}(q)_{\infty}^{3}\mathcal{C}_{2i-2,4-2r}^{(2,5)}(q)
 -q^{i+r}
 \times j(q^{-2r};q^{5}).
\end{align*}
Again using (\ref{equation:j-flip}) and (\ref{equation:j-elliptic}) brings us to
\begin{align*}
(q)_{\infty}^3\mathcal{C}_{2i-1,2r-1}^{(2,5)}(q)
&=-q^{2(i-r)+1}(q)_{\infty}^{3}\mathcal{C}_{2i-2,4-2r}^{(2,5)}(q)
 +q^{i-r}
 \times j(q^{2r};q^{5}).
\end{align*}
Let us set $i=1$ and replace $r$ with $r+1$.  This yields
\begin{equation}
(q)_{\infty}^3\mathcal{C}_{1,2r+1}^{(2,5)}(q)
=-q^{1-2r}(q)_{\infty}^{3}\mathcal{C}_{0,2(1-r)}^{(2,5)}(q)
 +q^{-r}
 \times j(q^{2r+2};q^{5}).\label{equation:master25crossSpin}
\end{equation}
 The proofs for (\ref{equation:mockThetaConj2512rPlus1-2ndA}) and (\ref{equation:mockThetaConj2512rPlus1-2ndmu}) are similar, so we will only do the second.  We substitute (\ref{equation:mockThetaConj2502r-2ndmu}) into (\ref{equation:master25crossSpin}) to get
\begin{align*}
(q)_{\infty}^3\mathcal{C}_{1,2r+1}^{(2,5)}(q)
&=-q^{1-2r} \left( (-q)^{r-1}\frac{1}{2}
\frac{J_{1}^3}{J_{2}J_{4}}j(-q^{3-2r};-q^{5})
+ q^{r-1} \frac{1}{2}  j(q^{3-2r};q^{5}) \mu_2(q) \right) \\
&\qquad 
 +q^{-r} \times j(q^{2r+2};q^{5}).
\end{align*}
Simplifying, using (\ref{equation:j-flip}), and regrouping terms brings us to
\begin{align*}
(q)_{\infty}^3\mathcal{C}_{1,2r+1}^{(2,5)}(q)
&=(-1)^r q^{-r}\frac{1}{2}
\frac{J_{1}^3}{J_{2}J_{4}}j(q^{2+2r};-q^{5})
+q^{-r} \frac{1}{2}  j(q^{2+2r};q^{5})\left (\frac{1}{2}  -\mu_2(q) \right ).
\end{align*}


To relate odd-spin $1/3$-level string functions to even-spin $1/3$-level string functions, we use the following specialization of Theorem \ref{theorem:crossSpin-j-Odd}:
\begin{align*}
(q)_{\infty}^3\mathcal{C}_{2i-1,2r-1}^{(3,7)}(q)
&=q^{3(1+i-r)}(q)_{\infty}^{3}\mathcal{C}_{2i-2,6-2r}^{(3,7)}(q)\\
&\qquad  -q^{1+2(i-r)}
\frac{j(q^{7+2r};q^{14})j(q^{4r};q^{28})}{J_{28}}
+q^{i-r}
\frac{j(q^{2r};q^{14})j(q^{14+4r};q^{28})}{J_{28}}.
\end{align*}
We replace $i=1$ and $r$ with $r+1$ to get
\begin{align*}
(q)_{\infty}^3\mathcal{C}_{1,2r+1}^{(3,7)}(q)
&=q^{3(1-r)}(q)_{\infty}^{3}\mathcal{C}_{0,2(2-r)}^{(3,7)}(q)\\
&\qquad  -q^{1-2r}
\frac{j(q^{9+2r};q^{14})j(q^{4r+4};q^{28})}{J_{28}}
+q^{-r}
\frac{j(q^{2r+2};q^{14})j(q^{18+4r};q^{28})}{J_{28}}.
\end{align*}
We substitute in the value in Theorem \ref{theorem:newMockThetaIdentitiespP37m0ell2r} to get
\begin{align*}
(q)_{\infty}^3\mathcal{C}_{1,2r+1}^{(3,7)}(q)
&=q^{3(1-r)}
\Big [ 
(-q)^{-2+r}\frac{(q)_{\infty}^3}{J_{2}}
\frac{j(-q^{1+2(2-r)};q^{14})j(q^{16+4(2-r)};q^{28})}
{j(-1;q)J_{28}} \\
&\qquad   -q^{2-2(2-r)}\frac{j(q^{6-2(2-r)};q^{14})j(q^{26-4(2-r)};q^{28})}{J_{28}} \omega_3(-q)\\
&\qquad +  q^{-2+r} \frac{j(q^{1+2(2-r)};q^{14})j(q^{16+4(2-r)};q^{28})}{J_{28}} 
\frac{1}{2}f_{3}(q^2)
\Big ]\\
&\qquad  -q^{1-2r}
\frac{j(q^{9+2r};q^{14})j(q^{4r+4};q^{28})}{J_{28}}
 +q^{-r}
\frac{j(q^{2r+2};q^{14})j(q^{18+4r};q^{28})}{J_{28}}.
\end{align*}
Distributing takes us to
\begin{align*}
(q)_{\infty}^3\mathcal{C}_{1,2r+1}^{(3,7)}(q)
&=
(-1)^{r}q^{1-2r}\frac{(q)_{\infty}^3}{J_{2}}
\frac{j(-q^{5-2r};q^{14})j(q^{24-4r};q^{28})}
{j(-1;q)J_{28}} \\
&\qquad   -q^{1-r}\frac{j(q^{2+2r};q^{14})j(q^{18+4r};q^{28})}{J_{28}} \omega_3(-q)\\
&\qquad +  q^{1-2r} \frac{j(q^{5-2r};q^{14})j(q^{24-4r};q^{28})}{J_{28}} 
\frac{1}{2}f_{3}(q^2)\\
&\qquad  -q^{1-2r}
\frac{j(q^{9+2r};q^{14})j(q^{4r+4};q^{28})}{J_{28}}
 +q^{-r}
\frac{j(q^{2r+2};q^{14})j(q^{18+4r};q^{28})}{J_{28}}.
\end{align*}
Combine terms using (\ref{equation:j-flip}) brings us to
\begin{align*}
(q)_{\infty}^3\mathcal{C}_{1,2r+1}^{(3,7)}(q)
&=
(-1)^{r}q^{1-2r}\frac{(q)_{\infty}^3}{J_{2}}
\frac{j(-q^{5-2r};q^{14})j(q^{24-4r};q^{28})}
{j(-1;q)J_{28}} \\
&\qquad   +q^{-r}\frac{j(q^{2+2r};q^{14})j(q^{18+4r};q^{28})}{J_{28}}\left ( 1-q \omega_3(-q)\right)\\
&\qquad -  q^{1-2r} \frac{j(q^{9+2r};q^{14})j(q^{4+4r};q^{28})}{J_{28}} 
\left ( 1-\frac{1}{2}f_{3}(q^2)\right ).
\end{align*}


\section{Mock theta conjecture-like identities for odd-spin, $2/3$-level string functions}
\label{section:mockTheta23-level}
In order to prove Theorems \ref{theorem:newMockThetaIdentitiespP38m1ell2rPlus1} and \ref{theorem:newMockThetaIdentitiespP38m1ell2rPlus1Alt}, we will first need to take the appropriate specialization of our polar-finite decomposition Theorem \ref{theorem:generalPolarFiniteOddSpin}.  This will give us
\begin{proposition} \label{proposition:polarFinite23OddSpin} We have
{\allowdisplaybreaks \begin{align*}
&q^{\frac{1}{8}-\frac{3(r+1)^2}{8}}\chi_{2r+1}^{(3,8)}(z;q)\\
& =  z^{-\frac{1}{2}}\mathcal{C}_{1,2r+1}^{(3,8)}(q) j(-z^{2}q^{3};q^{12})
+z^{-\frac{3}{2}}\mathcal{C}_{3,2r+1}^{(3,8)}(q) j(-z^{2}q^{-3};q^{12})\\
& \quad -     \frac{1}{(q)_{\infty}^3}z^{-\frac{1}{2}} 
j(-q^{3}z^{2};q^{12})\\
& \qquad  \times \Big [ -q^{1-2r}
\frac{j(q^{6-2r};q^{16})j(q^{28-4r};q^{32})}{J_{32}}
 \Big ( -q^{-1}m(-q,-q^{3}z^{2};q^{12}) 
+  m(-q^{5},-q^{3}z^{2};q^{12})\Big )\\
& \qquad \quad +q^{-r}
\frac{j(q^{2+2r};q^{16})j(q^{20+4r};q^{32})}{J_{32}}
  \Big ( -q^{-2}m(-q^{-1},-q^{3}z^{2};q^{12}) 
+  m(-q^{7},-q^{3}z^{2};q^{12})\Big )\Big ]\\
& \quad -     \frac{1}{(q)_{\infty}^3}z^{-\frac{3}{2}} 
j(-q^{-3}z^{2};q^{12})\\
& \qquad  \times \Big [ -q^{4-2r}
\frac{j(q^{6-2r};q^{16})j(q^{28-4r};q^{32})}{J_{32}}
 \Big ( -m(-q^{7},-q^{9}z^{2};q^{12}) 
 + q^{-1} m(-q^{11},-q^{9}z^{2};q^{12})\Big )\\
& \qquad \quad + q^{3-r}
\frac{j(q^{2+2r};q^{16})j(q^{20+4r};q^{32})}{J_{32}}
 \Big ( -m(-q^{5},-q^{9}z^{2};q^{12}) 
 + q^{-2} m(-q^{13},-q^{9}z^{2};q^{12})\Big )\Big ].
\end{align*} }%
\end{proposition}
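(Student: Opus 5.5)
This follows by specializing Theorem \ref{theorem:generalPolarFiniteOddSpin} to $(p,j)=(3,2)$, so $p'=8$, $2pj=12$ and $2p(2p+j)=48$, and then simplifying term by term. First multiply through by $q^{\frac18-\frac{3(r+1)^2}{8}}$. This turns $C_{2s+1,2r+1}^{(3,8)}(q)$ into $\mathcal{C}_{2s+1,2r+1}^{(3,8)}(q)$ via (\ref{equation:mathCalCtoStringC}). Here $s_\lambda=-\frac18+\frac{(r+1)^2\cdot 3}{8}-\frac{3(2s+1)^2}{8}$, and the factor $q^{\frac{3}{8}(2s+1)^2}$ from the theorem cancels the last piece. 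The same multiplication removes the prefactor $q^{-\frac18+\frac{p(2r+2)^2}{4(2p+j)}}$ in the polar part. For $s=0,1$ the theta coefficients are $j(-z^2q^{3};q^{12})$ and $j(-z^2q^{-3};q^{12})$, which gives the two finite-part terms directly.

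For the polar part, write out the sum over $m\in\{1,2\}$ for each $s\in\{0,1\}$. The powers $(-1)^p q^{\binom{p}{2}-p(r-s)}(-1)^m q^{\binom{m+1}{2}+m(r-3)}$ are explicit monomials. The key simplification is the pair of theta differences
\[
j(-q^{8m+6(r+1)};q^{48})-q^{8m-m(2r+2)}j(-q^{-8m+6(r+1)};q^{48}).
\]
These must be converted into the products $j(q^{6-2r};q^{16})j(q^{28-4r};q^{32})/J_{32}$ for $m=1$ and $j(q^{2+2r};q^{16})j(q^{20+4r};q^{32})/J_{32}$ for $m=2$, mirroring the $1/3$-level computation. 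Concretely, first apply (\ref{equation:j-flip}) and (\ref{equation:j-elliptic}) to put each difference in the form $j(qx^3;q^3)+xj(q^2x^3;q^3)$ with base $q^{48}=(q^{16})^3$. Then apply the quintuple product identity (\ref{equation:quintuple}) to get $j(-x;q^{16})j(q^{16}x^2;q^{32})/J_{32}$. Finally use (\ref{equation:j-flip}) and (\ref{equation:j-elliptic}) to normalize the arguments and absorb the resulting monomial into the prefactors $-q^{1-2r}$, $q^{-r}$ for $s=0$ and $-q^{4-2r}$, $q^{3-r}$ for $s=1$. I expect this to be the main obstacle, mostly as bookkeeping: choosing $x$ (roughly $x=-q^{\pm(2r+2)}$ up to shifts by $q^{16}$) so that the exponents line up, and tracking the powers of $q$ and signs.

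The Appell functions come straight from the theorem. For $s=0$ they are $q^{m-3}m(-q^{2m-3},-q^{9}z^{-2};q^{12})$ and $m(-q^{2m+3},-q^{3}z^{2};q^{12})$. Applying (\ref{equation:mxqz-flip}) followed by (\ref{equation:mxqz-fnq-z}) gives $m(x,-q^9z^{-2};q^{12})=x^{-1}m(x^{-1},-q^{3}z^{2};q^{12})$. This produces $-q^{-1}m(-q,\cdot)$, $-q^{-2}m(-q^{-1},\cdot)$ paired with $m(-q^5,\cdot)$, $m(-q^7,\cdot)$, once the signs coming from $x^{-1}$ with $x=-q^{2m-3}$ and the overall sign from $(-1)^{p+m}$ are absorbed.

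For $s=1$, the analogous step uses $m(x,-q^{15}z^{-2};q^{12})=m(x,-q^{3}z^{-2};q^{12})$ followed by the flip. This produces the second argument $-q^{9}z^{2}$ after a further use of (\ref{equation:mxqz-fnq-z}). It gives the four Appell functions with first arguments $-q^{7},-q^{11},-q^{5},-q^{13}$ and the stated monomial coefficients.

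Collect everything and factor out $z^{-\frac12}j(-q^3z^2;q^{12})$ and $z^{-\frac32}j(-q^{-3}z^2;q^{12})$. The resulting expression should agree with the statement, which I would check against the coefficient of each Appell function separately.
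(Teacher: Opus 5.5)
Your proposal is correct and follows the paper's proof essentially step for step. You specialize Theorem~\ref{theorem:generalPolarFiniteOddSpin} to $(p,j)=(3,2)$ and expand the sums over $s$ and $m$. You then convert each theta difference with (\ref{equation:j-flip}), (\ref{equation:j-elliptic}) and the quintuple product (\ref{equation:quintuple}) in base $q^{16}$, taking $x=-q^{6-2r}$ for $m=1$ and $x=-q^{2+2r}$ for $m=2$. Finally you move every Appell function to second argument $-q^{3}z^{2}$ or $-q^{9}z^{2}$ via (\ref{equation:mxqz-flip}) and (\ref{equation:mxqz-fnq-z}). The only difference is that the paper converts $C$ to $\mathcal{C}$ at the end rather than the beginning, which is immaterial.
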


By choosing a value of $z$ so that the coefficient of a given family of string functions vanishes, we see that instead of vanishing \cite[Lemmas 12.2, 12.3]{BoMo2025}, the corresponding Appell function expression becomes a simple quotient of theta functions.  This happens for $z=iq^{-3/2}$, and for $z=iq^{3/2}$.  Indeed, we have

\begin{lemma}\label{lemma:polarFinite23m1AppellVanish}  We have
\begin{gather*}
\lim_{z\to iq^{3/2}} j(-q^{-3}z^2;q^{12})
\left (-m(-q^{7},-q^{9}z^2;q^{12})+q^{-1}m(-q^{11},-q^{9}z^2;q^{12}) \right ) 
=-q^{-1}\frac{J_{1}^2J_{4}J_{6}^2}{J_{2}^2J_{3}},\\
\lim_{z\to iq^{3/2}} j(-z^2q^{-3};q^{12})\left ( 
-m(-q^{5},-q^{-3}z^2;q^{12})+q^{-2}m(-q^{13},-q^{-3}z^2;q^{12})
\right )
=-q^{-1}\frac{J_{1}^2J_{4}J_{6}^2}{J_{2}^2J_{3}}. 
\end{gather*}
\end{lemma}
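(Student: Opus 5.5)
The plan is to evaluate both limits directly from the partial-fraction definition \eqref{equation:m-def} of the Appell function, instead of trying to manipulate the Appell functions themselves. At $z=iq^{3/2}$ we have $z^2=-q^3$, so $w:=-q^{-3}z^2\to 1$. The prefactor $j(w;q^{12})$ therefore vanishes. At the same time the second argument of every Appell function tends to $-q^{9}z^2=q^{12}w\to q^{12}$, or to $-q^{-3}z^2=w\to 1$ in the second identity. Each Appell function thus has a pole there, and the limit has the indeterminate form $0\cdot\infty$.

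\textbf{Step 1: resolve the $0\cdot\infty$.}
By \eqref{equation:m-def} with $Q=q^{12}$, the product $j(y;Q)\,m(x,y;Q)=\sum_{r}(-1)^rQ^{\binom r2}y^r/(1-Q^{r-1}xy)$ is holomorphic in $y$ near $y=1$ and near $y=Q$ for generic $x$.
\begin{itemize}
\item For the first identity, $y=Qw$. By \eqref{equation:j-elliptic}, $j(Qw;Q)=-w^{-1}j(w;Q)$, so $j(w;Q)\,m(x,Qw;Q)=-w\,j(Qw;Q)\,m(x,Qw;Q)$. Letting $w\to1$ gives
\[
-\sum_r(-1)^rQ^{\binom{r+1}{2}}\frac{1}{1-xQ^{r}}=-\frac{J_{12}^3}{j(x;q^{12})},
\]
using the reciprocal-theta expansion \eqref{equation:jacobiThetaReciprocal} in base $Q$.
\item For the second identity, $y=w$ directly. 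Letting $w\to1$ gives $\sum_r(-1)^rQ^{\binom r2}/(1-Q^{r-1}x)$. After the shift $r\to r+1$ this is again $-J_{12}^3/j(x;q^{12})$.
\end{itemize}
(Alternatively one can first move the second argument with \eqref{equation:mxqz-fnq-z}, which makes the two identities literally the same computation.)

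\textbf{Step 2: reduce to a theta identity.}
By linearity, the first limit becomes
\[
J_{12}^3\Big(\frac{1}{j(-q^{7};q^{12})}-\frac{q^{-1}}{j(-q^{11};q^{12})}\Big),
\]
and the second becomes
\[
J_{12}^3\Big(\frac{1}{j(-q^{5};q^{12})}-\frac{q^{-2}}{j(-q^{13};q^{12})}\Big).
\]
Now apply \eqref{equation:j-flip}, which gives $j(-q^7;q^{12})=\overline{J}_{5,12}$ and $j(-q^{11};q^{12})=\overline{J}_{1,12}$, and \eqref{equation:j-elliptic}, which gives $j(-q^{13};q^{12})=q^{-1}\overline{J}_{1,12}$. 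With these, both expressions collapse to the same quantity
\[
J_{12}^3\,\frac{q\overline{J}_{1,12}-\overline{J}_{5,12}}{q\,\overline{J}_{1,12}\overline{J}_{5,12}}.
\]
This already shows the two right-hand sides agree.

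\textbf{Step 3: the main obstacle.}
It remains to prove the theta identity
\[
\overline{J}_{5,12}-q\overline{J}_{1,12}=\frac{J_{1}^2J_{4}J_{6}^2}{J_{2}^2J_{3}}\cdot\frac{\overline{J}_{1,12}\overline{J}_{5,12}}{J_{12}^3}.
\]
Note the sign: the target is $-q^{-1}(\cdots)$, so the numerator should be $\overline{J}_{5,12}-q\overline{J}_{1,12}$ up to that factor.
\begin{itemize}
\item I would first use \eqref{equation:jsplit-m2} with base $q^3$ and $z=q$. Since $j(-q^{5};q^{12})=\overline{J}_{5,12}$ and $j(-q^{3}q^{2};q^{12})$ reduces to $\overline{J}_{1,12}$ via \eqref{equation:j-flip} and \eqref{equation:j-elliptic}, this should give $\overline{J}_{5,12}-q\overline{J}_{1,12}=j(q;q^3)=J_1$.
\item The product $\overline{J}_{1,12}\overline{J}_{5,12}$ can then be combined into $J_{12}$-, $J_{24}$- and $\overline{J}$-products via \eqref{equation:1.10} and \eqref{equation:1.12}.
\item Finally, the routine product rearrangements from Section~\ref{section:technicalPrelim} should match this with $J_1^2J_4J_6^2/(J_2^2J_3)$ times $J_{12}^3/J_1$.
\end{itemize}
If the exponent bookkeeping in \eqref{equation:jsplit-m2} does not land exactly, I would fall back on \eqref{equation:H1Thm1.1}, or on the quintuple product \eqref{equation:quintuple}, to express the two-term difference as a single product.
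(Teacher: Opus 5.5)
Your proposal is correct and is essentially the paper's proof. Both evaluate the limits from the defining series \eqref{equation:m-def} and the reciprocal expansion \eqref{equation:jacobiThetaReciprocal}, obtaining $J_{12}^3(1/j(-q^{7};q^{12})-q^{-1}/j(-q^{11};q^{12}))$ and its analogue. Both then collapse the two-term difference to $j(q;q^3)=J_1$ via \eqref{equation:jsplit-m2}, and finish with $\overline{J}_{1,12}\overline{J}_{5,12}=J_2^2J_3J_{12}^3/(J_1J_4J_6^2)$, which does hold (e.g. via \eqref{equation:1.10} in base $q^4$).

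The differences are cosmetic. The paper first shifts the second Appell argument with \eqref{equation:mxqz-fnq-z}, which is your parenthetical alternative, and applies \eqref{equation:jsplit-m2} with $z=q^2$ rather than $z=q$. Also, in your Step 3 the second term of that split is $j(-q^{9}q^{2};q^{12})=j(-q^{11};q^{12})$, not $j(-q^{3}q^{2};q^{12})$.
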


\begin{lemma}\label{lemma:polarFinite23m3AppellVanish} We have
\begin{gather*}
\lim_{z\to iq^{-3/2}}j(-q^3z^2;q^{12})\left ( -q^{-1}m(-q,-q^{3}z^2;q^{12})+m(-q^{5},-q^{3}z^2;q^{12})\right )
=q^{-1}\frac{J_{1}^2J_{4}J_{6}^2}{J_{2}^2J_{3}},\\ 
\lim_{z\to iq^{-3/2}}j(-q^{3}z^2;q^{12})\left ( -q^{-2}m(-q^{-1},-q^{3}z^2;q^{12})+m(-q^{7},-q^{3}z^2;q^{12})\right )
=q^{-1}\frac{J_{1}^2J_{4}J_{6}^2}{J_{2}^2J_{3}}. 
\end{gather*}
\end{lemma}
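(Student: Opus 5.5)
The plan is to evaluate each limit with the changing-$z$ property, Theorem \ref{theorem:changing-z-theorem}. At $z=iq^{-3/2}$ we have $z^2=-q^{-3}$, so $w:=-q^3z^2\to 1$. Hence $j(w;q^{12})\to 0$ while each $m(x,w;q^{12})$ acquires a simple pole, and the limit is a residue-type quantity.

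\textbf{Step 1: residue of an Appell function.} First I will establish the general evaluation
\begin{equation*}
\lim_{w\to 1} j(w;q)\,m(x,w;q) = -\frac{J_1^3}{j(x;q)}.
\end{equation*}
Fix a generic $z_0$ and write $m(x,w;q)=m(x,z_0;q)+\Psi(x,w,z_0;q)$. Multiplying by $j(w;q)$ and letting $w\to1$, the term $j(w;q)m(x,z_0;q)$ tends to $0$. In $j(w;q)\Psi(x,w,z_0;q)$ the factor $j(w;q)$ cancels the $j(z_1;q)$ in the denominator of \eqref{equation:PsiDef}. What remains is
\begin{equation*}
\frac{z_0J_1^3\, j(1/z_0;q)\, j(xz_0;q)}{j(z_0;q)\, j(xz_0;q)\, j(x;q)}.
\end{equation*}
By \eqref{equation:j-flip}, $z_0 j(1/z_0;q)=-j(z_0;q)$, so this equals $-J_1^3/j(x;q)$. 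As a cross-check, the same result comes from reading off the $r=0$ style term directly from the definition \eqref{equation:m-def} together with \eqref{equation:jacobiThetaReciprocal}.

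\textbf{Step 2: apply to both limits.} With $q\to q^{12}$, the first limit becomes
\begin{equation*}
q^{-1}\frac{J_{12}^3}{\overline{J}_{1,12}}-\frac{J_{12}^3}{\overline{J}_{5,12}}.
\end{equation*}
For the second limit I get $q^{-2}J_{12}^3/j(-q^{-1};q^{12})-J_{12}^3/j(-q^{7};q^{12})$. By \eqref{equation:j-elliptic} and \eqref{equation:j-flip}, $j(-q^{-1};q^{12})=q^{-1}\overline{J}_{1,12}$ and $j(-q^7;q^{12})=\overline{J}_{5,12}$. So the second limit reduces to exactly the same expression, which explains why both right-hand sides in the lemma coincide.

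\textbf{Step 3: the theta identity.} Combining the fractions gives
\begin{equation*}
\frac{J_{12}^3\left(\overline{J}_{5,12}-q\overline{J}_{1,12}\right)}{q\,\overline{J}_{1,12}\overline{J}_{5,12}}.
\end{equation*}
For the numerator, apply \eqref{equation:jsplit-m2} with $q\to q^3$ and $z=q$. This gives $j(q;q^3)=j(-q^5;q^{12})-q\,j(-q^{11};q^{12})=\overline{J}_{5,12}-q\overline{J}_{1,12}$, and $j(q;q^3)=J_1$. It then remains to show
\begin{equation*}
\frac{J_{12}^3J_1}{\overline{J}_{1,12}\overline{J}_{5,12}}=\frac{J_1^2J_4J_6^2}{J_2^2J_3}.
\end{equation*}
I will verify this by writing $\overline{J}_{1,12}\overline{J}_{5,12}$ as an infinite product. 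This product runs over $(1+q^n)$ for $n\equiv \pm1,\pm5 \pmod{12}$, i.e.\ over $n$ coprime to $6$, times $J_{12}^2$. I will then express it through the $J_k$ using the standard rearrangements listed in Section \ref{section:technicalPrelim}; this should amount to the identity $\prod_{(n,6)=1}(1+q^n)=J_2^2J_3J_{12}/(J_1J_4J_6^2)$.

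\textbf{Main obstacle.} I expect the main obstacle to be this last product rearrangement, in particular getting the factors $J_1$ versus $J_1^2$ and the power of $q$ right. I will check it both by inclusion–exclusion on the Euler products and by comparing the first few $q$-coefficients.
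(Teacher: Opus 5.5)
Your proof is correct and is essentially the paper's argument: the paper omits this proof as ``similar'' to that of Lemma \ref{lemma:polarFinite23m1AppellVanish}. There, $j(w;q^{12})\,m(x,w;q^{12})$ at $w=1$ is read off directly from \eqref{equation:m-def} and \eqref{equation:jacobiThetaReciprocal}, which is your cross-check, rather than via Theorem \ref{theorem:changing-z-theorem}; the proof then finishes with the same combination via \eqref{equation:jsplit-m2} and product rearrangements, and your identity $\prod_{(n,6)=1}(1+q^n)=J_2^2J_3J_{12}/(J_1J_4J_6^2)$ is correct and yields the stated right-hand side for both limits.
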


For $z=iq^{3/2}$ and $z=iq^{-3/2}$, we then need to evaluate the character on the left-hand side.  Then it is straightforward to solve for the string function.  We will only do this for the proofs of Theorems \ref{theorem:newMockThetaIdentitiespP38m1ell2rPlus1} and \ref{theorem:newMockThetaIdentitiespP38m1ell2rPlus1Alt}, for which we will need Lemma \ref{lemma:polarFinite23m1AppellVanish}.  If we were to use this route to prove the identities in Corollaries \ref{corollary:newMockThetaIdentitiespP38m3ell2rPlus1} and \ref{corollary:newMockThetaIdentitiespP38m3ell2rPlus1Alt} then we would use Lemma \ref{lemma:polarFinite23m3AppellVanish}.  For the Corollaries, we will instead be using the cross-spin identity in Theorem \ref{theorem:crossSpin-j-Odd}.  So Lemma \ref{lemma:polarFinite23m3AppellVanish} is just stated for completeness and the curious reader.
\begin{proposition}\label{proposition:weylKac23ell2rzVal} We have
\begin{gather}
\chi_{2r+1}^{(3,8)}(iq^{3/2};q)
=-i(iq^{3/2})^{-r-\frac{1}{2}}q^{-\frac{1}{8}+3\frac{(r+1)^2}{8}}
q^{\frac{1}{2}}\frac{j((-q^{3})^{r+1};q^{12})}
{J_{2,4}},
 \label{equation:weylKac23ell2rPlus1zVal1}\\
\chi_{2r+1}^{(3,8)}(iq^{-3/2};q)
=-(-1)^{r}(iq^{-3/2})^{-r-\frac{1}{2}}q^{-\frac{1}{8}+3\frac{(r+1)^2}{8}}
q^{-3r-1}
\frac{j((-q^3)^{(r+1)};q^{12})}
{J_{2,4}}.
 \label{equation:weylKac23ell2rPlus1zVal2}
\end{gather}
\end{proposition}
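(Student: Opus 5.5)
Both evaluations come from specializing Proposition \ref{proposition:WeylKac} to $(p,p^{\prime},\ell)=(3,8,2r+1)$:
\begin{equation*}
\chi_{2r+1}^{(3,8)}(z;q)=z^{-r-\frac{1}{2}}q^{-\frac{1}{8}+\frac{3(r+1)^2}{8}}
\frac{j(-q^{6r+30}z^{-8};q^{48})-z^{2r+2}j(-q^{-6r+18}z^{-8};q^{48})}{j(z;q)}.
\end{equation*}
For $z=iq^{3/2}$ we have $z^{-8}=q^{-12}$ and $z^{2r+2}=(-1)^{r+1}q^{3r+3}$. The numerator then becomes
\begin{equation*}
j(-q^{6r+18};q^{48})-(-1)^{r+1}q^{3r+3}j(-q^{-6r+6};q^{48}).
\end{equation*}
Applying (\ref{equation:j-flip}) to the second term (send $-q^{6-6r}\mapsto -q^{42+6r}$) puts the numerator in the shape $j(-qx^2;q^4)-xj(-q^3x^2;q^4)$ with base $q^{12}$ and $x=(-q^3)^{r+1}$, up to a harmless shift handled by (\ref{equation:j-elliptic}). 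Identity (\ref{equation:jsplit-m2}) with $q\to q^{12}$ should then collapse it to $j((-q^3)^{r+1};q^{12})$. Parity checks on $r$ should fix the sign $(-1)^{r+1}$ and the power of $q$.

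For the denominator, $j(iq^{3/2};q)$ involves $q^{1/2}$. I will rewrite it via (\ref{equation:j-elliptic}) as a multiple of $j(iq^{1/2};q)$, say $j(iq^{3/2};q)=iq^{-1/2}\cdot(\text{stuff})\cdot j(iq^{1/2};q)$. Then I will use the product formula
\begin{equation*}
j(iq^{1/2};q)j(-iq^{1/2};q)=j(-q;q^2)J_1^2/J_2,
\end{equation*}
or more directly the observation that $(x;q)_\infty(-x;q)_\infty=(x^2;q^2)_\infty$ with $x=iq^{1/2}$. This gives $(iq^{1/2};q)_\infty(iq^{1/2};q)_\infty$-type pairings, so $j(iq^{1/2};q)$ reduces to $j(-q;q^2)\cdot J_1/J_2$. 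Equivalently, using (\ref{equation:1.11}) I expect $j(iq^{3/2};q)$ to be proportional to $J_{2,4}$, because $(iq^{1/2})(-iq^{1/2})=q$ combines into base $q^2$ and one lands on $J_{2,4}=\overline{J}_{1,2}$-type quantities after cancelling $J_1$ factors. The prefactor $-iq^{1/2}$ in (\ref{equation:weylKac23ell2rPlus1zVal1}) is exactly the residue of these elliptic shifts, and I will match it by tracking powers.

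For $z=iq^{-3/2}$ I will do the same: now $z^{-8}=q^{12}$ and $z^{2r+2}=(-1)^{r+1}q^{-3r-3}$. The numerator becomes
\begin{equation*}
j(-q^{6r+42};q^{48})-(-1)^{r+1}q^{-3r-3}j(-q^{30-6r};q^{48}).
\end{equation*}
Shifting by $q^{48}$ with (\ref{equation:j-elliptic}) again yields a (\ref{equation:jsplit-m2}) pattern equal to $j((-q^3)^{r+1};q^{12})$ times a monomial, which accounts for the extra $q^{-3r-1}$ and $(-1)^r$. The denominator is $j(iq^{-3/2};q)$, which is related to $j(iq^{3/2};q)$ by (\ref{equation:j-flip}) and (\ref{equation:j-elliptic}).

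The main obstacle is purely bookkeeping. First, I must get the fractional powers $(iq^{\pm3/2})^{-r-1/2}$ and the signs exactly right. Second, I must verify the precise theta-quotient evaluation of $j(iq^{3/2};q)$ in terms of $J_{2,4}$. I will check that evaluation first by comparing leading $q$-coefficients.
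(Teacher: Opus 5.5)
Your route is the paper's: specialize Proposition \ref{proposition:WeylKac}, flip the second theta function of the numerator with (\ref{equation:j-flip}), and collapse with (\ref{equation:jsplit-m2}) at base $q^{12}$ with $x=(-q^3)^{r+1}$.

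For $z=iq^{3/2}$ this is exact once you write $j(-q^{6-6r};q^{48})=j(-q^{42+6r};q^{48})$; no elliptic shift is needed. For $z=iq^{-3/2}$ the paper factors out $-(-1)^{r+1}q^{-3(r+1)}$ and flips $j(-q^{30-6r};q^{48})=j(-q^{18+6r};q^{48})$. This lands on the same pattern, so the numerator is $(-1)^{r}q^{-3r-3}j((-q^3)^{r+1};q^{12})$. Note that the computation the paper actually displays is this $z=iq^{-3/2}$ case, i.e.\ it yields (\ref{equation:weylKac23ell2rPlus1zVal2}). Your direct treatment of $z=iq^{3/2}$ is therefore the case the paper leaves to the reader.

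The step you must correct is the denominator. Your claim that $j(iq^{1/2};q)$ reduces to $j(-q;q^2)J_1/J_2$ is false, and so is $J_{2,4}=\overline{J}_{1,2}$. The pairing you quote gives directly
\begin{equation*}
j(iq^{1/2};q)=(iq^{1/2};q)_{\infty}(-iq^{1/2};q)_{\infty}(q;q)_{\infty}=(-q;q^2)_{\infty}(q;q)_{\infty}=\frac{J_2^2}{J_4}=J_{2,4}.
\end{equation*}
By contrast, $j(-q;q^2)J_1/J_2=(-q;q^2)_{\infty}^2J_1$ is too large by a factor $(-q;q^2)_{\infty}$. Your product formula $j(iq^{1/2};q)j(-iq^{1/2};q)=j(-q;q^2)J_1^2/J_2$ is correct. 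However, both factors are equal by (\ref{equation:j-flip}), so it only says $j(iq^{1/2};q)^2=J_{2,4}^2$.

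With the correct value, (\ref{equation:j-elliptic}) gives $j(iq^{3/2};q)=iq^{-1/2}J_{2,4}$. Its reciprocal is exactly the prefactor $-iq^{1/2}$ in (\ref{equation:weylKac23ell2rPlus1zVal1}). Likewise $j(iq^{-3/2};q)=j(-iq^{5/2};q)=-q^{-2}J_{2,4}$, which combined with the numerator monomial above gives the factor $-(-1)^{r}q^{-3r-1}$ in (\ref{equation:weylKac23ell2rPlus1zVal2}). Your planned leading-coefficient check would have exposed the error: $J_{2,4}=1-2q^2+\cdots$, whereas $(-q;q^2)_{\infty}^2J_1=1+q+\cdots$.
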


\begin{proof}[Proof of Proposition \ref{proposition:polarFinite23OddSpin}]
Let us specialize Theorem \ref{theorem:generalPolarFiniteOddSpin}  to $p=3, j=2$:
{\allowdisplaybreaks \begin{align*}
&\chi_{2r+1}^{(3,8)}(z;q)\\
& =    \sum_{s=0}^{1}z^{-\frac{1}{2}(2s+1)}q^{\frac{3}{8}(2s+1)^2}C_{2s+1,2r+1}^{(3,8)}(q)
    j(-z^{2}q^{3(1-2s)};q^{12})\\
& \quad -     \frac{1}{(q)_{\infty}^3}
\sum_{s=0}^{1}q^{-\frac{1}{8}+\frac{3(2r+2)^2}{32}}q^{3-3(r-s)}z^{-\frac{1}{2}(2s+1)} 
j(-q^{3(1-2s)}z^{2};q^{12})\\
& \qquad  \times \sum_{m=1}^{2}(-1)^{m}q^{\binom{m+1}{2}+m(r-3)}
 \times\Big (  j(-q^{8m+6r+6};q^{48}) 
 -q^{6m-2mr}j(-q^{-8m+6r+6};q^{48})\Big )\\
& \qquad \qquad \times \Big ( q^{m(s+1)-3(2s+1)}m(-q^{2m-3(2s+1)},-q^{3(3+2s)}z^{-2};q^{12}) \\
&\qquad  \qquad \qquad + q^{-ms} m(-q^{2m+3(2s+1)},-q^{3(1-2s)}z^{2};q^{12})\Big ).
\end{align*}}%
Expanding the sum over $s$ and then using (\ref{equation:mxqz-fnq-z}) gives
{\allowdisplaybreaks \begin{align*}
&\chi_{2r+1}^{(3,8)}(z;q)\\
& =  z^{-\frac{1}{2}}q^{\frac{3}{8}}C_{1,2r+1}^{(3,8)}(q) j(-z^{2}q^{3};q^{12})
+z^{-\frac{3}{2}}q^{\frac{27}{8}}C_{3,2r+1}^{(3,8)}(q) j(-z^{2}q^{-3};q^{12})\\
& \quad -     \frac{1}{(q)_{\infty}^3}
q^{-\frac{1}{8}+\frac{3(2r+2)^2}{32}}q^{3-3r}z^{-\frac{1}{2}} 
j(-q^{3}z^{2};q^{12})\\
& \qquad  \times \sum_{m=1}^{2}(-1)^{m}q^{\binom{m+1}{2}+m(r-3)}
 \times\Big (  j(-q^{8m+6r+6};q^{48}) 
 -q^{6m-2mr}j(-q^{-8m+6r+6};q^{48})\Big )\\
& \qquad \qquad \times \Big ( q^{m-3}m(-q^{2m-3},-q^{9}z^{-2};q^{12}) 
+  m(-q^{2m+3},-q^{3}z^{2};q^{12})\Big )\\
& \quad -     \frac{1}{(q)_{\infty}^3}
q^{-\frac{1}{8}+\frac{3(2r+2)^2}{32}}q^{6-3r}z^{-\frac{3}{2}} 
j(-q^{-3}z^{2};q^{12})\\
& \qquad  \times \sum_{m=1}^{2}(-1)^{m}q^{\binom{m+1}{2}+m(r-3)}
 \times\Big (  j(-q^{8m+6r+6};q^{48}) 
 -q^{6m-2mr}j(-q^{-8m+6r+6};q^{48})\Big )\\
& \qquad \qquad \times \Big ( q^{2m-9}m(-q^{2m-9},-q^{15}z^{-2};q^{12}) 
 + q^{-m} m(-q^{2m+9},-q^{-3}z^{2};q^{12})\Big ).
\end{align*} }%
Expanding the two sums over $m$ brings us to
{\allowdisplaybreaks \begin{align*}
&\chi_{2r+1}^{(3,8)}(z;q)\\
& =  z^{-\frac{1}{2}}q^{\frac{3}{8}}C_{1,2r+1}^{(3,8)}(q) j(-z^{2}q^{3};q^{12})
+z^{-\frac{3}{2}}q^{\frac{27}{8}}C_{3,2r+1}^{(3,8)}(q) j(-z^{2}q^{-3};q^{12})\\
& \quad -     \frac{1}{(q)_{\infty}^3}
q^{-\frac{1}{8}+\frac{3(2r+2)^2}{32}}q^{3-3r}z^{-\frac{1}{2}} 
j(-q^{3}z^{2};q^{12})\\
& \qquad  \times \Big [ -q^{-2+r}
 \times\Big (  j(-q^{14+6r};q^{48}) 
  -q^{6-2r}j(-q^{-2+6r};q^{48})\Big )\\
& \qquad \qquad \qquad \times \Big ( q^{-2}m(-q^{-1},-q^{9}z^{-2};q^{12}) 
+  m(-q^{5},-q^{3}z^{2};q^{12})\Big )\\
& \qquad \qquad +q^{-3+2r}
 \times\Big (  j(-q^{22+6r};q^{48}) 
 -q^{12-4r}j(-q^{-10+6r};q^{48})\Big )\\
& \qquad \qquad \qquad \times \Big ( q^{-1}m(-q,-q^{9}z^{-2};q^{12}) 
+  m(-q^{7},-q^{3}z^{2};q^{12})\Big )\Big ]\\
& \quad -     \frac{1}{(q)_{\infty}^3}
q^{-\frac{1}{8}+\frac{3(2r+2)^2}{32}}q^{6-3r}z^{-\frac{3}{2}} 
j(-q^{-3}z^{2};q^{12})\\
& \qquad  \times \Big [ -q^{-2+r}
 \times\Big (  j(-q^{14+6r};q^{48}) 
 -q^{6-2r}j(-q^{-2+6r};q^{48})\Big )\\
& \qquad \qquad \qquad \times \Big ( q^{-7}m(-q^{-7},-q^{15}z^{-2};q^{12}) 
 + q^{-1} m(-q^{11},-q^{-3}z^{2};q^{12})\Big )\\
& \qquad \qquad + q^{-3+2r}
 \times\Big (  j(-q^{22+6r};q^{48}) 
 -q^{12-4r}j(-q^{-10+6r};q^{48})\Big )\\
& \qquad \qquad \qquad \times \Big ( q^{-5}m(-q^{-5},-q^{15}z^{-2};q^{12}) 
 + q^{-2} m(-q^{13},-q^{-3}z^{2};q^{12})\Big )\Big ].
\end{align*} }%
We now use (\ref{equation:j-elliptic}) and (\ref{equation:j-flip}) to set up for the quintuple product identity. 
{\allowdisplaybreaks \begin{align*}
\chi_{2r+1}^{(3,8)}(z;q)
& =  z^{-\frac{1}{2}}q^{\frac{3}{8}}C_{1,2r+1}^{(3,8)}(q) j(-z^{2}q^{3};q^{12})
+z^{-\frac{3}{2}}q^{\frac{27}{8}}C_{3,2r+1}^{(3,8)}(q) j(-z^{2}q^{-3};q^{12})\\
& \quad -     \frac{1}{(q)_{\infty}^3}
q^{-\frac{1}{8}+\frac{3(2r+2)^2}{32}}q^{3-3r}z^{-\frac{1}{2}} 
j(-q^{3}z^{2};q^{12})\\
& \qquad  \times \Big [ -q^{-2+r}
 \times\Big (  j(-q^{34-6r};q^{48}) 
  -q^{6-2r}j(-q^{50-6r};q^{48})\Big )\\
& \qquad \qquad \qquad \times \Big ( q^{-2}m(-q^{-1},-q^{9}z^{-2};q^{12}) 
+  m(-q^{5},-q^{3}z^{2};q^{12})\Big )\\
& \qquad \qquad +q^{-3+2r}
 \times\Big (  j(-q^{22+6r};q^{48}) 
 -q^{2+2r}j(-q^{38+6r};q^{48})\Big )\\
& \qquad \qquad \qquad \times \Big ( q^{-1}m(-q,-q^{9}z^{-2};q^{12}) 
+  m(-q^{7},-q^{3}z^{2};q^{12})\Big )\Big ]\\
& \quad -     \frac{1}{(q)_{\infty}^3}
q^{-\frac{1}{8}+\frac{3(2r+2)^2}{32}}q^{6-3r}z^{-\frac{3}{2}} 
j(-q^{-3}z^{2};q^{12})\\
& \qquad  \times \Big [ -q^{-2+r}
 \times\Big (  j(-q^{34-6r};q^{48}) 
 -q^{6-2r}j(-q^{50-6r};q^{48})\Big )\\
& \qquad \qquad \qquad \times \Big ( q^{-7}m(-q^{-7},-q^{15}z^{-2};q^{12}) 
 + q^{-1} m(-q^{11},-q^{-3}z^{2};q^{12})\Big )\\
& \qquad \qquad + q^{-3+2r}
 \times\Big (  j(-q^{22+6r};q^{48}) 
 -q^{2+2r}j(-q^{38+6r};q^{48})\Big )\\
& \qquad \qquad \qquad \times \Big ( q^{-5}m(-q^{-5},-q^{15}z^{-2};q^{12}) 
 + q^{-2} m(-q^{13},-q^{-3}z^{2};q^{12})\Big )\Big ].
\end{align*} }%
Using the quintuple product identity (\ref{equation:quintuple}) and then collecting the powers of $q$ in each of the summands gives
{\allowdisplaybreaks \begin{align*}
&\chi_{2r+1}^{(3,8)}(z;q)\\
& =  z^{-\frac{1}{2}}q^{\frac{3}{8}}C_{1,2r+1}^{(3,8)}(q) j(-z^{2}q^{3};q^{12})
+z^{-\frac{3}{2}}q^{\frac{27}{8}}C_{3,2r+1}^{(3,8)}(q) j(-z^{2}q^{-3};q^{12})\\
& \quad -     \frac{1}{(q)_{\infty}^3}
q^{-\frac{1}{8}+\frac{3(2r+2)^2}{32}}q^{3-3r}z^{-\frac{1}{2}} 
j(-q^{3}z^{2};q^{12})\\
& \qquad  \times \Big [ -q^{-2+r}
\frac{j(q^{6-2r};q^{16})j(q^{28-4r};q^{32})}{J_{32}}
 \Big ( q^{-2}m(-q^{-1},-q^{9}z^{-2};q^{12}) 
+  m(-q^{5},-q^{3}z^{2};q^{12})\Big )\\
& \qquad \qquad +q^{-3+2r}
\frac{j(q^{2+2r};q^{16})j(q^{20+4r};q^{32})}{J_{32}}
  \Big ( q^{-1}m(-q,-q^{9}z^{-2};q^{12}) 
+  m(-q^{7},-q^{3}z^{2};q^{12})\Big )\Big ]\\
& \quad -     \frac{1}{(q)_{\infty}^3}
q^{-\frac{1}{8}+\frac{3(2r+2)^2}{32}}q^{6-3r}z^{-\frac{3}{2}} 
j(-q^{-3}z^{2};q^{12})\\
& \qquad  \times \Big [ -q^{-2+r}
\frac{j(q^{6-2r};q^{16})j(q^{28-4r};q^{32})}{J_{32}}\\
&\qquad \qquad \qquad \times  \Big ( q^{-7}m(-q^{-7},-q^{15}z^{-2};q^{12}) 
 + q^{-1} m(-q^{11},-q^{-3}z^{2};q^{12})\Big )\\
& \qquad \qquad + q^{-3+2r}
\frac{j(q^{2+2r};q^{16})j(q^{20+4r};q^{32})}{J_{32}}\\
&\qquad \qquad \qquad \times  \Big ( q^{-5}m(-q^{-5},-q^{15}z^{-2};q^{12}) 
 + q^{-2} m(-q^{13},-q^{-3}z^{2};q^{12})\Big )\Big ].
\end{align*} }%
Using the Appell function properties (\ref{equation:mxqz-flip}) and (\ref{equation:mxqz-fnq-z}) brings us to
{\allowdisplaybreaks \begin{align*}
&\chi_{2r+1}^{(3,8)}(z;q)\\
& =  z^{-\frac{1}{2}}q^{\frac{3}{8}}C_{1,2r+1}^{(3,8)}(q) j(-z^{2}q^{3};q^{12})
+z^{-\frac{3}{2}}q^{\frac{27}{8}}C_{3,2r+1}^{(3,8)}(q) j(-z^{2}q^{-3};q^{12})\\
& \quad -     \frac{1}{(q)_{\infty}^3}
q^{-\frac{1}{8}+\frac{3(2r+2)^2}{32}}q^{3-3r}z^{-\frac{1}{2}} 
j(-q^{3}z^{2};q^{12})\\
& \qquad  \times \Big [ -q^{-2+r}
\frac{j(q^{6-2r};q^{16})j(q^{28-4r};q^{32})}{J_{32}}
 \Big ( -q^{-1}m(-q,-q^{3}z^{2};q^{12}) 
+  m(-q^{5},-q^{3}z^{2};q^{12})\Big )\\
& \qquad \qquad +q^{-3+2r}
\frac{j(q^{2+2r};q^{16})j(q^{20+4r};q^{32})}{J_{32}}
  \Big ( -q^{-2}m(-q^{-1},-q^{3}z^{2};q^{12}) 
+  m(-q^{7},-q^{3}z^{2};q^{12})\Big )\Big ]\\
& \quad -     \frac{1}{(q)_{\infty}^3}
q^{-\frac{1}{8}+\frac{3(2r+2)^2}{32}}q^{6-3r}z^{-\frac{3}{2}} 
j(-q^{-3}z^{2};q^{12})\\
& \qquad  \times \Big [ -q^{-2+r}
\frac{j(q^{6-2r};q^{16})j(q^{28-4r};q^{32})}{J_{32}}
 \Big ( -m(-q^{7},-q^{9}z^{2};q^{12}) 
 + q^{-1} m(-q^{11},-q^{9}z^{2};q^{12})\Big )\\
& \qquad \qquad + q^{-3+2r}
\frac{j(q^{2+2r};q^{16})j(q^{20+4r};q^{32})}{J_{32}}
 \Big ( -m(-q^{5},-q^{9}z^{2};q^{12}) 
 + q^{-2} m(-q^{13},-q^{9}z^{2};q^{12})\Big )\Big ].
\end{align*} }%
We change the string function notation using (\ref{equation:mathCalCtoStringC}).   This takes us to
\begin{equation*}
C_{1,2r+1}^{(3,8)}(q) 
=q^{-\frac{1}{8}+\frac{3(2r+2)^2}{4\cdot 8}-\frac{3\cdot 1^2}{4\cdot 2}}\mathcal{C}_{1,2r+1}^{(3,8)}(q) 
=q^{-\frac{1}{8}+\frac{3(r+1)^2}{ 8}-\frac{3}{8}}\mathcal{C}_{1,2r+1}^{(3,8)}(q), 
\end{equation*}
and
\begin{equation*}
C_{3,2r+1}^{(3,8)}(q) 
=q^{-\frac{1}{8}+\frac{3(2r+2)^2}{4\cdot 8}-\frac{3\cdot 3^2}{4\cdot 2}}\mathcal{C}_{3,2r+1}^{(3,8)}(q) 
=q^{-\frac{1}{8}+\frac{3(r+1)^2}{ 8}-\frac{27}{8}}\mathcal{C}_{3,2r+1}^{(3,8)}(q),
\end{equation*}
and the result follows.
\end{proof}

\begin{proof}[Proof of Lemma \ref{lemma:polarFinite23m1AppellVanish}]  We rewrite the sum of Appell functions.  We first note that (\ref{equation:mxqz-fnq-z}) gives
\begin{align*}
-m(-q^{7},-q^{9}z^2;q^{12})&+q^{-1}m(-q^{11},-q^{9}z^2;q^{12})\\
&=-m(-q^{7},-q^{-3}z^2;q^{12})+q^{-1}m(-q^{11},-q^{-3}z^2;q^{12}).
\end{align*}
Hence for the first limit, using the definition of our Appell function (\ref{equation:m-def}) yields
\begin{align*}
\lim_{z\to iq^{3/2}}j(-q^{-3}z^2;q^{12})
&\left ( -m(-q^{7},-q^{-3}z^2;q^{12})+q^{-1}m(-q^{11},-q^{-3}z^2;q^{12})\right )\\ 
&=-\sum_{n\in\mathbb{Z}}\frac{(-1)^nq^{12\binom{n}{2}}}{1-q^{12(n-1)}(-q^7)}
+q^{-1}\sum_{n\in\mathbb{Z}}\frac{(-1)^nq^{12\binom{n}{2}}}{1-q^{12(n-1)}(-q^{11})}.
\end{align*}
Setting $n \to n+1$ and then using the reciprocal formula (\ref{equation:jacobiThetaReciprocal}) yields
{\allowdisplaybreaks \begin{align*}
\lim_{z\to iq^{3/2}}j(-q^{-3}z^2;q^{12})&\left ( -m(-q^{7},-q^{-3}z^2;q^{12})+q^{-1}m(-q^{11},-q^{-3}z^2;q^{12})\right )\\ 
&=\sum_{n\in\mathbb{Z}}\frac{(-1)^nq^{12\binom{n+1}{2}}}{1+q^{12n+7}}
-q^{-1}\sum_{n\in\mathbb{Z}}\frac{(-1)^nq^{12\binom{n+1}{2}}}{1+q^{12n+11}}\\
&=\frac{J_{12}^3}{j(-q^{7};q^{12})}-q^{-1}\frac{J_{12}^3}{j(-q^{11};q^{12})}\\
&=-q^{-1}\frac{J_{12}^3}{j(-q^{11};q^{12})j(-q^7;q^{12})}\left (j(-q^{7};q^{12})-q j(-q^{11};q^{12}) \right ).
\end{align*}}%
Use (\ref{equation:j-flip}) and (\ref{equation:j-elliptic}), and then combine the theta functions using (\ref{equation:jsplit-m2}) to get
{\allowdisplaybreaks \begin{align*}
\lim_{z\to iq^{3/2}}j(-q^{-3}z^2;q^{12})&\left ( -m(-q^{7},-q^{-3}z^2;q^{12})+q^{-1}m(-q^{11},-q^{-3}z^2;q^{12})\right )\\ 
&=-q^{-1}\frac{J_{12}^3}{j(-q;q^{12})j(-q^7;q^{12})}\left (j(-q^{7};q^{12})-q j(-q;q^{12}) \right ) \\
&=-q^{-1}\frac{J_{12}^3}{j(-q;q^{12})j(-q^7;q^{12})}\left (j(-q^{7};q^{12})-q^2 j(-q^{13};q^{12}) \right )\\
&=-q^{-1}\frac{J_{12}^3j(q^{2};q^{3})}{j(-q;q^{12})j(-q^7;q^{12})}.
\end{align*}}%
Elementary product rearrangements gives the result.

For the second limit, we use (\ref{equation:mxqz-fnq-z}) to rewrite the sum of Appell functions to get
\begin{align*}
-m(-q^{5},-q^{-3}z^2;q^{12})&+q^{-2}m(-q^{13},-q^{-3}z^2;q^{12})\\
&=-m(-q^{5},-q^{-3}z^2;q^{12})+q^{-2}m(-q^{13},-q^{-3}z^2;q^{12}).
\end{align*}
Using the definition of our Appell function (\ref{equation:m-def}) immediately yields
\begin{align*}
\lim_{z\to iq^{3/2}}j(-q^{-3}z^2;q^{12})
&\left ( -m(-q^{5},-q^{-3}z^2;q^{12})+q^{-2}m(-q^{13},-q^{-3}z^2;q^{12})\right )\\ 
&=-\sum_{n\in\mathbb{Z}}\frac{(-1)^nq^{12\binom{n}{2}}}{1-q^{12(n-1)}(-q^5)}
+q^{-2}\sum_{n\in\mathbb{Z}}\frac{(-1)^nq^{12\binom{n}{2}}}{1-q^{12(n-1)}(-q^{13})}.
\end{align*}
Setting $n \to n+1$ and then using the reciprocal formula (\ref{equation:jacobiThetaReciprocal}) yields
{\allowdisplaybreaks \begin{align*}
\lim_{z\to iq^{3/2}}j(-q^{-3}z^2;q^{12})
&\left ( -m(-q^{5},-q^{-3}z^2;q^{12})+q^{-2}m(-q^{13},-q^{-3}z^2;q^{12})\right )\\ 
&=-\sum_{n\in\mathbb{Z}}\frac{(-1)^nq^{12\binom{n}{2}}}{1-q^{12(n-1)}(-q^5)}
+q^{-2}\sum_{n\in\mathbb{Z}}\frac{(-1)^nq^{12\binom{n}{2}}}{1-q^{12(n-1)}(-q^{13})}\\
&=\frac{J_{12}^3}{j(-q^{5};q^{12})}-q^{-2}\frac{J_{12}^3}{j(-q^{13};q^{12})}.
\end{align*}}%
Combining terms with (\ref{equation:jsplit-m2}) gives
{\allowdisplaybreaks \begin{align*}
\lim_{z\to iq^{3/2}}j(-q^{-3}z^2;q^{12})
&\left ( -m(-q^{5},-q^{-3}z^2;q^{12})+q^{-2}m(-q^{13},-q^{-3}z^2;q^{12})\right )\\ 
&=-q^{-2}\frac{J_{12}^3}{j(-q^{5};q^{12})j(-q^{13};q^{12})}\left ( j(-q^{5};q^{12})-q^2j(-q^{13};q^{12})\right ) \\
&=-q^{-1}\frac{J_{12}^3}{j(-q^{7};q^{12})j(-q;q^{12})}j(q^2;q^3).
\end{align*}}%
The result then follows from elementary product rearrangements.
\end{proof}

\begin{proof}[Proof of Lemma \ref{lemma:polarFinite23m3AppellVanish}]   The proof is similar to that of Lemma \ref{lemma:polarFinite23m1AppellVanish}, so we will omit it.
\end{proof}

\begin{proof}[Proof of Proposition \ref{proposition:weylKac23ell2rzVal}]
We prove (\ref{equation:weylKac23ell2rPlus1zVal1}).  Taking the appropriate specialization of Proposition \ref{proposition:WeylKac} gives
\begin{equation}
\chi_{2r+1}^{(3,8)}(z;q)
=z^{-r-\frac{1}{2}}q^{-\frac{1}{8}+3\frac{(r+1)^2}{8}}
\frac{j(-q^{30+6r}z^{-8};q^{48})
-z^{2r+2}j(-q^{18-6r}z^{-8};q^{48}) }
{j(z;q)}.
 \label{equation:weylKac23genz}
\end{equation}
Specializing (\ref{equation:weylKac23genz}) to $z=iq^{-3/2}$ gives
\begin{equation*}
\chi_{2r+1}^{(3,8)}(iq^{-3/2};q)
=(iq^{-3/2})^{-r-\frac{1}{2}}q^{-\frac{1}{8}+3\frac{(r+1)^2}{8}}
\frac{j(-q^{42+6r};q^{48})
-(-1)^{r+1}q^{-3(r+1)}j(-q^{30-6r};q^{48}) }
{j(iq^{-3/2};q)}.
\end{equation*}
We rewrite the numerator.  Pulling out a common factor and using (\ref{equation:j-flip}) gives
\begin{align*}
j(-q^{42+6r};q^{48})&-(-1)^{r+1}q^{-3(r+1)}j(-q^{30-6r};q^{48}) \\
&=-(-1)^{r+1}q^{-3(r+1)}\left (j(-q^{30-6r};q^{48})  -(-1)^{r+1}q^{3(r+1)}  j(-q^{42+6r};q^{48})\right ) \\
&=-(-1)^{r+1}q^{-3(r+1)}\left (j(-q^{18+6r};q^{48})  -(-q^{3})^{r+1}  j(-q^{42+6r};q^{48})\right ).
\end{align*}
Using (\ref{equation:jsplit-m2}) gives
\begin{align*}
j(-q^{42+6r};q^{48})&-(-1)^{r+1}q^{-3(r+1)}j(-q^{30-6r};q^{48}) \\
&=-(-1)^{r+1}q^{-3(r+1)}\left (j(-q^{12+6(r+1)};q^{48})  -(-q^{3})^{r+1}  j(-q^{36+6(r+1)};q^{48})\right ) \\
&=-(-1)^{r+1}q^{-3(r+1)}j((-q^3)^{(r+1)};q^{12}).
\end{align*}
Property (\ref{equation:j-elliptic}) and elementary product rearrangements give
\begin{align*}
j(iq^{-3/2};q)=j(-iq^{5/2};q)
=q^{-1}(-iq^{1/2})^{-2}j(-iq^{1/2};q)
=-q^{-2}J_{2,4},
\end{align*}
and the result follows.

The proof of  (\ref{equation:weylKac23ell2rPlus1zVal2}) is similar and will be omitted.
\end{proof}


\subsection{The $2/3$-level string functions with quantum number $m=1$, first theorem}
We prove Theorem \ref{theorem:newMockThetaIdentitiespP38m1ell2rPlus1}.  We specialize Proposition \ref{proposition:polarFinite23OddSpin} to $z=iq^{3/2}$ and use Lemma \ref{lemma:polarFinite23m1AppellVanish} to produce the much smaller
{\allowdisplaybreaks \begin{align*}
&q^{\frac{1}{8}-\frac{3(r+1)^2}{8}}\chi_{2r+1}^{(3,8)}(iq^{3/2};q)\\
& =  (iq^{3/2})^{-\frac{1}{2}}\mathcal{C}_{1,2r+1}^{(3,8)}(q) j(q^{6};q^{12})\\
& \quad -     \frac{1}{(q)_{\infty}^3}(iq^{3/2})^{-\frac{1}{2}} 
j(q^{6};q^{12})\\
& \qquad  \times \Big [ -q^{1-2r}
\frac{j(q^{6-2r};q^{16})j(q^{28-4r};q^{32})}{J_{32}}
 \Big (-q^{-1}m(-q,q^{6};q^{12}) 
+  m(-q^{5},q^{6};q^{12})\Big )\\
& \qquad \quad +q^{-r}
\frac{j(q^{2+2r};q^{16})j(q^{20+4r};q^{32})}{J_{32}}
  \Big ( -q^{-2}m(-q^{-1},q^{6};q^{12}) 
+  m(-q^{7},q^{6};q^{12})\Big )\Big ]\\
& \quad -     \frac{1}{(q)_{\infty}^3}(iq^{3/2})^{-\frac{3}{2}} 
 \Big (-q^{-1}\frac{J_{1}^2J_{4}J_{6}^2}{J_{2}^2J_{3}}\Big )\\
& \qquad  \times \Big [ -q^{4-2r}
\frac{j(q^{6-2r};q^{16})j(q^{28-4r};q^{32})}{J_{32}}
+ q^{3-r}
\frac{j(q^{2+2r};q^{16})j(q^{20+4r};q^{32})}{J_{32}}\Big ].
\end{align*} }%

Substituting in the expression for the character found in (\ref{equation:weylKac23ell2rPlus1zVal1}) and the mock theta functions found in Proposition \ref{proposition:alternateAppellFormsLvl23FirstTwoPairs} yields
{\allowdisplaybreaks \begin{align*}
&-i(iq^{3/2})^{-r-\frac{1}{2}}
q^{\frac{1}{2}}\frac{j((-q^{3})^{r+1};q^{12})}
{J_{2,4}}\\
& =  (iq^{3/2})^{-\frac{1}{2}}\mathcal{C}_{1,2r+1}^{(3,8)}(q) j(q^{6};q^{12})\\
& \quad -     \frac{1}{(q)_{\infty}^3}(iq^{3/2})^{-\frac{1}{2}} 
j(q^{6};q^{12})\\
& \qquad  \times \Big [ -q^{1-2r}
\frac{j(q^{6-2r};q^{16})j(q^{28-4r};q^{32})}{J_{32}}
 \left ( -\psi_{3}(-q)\right)\\
& \qquad \quad +q^{-r}
\frac{j(q^{2+2r};q^{16})j(q^{20+4r};q^{32})}{J_{32}}
  \left ( 1-\left (\chi_{3}(q)-\frac{J_{3}J_{4}^3}{J_{2}^2J_{12}}\right ) \right )\Big ]\\
& \quad -     \frac{1}{(q)_{\infty}^3}(iq^{3/2})^{-\frac{3}{2}} 
 \Big (-q^{-1}\frac{J_{1}^2J_{4}J_{6}^2}{J_{2}^2J_{3}}\Big )\\
& \qquad  \times \Big [ -q^{4-2r}
\frac{j(q^{6-2r};q^{16})j(q^{28-4r};q^{32})}{J_{32}}
+ q^{3-r}
\frac{j(q^{2+2r};q^{16})j(q^{20+4r};q^{32})}{J_{32}}\Big ].
\end{align*} }%
Isolating the string function and regrouping terms then reads
{\allowdisplaybreaks \begin{align*}
&(iq^{3/2})(q)_{\infty}^{3}\mathcal{C}_{1,2r+1}^{(3,8)}(q) \\
&=-i(iq^{3/2})^{-r+1}
q^{\frac{1}{2}}\frac{j((-q^{3})^{r+1};q^{12})}
{J_{2,4}j(q^{6};q^{12})}\\
&\quad  + (iq^{3/2}) 
q^{-r} \frac{j(q^{2+2r};q^{16})j(q^{20+4r};q^{32})}{J_{32}}\frac{J_{3}J_{4}^3}{J_{2}^2J_{12}}\\
& \quad +q^{-1}\frac{J_{1}^2J_{4}J_{12}}{J_{2}^2J_{3}}
  \times \left ( q^{4-2r}
\frac{j(q^{6-2r};q^{16})j(q^{28-4r};q^{32})}{J_{32}}
- q^{3-r}
\frac{j(q^{2+2r};q^{16})j(q^{20+4r};q^{32})}{J_{32}}\right )\\
& \quad +    (iq^{3/2})   \times \Big [ -q^{1-2r}
\frac{j(q^{6-2r};q^{16})j(q^{28-4r};q^{32})}{J_{32}}
 \left ( -\psi_{3}(-q)\right)\\
& \qquad \quad +q^{-r}
\frac{j(q^{2+2r};q^{16})j(q^{20+4r};q^{32})}{J_{32}}
  \left ( 1- \chi_{3}(q)\right) \Big ].
\end{align*} }%
More simplifying and then using Lemma \ref{lemma:unusualThetaIdentity2} yields

{\allowdisplaybreaks \begin{align*}
(q)_{\infty}^{3}&\mathcal{C}_{1,2r+1}^{(3,8)}(q) \\
&=-i(iq^{3/2})^{-r}
q^{\frac{1}{2}}\frac{J_{1}^3j((-q^{3})^{r+1};q^{12})}
{J_{2,4}j(q^{6};q^{12})}
 + q^{-r} \frac{j(q^{2+2r};q^{16})j(q^{20+4r};q^{32})}{J_{32}}\frac{J_{3}J_{4}^3}{J_{2}^2J_{12}}\\
& \quad +\frac{1}{iq^{3/2} }q^{-1}\frac{J_{1}^2J_{4}J_{12}}{J_{2}^2J_{3}}
  \times \left ( -q^{3-r}\frac{j(q^{2+2r};q^{8})}{J_{8}}j(q^{1-r};q^{4})\right )\\
& \quad -q^{1-2r}
\frac{j(q^{6-2r};q^{16})j(q^{28-4r};q^{32})}{J_{32}}
 \left ( -\psi_{3}(-q)\right)\\
& \qquad \quad +q^{-r}
\frac{j(q^{2+2r};q^{16})j(q^{20+4r};q^{32})}{J_{32}}
  \left ( 1- \chi_{3}(q)\right).
\end{align*} }%

The result then follows from Proposition \ref{proposition:masterThetaIdentitypP38m1ell2rPlus1}.


\subsection{The $2/3$-level string functions with quantum number $m=1$, second theorem}

We prove Theorem \ref{theorem:newMockThetaIdentitiespP38m1ell2rPlus1Alt}.  We specialize Proposition \ref{proposition:polarFinite23OddSpin} to $z=iq^{3/2}$ and use Lemma \ref{lemma:polarFinite23m1AppellVanish} to produce the much smaller
{\allowdisplaybreaks \begin{align*}
&q^{\frac{1}{8}-\frac{3(r+1)^2}{8}}\chi_{2r+1}^{(3,8)}(iq^{3/2};q)\\
& =  (iq^{3/2})^{-\frac{1}{2}}\mathcal{C}_{1,2r+1}^{(3,8)}(q) j(q^{6};q^{12})\\
& \quad -     \frac{1}{(q)_{\infty}^3}(iq^{3/2})^{-\frac{1}{2}} 
j(q^{6};q^{12})\\
& \qquad  \times \Big [ -q^{1-2r}
\frac{j(q^{6-2r};q^{16})j(q^{28-4r};q^{32})}{J_{32}}
 \Big (-q^{-1}m(-q,q^{6};q^{12}) 
+  m(-q^{5},q^{6};q^{12})\Big )\\
& \qquad \quad +q^{-r}
\frac{j(q^{2+2r};q^{16})j(q^{20+4r};q^{32})}{J_{32}}
  \Big ( -q^{-2}m(-q^{-1},q^{6};q^{12}) 
+  m(-q^{7},q^{6};q^{12})\Big )\Big ]\\
& \quad -     \frac{1}{(q)_{\infty}^3}(iq^{3/2})^{-\frac{3}{2}} 
 \Big (-q^{-1}\frac{J_{1}^2J_{4}J_{6}^2}{J_{2}^2J_{3}}\Big )\\
& \qquad  \times \Big [ -q^{4-2r}
\frac{j(q^{6-2r};q^{16})j(q^{28-4r};q^{32})}{J_{32}}
+ q^{3-r}
\frac{j(q^{2+2r};q^{16})j(q^{20+4r};q^{32})}{J_{32}}\Big ].
\end{align*} }%

Substituting in the expression for the character found in (\ref{equation:weylKac23ell2rPlus1zVal1}) and the mock theta functions found in Proposition \ref{proposition:alternateAppellFormsLvl23FirstTwoPairsAlt} yields
{\allowdisplaybreaks \begin{align*}
&-i(iq^{3/2})^{-r-\frac{1}{2}}
q^{\frac{1}{2}}\frac{j((-q^{3})^{r+1};q^{12})}
{J_{2,4}}\\
& =  (iq^{3/2})^{-\frac{1}{2}}\mathcal{C}_{1,2r+1}^{(3,8)}(q) j(q^{6};q^{12})\\
& \quad -     \frac{1}{(q)_{\infty}^3}(iq^{3/2})^{-\frac{1}{2}} 
j(q^{6};q^{12})\\
& \qquad  \times \Big [ -q^{1-2r}
\frac{j(q^{6-2r};q^{16})j(q^{28-4r};q^{32})}{J_{32}}
 \Big (\frac{1}{4}f_{3}(q)-\frac{1}{4}\frac{J_{1}^3}{J_{2}^2}\Big )\\
& \qquad \quad +q^{-r}
\frac{j(q^{2+2r};q^{16})j(q^{20+4r};q^{32})}{J_{32}}
  \Big ( 1-\left ( \frac{1}{4}f_{3}(q)-\frac{1}{4}\frac{J_{1}^3}{J_{2}^2}\right ) \Big )\Big ]\\
& \quad -     \frac{1}{(q)_{\infty}^3}(iq^{3/2})^{-\frac{3}{2}} 
 \Big (-q^{-1}\frac{J_{1}^2J_{4}J_{6}^2}{J_{2}^2J_{3}}\Big )\\
& \qquad  \times \Big [ -q^{4-2r}
\frac{j(q^{6-2r};q^{16})j(q^{28-4r};q^{32})}{J_{32}}
+ q^{3-r}
\frac{j(q^{2+2r};q^{16})j(q^{20+4r};q^{32})}{J_{32}}\Big ].
\end{align*} }%
Regrouping terms and isolating the string function yields
{\allowdisplaybreaks \begin{align*}
(q)_{\infty}^3&\mathcal{C}_{1,2r+1}^{(3,8)}(q)\\
&=-i(iq^{3/2})^{-r}
q^{\frac{1}{2}}\frac{J_{1}^3j((-q^{3})^{r+1};q^{12})}
{J_{2,4}J_{6,12}}\\
& \quad  -q^{1-2r}
\frac{j(q^{6-2r};q^{16})j(q^{28-4r};q^{32})}{J_{32}}
 \frac{1}{4}f_{3}(q)\\
& \qquad \quad +q^{-r}
\frac{j(q^{2+2r};q^{16})j(q^{20+4r};q^{32})}{J_{32}}
  \left ( 1- \frac{1}{4}f_{3}(q) \right )\\
& \quad +   \frac{1}{4}\frac{J_{1}^3}{J_{2}^2} 
  \times \Big [ q^{1-2r}
\frac{j(q^{6-2r};q^{16})j(q^{28-4r};q^{32})}{J_{32}}
 +q^{-r}
\frac{j(q^{2+2r};q^{16})j(q^{20+4r};q^{32})}{J_{32}}
  \Big ]\\  
& \quad +   (iq^{3/2})^{-1} 
 \Big (q^{2}\frac{J_{1}^2J_{4}J_{12}}{J_{2}^2J_{3}}\Big )\\
& \qquad  \times \Big [ q^{1-2r}
\frac{j(q^{6-2r};q^{16})j(q^{28-4r};q^{32})}{J_{32}}
- q^{-r}
\frac{j(q^{2+2r};q^{16})j(q^{20+4r};q^{32})}{J_{32}}\Big ].
\end{align*} }%
Lemma \ref{lemma:unusualThetaIdentity2} then gives a more compact form
{\allowdisplaybreaks \begin{align*}
(q)_{\infty}^3\mathcal{C}_{1,2r+1}^{(3,8)}(q)
&=-i(iq^{3/2})^{-r}
q^{\frac{1}{2}}\frac{J_{1}^3j((-q^{3})^{r+1};q^{12})}
{J_{2,4}J_{6,12}}\\
& \quad  -q^{1-2r}
\frac{j(q^{6-2r};q^{16})j(q^{28-4r};q^{32})}{J_{32}}
 \frac{1}{4}f_{3}(q)\\
& \qquad \quad +q^{-r}
\frac{j(q^{2+2r};q^{16})j(q^{20+4r};q^{32})}{J_{32}}
  \left ( 1- \frac{1}{4}f_{3}(q) \right )\\
& \quad +   \frac{1}{4}\frac{J_{1}^3}{J_{2}^2} 
  \times \left ( q^{-r}
\frac{j(q^{2+2r};q^{8})}{J_{8}}j(-q^{1-r};q^{4})
  \right ) \\  
& \quad +   (iq^{3/2})^{-1} 
 \Big (q^{2}\frac{J_{1}^2J_{4}J_{12}}{J_{2}^2J_{3}}\Big )
   \times \left (  -q^{-r}
\frac{j(q^{2+2r};q^{8})}{J_{8}}j(q^{1-r};q^{4}) \right ) .
\end{align*} }%
The result then follows from Proposition \ref{proposition:masterThetaIdentitypP38m1ell2rPlus1Alt}.


\subsection{The $2/3$-level string functions with quantum number $m=3$, a cross-spin approach}
We prove Corollaries \ref{corollary:newMockThetaIdentitiespP38m3ell2rPlus1} and \ref{corollary:newMockThetaIdentitiespP38m3ell2rPlus1Alt}, but the proofs are similar, so we will only carry out the first.  To relate odd-spin $2/3$-level string functions with quantum number $3$ to the already established quantum number $m=1$, odd-spin $2/3$-level string functions, we use the following specialization of Theorem \ref{theorem:crossSpin-j-Odd}: 
\begin{align*}
(q)_{\infty}^3&\mathcal{C}_{2i-1,2r-1}^{(3,8)}(q)-q^{3(i-r)+3}(q)_{\infty}^{3}\mathcal{C}_{2i-3,7-2r}^{(3,8)}(q)\\
&= -q^{3+3(i+r)}
\sum_{m=1}^{2}(-1)^{m}q^{\binom{m+1}{2}-m(3+i+r)}
 \times
\left ( j(-q^{8m-6r};q^{48})
-q^{2r(m-3)}j(-q^{8m+6r};q^{48})\right ).
\end{align*}
We set $i=2$ to get
\begin{align*}
(q)_{\infty}^3&\mathcal{C}_{3,2r-1}^{(3,8)}(q)-q^{9-3r}(q)_{\infty}^{3}\mathcal{C}_{1,7-2r}^{(3,8)}(q)\\
&= -q^{9+3r}
\sum_{m=1}^{2}(-1)^{m}q^{\binom{m+1}{2}-m(5+r)}
 \times
\left ( j(-q^{8m-6r};q^{48})
-q^{2r(m-3)}j(-q^{8m+6r};q^{48})\right ) .
\end{align*}
Expanding the sum over $m$ to brings us to
\begin{align*}
(q)_{\infty}^3&\mathcal{C}_{3,2r-1}^{(3,8)}(q)-q^{9-3r}(q)_{\infty}^{3}\mathcal{C}_{1,7-2r}^{(3,8)}(q)\\
&= -q^{9+3r}\Big [ 
-q^{-4-r}
 \times
\left ( j(-q^{8-6r};q^{48}) -q^{-4r}j(-q^{8+6r};q^{48})\right )\\
&\qquad +q^{-7-2r}
 \times
\left ( j(-q^{16-6r};q^{48}) -q^{-2r}j(-q^{16+6r};q^{48})\right )\Big ].
\end{align*}
We rewrite the theta functions using (\ref{equation:j-flip}) and (\ref{equation:j-elliptic}) to get
\begin{align*}
(q)_{\infty}^3&\mathcal{C}_{3,2r-1}^{(3,8)}(q)-q^{9-3r}(q)_{\infty}^{3}\mathcal{C}_{1,7-2r}^{(3,8)}(q)\\
&= -q^{9+3r}\Big [ 
-q^{-4-r}
 \times
\left ( j(-q^{40+6r};q^{48}) -q^{8+2r}j(-q^{56+6r};q^{48})\right )\\
&\qquad +q^{-7-2r}
 \times
\left ( j(-q^{32+6r};q^{48}) -q^{-2r}j(-q^{16+6r};q^{48})\right )\Big ].
\end{align*}
We rearrange terms to set up for quintuple product identity:
\begin{align*}
(q)_{\infty}^3&\mathcal{C}_{3,2r-1}^{(3,8)}(q)-q^{9-3r}(q)_{\infty}^{3}\mathcal{C}_{1,7-2r}^{(3,8)}(q)\\
&= -q^{9+3r}\Big [ 
-q^{-4-r}
 \times
\left ( j(-q^{40+6r};q^{48}) -q^{8+2r}j(-q^{56+6r};q^{48})\right )\\
&\qquad -q^{-7-4r}
 \times
\left ( j(-q^{16+6r};q^{48})-q^{2r}j(-q^{32+6r};q^{48})\right )\Big ].
\end{align*}
Using the quintuple product identity (\ref{equation:quintuple}) yields
{\allowdisplaybreaks \begin{align*}
(q)_{\infty}^3&\mathcal{C}_{3,2r-1}^{(3,8)}(q)-q^{9-3r}(q)_{\infty}^{3}\mathcal{C}_{1,7-2r}^{(3,8)}(q)\\
&= -q^{9+3r}\Big [ 
-q^{-4-r}
 \frac{j(q^{8+2r};q^{16})j(q^{24+2r};q^{32})}{J_{32}}
  -q^{-7-4r}
 \frac{j(q^{2r};q^{16})j(q^{16+2r};q^{32})}{J_{32}}\Big ]\\
&= q^{5+2r}
 \frac{j(q^{8+2r};q^{16})j(q^{32+4r};q^{32})}{J_{32}}
 +q^{2-r}
 \frac{j(q^{2r};q^{16})j(q^{16+4r};q^{32})}{J_{32}}.
\end{align*}}%
Replacing $r$ with $r+1$ and using (\ref{equation:j-elliptic}) brings us to
\begin{align}
(q)_{\infty}^3&\mathcal{C}_{3,2r+1}^{(3,8)}(q)-q^{6-3r}(q)_{\infty}^{3}\mathcal{C}_{1,2(2-r)+1}^{(3,8)}(q)
\label{equation:crossSpin23master}\\
&= -q^{3-2r}
 \frac{j(q^{10+2r};q^{16})j(q^{4+4r};q^{32})}{J_{32}}
 +q^{1-r}
 \frac{j(q^{2+2r};q^{16})j(q^{20+4r};q^{32})}{J_{32}}.\notag
\end{align}
Let us substitute the result of Theorem \ref{theorem:newMockThetaIdentitiespP38m1ell2rPlus1} into (\ref{equation:crossSpin23master}).  This gives
{\allowdisplaybreaks \begin{align*}
(q)_{\infty}^3&\mathcal{C}_{3,2r+1}^{(3,8)}(q)\\
&=q^{6-3r}\Big [\Theta_{1,2(2-r)+1}(q)\\
&\qquad    -q^{1-2(2-r)}
\frac{j(q^{6-2(2-r)};q^{16})j(q^{28-4(2-r)};q^{32})}{J_{32}}
 \Big ( -\psi_{3}(-q)\Big )\\
& \qquad  +q^{-(2-r)}
\frac{j(q^{2+2(2-r)};q^{16})j(q^{20+4(2-r)};q^{32})}{J_{32}}
  \Big ( 1-\chi_{3}(q)\Big ) \Big ] \\
&\qquad  -q^{3-2r}
 \frac{j(q^{10+2r};q^{16})j(q^{4+4r};q^{32})}{J_{32}}
 +q^{1-r}
 \frac{j(q^{2+2r};q^{16})j(q^{20+4r};q^{32})}{J_{32}}.
\end{align*}}%
Simplifying gives
\begin{align*}
(q)_{\infty}^3\mathcal{C}_{3,2r+1}^{(3,8)}(q)
&=q^{6-3r}\Theta_{1,2(2-r)+1}(q)\\
&\qquad    -q^{3-r}
\frac{j(q^{2+2r};q^{16})j(q^{20+4r};q^{32})}{J_{32}}
 \Big ( -\psi_{3}(-q)\Big )\\
& \qquad  +q^{4-2r}
\frac{j(q^{6-2r};q^{16})j(q^{28-4r};q^{32})}{J_{32}}
  \Big ( 1-\chi_{3}(q)\Big )  \\
&\qquad  -q^{3-2r}
 \frac{j(q^{10+2r};q^{16})j(q^{4+4r};q^{32})}{J_{32}}
 +q^{1-r}
 \frac{j(q^{2+2r};q^{16})j(q^{20+4r};q^{32})}{J_{32}}.
\end{align*}
Using (\ref{equation:j-flip}) and combining terms gives
{\allowdisplaybreaks \begin{align*}
(q)_{\infty}^3\mathcal{C}_{3,2r+1}^{(3,8)}(q)
&=q^{6-3r}\Theta_{1,2(2-r)+1}(q)\\
&\qquad    +q^{3-r}
\frac{j(q^{2+2r};q^{16})j(q^{20+4r};q^{32})}{J_{32}}
 \Big ( q^{-2}+\psi_{3}(-q)\Big )\\
& \qquad  +q^{4-2r}
\frac{j(q^{6-2r};q^{16})j(q^{28-4r};q^{32})}{J_{32}}
  \Big ( 1-q^{-1}-\chi_{3}(q)\Big ),
\end{align*}}%
and the result follows.


\section{Acknowledgments}

The work is supported by the Ministry of Science and Higher Education of the Russian
Federation (agreement no. 075-15-2025-343).

\end{document}